\documentclass[psamsfont,a4paper,11pt]{article}

\usepackage[english]{babel}   

\addtolength{\hoffset}{-2cm}         
\addtolength{\textwidth}{4cm}
\addtolength{\voffset}{-1.6cm}	
\addtolength{\textheight}{1.2cm}

\usepackage{setspace}     
\usepackage{enumerate} 	
\usepackage{afterpage} 	
\usepackage[usenames,dvipsnames]{color}
\usepackage{amsmath} 	
\usepackage{amssymb}	
\usepackage{mathrsfs} 	
\usepackage{amsfonts} 	
\usepackage{latexsym} 	
\usepackage[latin1]{inputenc} 

\usepackage{amsthm} 	
\usepackage{stackrel}       

\usepackage{tabularx}	
\usepackage{longtable}	

\usepackage{verbatim} 
\usepackage{blindtext} 

\language 2
\allowdisplaybreaks

\hyphenation{par-ti-cu-lar}
\hyphenation{homeo-mor-phic}


\def\GL{\mathop{\rm GL}}

\renewcommand{\to}{\longrightarrow}

\def\und{\underline}



\theoremstyle{definition}

\theoremstyle{remark}


\newcommand{\bbmx}{\begin{bmatrix}}
\newcommand{\ebmx}{\end{bmatrix}}

\newtheorem{ind}[]{{\rm\it Indice}}

\usepackage{multicol}
\usepackage{mathrsfs}
\usepackage{tikz-cd}
\usepackage{adjustbox,amsfonts,amssymb,array,graphicx,amscd,amsmath,amsthm,tikz,booktabs,tabularx,multirow,multicol,stmaryrd,tabu,xcolor}
\usepackage[flushleft]{threeparttable}
\usepackage[fixamsmath]{mathtools}
\usepackage{stackengine}
\usepackage{ mathdots }
\usepackage{subcaption}
\usepackage{cite}
\usepackage{float}
\usepackage{hyperref}

\usetikzlibrary{shapes,arrows,decorations.pathreplacing}
\usepackage{enumitem}
\setlist[enumerate]{label=(\alph*)}

\newcommand{\twid}[1]{\widetilde{#1}}

\theoremstyle{plain}
\newtheorem{theorem}{Theorem}[section]
\newtheorem*{maintheorem}{Main Theorem}
\newtheorem{proposition}[theorem]{Proposition}
\newtheorem{lemma}[theorem]{Lemma}
\newtheorem{corollary}[theorem]{Corollary}
\theoremstyle{definition}
\newtheorem{definition}[theorem]{Definition}
\newtheorem{example}[theorem]{Example}
\newtheorem{remark}[theorem]{Remark}

\newcommand{\gammaice}[6]{
\begin{tikzpicture}[scale=0.77, every node/.style={scale=0.8}]
\coordinate (a) at (-.75, 0);
\coordinate (b) at (0, .75);
\coordinate (c) at (.75, 0);
\coordinate (d) at (0, -.75);
\coordinate (aa) at (-.75,.5);
\coordinate (cc) at (.75,.5);
\draw (a)--(c);
\draw (b)--(d);
\draw[fill=white] (a) circle (.25);
\draw[fill=white] (b) circle (.25);
\draw[fill=white] (c) circle (.25);
\draw[fill=white] (d) circle (.25);
\node at (0,1) { };
\node at (a) {$#1$};
\node at (b) {$#2$};
\node at (c) {$#3$};
\node at (d) {$#4$};
\node at (aa) {$#5$};
\node at (cc) {$#6$};
\end{tikzpicture}}










\title{Frozen Pipes: Lattice Models for Grothendieck Polynomials}
\author{Ben Brubaker, Claire Frechette, Andrew Hardt, Emily Tibor, and Katherine Weber}
\date{\today}

\begin{document}

\maketitle

\begin{abstract} We introduce families of two-parameter multivariate polynomials indexed by pairs of partitions $v,w$ -- {\it biaxial double}
 $(\beta,q)$-{\it Grothendieck polynomials} -- which specialize at $q=0$ and $v=1$ to double $\beta$-Grothendieck polynomials from torus-equivariant connective K-theory. Initially defined recursively via divided difference operators, our main result is that these new polynomials arise as partition functions of solvable lattice models. Moreover, the associated quantum group of the solvable model for polynomials in $n$ pairs of variables is a Drinfeld twist of the $U_q(\widehat{\mathfrak{sl}}_{n+1})$ $R$-matrix. By leveraging the resulting Yang-Baxter equations of the lattice model, we show that these polynomials simultaneously generalize double $\beta$-Grothendieck polynomials and dual double $\beta$-Grothendieck polynomials for arbitrary permutations. We then use properties of the model and Yang-Baxter equations to reprove Fomin-Kirillov's Cauchy identity for $\beta$-Grothendieck polynomials, generalize it to a new Cauchy identity for biaxial double $\beta$-Grothendieck polynomials, and prove a new branching rule for double $\beta$-Grothendieck polynomials.
\end{abstract}

\section{Introduction}\label{intro}

In statistical mechanics, solvable lattice models are used to infer global behavior of a system from the local properties of nearest-neighbor particle interactions. Here we use the adjective ``solvable'' to mean that there exists a family of (quantum) Yang-Baxter equations which allow one to \emph{solve the model}; that is, to determine sufficiently many recursive relations to explicitly evaluate the generating function made from weighted sums of potential particle configurations. This generating function is called the \emph{partition function} of the model and it is summed over certain \emph{admissible states}. This terminology and additional tools for (two-dimensional) solvable lattice models are reviewed at the outsets of Sections~\ref{Pipes} and~\ref{solvability}. 

A growing collection of recent papers (see for example \cite{Zinn-Justin-LRSchur, ZJ-Wheeler, KnutsonZJ-schubert-puzzles, KnutsonZJ-motivic, MotegiSakai, BW-coloured, BBBGcolor}) has demonstrated the utility of solvable lattice models to represent important polynomial functions in Schubert calculus and its many generalizations. Roughly speaking, solvable lattice models are effective tools because, on one hand, the admissible states are in bijection with certain combinatorial objects -- such as tableaux, Gelfand-Tsetlin-type patterns, or, in our case, pipe dreams -- and are thus equipped with a rich set of tools for studying algebraic invariants. On the other hand, their solutions to the Yang-Baxter equation, called \emph{$R$-matrices}, are known to satisfy braid and quadratic relations which give rise to an action of a Hecke algebra on the lattice model. As we will see, manipulating the lattice model according to a certain diagrammatic calculus allows one to move between these interpretations, via a sort of graphical analogue of Schur-Weyl duality, leading to new identities as well as new proofs of known identities involving the partition function. In the present paper, we add to the list by showing that the double $\beta$-Grothendieck polynomials and their duals are expressible as partition functions of solvable lattice models, and we present a one-parameter deformation of both of them to properly explain the solvability in terms of associated quantum groups. We then use these models and their Yang-Baxter equations to prove Cauchy-type identities and a branching rule. 

Before describing our results in more detail, we give a concise overview of Grothendieck polynomials, with precise definitions to follow in Section~\ref{grotpoly}. 
Let $X$ be the set of complete flags in $\mathbb{C}^n$, a smooth, projective
complex variety with an action of $\GL_n(\mathbb{C})$ induced from the standard action
on complex vectors. Let $K(X)$ denote the Grothendieck ring of algebraic vector bundles over $X$.
It has an additive basis given by $K$-theoretic Schubert classes $[\mathcal{O}_{X_w}]$ with $w \in S_n$.
Here $X_w$ denotes the closure of a corresponding $B$-orbit in $X$, where $B$ denotes the Borel subgroup of lower
triangular matrices in $\GL_n(\mathbb{C})$, and $\mathcal{O}_X$ is the structure sheaf of $X$.
The generalized Littlewood-Richardson problem is to determine an explicit formula for the structure constants for multiplication with respect to this basis.
That is, one would like to determine the integer coefficients $C_{u,v}^w$ appearing in
$$ [\mathcal{O}_{X_u}] \cdot [\mathcal{O}_{X_v}] = \sum_{w \in S_n} C_{u,v}^w [\mathcal{O}_{X_w}], $$ 
analogous to the Littlewood-Richardson coefficients for the ordinary cohomology of $X$. Brion \cite{Brion-positivity} famously showed that these
structure constants had a predictable sign. If $\ell(w)$ denotes the length of $w$ as a reduced word in simple reflections,
then he showed that $(-1)^{\ell(w) - \ell(u) - \ell(v)} C_{u,v}^w$ is non-negative for all $u,v,w$ in $S_n$.

Lascoux and Sch\"utzenberger \cite{LS-symmetry} introduced polynomial representatives for these classes known as Grothendieck polynomials, denoted
$\mathcal{G}_w$ to any permutation $w$. That is, Grothendieck polynomials model the multiplication in $K(X)$ according to
$$  \mathcal{G}_u \cdot \mathcal{G}_v = \sum_{w \in S_n} C_{u,v}^w \mathcal{G}_w. $$
These were later generalized by Fomin and Kirillov \cite{FominKirillov-gpybe}, who defined $\beta$-Grothendieck polynomials depending on a deformation parameter $\beta$, which simultaneously
generalize the prior Grothendieck polynomials ($\beta=-1$) and Schubert polynomials ($\beta = 0$). We will denote these by $\mathcal{G}^{(\beta)}_w$.

Much later, a geometric interpretation of $\beta$-Grothendieck polynomials was described by Hudson in \cite{Hudson}.  In fact, Hudson works in slightly greater generality: just as Schubert and Grothendieck polynomials admit generalizations to two sets of variables, there are also \emph{double} $\beta$-Grothendieck polynomials. These represent connective $K$-theory classes of degeneracy loci of flag bundles, or, equivalently, Schubert classes in the $T$-equivariant connective $K$-theory ring of the flag variety, where $T$ is the torus of diagonal matrices. (See e.g. \cite{anderson2012introduction} for a nice example of how results about degeneracy loci translate into statements about equivariant cohomology.) As is also true in the single variable case, these have a natural recursive description in terms of pullback and pushforward maps in the Bott-Samelson resolution of singularities in the flag variety, which leads to the definition of $\beta$-Grothendieck polynomials in terms of divided difference operators described in Section~\ref{grotpoly}, and which will manifest in the action of $R$-matrices on our lattices in later sections.

One of the most striking examples of the interplay between the Yang-Baxter equation and combinatorial identities is the paper \cite{Zinn-Justin-LRSchur} of Zinn-Justin which, by interpreting Knuston-Tao puzzles in the context of solvable lattice models, gives a new proof of the Littlewood-Richardson rule for the structure constants of Schur functions. This method was subsequently generalized by Wheeler and Zinn-Justin \cite{ZJ-Wheeler} to Grassmannian Grothendieck polynomials, and by Knutson and Zinn-Justin to Grothendieck polynomials whose associated permutations have few descents \cite{KnutsonZJ-schubert-puzzles}. As noted above, finding and proving a Littlewood-Richardson type rule for the structure constants of general ($\beta$-) Grothendieck polynomials is a long-standing open problem, and based on the success in the certain special cases described above, one might hope that this general problem can also be approached from the lattice models defined in the present paper. 

Indeed, our lattice models are defined for arbitrary permutations, and therefore in the ``crystal limit'' $q=0$ specialize to Grothendieck polynomials for arbitrary permutations. The key insight to treating arbitrary permutations is to use a generalization of the ice-type \textit{six-vertex model} inspired by earlier work of Borodin and Wheeler \cite{BW-coloured}. Unlike Borodin-Wheeler, who use color to refine certain partition functions of lattice models from symmetric functions into their nonsymmetric pieces, our models use color to move from permutations with one descent to those with arbitrarily many descents. As a nod to this similarity, we refer to our models as ``chromatic'' rather than ``colo(u)red.'' This distinction is also seen in the quantum interpretation of these two approaches: the associated quantum group module for our solutions of the Yang-Baxter equation is a Drinfeld twist of the standard $U_q(\widehat{\mathfrak{sl}}_{n+1})$ module, while those in~\cite{BW-coloured} arise from symmetric powers of $U_q(\widehat{\mathfrak{sl}}_{2})$ modules. 

We conclude by outlining the structure of the subsequent sections. In Section~\ref{grotpoly}, we define several new families of polynomials, including our most general $q$-deformed ``biaxial'' polynomials, which specialize to both double $\beta$-Grothendieck polynomials and their duals. In that section, we also present two different definitions for double $\beta$-Grothendieck polynomials which represent the Hecke algebra point of view and the combinatorial point of view in terms of generating functions discussed earlier. Then in Section~\ref{Pipes}, we introduce our lattice model, whose partition function we will later show is the biaxial polynomial, and show how to obtain the weights from a Drinfeld twist of the standard $U_q(\widehat{\mathfrak{sl}}_{n+1})$ module. In Section~\ref{solvability}, we describe the Yang-Baxter equations and $R$-matrices associated to the model, as well as a ``rhombus'' Yang-Baxter equation associated to mixing two different orientations of the model.

Then in Section~\ref{modelproof}, we use the solvability of the model to prove the following main result. 

\begin{maintheorem}
The biaxial double $(\beta,q)$-Grothendieck polynomials in Definition \ref{biaxial-polynomials} are realized as the partition function of a solvable lattice model (see Theorem \ref{modelgivesgrotpolys}). Furthermore, they simultaneously generalize both the double $\beta$-Grothendieck and double dual $\beta$-Grothendieck polynomials (see Theorems \ref{modelgivesgrotpolys} and~\ref{modelgivesdual}).
\end{maintheorem}

This allows us to find alternate recursive definitions for both the $\beta$-Grothendieck polynomials and their duals and show that the biaxial polynomials specialize to both of these sets. The last three sections concern these specializations of the model. In Section~\ref{pipedreamcorrespondencesection}, we move to the generating function approach and describe the correspondence between the states of the models and pipe dreams. In Section~\ref{cauchy}, we prove Cauchy-type identities by stacking our models appropriately and calculating the partition functions in two ways. Finally, in Section~\ref{branching}, we provide a branching rule that describes how to express a double $\beta$-Grothendieck polynomial for any permutation in $S_n$ as a sum over double $\beta$-Grothendieck polynomials for permutations in $S_{n-1}.$

\bigskip

\noindent {\bf Acknowledgements:} This work was supported by NSF grants DMS-1801527 and DMS-2101392 and NSF RTG grant DMS-1745638. We thank Valentin Buciumas and Travis Scrimshaw for comments on an earlier draft of the paper, and Daniel Bump and Henrik Gustafsson for making code available to us that was useful in computing solutions to Yang-Baxter equations.

\bigskip

\noindent {\bf Authors' Note:} In the final stages of the first version of this paper ({\tt arXiv:2007.04310}), we were informed of another, independent work by Buciumas and Scrimshaw~\cite{BuciumasScrimshaw-bumpless-model} which provides solvable lattice models for Grothendieck polynomials. Their models are naturally connected to so-called ``bumpless'' pipe dreams of Lam, Lee, and Shimozono \cite{LamLeeShimozono-bumpless}. So while their Boltzmann weights look similar in nature, there are important differences which prevent direct comparison of their lattice model to ours. Indeed, it is likewise difficult on the combinatorial generating function side to compare Fomin-Kirillov pipe dreams to bumpless ones. This provides yet another example of the mutable nature of these solvable lattice models - the methods are flexible enough to apply to a wide array of special functions, but the analysis of each particular model presents unique challenges.

\section{Grothendieck Polynomials and Their Generalizations}\label{grotpoly}

In this section, we build up to an inductive definition of the biaxial double $(\beta,q)$-Grothendieck polynomials via divided difference operators. These polynomials are a $q$-deformation of another new family of polynomials, the biaxial double $\beta$-Grothendieck polynomials, which in turn are a simultaneous generalization of double $\beta$-Grothendieck polynomials and dual double $\beta$-Grothendieck polynomials.

In the process, we review the special case of (double $\beta$-)Grothendieck polynomials as in the original definition of Fomin and Kirillov~\cite{FominKirillov-gpybe}. Let $\beta$ and $q$ be formal parameters and let $\boldsymbol{x} = (x_1, \ldots, x_n)$ and $\boldsymbol{y} = (y_1, \ldots, y_n)$ denote $n$-tuples of formal variables. Then for the polynomial ring $\mathbb{C}(\beta, q)[\boldsymbol{x}, \boldsymbol{y}] := \mathbb{C}(\beta, q)[x_1, \ldots, x_n, y_1, \ldots, y_n]$ we define the divided difference operators $\pi_i := \pi_i^{(\beta,q)}$ by
\begin{equation} \pi_{i}^{(\beta,q)}(f)(\boldsymbol{x},\boldsymbol{y}) = \frac{ (1-q^2)(1+\beta x_{i+1})f(\boldsymbol{x},\boldsymbol{y}) - (1+\beta x_i -q^2 - q^2\beta x_{i+1})f(s_i\boldsymbol{x},\boldsymbol{y})}{x_i - x_{i+1}} \label{isobaricdd} \end{equation}
for $i=1, \ldots, n-1.$ Here $s_i$ denotes the simple reflection in $S_n$ acting on $\boldsymbol{x}$ by interchanging the variables $x_i$ and $x_{i+1}$. Recalling the Newton divided difference operator $\partial_i$ given by
\[ \partial_i(f)(\boldsymbol{x}) = \frac{f(\boldsymbol{x}) - f(s_i \boldsymbol{x})}{x_i - x_{i+1}}, \]
we may write the identity of operators
\[ \pi_i^{(\beta,q)} = \partial_i^x (1+\beta x_{i+1}) - q^2 (1+\beta x_{i+1}) \partial_i^x, \]
where we've emphasized in the notation $\partial_i^x$ that the operator $\partial_i$ acts only on the variables $\boldsymbol{x}$ for any polynomial $f(\boldsymbol{x}, \boldsymbol{y}).$
There is a second corresponding set of operators $\twid{\pi}_i := \twid{\pi}_i^{(\beta, q)}$ for $i=1,\ldots,n$ defined by
\[\twid{\pi}_i^{(\beta,q)}(f) = \frac{1}{\beta^2 q^2} \left( \partial_i^y (1+\beta y_{i+1}) - q^2 (1+\beta y_{i+1}) \partial_i^y \right), \]
where $\partial_i^y$ now acts on the $\boldsymbol{y}$ variables, paralleling the operator $\pi_i$ up to normalization. 

Recalling that $\partial_i^2 = 0$, $x_i \partial_i = \partial_i x_{i+1} + 1$, and the braid relations for $\partial_i$, it is straightforward to check that the operators $\pi_i$ and $\twid{\pi}_i$ also satisfy the braid relations. For example, in the case of $\pi_i$:
\[ \pi_i \pi_{i+1} \pi_i = \pi_{i+1} \pi_i \pi_{i+1} \quad \textrm{and} \quad \pi_i \pi_j = \pi_j \pi_i \quad \textrm{for all $i,j$ with $|i-j|>1$.} \] Additionally, a somewhat-lengthy computation shows that each $\pi_i$ and $\twid{\pi}_i$ satisfy the quadratic relations \[(\pi_i+\beta)(\pi_i-\beta q^2)f = 0, \hspace{30pt} (\twid{\pi}_i + \beta^{-1}q^{-2})(\twid{\pi}_i-\beta^{-1})f = 0\] for any function $f\in\mathbb{C}(\beta,q)[\boldsymbol{x},\boldsymbol{y}]$. Therefore, the operators $\{\pi_i\}_{i=1}^{n-1}$ and the operators $\{\twid{\pi}_i\}_{i=1}^{n-1}$ each generate Hecke algebras. Section~2 of \cite{FominKirillov-schubert} explores solutions to the Yang-Baxter equation via such Hecke algebras as we vary over certain specializations of quadratic relations of the form $\pi_i^2 = a \pi_i + b$ (in particular when $a=0$ and $b=0$). As we will show in subsequent sections, our operators also arise from solutions to Yang-Baxter equations, despite their complicated quadratic relations. This connection will arise in a much more circuitous way by making use of solvable lattice models to demonstrate this fact and derive further applications.

For later use, we also record the explicit inverses of each operator: \[\pi^{-1}_{i}(f) = \frac{1}{\beta^2 q^2} \left( (1+\beta x_i)\partial_i^x - q^2 \partial_i^x (1+\beta x_i) \right) \hspace{18pt} \text{and} \hspace{18pt} \twid{\pi}^{-1}_{i}(f) = \left( (1+\beta y_i)\partial_i^y - q^2 \partial_i^y (1+\beta y_i) \right).\]

We may now define double $(\beta,q)$-Grothendieck 
polynomials recursively with respect to length $\ell$ in the symmetric group, beginning from the long element $w_0$. We often use the following notation, suggesting a formal group law addition but suppressing the dependence on $\beta$, in our definitions:
$$ x \oplus y := x + y + \beta xy. $$

\begin{definition} Given a fixed positive integer $n$ and a permutation $w$, define the {\it double $(\beta,q)$-Grothendieck polynomials} $\mathcal{G}^{(\beta,q)}_w(\boldsymbol{x}; \boldsymbol{y}) := \mathcal{G}^{(\beta,q)}_w(x_1, \ldots, x_n; y_1, \ldots, y_n)$ as follows. \begin{itemize}
\item Set $$ \mathcal{G}^{(\beta,q)}_{w_0}(\boldsymbol{x};\boldsymbol{y}) = (1-q^2)^n \prod_{i+j< n+1}(x_i \oplus y_j) \prod_{i+j>n+1} (1-q^2(1+\beta (x_i\oplus y_j))).$$
\item For any $w$ and a simple reflection $s_i = (i \; i+1)$ in $S_n$ such that $\ell(ws_i) = \ell(w) - 1$, set
$$ \mathcal{G}^{(\beta,q)}_{ws_i}(\boldsymbol{x}; \boldsymbol{y}) = \pi_i^{(\beta,q)} \mathcal{G}^{(\beta,q)}_{w} (\boldsymbol{x}; \boldsymbol{y}). $$
\end{itemize}
\label{divdiffdef}
\end{definition}
Note the above polynomials are well-defined since the operators $\pi_i^{(\beta,q)}$ satisfy the braid relations above. 

Setting $q=0$ in both bullets above recovers the formula for double $\beta$-Grothendieck polynomials, as in the original definition in Fomin and Kirillov~\cite{FominKirillov-gpybe}. These authors were motivated, in part, by attempts to classify exponential solutions to the Yang-Baxter equation. Their definition of $\mathcal{G}^{(\beta)}_w$ is given in terms of $\beta$-deformed divided difference operators $\pi_i^{(\beta)} := \pi_{i}^{(\beta,0)}$, obtained from setting $q=0$ in (\ref{isobaricdd}).
Once again, we may succinctly write $\pi_i^{(\beta)} = \partial_i \circ (1+\beta x_{i+1})$ where $\partial_i$ is the usual divided difference operator. The single variable $\beta$-Grothendieck polynomials are recovered by the further specialization $\mathcal{G}^{(\beta)}_{w} (\boldsymbol{x}; 0)$. See the appendix to \cite{Hudson} for a nice exposition of the relation between Fomin-Kirillov's definition of double Grothendieck polynomials via the exponential Yang-Baxter equation solutions and the divided difference operator definition given above.

We caution the reader that other notions of Grothendieck polynomials appear in the literature. For example in \cite{Kirillov-notes, Kirillov-quantum}, Kirillov sets $\mathcal{G}^{(\beta)}_{w_0}(\boldsymbol{x};\boldsymbol{y})$ equal to $\prod_{i+j \leq n} (x_i + y_j)$, using the additive formal group as opposed to our multiplicative one depending on $\beta$. Moreover, many authors work only with {\it Grassmannian} Grothendieck polynomials, for which the corresponding permutation $w$ has a unique descent and so may be recorded as a partition, and the adjective ``Grassmannian'' is often dropped for brevity.

A second definition of $\beta$-Grothendieck polynomials as a weighted sum over a combinatorially defined set may be given in terms of ``pipe dreams.'' Fomin and Kirillov \cite{FominKirillov-schubert} defined reduced pipe dreams (under the name ``generalized configurations'') to represent products in a Yang-Baxter algebra, and used them to study Schubert polynomials. Reduced pipe dreams are also known as rc-graphs, as in \cite{BergeronBilley-rc}.

Knutson and Miller \cite{KnutsonMiller-geometry,KnutsonMiller-subwords} coined the name ``pipe dreams,'' defined nonreduced pipe dreams, and gave a formula for Grothendieck polynomials in terms of these more general objects. Lenart, Robinson, and Sottile \cite{LenartRobinsonSottile} gave a correspondence between pipe dreams and chains in the $k$-Bruhat order, which they used to find formulas for both $\mathcal{G}_w$ and $\mathcal{H}_w$.

We mostly follow \cite{PechenikSearles} for our definition. A pipe dream is a tiling of an $n\times n$ grid with the tiles \raisebox{-1 ex}{\begin{tikzpicture}[scale=.5]
    \draw (0,0) -- (1,0);
    \draw (0,1) -- (1,1);
    \draw (0,0) -- (0,1);
    \draw (1,0) -- (1,1);
    \draw[rounded corners = 2mm, line width = .5mm] (0,.5) -- (.5,.5)-- (.5,1);
    \draw[rounded corners = 2mm, line width = .5mm] (.5,0) --(.5,.5)-- (1,.5);
\end{tikzpicture}}
and 
\raisebox{-1 ex}{\begin{tikzpicture}[scale=.5]
    \draw (0,0) -- (1,0);
    \draw (0,1) -- (1,1);
    \draw (0,0) -- (0,1);
    \draw (1,0) -- (1,1);
    \draw[rounded corners = 2mm, line width = .5mm] (0,.5)--(1,.5);
    \draw[rounded corners = 2mm, line width = .5mm] (.5,0) --(.5,1);
\end{tikzpicture}}, such that every 
\raisebox{-1 ex}{\begin{tikzpicture}[scale=.5]
    \draw (0,0) -- (1,0);
    \draw (0,1) -- (1,1);
    \draw (0,0) -- (0,1);
    \draw (1,0) -- (1,1);
    \draw[rounded corners = 2mm, line width = .5mm] (0,.5)--(1,.5);
    \draw[rounded corners = 2mm, line width = .5mm] (.5,0) --(.5,1);
\end{tikzpicture}}
appears above the anti-diagonal. Any undrawn tiles are assumed to be of the type 
\raisebox{-1 ex}{\begin{tikzpicture}[scale=.5]
    \draw (0,0) -- (1,0);
    \draw (0,1) -- (1,1);
    \draw (0,0) -- (0,1);
    \draw (1,0) -- (1,1);
    \draw[rounded corners = 2mm, line width = .5mm] (0,.5) -- (.5,.5)-- (.5,1);
    \draw[rounded corners = 2mm, line width = .5mm] (.5,0) --(.5,.5)-- (1,.5);
\end{tikzpicture}}. 
These tiles form a collection of pipes originating on the left boundary of the grid and terminating along the top boundary. We number the pipes according to the row of their origination, increasing from top to bottom. A pipe dream is said to be {\it reduced} if no two pipes cross more than once. Given a nonreduced pipe dream $P$ we can form an associated reduced pipe dream, called the {\it reduction of $P$}, in the following way: pipe by pipe, starting with pipe 1, remove any multiple crossings with another pipe after the initial crossing. We denoted the reduction of $P$ by $\text{red}(P)$. The number of crossings removed in this process is the {\it excess of $P$}, written $\text{ex}(P).$ 

To each pipe dream we can associate a permutation. The permutation of a reduced pipe dream is the permutation sending $i$ to the column (indexed in increasing order from left to right) of the termination of pipe $i$. The permutation of a nonreduced pipe dream is the permutation associated to its reduction; it is often difficult to read the permutation from a nonreduced pipe dream. Given a permutation $w$, let $PD(w)$ be the collection of all pipe dreams with permutation $w$. The weight of a pipe dream $P$ is $\text{wt}(P) := \prod (x_i\oplus y_j) $, where the product is over all \raisebox{-1 ex}{\begin{tikzpicture}[scale=.5]
    \draw (0,0) -- (1,0);
    \draw (0,1) -- (1,1);
    \draw (0,0) -- (0,1);
    \draw (1,0) -- (1,1);
    \draw[rounded corners = 2mm, line width = .5mm] (0,.5)--(1,.5);
    \draw[rounded corners = 2mm, line width = .5mm] (.5,0) --(.5,1);
\end{tikzpicture}} tiles, and the variable subscripts $i$ (resp. $j$) refer to the row (resp. column) in which the crossing tile appears (see Figure \ref{pipedreamreduction}).

\begin{figure}[h]
\centering
\scalebox{.8}{\begin{tikzpicture}
\draw (0,0) -- (4,0);
\draw (0,1) -- (4,1);
\draw (0,2) -- (4,2);
\draw (0,3) -- (4,3);
\draw (0,4) -- (4,4);
\draw (0,0) -- (0,4);
\draw (1,0) -- (1,4);
\draw (2,0) -- (2,4);
\draw (3,0) -- (3,4);
\draw (4,0) -- (4,4);
\draw[rounded corners = 3mm, line width = .5mm] (0,3.5) -- (.5,3.5) -- (.5, 4);
\draw[rounded corners= 3mm, line width = .5mm] (0, 2.5) -- (1.5,2.5) -- (1.5,4);
\draw[rounded corners = 3mm, line width = .5mm] (0,1.5) -- (1.5,1.5) -- (1.5,2.5)  -- (2.5,2.5) -- (2.5,4);
\draw[rounded corners = 3mm, line width = .5mm] (0,.5) -- (.5,.5) -- (.5,3.5)  -- (3.5,3.5) -- (3.5,4);
\end{tikzpicture}}
\hspace{1cm}
\scalebox{.8}{\begin{tikzpicture}
\draw (0,0) -- (4,0);
\draw (0,1) -- (4,1);
\draw (0,2) -- (4,2);
\draw (0,3) -- (4,3);
\draw (0,4) -- (4,4);
\draw (0,0) -- (0,4);
\draw (1,0) -- (1,4);
\draw (2,0) -- (2,4);
\draw (3,0) -- (3,4);
\draw (4,0) -- (4,4);
\draw[rounded corners = 3mm, line width = .5mm] (0,3.5) -- (.5,3.5) -- (.5, 4);
\draw[rounded corners= 3mm, line width = .5mm] (0, 2.5) -- (1.5,2.5) -- (1.5,3.5)--(3.5,3.5)--(3.5,4);
\draw[rounded corners = 3mm, line width = .5mm] (0,1.5) -- (1.5,1.5) -- (1.5,2.5)  -- (2.5,2.5) -- (2.5,4);
\draw[rounded corners = 3mm, line width = .5mm] (0,.5) -- (.5,.5) -- (.5,3.5)  -- (1.5,3.5) -- (1.5,4);
\end{tikzpicture}}
\caption{Two pipe dreams that both correspond to the permutation $w=1432.$ On the left is an unreduced pipe dream, and on the right is its reduction. The weight of the nonreduced pipe dream  is $(x_1\oplus y_2)(x_1\oplus y_3)(x_2\oplus y_1)(x_3\oplus y_1)$, and its excess is 1. The weight of the reduced pipe dream is $(x_1\oplus y_3)(x_2\oplus y_1)(x_3\oplus y_1)$.}
    \label{pipedreamreduction}
\end{figure}

It is shown (see for example \cite{KnutsonMiller-subwords,LenartRobinsonSottile}) that double $\beta$-Grothendieck polynomials can be written as a sum over pipe dreams using the statistics defined above: \begin{equation} \mathcal{G}_w^{(\beta)}(\boldsymbol{x};\boldsymbol{y}) = \sum_{P\in PD(w)} \text{wt}(P) \beta^{\text{ex}(P)}. \label{pipedreamequation} \end{equation}

\begin{remark}
Our definition of pipe dreams is actually a slight generalization of others we've found in the literature since our pipe dreams have both row and column variables, and general $\beta$. The formula in \cite{LenartRobinsonSottile} has $\beta=-1$, and the formula in \cite{PechenikSearles} has only row variables $\boldsymbol{x}$. We have not been able to find the formula (\ref{pipedreamequation}) in the literature for double $\beta$-Grothendieck polynomials, although surely it is well-known. Our Proposition \ref{pipedreamproposition} can be taken as a proof of (\ref{pipedreamequation}).
\end{remark}

\begin{remark}
Pipe dreams can be seen as combinatorial realizations of products in the Yang-Baxter algebras of \cite{FominKirillov-schubert,FominKirillov-gpybe}. Each tile represents a generator of the Yang-Baxter algebra, and \raisebox{-1ex}{\begin{tikzpicture}[scale=.5]
    \draw (0,0) -- (1,0);
    \draw (0,1) -- (1,1);
    \draw (0,0) -- (0,1);
    \draw (1,0) -- (1,1);
    \draw[rounded corners = 2mm, line width = .5mm] (0,.5)--(1,.5);
    \draw[rounded corners = 2mm, line width = .5mm] (.5,0) --(.5,1);
\end{tikzpicture}} tiles correspond to generators of a 0-Hecke algebra. Reduction of a pipe dream corresponds to applying quadratic relations in the 0-Hecke algebra, and the braid relations in the Yang-Baxter algebra ensure that the reduction of a pipe dream is well-defined and associated to a unique permutation.
\end{remark}

A different approach to the one we use here is to consider {\it bumpless pipe dreams}, which are pipe dreams with some extra allowed tiles. These were introduced by Lam, Lee, and Shimozono \cite{LamLeeShimozono-bumpless}. Lascoux \cite{Lascoux-chern-yang} proved a formula for Grothendieck polynomials as a sum over alternating sign matrices. Weigandt \cite{Weigandt-bumpless} then showed that Lascoux's formula implies a summation formula similar to (\ref{pipedreamequation}) for double $\beta$-Grothendieck polynomials in terms of bumpless pipe dreams. However, there is no weight-preserving bijection between Weigandt's formula and (\ref{pipedreamequation}) in the double set of variables.

These bumpless pipe dreams and the work of Weigandt were the motivation for the aforementioned contemporaneous work of Buciumas and Scrimshaw~\cite{BuciumasScrimshaw-bumpless-model} on lattice models for Grothendieck polynomials. As our model will be seen to relate to (ordinary) pipe dreams, this is a further indication of the fundamental difference between the models that will appear here and those of~\cite{BuciumasScrimshaw-bumpless-model}. 

We conclude this section by defining a set of ``dual'' polynomials, and a generalized set of ``biaxial'' polynomials.

\begin{definition} Given a fixed positive integer $n$ and a permutation $w$, define the {\it double dual $(\beta,q)$-Grothendieck polynomials} $\mathcal{H}^{(\beta,q)}_w(\boldsymbol{x}; \boldsymbol{y}) := \mathcal{H}^{(\beta,q)}_w(x_1, \ldots, x_n; y_1, \ldots, y_n)$ as follows. \begin{itemize}
\item Set $$ \mathcal{H}^{(\beta,q)}_{w_0}(\boldsymbol{x};\boldsymbol{y}) = (1-q^2)^n \prod_{i+j< n+1}(x_i \oplus y_j) \prod_{i+j>n+1} (1-q^2(1+\beta (x_i\oplus y_j))).$$
\item For any $w$ and a simple reflection $s_i = (i \; i+1)$ in $S_n$ such that $\ell(s_iw) = \ell(w) - 1$, set
$$ \mathcal{H}^{(\beta,q)}_{s_iw}(\boldsymbol{x}; \boldsymbol{y}) = \twid{\pi_i}^{-1} \mathcal{H}^{(\beta,q)}_{w} (\boldsymbol{x}; \boldsymbol{y}). $$
\end{itemize}
\label{dualdef}
\end{definition}

We will later show that when $q=0$ our dual polynomials specialize to the dual $\beta$-Grothendieck polynomials defined by Lascoux and Sch\"utzenberger.

\begin{definition}[Lascoux-Sch\"utzenberger \cite{LS-symmetry}] \label{dual-polynomials-definition} Given a fixed positive integer $n$ and a permutation $w$, define the {\it dual double $\beta$-Grothendieck polynomials} $\mathcal{H}^{(\beta)}_w(\boldsymbol{x}; \boldsymbol{y})$ recursively by setting $\mathcal{H}^{(\beta)}_{w_0} = \mathcal{G}^{(\beta)}_{w_0}$.

Then, for any $w$ and a simple reflection $s_i = (i \; i+1)$ in $S_n$ such that $\ell(ws_i) = \ell(w) - 1$, set
$$ \mathcal{H}^{(\beta)}_{ws_i}(\boldsymbol{x}; \boldsymbol{y}) = \mu_i \, \mathcal{H}^{(\beta)}_{w} (\boldsymbol{x}; \boldsymbol{y}) $$
where $\mu_i:= (1+\beta x_i)\partial_i$. 
\end{definition}

Note that this action is a right action involving the variable $x$, as opposed to the left action using the variable $y$ involved in Definition \ref{dualdef}. Thus, showing that the double dual $(\beta,q)$-Grothendieck polynomials indeed $q$-deform the dual $\beta$-Grothendieck polynomials requires additional information on the symmetries of these polynomials (see Section 5).

These dual $\beta$-Grothendieck polynomials are sometimes referred to as $\mathcal{H}$-polynomials. 
Equivalently, we may define them by the formula
$$ \mathcal{H}^{(\beta)}_w(\boldsymbol{x}; \boldsymbol{y}) := \sum_{w_0 \geq v \geq w} \beta^{\ell(v)-\ell(w)} \mathcal{G}^{(\beta)}_v(\boldsymbol{x}; \boldsymbol{y}). $$

The $\mathcal{H}$-polynomials may be seen to be adjoint to the $\mathcal{G}^{(\beta)}_{w}$ with respect to an inner product on the Grothendieck ring $K(X)$ defined in terms of the $\pi_i^{(\beta)}$ operators (see Equation 7.1 in~\cite{LS-symmetry}). Strictly speaking, this exact definition seems to have not appeared explicitly in the literature, but see Section~6 of \cite{LenartRobinsonSottile} for a detailed survey of their properties in the special case $\beta = -1$ and \cite{Kirillov-notes} for a definition that agrees except for the initial $\mathcal{H}^{(\beta)}_{w_0}$.

We define the biaxial polynomials to be a generalization of the double $(\beta,q)$-Grothendieck polynomials. They will turn out to generalize the dual polynomials as well.

\begin{definition}
Given a fixed positive integer $n$ and two permutations $v,w$, define the {\it biaxial double $(\beta,q)$-Grothendieck polynomials} $\mathcal{G}^{(\beta,q)}_{v,w}(\boldsymbol{x}; \boldsymbol{y}) := \mathcal{G}^{(\beta,q)}_{v,w}(x_1, \ldots, x_n; y_1, \ldots, y_n)$ as follows. \begin{itemize}
\item Set $$\mathcal{G}^{(\beta,q)}_{1,w}(\boldsymbol{x};\boldsymbol{y}) = \mathcal{G}^{(\beta,q)}_w(\boldsymbol{x};\boldsymbol{y}).$$
\item If $s_i$ is a simple reflection with $\ell(vs_i) = \ell(v)+1$, set
$$ \mathcal{G}^{(\beta,q)}_{vs_i,w}(\boldsymbol{x}; \boldsymbol{y}) = \twid{\pi}_i^{-1}\mathcal{G}^{(\beta,q)}_{v,w}(\boldsymbol{x}; \boldsymbol{y}). $$
\end{itemize}
\label{biaxial-polynomials}
\end{definition}

These polynomials will turn out to be partition functions of our model with two nontrivial boundary axes; hence the name. Note that the operators $\pi$ and $\twid{\pi}$ commute, so we can apply $\pi$ and $\twid{\pi}^{-1}$ in any order we wish to obtain $\mathcal{G}^{(\beta,q)}_{v,w}$ from $\mathcal{G}^{(\beta,q)}_{1,w_0}$.

\section{Constructing Chromatic Lattice Models}\label{Pipes}

In this section, we define a \emph{chromatic lattice model} whose partition function gives a polynomial family that simultaneously generalizes (double) $\beta$-Grothendieck polynomials and their duals. In Section \ref{modelproof}, we will demonstrate that the solvability of this model will provide the connection to $\beta$-Grothendieck polynomials via Demazure operators. Before detailing the specifics of the model, we briefly review the terminology associated to lattice models.

Given a fixed positive integer $n$, we form a square grid with $n$ rows and $n$ columns, whose intersection points are called {\it vertices}. In particular, each vertex has four adjacent edges. The vertices are labeled by parameters $x_i,y_j$ depending on the row ($i$) and column ($j$) in which they appear. Columns will always be numbered from left to right starting with column 1, and rows from top to bottom starting with row 1. (For diagrammatic readability and to nod to their potential quantum group connections, we will often abbreviate the vertex labels $x_i,y_j$ as $T_{i,j}$.) Each edge has a label taken from the set $\{+,1,2,...,n\}$. This strange choice of label set comes from its connection to the six-vertex model, where edges are labeled with $+$ or $-$ and we think of $\{1,\ldots,n\}$ as expanding the set of possible labels on six-vertex model states formerly labeled with a $-$. In figures, the labels $\{1,\ldots,n\}$ on edges will be depicted with colors while edges labeled with a $+$ label will be considered \emph{uncolored}. This is where the name ``chromatic lattice model'' comes from, and we may think of the classical six-vertex model as the special case of a monochrome (i.e., one color) model. Color has been used before to generalize lattice models, and we caution the reader that the use of color here is different from the so-called ``colo(u)red lattice models'' in \cite{BW-coloured, BBBGcolor, BSWatoms}. While our diagrams appear visually similar to those of colored lattice models, the colored models refine the uncolored partition functions into smaller ``atoms,'' while our labels generalize the monochrome models. Thus we have chosen the adjective \emph{chromatic} to reflect this difference.

\begin{figure}[h]
\begin{center}
\scalebox{0.8}{
\begin{tikzpicture}
  \coordinate (ab) at (1,0);
  \coordinate (ad) at (3,0);
  \coordinate (af) at (5,0);
  \coordinate (ah) at (7,0);
  \coordinate (ba) at (0,1);
  \coordinate (bc) at (2,1);
  \coordinate (be) at (4,1);
  \coordinate (bg) at (6,1);
  \coordinate (bi) at (8,1);
  \coordinate (cb) at (1,2);
  \coordinate (cd) at (3,2);
  \coordinate (cf) at (5,2);
  \coordinate (ch) at (7,2);
  \coordinate (da) at (0,3);
  \coordinate (dc) at (2,3);
  \coordinate (de) at (4,3);
  \coordinate (dg) at (6,3);
  \coordinate (di) at (8,3);
  \coordinate (eb) at (1,4);
  \coordinate (ed) at (3,4);
  \coordinate (ef) at (5,4);
  \coordinate (eh) at (7,4);
  \coordinate (fa) at (0,5);
  \coordinate (fc) at (2,5);
  \coordinate (fe) at (4,5);
  \coordinate (fg) at (6,5);
  \coordinate (fi) at (8,5);
  \coordinate (gb) at (1,6);
  \coordinate (gd) at (3,6);
  \coordinate (gf) at (5,6);
  \coordinate (gh) at (7,6);
  \coordinate (ha) at (0,7);
  \coordinate (hc) at (2,7);
  \coordinate (he) at (4,7);
  \coordinate (hg) at (6,7);
  \coordinate (hi) at (8,7);
  \coordinate (ib) at (1,8);
  \coordinate (id) at (3,8);
  \coordinate (if) at (5,8);
  \coordinate (ih) at (7,8);
  \coordinate (bb) at (1,1);
  \coordinate (bd) at (3,1);
  \coordinate (bf) at (5,1);
  \coordinate (bh) at (7,1);
  \coordinate (db) at (1,3);
  \coordinate (dd) at (3,3);
  \coordinate (df) at (5,3);
  \coordinate (dh) at (7,3);
  \coordinate (fb) at (1,5);
  \coordinate (fd) at (3,5);
  \coordinate (ff) at (5,5);
  \coordinate (fh) at (7,5);
  \coordinate (hb) at (1,7);
  \coordinate (hd) at (3,7);
  \coordinate (hf) at (5,7);
  \coordinate (hh) at (7,7);
  \coordinate (bax) at (0,1.5);
  \coordinate (bcx) at (2,1.5);
  \coordinate (bex) at (4,1.5);
  \coordinate (bgx) at (6,1.5);
  \coordinate (bix) at (8,1.5);
  \coordinate (dax) at (0,3.5);
  \coordinate (dcx) at (2,3.5);
  \coordinate (dex) at (4,3.5);
  \coordinate (dgx) at (6,3.5);
  \coordinate (dix) at (8,3.5);
  \coordinate (fax) at (0,5.5);
  \coordinate (fcx) at (2,5.5);
  \coordinate (fex) at (4,5.5);
  \coordinate (fgx) at (6,5.5);
  \coordinate (fix) at (8,5.5);
  \coordinate (hax) at (0,7.5);
  \coordinate (hcx) at (2,7.5);
  \coordinate (hex) at (4,7.5);
  \coordinate (hgx) at (6,7.5);
  \coordinate (hix) at (8,7.5);
  \draw (ab)--(ib);
  \draw (ad)--(id);
  \draw (af)--(if);
  \draw (ah)--(ih);
  \draw (ba)--(bi);
  \draw (da)--(di);
  \draw (fa)--(fi);
  \draw (ha)--(hi);
  \draw[fill=white] (ab) circle (.25);
  \draw[fill=white] (ad) circle (.25);
  \draw[fill=white] (af) circle (.25);
  \draw[fill=white] (ah) circle (.25);
  \draw[line width=0.5mm, green,fill=white] (ba) circle (.25);
  \draw[fill=white] (bc) circle (.25);
  \draw[fill=white] (be) circle (.25);
  \draw[fill=white] (bg) circle (.25);
  \draw[fill=white] (bi) circle (.25);
  \draw[fill=white] (cb) circle (.25);
  \draw[fill=white] (cd) circle (.25);
  \draw[fill=white] (cf) circle (.25);
  \draw[fill=white] (ch) circle (.25);
  \draw[line width=0.5mm, red,fill=white] (da) circle (.25);
  \draw[fill=white] (dc) circle (.25);
  \draw[fill=white] (de) circle (.25);
  \draw[fill=white] (dg) circle (.25);
  \draw[fill=white] (di) circle (.25);
  \draw[fill=white] (eb) circle (.25);
  \draw[fill=white] (ed) circle (.25);
  \draw[fill=white] (ef) circle (.25);
  \draw[fill=white] (eh) circle (.25);
  \draw[line width=0.5mm, blue,fill=white] (fa) circle (.25);
  \draw[fill=white] (fc) circle (.25);
  \draw[fill=white] (fe) circle (.25);
  \draw[fill=white] (fg) circle (.25);
  \draw[fill=white] (fi) circle (.25);
  \draw[fill=white] (gb) circle (.25);
  \draw[fill=white] (gd) circle (.25);
  \draw[fill=white] (gf) circle (.25);
  \draw[fill=white] (gh) circle (.25);
  \draw[line width=0.5mm, violet,fill=white] (ha) circle (.25);
  \draw[fill=white] (hc) circle (.25);
  \draw[fill=white] (he) circle (.25);
  \draw[fill=white] (hg) circle (.25);
  \draw[fill=white] (hi) circle (.25);
  \draw[line width=0.5mm, red,fill=white] (ib) circle (.25);
  \draw[line width=0.5mm, green,fill=white] (id) circle (.25);
  \draw[line width=0.5mm, blue,fill=white] (if) circle (.25);
  \draw[line width=0.5mm, violet,fill=white] (ih) circle (.25);
  \path[fill=white] (bb) circle (.3);
  \path[fill=white] (bd) circle (.3);
  \path[fill=white] (bf) circle (.3);
  \path[fill=white] (bh) circle (.3);
  \path[fill=white] (db) circle (.3);
  \path[fill=white] (dd) circle (.3);
  \path[fill=white] (df) circle (.3);
  \path[fill=white] (dh) circle (.3);
  \path[fill=white] (fb) circle (.3);
  \path[fill=white] (fd) circle (.3);
  \path[fill=white] (ff) circle (.3);
  \path[fill=white] (fh) circle (.3);
   \path[fill=white] (hb) circle (.3);
  \path[fill=white] (hd) circle (.3);
  \path[fill=white] (hf) circle (.3);
  \path[fill=white] (hh) circle (.3);
  \node at (bb) {$T_{4,1}$};
  \node at (bd) {$T_{4,2}$};
  \node at (bf) {$T_{4,3}$};
  \node at (bh) {$T_{4,4}$};
  \node at (db) {$T_{3,1}$};
  \node at (dd) {$T_{3,2}$};
  \node at (df) {$T_{3,3}$};
  \node at (dh) {$T_{3,4}$};
  \node at (fb) {$T_{2,1}$};
  \node at (fd) {$T_{2,2}$};
  \node at (ff) {$T_{2,3}$};
  \node at (fh) {$T_{2,4}$};
  \node at (hb) {$T_{1,1}$};
  \node at (hd) {$T_{1,2}$};
  \node at (hf) {$T_{1,3}$};
  \node at (hh) {$T_{1,4}$};
  \node at (-2,7) {row:};
  \node at (-1.2,7) {1};
  \node at (-1.2,5) {2};
  \node at (-1.2,3) {3};
  \node at (-1.2,1) {4};
  \node at (ib) {$1$};
  \node at (id) {$2$};
  \node at (if) {$3$};
  \node at (ih) {$4$};
  \node at (ha) {$4$};
  \node at (hi) {$+$};
  \node at (fa) {$3$};
  \node at (fi) {$+$};
  \node at (da) {$1$};
  \node at (di) {$+$};
  \node at (ba) {$2$};
  \node at (bi) {$+$};
  \node at (ab) {$+$};
  \node at (ad) {$+$};
  \node at (af) {$+$};
  \node at (ah) {$+$};
  \node at (7,9) {$4$};
  \node at (5,9) {$3$};
  \node at (3,9) {$2$};
  \node at (1,9) {$1$};
  \node at (0,9.04) {column:};
\end{tikzpicture}}
\end{center}
\caption{Boundary conditions for the system $\mathfrak{S}_{1234,4312}$ in the chromatic model.}
\label{iceboundary1}
\end{figure}

We assign a \emph{Boltzmann weight} to each vertex in the model. The weights are allowed to depend on the labels of the four edges adjacent to the vertex, the parameters $x_i, y_j$ assigned to the vertex, and the parameter $\beta$. In particular, adjacent edge labels determine whether a Boltzmann weight will be non-zero, and those vertices whose adjacent edge labeling has non-zero weight will be called \emph{admissible vertices}. In all our tables of Boltzmann weights for vertices, we report only on the admissible vertices; all other possible labelings around a vertex are understood to have weight 0. A \emph{system} $\mathfrak{S}$ is a fixed boundary condition for the grid, together with Boltzmann weights for the vertices. We often adorn $\mathfrak{S}$ with additional subscripts and superscripts to indicate a particular choice of boundary and set of Boltzmann weights. 

An \emph{admissible state} of a system will be a labeled $n \times n$ grid in which each vertex is admissible. As we will see in the subsequent sections, the Boltzmann weights on the vertices have been chosen so that admissible states consist of colored paths or ``strands'' along the edges of the lattice which begin and end along particular boundaries. The Boltzmann weight $B(\mathfrak{s})$ of an (admissible) state $\mathfrak{s}$ is defined as the product of the Boltzmann weights of its vertices. Finally, given a system $\mathfrak{S}$, the \emph{partition function} $Z(\mathfrak{S})$ is defined to be the sum of Boltzmann weights of the (admissible) states in the system, i.e.
\[ Z(\mathfrak{S})= \sum_{\mathfrak{s}\in\mathfrak{S}} B(\mathfrak{s}).\]

\subsection{The Six-Vertex Chromatic Model}

Our model arises from a Drinfeld twist of the $U_q(\widehat{\mathfrak{sl}}_{n+1})$ $R$-matrix. When we send $q\mapsto 0$, this leads to Demazure-like $\beta$-deformed divided difference operators in (\ref{isobaricdd}) that define Grothendieck polynomials. The Boltzmann weights for (admissible) vertices are presented in Figure \ref{pipeweights1}.

Rows are indexed from $1$ to $n$ increasing from top to bottom and columns are indexed from $1$ to $n$ increasing from left to right, as in Figure~\ref{iceboundary1}. It remains to describe boundary conditions for this model depending on two permutations $v,w \in S_n$.

\begin{itemize}
\item All edges on the right and bottom boundaries are labeled with $+$.
\item Edges on the top boundary are labeled, from left to right, by colors $v(1), \ldots, v(n)$.
\item Edges on the left boundary are labeled, from top to bottom, by colors $w(1), \ldots, w(n)$. 
\end{itemize}

The resulting system will be denoted $\mathfrak{S}_{v,w}$. Owing to the labeling defined above, it is often convenient to write our permutations in one-line notation. For example, the boundary conditions along the left in Figure \ref{iceboundary1} correspond to $w = 4312 = (1423)$ in cycle notation.

\begin{figure}
\centering
\scalebox{.95}{
$
\begin{array}{c@{\hspace{10pt}}c@{\hspace{30pt}}c@{\hspace{10pt}}c@{\hspace{10pt}}c@{\hspace{25pt}}}
\toprule
\tt{a}&\tt{b}_1&\tt{b_2}&\tt{c}_1&\tt{c}_2\\
\midrule
\begin{tikzpicture}
\coordinate (a) at (-.75, 0);
\coordinate (b) at (0, .75);
\coordinate (c) at (.75, 0);
\coordinate (d) at (0, -.75);
\coordinate (aa) at (-.75,.5);
\coordinate (cc) at (.75,.5);
\draw[line width=0.5mm, violet] (a)--(0,0);
\draw[line width=0.6mm, violet] (b)--(0,0);
\draw[line width=0.5mm, violet] (c)--(0,0);
\draw[line width=0.6mm, violet] (d)--(0,0);
\draw[line width=0.5mm, violet,fill=white] (a) circle (.25);
\draw[line width=0.5mm, violet,fill=white] (b) circle (.25);
\draw[line width=0.5mm, violet, fill=white] (c) circle (.25);
\draw[line width=0.5mm, violet, fill=white] (d) circle (.25);
\node at (0,1) { };
\node at (a) {$c$};
\node at (b) {$c$};
\node at (c) {$c$};
\node at (d) {$c$};
\end{tikzpicture}
&
\begin{tikzpicture}
\coordinate (a) at (-.75, 0);
\coordinate (b) at (0, .75);
\coordinate (c) at (.75, 0);
\coordinate (d) at (0, -.75);
\coordinate (aa) at (-.75,.5);
\coordinate (cc) at (.75,.5);
\draw[line width=0.5mm, blue] (a)--(c);
\draw[line width=0.6mm, red] (b)--(d);
\draw[line width=0.5mm,blue,fill=white] (a) circle (.25);
\draw[line width=0.5mm,blue,fill=white] (c) circle (.25);
\draw[line width=0.5mm,red,fill=white] (b) circle (.25);
\draw[line width=0.5mm,red,fill=white] (d) circle (.25);
\node at (0,1) { };
\node at (a) {$b$};
\node at (b) {$a$};
\node at (c) {$b$};
\node at (d) {$a$};
\end{tikzpicture}
&
\begin{tikzpicture}
\coordinate (a) at (-.75, 0);
\coordinate (b) at (0, .75);
\coordinate (c) at (.75, 0);
\coordinate (d) at (0, -.75);
\coordinate (aa) at (-.75,.5);
\coordinate (cc) at (.75,.5);
\draw[line width=0.5mm, red] (a)--(c);
\draw[line width=0.6mm, blue] (b)--(d);
\draw[line width=0.5mm,red,fill=white] (a) circle (.25);
\draw[line width=0.5mm,red,fill=white] (c) circle (.25);
\draw[line width=0.5mm,blue,fill=white] (b) circle (.25);
\draw[line width=0.5mm,blue,fill=white] (d) circle (.25);
\node at (0,1) { };
\node at (a) {$a$};
\node at (b) {$b$};
\node at (c) {$a$};
\node at (d) {$b$};
\end{tikzpicture}
& \begin{tikzpicture}
\coordinate (a) at (-.75, 0);
\coordinate (b) at (0, .75);
\coordinate (c) at (.75, 0);
\coordinate (d) at (0, -.75);
\coordinate (aa) at (-.75,.5);
\coordinate (cc) at (.75,.5);
\draw[line width=0.5mm, blue](a)--(0,0)--(b);
\draw[line width=0.5mm,blue,fill=white] (b) circle (.25);
\draw[line width=0.5mm,blue,fill=white] (a) circle (.25);
\draw[line width=0.5mm, red](d)--(0,0)--(c);
\draw[line width=0.5mm,red,fill=white] (c) circle (.25);
\draw[line width=0.5mm,red,fill=white] (d) circle (.25);
\node at (0,1) { };
\node at (a) {$b$};
\node at (b) {$b$};
\node at (c) {$a$};
\node at (d) {$a$};
\end{tikzpicture}
& \begin{tikzpicture}
\coordinate (a) at (-.75, 0);
\coordinate (b) at (0, .75);
\coordinate (c) at (.75, 0);
\coordinate (d) at (0, -.75);
\coordinate (aa) at (-.75,.5);
\coordinate (cc) at (.75,.5);
\draw[line width=0.5mm, red](a)--(0,0)--(b);
\draw[line width=0.5mm,red,fill=white] (b) circle (.25);
\draw[line width=0.5mm,red,fill=white] (a) circle (.25);
\draw[line width=0.5mm, blue](d)--(0,0)--(c);
\draw[line width=0.5mm,blue,fill=white] (c) circle (.25);
\draw[line width=0.5mm,blue,fill=white] (d) circle (.25);
\node at (0,1) { };
\node at (a) {$a$};
\node at (b) {$a$};
\node at (c) {$b$};
\node at (d) {$b$};
\end{tikzpicture}
\\
   \midrule
 1-q^2(1+\beta(x_i\oplus y_j)) & x_i\oplus y_j & \beta^2q^2(x_i\oplus y_j) & (1-q^2)(1+\beta (x_i\oplus y_j)) & 1-q^2  \\
   \bottomrule
\end{array}
$}
\caption{The Boltzmann weights for the chromatic model at a vertex in row $i$ and column $j$, where $x_i \oplus y_j$ denotes the formal group law $x_i+y_j+\beta x_iy_j$, $\textcolor{red}{a}<\textcolor{blue}{b}$, and ${\color{violet} c}$ is any color. We consider the $+$ label to be larger than any color, and the same weights hold when one or more labels are $+$.}
\label{pipeweights1}
\end{figure}

\begin{example}
Let $w = 4321$ and $v = 1234$. The system $\mathfrak{S}_{v,w}$ has only one admissible state, which is shown in Figure \ref{icestate1}. The weight of the first row is $(x_1 \oplus y_1)(x_1\oplus y_2)(x_1\oplus y_3)(1-q^2)$, since each of the first three vertices is of type $\tt{b}_1$ and the fourth is of type $\tt{c}_2$. Continuing similarly, we get that the full weight of this state, and thus this system, is $(1-q^2)^4\prod_{i+j < 5}(x_i \oplus y_j)\prod_{i+j > 5} (1-q^2(1+\beta(x_i\oplus y_j))$.\end{example}

\begin{figure}[h]
\begin{center}
\scalebox{0.8}{
\begin{tikzpicture}
  \coordinate (ab) at (1,0);
  \coordinate (ad) at (3,0);
  \coordinate (af) at (5,0);
  \coordinate (ah) at (7,0);
  \coordinate (ba) at (0,1);
  \coordinate (bc) at (2,1);
  \coordinate (be) at (4,1);
  \coordinate (bg) at (6,1);
  \coordinate (bi) at (8,1);
  \coordinate (cb) at (1,2);
  \coordinate (cd) at (3,2);
  \coordinate (cf) at (5,2);
  \coordinate (ch) at (7,2);
  \coordinate (da) at (0,3);
  \coordinate (dc) at (2,3);
  \coordinate (de) at (4,3);
  \coordinate (dg) at (6,3);
  \coordinate (di) at (8,3);
  \coordinate (eb) at (1,4);
  \coordinate (ed) at (3,4);
  \coordinate (ef) at (5,4);
  \coordinate (eh) at (7,4);
  \coordinate (fa) at (0,5);
  \coordinate (fc) at (2,5);
  \coordinate (fe) at (4,5);
  \coordinate (fg) at (6,5);
  \coordinate (fi) at (8,5);
  \coordinate (gb) at (1,6);
  \coordinate (gd) at (3,6);
  \coordinate (gf) at (5,6);
  \coordinate (gh) at (7,6);
  \coordinate (ha) at (0,7);
  \coordinate (hc) at (2,7);
  \coordinate (he) at (4,7);
  \coordinate (hg) at (6,7);
  \coordinate (hi) at (8,7);
  \coordinate (ib) at (1,8);
  \coordinate (id) at (3,8);
  \coordinate (if) at (5,8);
  \coordinate (ih) at (7,8);
  \coordinate (bb) at (1,1);
  \coordinate (bd) at (3,1);
  \coordinate (bf) at (5,1);
  \coordinate (bh) at (7,1);
  \coordinate (db) at (1,3);
  \coordinate (dd) at (3,3);
  \coordinate (df) at (5,3);
  \coordinate (dh) at (7,3);
  \coordinate (fb) at (1,5);
  \coordinate (fd) at (3,5);
  \coordinate (ff) at (5,5);
  \coordinate (fh) at (7,5);
  \coordinate (hb) at (1,7);
  \coordinate (hd) at (3,7);
  \coordinate (hf) at (5,7);
  \coordinate (hh) at (7,7);
  \coordinate (bax) at (0,1.5);
  \coordinate (bcx) at (2,1.5);
  \coordinate (bex) at (4,1.5);
  \coordinate (bgx) at (6,1.5);
  \coordinate (bix) at (8,1.5);
  \coordinate (dax) at (0,3.5);
  \coordinate (dcx) at (2,3.5);
  \coordinate (dex) at (4,3.5);
  \coordinate (dgx) at (6,3.5);
  \coordinate (dix) at (8,3.5);
  \coordinate (fax) at (0,5.5);
  \coordinate (fcx) at (2,5.5);
  \coordinate (fex) at (4,5.5);
  \coordinate (fgx) at (6,5.5);
  \coordinate (fix) at (8,5.5);
  \coordinate (hax) at (0,7.5);
  \coordinate (hcx) at (2,7.5);
  \coordinate (hex) at (4,7.5);
  \coordinate (hgx) at (6,7.5);
  \coordinate (hix) at (8,7.5);
  \draw (ab)--(ib);
  \draw (ad)--(id);
  \draw (af)--(if);
  \draw (ah)--(ih);
  \draw (ba)--(bi);
  \draw (da)--(di);
  \draw (fa)--(fi);
  \draw (ha)--(hi);
  \draw[line width=0.5mm,red] (ba)--(bb)--(ib);
  \draw[line width=0.5mm,green] (da)--(dd)--(id);
  \draw[line width=0.5mm,blue] (fa)--(ff)--(if);
  \draw[line width=0.5mm,violet] (ha)--(hh)--(ih);
  \draw[fill=white] (ab) circle (.25);
  \draw[fill=white] (ad) circle (.25);
  \draw[fill=white] (af) circle (.25);
  \draw[fill=white] (ah) circle (.25);
  \draw[line width=0.5mm,red,fill=white] (ba) circle (.25);
  \draw[fill=white] (bc) circle (.25);
  \draw[fill=white] (be) circle (.25);
  \draw[fill=white] (bg) circle (.25);
  \draw[fill=white] (bi) circle (.25);
  \draw[line width=0.5mm,red,fill=white] (cb) circle (.25);
  \draw[fill=white] (cd) circle (.25);
  \draw[fill=white] (cf) circle (.25);
  \draw[fill=white] (ch) circle (.25);
  \draw[line width=0.5mm,green,fill=white] (da) circle (.25);
  \draw[line width=0.5mm,green,fill=white] (dc) circle (.25);
  \draw[fill=white] (de) circle (.25);
  \draw[fill=white] (dg) circle (.25);
  \draw[fill=white] (di) circle (.25);
  \draw[line width=0.5mm,red,fill=white] (eb) circle (.25);
  \draw[line width=0.5mm,green,fill=white] (ed) circle (.25);
  \draw[fill=white] (ef) circle (.25);
  \draw[fill=white] (eh) circle (.25);
  \draw[line width=0.5mm,blue,fill=white] (fa) circle (.25);
  \draw[line width=0.5mm,blue,fill=white] (fc) circle (.25);
  \draw[line width=0.5mm,blue,fill=white] (fe) circle (.25);
  \draw[fill=white] (fg) circle (.25);
  \draw[fill=white] (fi) circle (.25);
  \draw[line width=0.5mm,red,fill=white] (gb) circle (.25);
  \draw[line width=0.5mm,green,fill=white] (gd) circle (.25);
  \draw[line width=0.5mm,blue,fill=white] (gf) circle (.25);
  \draw[fill=white] (gh) circle (.25);
  \draw[line width=0.5mm,violet,fill=white] (ha) circle (.25);
  \draw[line width=0.5mm,violet,fill=white] (hc) circle (.25);
  \draw[line width=0.5mm,violet,fill=white] (he) circle (.25);
  \draw[line width=0.5mm,violet,fill=white] (hg) circle (.25);
  \draw[fill=white] (hi) circle (.25);
  \draw[line width=0.5mm,red,fill=white] (ib) circle (.25);
  \draw[line width=0.5mm,green,fill=white] (id) circle (.25);
  \draw[line width=0.5mm,blue,fill=white] (if) circle (.25);
  \draw[line width=0.5mm,violet,fill=white] (ih) circle (.25);
 \path[fill=white] (bb) circle (.3);
  \path[fill=white] (bd) circle (.3);
  \path[fill=white] (bf) circle (.3);
  \path[fill=white] (bh) circle (.3);
  \path[fill=white] (db) circle (.3);
  \path[fill=white] (dd) circle (.3);
  \path[fill=white] (df) circle (.3);
  \path[fill=white] (dh) circle (.3);
  \path[fill=white] (fb) circle (.3);
  \path[fill=white] (fd) circle (.3);
  \path[fill=white] (ff) circle (.3);
  \path[fill=white] (fh) circle (.3);
   \path[fill=white] (hb) circle (.3);
  \path[fill=white] (hd) circle (.3);
  \path[fill=white] (hf) circle (.3);
  \path[fill=white] (hh) circle (.3);
  \node at (bb) {$T_{4,1}$};
  \node at (bd) {$T_{4,2}$};
  \node at (bf) {$T_{4,3}$};
  \node at (bh) {$T_{4,4}$};
  \node at (db) {$T_{3,1}$};
  \node at (dd) {$T_{3,2}$};
  \node at (df) {$T_{3,3}$};
  \node at (dh) {$T_{3,4}$};
  \node at (fb) {$T_{2,1}$};
  \node at (fd) {$T_{2,2}$};
  \node at (ff) {$T_{2,3}$};
  \node at (fh) {$T_{2,4}$};
  \node at (hb) {$T_{1,1}$};
  \node at (hd) {$T_{1,2}$};
  \node at (hf) {$T_{1,3}$};
  \node at (hh) {$T_{1,4}$};
  \node at (-2,7) {row:};
  \node at (-1.2,7) {1};
  \node at (-1.2,5) {2};
  \node at (-1.2,3) {3};
  \node at (-1.2,1) {4};
  \node at (ib) {$1$};
  \node at (id) {$2$};
  \node at (if) {$3$};
  \node at (ih) {$4$};
  \node at (ha) {$4$};
  \node at (hc) {$4$};
  \node at (he) {$4$};
  \node at (hg) {$4$};
  \node at (hi) {$+$};
  \node at (gb) {$1$};
  \node at (gd) {$2$};
  \node at (gf) {$3$};
  \node at (gh) {$+$};
  \node at (fa) {$3$};
  \node at (fc) {$3$};
  \node at (fe) {$3$};
  \node at (fg) {$+$};
  \node at (fi) {$+$};
  \node at (eb) {$1$};
  \node at (ed) {$2$};
  \node at (ef) {$+$};
  \node at (eh) {$+$};
  \node at (da) {$2$};
  \node at (dc) {$2$};
  \node at (de) {$+$};
  \node at (dg) {$+$};
  \node at (di) {$+$};
  \node at (cb) {$1$};
  \node at (cd) {$+$};
  \node at (cf) {$+$};
  \node at (ch) {$+$};
  \node at (ba) {$1$};
  \node at (bc) {$+$};
  \node at (be) {$+$};
  \node at (bg) {$+$};
  \node at (bi) {$+$};
  \node at (ab) {$+$};
  \node at (ad) {$+$};
  \node at (af) {$+$};
  \node at (ah) {$+$};
  \node at (7,9) {$4$};
  \node at (5,9) {$3$};
  \node at (3,9) {$2$};
  \node at (1,9) {$1$};
  \node at (0,9.04) {column:};
\end{tikzpicture}}
\end{center}
\caption{The sole admissible state of system $\mathfrak{S}_{1234,4321}.$ Recall that we abbreviate $T_{i,j} := x_i,y_j$.}
\label{icestate1}
\end{figure}

In order to relate our weights to an $R$-matrix of a quantum group, we make use of the notation \[T_{a,b}^{c,d} \longleftrightarrow \raisebox{-1 em}{\scalebox{0.7}{\gammaice{d}{a}{b}{c}{}{}}}.\] Since we are defining rectangular weights, we use $T$ instead of $R$. Note that this use of $T_{a,b}^{c,d}$ refers to the matrix $T$ at the end of the proof of Proposition \ref{rectangularweightquantumgroupprop}, and should not be confused with the shorthand $T_{i,j}$ in figures used to denote the rectangular vertex in row $i$ and column $j$.

\begin{proposition} \label{rectangularweightquantumgroupprop}
The weights of Figure \ref{pipeweights1} arise from a Drinfeld twist of the $R$-matrix for an evaluation module for $U_q(\widehat{\mathfrak{sl}}_{n+1})$.
\end{proposition}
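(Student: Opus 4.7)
The plan is to produce both the standard $R$-matrix and the Drinfeld twist explicitly, then match the five weights of Figure \ref{pipeweights1} entry-by-entry.

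First I would write down the trigonometric $R$-matrix $R(z)$ of the $(n+1)$-dimensional vector evaluation representation $V = \bigoplus_{a \in \{+, 1, \ldots, n\}} \mathbb{C} v_a$ of $U_q(\widehat{\mathfrak{sl}}_{n+1})$ with spectral parameter $z$. Its action on $V \otimes V$ is diagonal on each $v_a \otimes v_a$ and acts as a $2 \times 2$ block on $\operatorname{span}(v_a \otimes v_b, v_b \otimes v_a)$ for $a \neq b$, with entries given by the standard rational expressions in $z$ and $q$. This $R(z)$ is the starting point: after twisting and a change of variables it should produce the matrix $T$ whose entries appear in Figure \ref{pipeweights1}.

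Second, motivated by the asymmetry $\mathtt{b}_1 : \mathtt{b}_2 = 1 : \beta^2 q^2$ and the appearance of the formal group factor $1 + \beta(x_i \oplus y_j)$ in the $\mathtt{a}$ and $\mathtt{c}_1$ weights, I would take the twist $F \in U_q(\widehat{\mathfrak{sl}}_{n+1})^{\otimes 2}$ to be a diagonal element of the Cartan $\otimes$ Cartan subalgebra whose matrix coefficients on each basis vector $v_a \otimes v_b$ depend multiplicatively on $\beta$. Any such diagonal element in the Cartan automatically satisfies the $2$-cocycle condition $(F \otimes 1)(\Delta \otimes 1)F = (1 \otimes F)(1 \otimes \Delta)F$, so the twisted matrix $R^F(z) := F_{21} R(z) F^{-1}$ is again a solution of the Yang-Baxter equation. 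Next I would perform the change of spectral parameter $z = z(x_i, y_j; \beta, q)$ which clears denominators and turns the rational expressions in $z$ into the polynomials in $x_i \oplus y_j$ of Figure \ref{pipeweights1}; this substitution is essentially forced by the $\mathtt{b}_1$ entry.

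Third, I would verify by direct computation that the five nonzero entries of $R^F(z)$ in the basis $\{v_a \otimes v_b\}$ agree, after the substitution, with the $\mathtt{a}$, $\mathtt{b}_1$, $\mathtt{b}_2$, $\mathtt{c}_1$, $\mathtt{c}_2$ weights. The matrix $T = R^F(z)$ obtained at the end of this computation is the matrix whose entries $T_{a,b}^{c,d}$ are referenced in the sequel.

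The hard part is pinning down the diagonal twist $F$ and the change of variables simultaneously so that all five weights match. A diagonal twist only rescales the off-diagonal entries of each $2 \times 2$ crossing block while leaving their determinant invariant, so there are few free parameters available; these must be chosen to reproduce the ratio $\mathtt{b}_1/\mathtt{b}_2 = 1/(\beta^2 q^2)$ and to be consistent with the $q$- and $\beta$-dependent combinations appearing in $\mathtt{a}$, $\mathtt{c}_1$, and $\mathtt{c}_2$. Once the correct $F$ and substitution are written down, the verification reduces to routine manipulations of rational functions of $z, q, \beta$ together with the definition $x \oplus y = x + y + \beta xy$.
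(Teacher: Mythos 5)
Your plan matches the paper's proof: start with the standard $U_q(\widehat{\mathfrak{sl}}_{n+1})$ evaluation $R$-matrix (the paper uses Kojima's form), twist by a diagonal Cartan element (the paper's explicit choice is $F = \mathrm{diag}(1, (q\beta)^{1/2}, (q\beta)^{-1/2}, 1)$ in the basis $(a,a),(a,b),(b,a),(b,b)$), then substitute $z \mapsto (1+\beta x_i)(1+\beta y_j) = 1 + \beta(x_i \oplus y_j)$. One small slip: the twist $F_{21}\widetilde{R}F^{-1}$ with $F_{21}=F^{-1}$ rescales the \emph{diagonal} entries $\mathtt{b}_1,\mathtt{b}_2$ of each $2\times 2$ crossing block by $(q\beta)^{\mp 1}$ and fixes the off-diagonal $\mathtt{c}_1,\mathtt{c}_2$ entries, not the other way around, though your identification of the ratio $\mathtt{b}_1/\mathtt{b}_2 = 1/(\beta^2 q^2)$ as the constraint pinning down $F$ is exactly right.
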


\begin{proof}
We use the $R$-matrix for the standard evaluation module for $U_q(\widehat{\mathfrak{sl}}_{n+1})$, which we will call $\twid{R}$. This can be found in many places, such as \cite{PerkSchultz-Uq(sln)Rmatrix,Jimbo}; we will use the version from Definition 2.1 of \cite{Kojima}. For $U_q(\widehat{\mathfrak{sl}}_{n+1})$, we consider two spins $a<b$; multiplying all weights from \cite{Kojima} by $q^2z-1$ to clear denominators, we have
\begin{align*}
    \twid{R}_{a,a}^{a,a} = \twid{R}_{b,b}^{b,b} &= 1-q^2z\\
    \twid{R}_{b,a}^{b,a} = \twid{R}_{a,b}^{a,b} &= (z-1)q\\
    \twid{R}_{a,b}^{b,a} &= 1-q^2\\
    \twid{R}_{b,a}^{a,b} &= (1-q^2)z
\end{align*} We will write this as a matrix using the basis $(a,a), (a,b), (b,a), (b,b)$: \begin{align*} 
    \twid{R} = \twid{R}^{(a,b)} := \begin{bmatrix}
    1-q^2z&0&0&0\\0&(z-1)q&1-q^2&0\\ 0&(1-q^2)z&(z-1)q&0\\ 0&0&0&1-q^2z
    \end{bmatrix}.
\end{align*} Note that this is just the 2-color matrix; for the full matrix, one would vary $a$ and $b$ over all colors (including $+$) such that $a<b$.

Next, we perform a Drinfeld twist by the matrix
\begin{align*}
    F := \begin{bmatrix}1&0&0&0\\0&(q\beta)^{1/2}&0&0\\0&0&(q\beta)^{-1/2}&0\\0&0&0&1
    \end{bmatrix}.
\end{align*} When $q \neq 0$ and $\beta \neq 0$, $F$ is invertible, and one may check that it satisfies the necessary conditions for a Drinfeld twist \cite[(1)]{Reshetikhin-twisted-hopf-algebras}. To apply the Drinfeld twist, notice that $F_{21} = F^{-1}$. Then the twisted $T$-matrix is \begin{align} \label{twistedRmatrix}
    \twid{R}^F := F_{21}\twid{R}F^{-1} = \begin{bmatrix}
    1-q^2z&0&0&0\\0&(z-1)q^2\beta&1-q^2&0\\ 0&(1-q^2)z&(z-1)\beta^{-1}&0\\ 0&0&0&1-q^2z
    \end{bmatrix}.
\end{align}

Finally, we substitute \[z\mapsto 1+\beta(x_i\oplus y_j) = (1+\beta x_i)(1+\beta y_j)\] to arrive at the following weights \begin{align*}
    T := \begin{bmatrix}
    1-q^2(1+\beta(x_i\oplus y_j))&0&0&0\\0&\beta^2q^2(x_i\oplus y_j)&1-q^2&0\\ 0&(1-q^2)(1+\beta(x_i\oplus y_j))&x_i\oplus y_j&0\\ 0&0&0&1-q^2(1+\beta(x_i\oplus y_j))
    \end{bmatrix},
\end{align*}
which precisely match the weights in Figure \ref{pipeweights1}.
\end{proof}

Our model in many cases is closely related to lattice models studied by Zinn-Justin \cite{Zinn-Justin-LRSchur}, Wheeler and Zinn-Justin \cite{ZJ-Wheeler}, Gorbounov and Korff \cite{GorbunovKorff}, and Knutson and Zinn-Justin \cite{KnutsonZJ-schubert-puzzles, KnutsonZJ-motivic}. We will now describe these relationships.

For permutations $w$ of only one descent (i.e. for which there is only one $k$ such that $w(k) > w(k+1)$), a specialization of our model recovers the model for Grassmannian Grothendieck polynomials used in \cite{ZJ-Wheeler}. First, specialize $q=0,\beta=-1$. Each such polynomial is indexed by a Weyl group element $w$, which corresponds to a partition $\lambda$ according to the following prescription. Let $k$ be the descent of $w$: then, for each $j = 1,...,k$, let $\lambda_{k+1-j} = w(j) - j$ (see Sottile \cite[\S 2]{Sottile} for discussion of this correspondence). Their partition functions are slightly different, owing to a slightly different convention for $\oplus$ in the Grothendieck polynomial, but the underlying diagrams possess pipes of the same shape. To match them precisely, set all strands below the descent $k$ to be labeled as $+$ strands, set the strands above the descent to all be a single color, then remove the rows with only $+$ strands (i.e. rows $k+1$ through $n$).

The five-vertex model obtained by this specialization also matches the \textit{osculating walker model} given by the ``L$'$ weights" in Figure 3.1 of \cite{GorbunovKorff}. On the other hand, the Grothendieck model of Buciumas and Scrimshaw \cite{BuciumasScrimshaw-bumpless-model} is reminiscent of, but does not specialize to, the \textit{vicious walker model} given by the ``L weights" of~\cite{GorbunovKorff}. It would be interesting to determine whether a modification of their model exists which degenerates exactly to the vicious walker model. Indeed, several recent works have made use of lattice models with these two different flavors of weights; these include the so-called ``Gamma'' and ``Delta'' weights used in \cite{BBFschur, BBCG, Ivanov, BBB}.

Similarly, for permutations $w$ of 3 or fewer descents, our model recovers the model denoted $\boldsymbol{S}^\lambda$ by Knutson and Zinn-Justin for Grothendieck polynomials ($q=0,\beta = -1$) in \cite{KnutsonZJ-schubert-puzzles}.  To see this for a given permutation $w$ of $d \leq 3$ descents, start with $\mathfrak{S}_{w,1}$ and relabel colored strand labels via the following method. Let $\omega(w)$ be the weakly increasing string of integers that increases at each descent of $w$. For example, if $w = 25143$ then $\omega(w) = 11223$. Now replace the left-hand boundary edges of $\mathfrak{S}_{w,1}$ with the labels given by $\omega(w)$. The resulting model is $\boldsymbol{S}^\lambda$, where we may read off the corresponding $\lambda$ used by Knutson and Zinn-Justin from the labels on the top edge. To match their definition of the Grothendieck polynomial, replace our formal group law $\oplus$ with  $x_i \oplus y_j = 1 - x_i/y_j$ and set our $w$ to be the inverse of their $\sigma$. For example, take $w = 25143$, so $w^{-1} = 31542$. Considering $\mathfrak{S}_{w,1}$, we then set strands 1 and 2 to be color $d_1$ (shown below in red), strands 3 and 4 to be color $d_2$ (green), and strand 5 to be color $d_3$ (blue). Reading just the indices of the reduced colors, we have $\omega := \omega(w) = 11223$ down the left boundary and $\lambda = 21321$ across the top boundary (matching the running example of Section 3.7 of \cite{KnutsonZJ-schubert-puzzles}), as shown in the figure below.
\begin{center}
\scalebox{0.6}{
\begin{tikzpicture}
  \coordinate (ab) at (1,0);
  \coordinate (ad) at (2.3,0);
  \coordinate (af) at (3.6,0);
  \coordinate (ah) at (5,0);
  \coordinate (aj) at (6.3,0);
  \coordinate (ba) at (0,1);
  \coordinate (bj) at (7.6,1);
  \coordinate (da) at (0,2.3);
  \coordinate (dj) at (7.6,2.3);
  \coordinate (fa) at (0,3.6);
  \coordinate (fj) at (7.6,3.6);
  \coordinate (ha) at (0,5);
  \coordinate (hj) at (7.6,5);
  \coordinate (ja) at (0,6.3);
  \coordinate (jj) at (7.6,6.3);
  \coordinate (kb) at (1,7.6);
  \coordinate (kd) at (2.3,7.6);
  \coordinate (kf) at (3.6,7.6);
  \coordinate (kh) at (5,7.6);
  \coordinate (kj) at (6.3,7.6);
  \draw (ab)--(kb);
  \draw (ad)--(kd);
  \draw (af)--(kf);
  \draw (ah)--(kh);
  \draw (aj)--(kj);
  \draw (ba)--(bj);
  \draw (da)--(dj);
  \draw (fa)--(fj);
  \draw (ha)--(hj);
  \draw (ja)--(jj);
  \draw[fill=white] (ab) circle (.25);
  \draw[fill=white] (ad) circle (.25);
  \draw[fill=white] (af) circle (.25);
  \draw[fill=white] (ah) circle (.25);
  \draw[fill=white] (aj) circle (.25);
  \draw[line width=0.5mm,blue,fill=white] (ba) circle (.25);
  \draw[fill=white] (bj) circle (.25);
  \draw[line width=0.5mm,green,fill=white] (da) circle (.25);
  \draw[fill=white] (dj) circle (.25);
  \draw[line width=0.5mm,green,fill=white] (fa) circle (.25);
  \draw[fill=white] (fj) circle (.25);
  \draw[line width=0.5mm,red,fill=white] (ha) circle (.25);
  \draw[fill=white] (hj) circle (.25);
  \draw[line width=0.5mm,red,fill=white] (ja) circle (.25);
  \draw[fill=white] (jj) circle (.25);
  \draw[line width=0.5mm,green,fill=white] (kb) circle (.25);
  \draw[line width=0.5mm,red,fill=white] (kd) circle (.25);
  \draw[line width=0.5mm,blue,fill=white] (kf) circle (.25);
  \draw[line width=0.5mm,green,fill=white] (kh) circle (.25);
  \draw[line width=0.5mm,red,fill=white] (kj) circle (.25);
  \draw [decorate,decoration={brace,amplitude=10pt},xshift=-15pt,yshift=0pt] (0,0.7) -- (0,6.6) node [black,midway,left, xshift=-10pt]{$\omega$};
  \draw [decorate,decoration={brace,amplitude=10pt},yshift=15pt,xshift=0pt] (0.7,7.6) -- (6.6,7.6) node [black,midway,above, yshift=10pt]{$\lambda$};
  \node at (kb) {$3$};
  \node at (kd) {$1$};
  \node at (kf) {$5$};
  \node at (kh) {$4$};
  \node at (kj) {$2$};
  \node at (ja) {$1$};
  \node at (jj) {$+$};
  \node at (ha) {$2$};
  \node at (hj) {$+$};
  \node at (fa) {$3$};
  \node at (fj) {$+$};
  \node at (da) {$4$};
  \node at (dj) {$+$};
  \node at (ba) {$5$};
  \node at (bj) {$+$};
  \node at (ab) {$+$};
  \node at (ad) {$+$};
  \node at (af) {$+$};
  \node at (ah) {$+$};
  \node at (aj) {$+$};
\end{tikzpicture}}
\end{center}
Note that setting labels within the same descent to be the same color \emph{does not} change the partition function, since these strands could only have interacted in type $\tt{c}_2$ vertices before, which now become type $\tt{a}$ vertices, and both of these vertices have the same weight.

Finally, in the case where $q$ is arbitrary and $\beta=-1$ for any number of descents our model is related to the model used by Knutson and Zinn-Justin in \cite[(1)]{KnutsonZJ-schubert-puzzles}. Their model uses vertex weights directly from \cite{Jimbo}, (without a Drinfeld twist), and twist only when specializing $q=0$. Their polynomials have the advantage of being related to the Schubert calculus on the cotangent bundle of the flag variety, while ours have the advantage of specializing directly to the (non-deformed) $\beta$-Grothendieck polynomial. In addition, our boundary conditions are more general than theirs, and there doesn't appear to be an analogue of our ``biaxial'' polynomials in the literature.

\section{Solvability of the Lattice Models}\label{solvability}

We now show that the model in Section \ref{Pipes} is {\it solvable}, meaning that is satisfies a family of (quantum) Yang-Baxter equations (YBEs) for every pair of adjacent rows or columns. We say that a set of Boltzmann weights $T$ has a {\it row Yang-Baxter equation} in rows $i$ and $j$ if there exists a set of vertex weights $R_{i,j}$ such that, for any choice of boundary labels $\alpha,\beta, \gamma, \delta, \epsilon$ and $\eta$, equality holds for the partition functions of the following systems:
\begin{align}\label{Rvertexdefinition}
\begin{array}{c}
\scalebox{.85}{\begin{tikzpicture}
  \draw (0,0)--(2,0);
  \draw (0,2)--(2,2);
  \draw (1,-1)--(1,3);
  \coordinate (a1) at (-2,0);
  \coordinate (c1) at (0,2);
  \coordinate (a2) at (-2,2);
  \coordinate (c2) at (0,0);
  \draw (a1) to [out=0,in=180] (c1);
  \draw (a2) to [out=0,in=180] (c2);
  \draw[fill=white] (-2,0) circle (.3);
  \draw[fill=white] (-2,2) circle (.3);
  \draw[fill=white](1,3) circle(.3);
  \draw[fill=white] (2,2) circle (.3);
  \draw[fill=white] (2,0) circle (.3);
  \draw[fill=white](1,-1) circle (.3);
  \path[fill=white] (-1,1) circle (.3);
  \path[fill=white] (1,2) circle (.3);
  \path[fill=white] (1,0) circle (.3);
  \node at (-2,0){$\alpha$};
  \node at (-2,2){$\beta$};
  \node at (1,3){$\gamma$};
  \node at (2,2) {$\delta$};
  \node at (2,0) {$\epsilon$};
  \node at (1,-1){$\eta$};
  \node at (1,2) {$T_{i,k}$};
  \node at (1,0) {$T_{j,k}$};
  \node at (-1,1){$R_{i,j}$};
  \node at (3,1) {{\Large $=$}};
  \end{tikzpicture}}
\hspace{1em}
\scalebox{.85}{\begin{tikzpicture}
  \draw (0,0)--(-2,0); 
  \draw (0,2)--(-2,2);
  \draw (-1,-1)--(-1,3);
  \coordinate (a1) at (2,0);
  \coordinate (c1) at (0,2);
  \coordinate (a2) at (2,2);
  \coordinate (c2) at (0,0);
  \draw (a1) to [out=180,in=0] (c1);
  \draw (a2) to [out=180,in=0] (c2);
  \draw[fill=white] (-2,0) circle (.3);
  \draw[fill=white] (-2,2) circle (.3);
  \draw[fill=white](-1,3) circle(.3);
  \draw[fill=white] (2,2) circle (.3);
  \draw[fill=white] (2,0) circle (.3);
  \draw[fill=white](-1,-1) circle (.3);
  \path[fill=white] (1,1) circle (.3);
  \path[fill=white] (-1,2) circle (.3);
  \path[fill=white] (-1,0) circle (.3);
  \node at (-2,0) {$\alpha$};
  \node at (-2,2) {$\beta$};
  \node at (-1,3) {$\gamma$};
  \node at (2,2) {$\delta$};
  \node at (2,0) {$\epsilon$};
  \node at (-1,-1) {$\eta$};
  \node at (-1,2) {$T_{j,k}$};
  \node at (-1,0) {$T_{i,k}$};
  \node at (1,1) {$R_{i,j}$};
  \end{tikzpicture}}\end{array}.
\end{align}

Analogously, we say such set of Boltzmann weights for vertices has a {\it column Yang-Baxter equation} if instead there exists a set of vertex weights $R_{i,j}$ such that for any choice of boundary conditions $\alpha,\beta, \gamma, \delta, \epsilon$ and $\eta$ equality holds for the partition functions of the following:

\begin{align}\label{columnYBE}
\begin{array}{c}
\scalebox{.85}{\begin{tikzpicture}
  \draw (0,0)--(0,-2);
  \draw (2,0)--(2,-2);
  \draw (-1,-1)--(3,-1);
  \coordinate (a1) at (0,2);
  \coordinate (c1) at (2,0);
  \coordinate (a2) at (2,2);
  \coordinate (c2) at (0,0);
  \draw (a1) to [out=-90,in=90] (c1);
  \draw (a2) to [out=-90,in=90] (c2);
  \draw[fill=white] (0,2) circle (.3);
  \draw[fill=white] (2,2) circle (.3);
  \draw[fill=white] (3,-1) circle (.3);
  \draw[fill=white] (2,-2) circle (.3);
  \draw[fill=white] (0,-2) circle (.3);
  \draw[fill=white] (-1,-1) circle (.3);
  \path[fill=white] (0,-1) circle (.3);
  \path[fill=white] (2,-1) circle (.3);
  \path[fill=white] (1,1) circle (.4);
  \node at (0,-1) {$T_{k,i}$};
  \node at (2,-1) {$T_{k,j}$};
  \node at (1,1){$R_{i,j}$};
  \node at (0,2) {$\alpha$};
  \node at (2,2){$\beta$};
  \node at (3,-1){$\gamma$};
  \node at (2,-2){$\delta$};
  \node at (0,-2){$\epsilon$};
  \node at (-1,-1) {$\eta$};
  \node at (4,0) {{\Large $=$}};
  \end{tikzpicture}}
\hspace{1em}
\scalebox{.85}{\begin{tikzpicture}
  \draw (0,0)--(0,2);
  \draw (2,0)--(2,2);
  \draw (-1,1)--(3,1);
  \coordinate (a1) at (0,0);
  \coordinate (c1) at (2,-2);
  \coordinate (a2) at (2,0);
  \coordinate (c2) at (0,-2);
  \draw (a1) to [out=-90,in=90] (c1);
  \draw (a2) to [out=-90,in=90] (c2);
  \draw[fill=white] (0,2) circle (.3);
  \draw[fill=white] (2,2) circle (.3);
  \draw[fill=white](3,1) circle(.3);
  \draw[fill=white] (2,-2) circle (.3);
  \draw[fill=white] (0,-2) circle (.3);
  \draw[fill=white](-1,1) circle (.3);
  \path[fill=white] (0,1) circle (.3);
  \path[fill=white] (2,1) circle (.3);
  \path[fill=white] (1,-1) circle (.4);
  \node at (0,1) {$T_{k,j}$};
  \node at (2,1) {$T_{k,i}$};
  \node at (1,-1){$R_{i,j}$};
  \node at (0,2){$\alpha$};
  \node at (2,2){$\beta$};
  \node at (3,1) {$\gamma$};
  \node at (2,-2) {$\delta$};
  \node at (0,-2){$\epsilon$};
  \node at (-1,1){$\eta$};
  \end{tikzpicture}}\end{array}.
\end{align}
Here we have drawn the solution weights $R_{i,j}$ to the Yang-Baxter equation by rotating the earlier vertex diagrams by 45 degrees, thereby indicating that it is a different kind of vertex with a different set of Boltzmann weights. Colloquially, we refer to this rotated vertex as an $R$-vertex and readers familiar with the algebraic interpretation of the above diagrams will note that the $R$-vertex Boltzmann weights are entries of the $R$-matrix solving the Yang-Baxter equation as an endomorphism of a triple tensor product of vector spaces. We sometimes refer to our earlier vertices that comprise the square grid as \emph{rectangular vertices} in contrast with the $R$-vertices. 

\begin{theorem}\label{DemazureYBE}
The Boltzmann weights from Figure \ref{pipeweights1} satisfy a row Yang-Baxter equation with $R$-vertex weights given by the first row of Figure \ref{RweightsDemazure} and a column Yang-Baxter equation with $R$-vertex weights in the second row of Figure \ref{RweightsDemazure}. Thus, our model is solvable in both row and column variables. 
\end{theorem}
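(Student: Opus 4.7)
My approach is to promote the quantum group construction of the rectangular weights from Proposition \ref{rectangularweightquantumgroupprop} to the pictorial level. The untwisted $R$-matrix $\widetilde{R}(z)$ for the standard evaluation module of $U_q(\widehat{\mathfrak{sl}}_{n+1})$ is known to satisfy the Yang-Baxter equation, and since the twist matrix $F$ appearing in (\ref{twistedRmatrix}) satisfies the cocycle condition required of a Drinfeld twist, the twisted $R$-matrix $\widetilde{R}^F$ automatically satisfies a YBE of the form
\[
\widetilde{R}^F_{12}(z_1/z_2)\,\widetilde{R}^F_{13}(z_1/z_3)\,\widetilde{R}^F_{23}(z_2/z_3) \;=\; \widetilde{R}^F_{23}(z_2/z_3)\,\widetilde{R}^F_{13}(z_1/z_3)\,\widetilde{R}^F_{12}(z_1/z_2),
\]
viewed as an equality of endomorphisms of the three-fold tensor product of the $(n+1)$-dimensional evaluation module $V$ spanned by the color labels $\{+,1,\ldots,n\}$.

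To deduce the row YBE \eqref{Rvertexdefinition} from this, I would match spectral parameters using the substitution from the proof of Proposition \ref{rectangularweightquantumgroupprop}: rectangular vertices carry $z = (1+\beta x_i)(1+\beta y_j)$, a product of a row and a column parameter. With two auxiliary row parameters $x_i,x_j$ and a single column parameter $y_k$, the two rectangular vertices on either side of \eqref{Rvertexdefinition} correspond to $\widetilde{R}^F_{13}$ and $\widetilde{R}^F_{23}$ at their appropriate specializations. The $R$-vertex $R_{i,j}$ should then correspond to $\widetilde{R}^F_{12}$ evaluated at the ratio $(1+\beta x_i)/(1+\beta x_j)$, which, crucially, depends only on the row data as required. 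Direct substitution into (\ref{twistedRmatrix}) followed by the appropriate overall normalization should reproduce the entries in the first row of Figure \ref{RweightsDemazure}.

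The column YBE \eqref{columnYBE} follows by an essentially symmetric argument with the roles of rows and columns interchanged: the rectangular weights are manifestly invariant under the simultaneous swap $x_i\leftrightarrow y_i$, so the parallel computation produces an $R$-vertex whose weights depend on $(1+\beta y_i)/(1+\beta y_j)$ and match the second row of Figure \ref{RweightsDemazure}. The main technical obstacle will be organizing the orientation conventions arising from the $45^\circ$ rotation of rectangular vertices into $R$-vertices, and confirming that the color-preservation patterns of the five admissible configurations in Figure \ref{pipeweights1} correspond precisely, after twisting, to the nonzero matrix entries of $\widetilde{R}^F_{12}(z)$ expressed in the basis $\{(a,a),(a,b),(b,a),(b,b)\}$ and their multi-color analogues. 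Once this dictionary is fixed, the identity for arbitrary boundary labels $(\alpha,\beta,\gamma,\delta,\epsilon,\eta)$ follows uniformly from the algebraic YBE above, avoiding any case-by-case combinatorial verification.
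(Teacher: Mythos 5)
Your proposal is essentially the paper's own proof: both start from the algebraic Yang--Baxter relation for the untwisted $U_q(\widehat{\mathfrak{sl}}_{n+1})$ evaluation $R$-matrix, invoke the fact that a Drinfeld twist preserves the YBE, and then substitute the multiplicative spectral parameters $z = (1+\beta x)(1+\beta y)$ so that the ratio appearing in the $R$-vertex depends only on the row (or column) data, namely $(1+\beta x_j)/(1+\beta x_i) = 1+\beta(x_j\ominus x_i)$, followed by clearing denominators. This is precisely what the paper does.

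One caution on the column case: the claim that it follows from ``manifest invariance under $x_i\leftrightarrow y_i$'' is a bit too quick. If you simply replaced $x$'s by $y$'s in the row $R$-vertex table you would get the wrong answer, because the row and column $R$-vertex tables in Figure~\ref{RweightsDemazure} have the $\tt{b}_1$ and $\tt{b}_2$ weights \emph{swapped} relative to each other. The reason is that the $R$-vertex in the column YBE (\ref{columnYBE}) sits in a $90^\circ$-rotated position relative to the row YBE (\ref{Rvertexdefinition}), so the dictionary between matrix entries $R^{c,d}_{a,b}$ and tilted-vertex edge labels changes when you pass from one to the other. The correct argument, as in the paper, is to run the same algebraic YBE with the substitutions $u\mapsto (1+\beta x)(1+\beta y_j)$, $v\mapsto (1+\beta x)(1+\beta y_i)$ so the $R$-vertex depends on $(1+\beta y_j)/(1+\beta y_i)$; the reassignment of $\tt{b}_1/\tt{b}_2$ then falls out of the orientation convention rather than from any naive $x\leftrightarrow y$ symmetry.
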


\begin{proof}
To obtain the row $R$-vertices in Figure \ref{RweightsDemazure}, we take the matrix in (\ref{twistedRmatrix}), and make the substitution $z\mapsto x_j\ominus x_i = \frac{x_j-x_i}{1+\beta x_i}$. Noting that $1 + \beta(x_j\ominus x_i) = \frac{1+\beta x_j}{1+\beta x_i}$, we scale the weights by $1+\beta x_i$ to obtain \begin{align*}
   R =  \begin{bmatrix}
    1+\beta x_i -q^2(1+\beta x_j) &0&0&0\\0& x_j-x_i&(1-q^2)(1+\beta x_i)&0\\0&(1-q^2)(1+\beta x_j)&q^2\beta^2(x_j-x_i)&0 \\ 0&0&0&1+\beta x_i-q^2(1+\beta x_j)
    \end{bmatrix},
\end{align*}
which matches the row weights in Figure \ref{RweightsDemazure} under the correspondence \[R_{a,b}^{c,d} \longleftrightarrow \raisebox{-1 em}{\scalebox{0.7}{
\begin{tikzpicture}[scale=0.77, every node/.style={scale=0.8}]
\coordinate (a) at (-.55, -.55);
\coordinate (b) at (-.55, .55);
\coordinate (c) at (.55, .55);
\coordinate (d) at (.55, -.55);
\draw (a)--(c);
\draw (b)--(d);
\draw[fill=white] (a) circle (.25);
\draw[fill=white] (b) circle (.25);
\draw[fill=white] (c) circle (.25);
\draw[fill=white] (d) circle (.25);
\node at (0,1) { };
\node at (a) {$d$};
\node at (b) {$c$};
\node at (c) {$b$};
\node at (d) {$a$};
\end{tikzpicture}}}.\]

By the $U_q(\widehat{\mathfrak{sl}}_{n+1})$ Yang-Baxter equation \cite[(2.10)]{Kojima}, we have \begin{align*} \twid{R}_{12}(u/v)\twid{R}_{13}(u)\twid{R}_{23}(v) = \twid{R}_{23}(v)\twid{R}_{13}(u)\twid{R}_{12}(u/v),\end{align*} for any parameters $u$ and $v$. Drinfeld twists don't affect this relation \cite[Theorem~1]{Reshetikhin-twisted-hopf-algebras}, so the relation holds when we twist by $F$: \begin{align*} \twid{R}^F_{12}(u/v)\twid{R}^F_{13}(u)\twid{R}^F_{23}(v) = \twid{R}^F_{23}(v)\twid{R}^F_{13}(u)\twid{R}^F_{12}(u/v).\end{align*}
To obtain the rectangular weights in Figure \ref{pipeweights1}, we substitute $u\mapsto 1+\beta(x_j\oplus y) = (1+\beta x_j)(1+\beta y)$, $v\mapsto 1+\beta(x_i\oplus y) = (1+\beta x_i)(1+\beta y)$. Then, $\frac{u}{v} = \frac{1+\beta x_j}{1+\beta x_i} = 1 + \beta(x_j\ominus x_i)$.

Then, $\twid{R}^F(1 + \beta(x_j\ominus x_i)) = R(x_i,x_j)$ and $\twid{R}^F(1 + \beta(x\oplus y)) = T(x,y)$. Therefore, the row solvability of our model is given by the modified Yang-Baxter equation \begin{align*} \twid{R}^F_{12}\left(1 + \beta(x_j\ominus x_i)\right) & \twid{R}^F_{13}(1+\beta(x_j\oplus y))\twid{R}^F_{23}(1+\beta(x_j\oplus y)) \\&= \twid{R}^F_{23}(1+\beta(x_j\oplus y))\twid{R}^F_{13}(1+\beta(x_j\oplus y))\twid{R}^F_{12}\left(1 + \beta(x_j\ominus x_i)\right).\end{align*}

Column solvability follows from the same argument with the modified Yang-Baxter equation \begin{align*}\twid{R}^F_{12}\left(1 + \beta(y_j\ominus y_i)\right) & \twid{R}^F_{13}(1+\beta(x\oplus y_j))\twid{R}^F_{23}(1+\beta(x\oplus y_i)) \\&= \twid{R}^F_{23}(1+\beta(x\oplus y_i))\twid{R}^F_{13}(1+\beta(x\oplus y_j))\twid{R}^F_{12}\left(1 + \beta(y_j\ominus y_i)\right). \qedhere \end{align*}
\end{proof}

\begin{figure}[h]
\centering
\scalebox{.95}{$\begin{array}{c@{\hspace{10pt}}c@{\hspace{30pt}}c@{\hspace{25pt}}c@{\hspace{10pt}}c@{\hspace{25pt}}}
\toprule
\multicolumn{5}{l}{\text{{\bf Chromatic Model $R$-vertex weights}:}}\\
\toprule
\multicolumn{5}{l}{\text{Row Yang-Baxter equation $R$-vertex weights:}}\\
\toprule
\tt{a} & \tt{b_1} & \tt{b_2} & \tt{c_1} & \tt{c_2} \\
\midrule
\begin{tikzpicture}[scale=0.7]
\draw[line width = .5mm, violet] (0,0) to [out = 0, in = 180] (2,2);
\draw[line width = .5mm, violet] (0,2) to [out = 0, in = 180] (2,0);
\draw[line width=0.5mm, violet, fill=white] (0,0) circle (.35);
\draw[line width=0.5mm, violet, fill=white] (0,2) circle (.35);
\draw[line width=0.5mm, violet, fill=white] (2,2) circle (.35);
\draw[line width=0.5mm, violet, fill=white] (2,0) circle (.35);
\node at (0,0) {$c$};
\node at (0,2) {$c$};
\node at (2,2) {$c$};
\node at (2,0) {$c$};
\end{tikzpicture}
&
\begin{tikzpicture}[scale=0.7]
\draw[line width = .5mm, red] (0,0) to [out = 0, in = 180] (2,2);
\draw[line width = .5mm, blue] (0,2) to [out = 0, in = 180] (2,0);
\draw[line width=0.5mm, red, fill=white] (0,0) circle (.35);
\draw[line width=0.5mm, blue, fill=white] (0,2) circle (.35);
\draw[line width=0.5mm, red, fill=white] (2,2) circle (.35);
\draw[line width=0.5mm, blue, fill=white] (2,0) circle (.35);
\node at (0,0) {$a$};
\node at (0,2) {$b$};
\node at (2,2) {$a$};
\node at (2,0) {$b$};
\end{tikzpicture}
&
\begin{tikzpicture}[scale=0.7]
\draw[line width = .5mm, blue] (0,0) to [out = 0, in = 180] (2,2);
\draw[line width = .5mm, red] (0,2) to [out = 0, in = 180] (2,0);
\draw[line width=0.5mm, blue, fill=white] (0,0) circle (.35);
\draw[line width=0.5mm, red, fill=white] (0,2) circle (.35);
\draw[line width=0.5mm, blue, fill=white] (2,2) circle (.35);
\draw[line width=0.5mm, red, fill=white] (2,0) circle (.35);
\node at (0,0) {$b$};
\node at (0,2) {$a$};
\node at (2,2) {$b$};
\node at (2,0) {$a$};
\end{tikzpicture}
&
\begin{tikzpicture}[scale=0.7]
\draw[line width = .5mm,blue] (0,2) to [out = 0, in = 120] (1,1);
\draw[line width = .5mm, blue] (2,2) to [out = 180, in=60] (1,1);
\draw[line width = .5mm, red] (0,0) to [out = 0, in = -120] (1,1);
\draw[line width = .5mm, red] (2,0) to [out = 180, in = -60] (1,1);
\draw[line width=0.5mm, red, fill=white] (0,0) circle (.35);
\draw[line width=0.5mm, blue, fill=white] (0,2) circle (.35);
\draw[line width=0.5mm, blue, fill=white] (2,2) circle (.35);
\draw[line width=0.5mm, red, fill=white] (2,0) circle (.35);
\node at (0,0) {$a$};
\node at (0,2) {$b$};
\node at (2,2) {$b$};
\node at (2,0) {$a$};
\end{tikzpicture}
&
\begin{tikzpicture}[scale=0.7]
\draw[line width = .5mm,red] (0,2) to [out = 0, in = 120] (1,1);
\draw[line width = .5mm, red] (2,2) to [out = 180, in=60] (1,1);
\draw[line width = .5mm, blue] (0,0) to [out = 0, in = -120] (1,1);
\draw[line width = .5mm, blue] (2,0) to [out = 180, in = -60] (1,1);
\draw[line width=0.5mm, blue, fill=white] (0,0) circle (.35);
\draw[line width=0.5mm, red, fill=white] (0,2) circle (.35);
\draw[line width=0.5mm, red, fill=white] (2,2) circle (.35);
\draw[line width=0.5mm, blue, fill=white] (2,0) circle (.35);
\node at (0,0) {$b$};
\node at (0,2) {$a$};
\node at (2,2) {$a$};
\node at (2,0) {$b$};
\end{tikzpicture}
\\
   \midrule
 1+\beta x_i - q^2(1+\beta x_j) & x_j-x_i & \beta^2q^2(x_j-x_i) & (1-q^2)(1+\beta x_j) & (1-q^2)(1+\beta x_i) \\
\toprule
\toprule
\multicolumn{5}{l}{\text{Column Yang-Baxter equation $R$-vertex weights:}}\\
\toprule
\tt{a} & \tt{b_1} & \tt{b_2} & \tt{c_1} & \tt{c_2} \\
\midrule
\begin{tikzpicture}[scale=0.7]
\draw[line width = .5mm, violet] (0,0) to [out = 0, in = 180] (2,2);
\draw[line width = .5mm, violet] (0,2) to [out = 0, in = 180] (2,0);
\draw[line width=0.5mm, violet, fill=white] (0,0) circle (.35);
\draw[line width=0.5mm, violet, fill=white] (0,2) circle (.35);
\draw[line width=0.5mm, violet, fill=white] (2,2) circle (.35);
\draw[line width=0.5mm, violet, fill=white] (2,0) circle (.35);
\node at (0,0) {$c$};
\node at (0,2) {$c$};
\node at (2,2) {$c$};
\node at (2,0) {$c$};
\end{tikzpicture}
&
\begin{tikzpicture}[scale=0.7]
\draw[line width = .5mm, red] (0,0) to [out = 0, in = 180] (2,2);
\draw[line width = .5mm, blue] (0,2) to [out = 0, in = 180] (2,0);
\draw[line width=0.5mm, red, fill=white] (0,0) circle (.35);
\draw[line width=0.5mm, blue, fill=white] (0,2) circle (.35);
\draw[line width=0.5mm, red, fill=white] (2,2) circle (.35);
\draw[line width=0.5mm, blue, fill=white] (2,0) circle (.35);
\node at (0,0) {$a$};
\node at (0,2) {$b$};
\node at (2,2) {$a$};
\node at (2,0) {$b$};
\end{tikzpicture}
&
\begin{tikzpicture}[scale=0.7]
\draw[line width = .5mm, blue] (0,0) to [out = 0, in = 180] (2,2);
\draw[line width = .5mm, red] (0,2) to [out = 0, in = 180] (2,0);
\draw[line width=0.5mm, blue, fill=white] (0,0) circle (.35);
\draw[line width=0.5mm, red, fill=white] (0,2) circle (.35);
\draw[line width=0.5mm, blue, fill=white] (2,2) circle (.35);
\draw[line width=0.5mm, red, fill=white] (2,0) circle (.35);
\node at (0,0) {$b$};
\node at (0,2) {$a$};
\node at (2,2) {$b$};
\node at (2,0) {$a$};
\end{tikzpicture}
&
\begin{tikzpicture}[scale=0.7]
\draw[line width = .5mm,blue] (0,2) to [out = 0, in = 120] (1,1);
\draw[line width = .5mm, red] (2,2) to [out = 180, in=60] (1,1);
\draw[line width = .5mm, blue] (0,0) to [out = 0, in = -120] (1,1);
\draw[line width = .5mm, red] (2,0) to [out = 180, in = -60] (1,1);
\draw[line width=0.5mm, blue, fill=white] (0,0) circle (.35);
\draw[line width=0.5mm, blue, fill=white] (0,2) circle (.35);
\draw[line width=0.5mm, red, fill=white] (2,2) circle (.35);
\draw[line width=0.5mm, red, fill=white] (2,0) circle (.35);
\node at (0,0) {$b$};
\node at (0,2) {$b$};
\node at (2,2) {$a$};
\node at (2,0) {$a$};
\end{tikzpicture}&
\begin{tikzpicture}[scale=0.7]
\draw[line width = .5mm,red] (0,2) to [out = 0, in = 120] (1,1);
\draw[line width = .5mm, blue] (2,2) to [out = 180, in=60] (1,1);
\draw[line width = .5mm, red] (0,0) to [out = 0, in = -120] (1,1);
\draw[line width = .5mm, blue] (2,0) to [out = 180, in = -60] (1,1);
\draw[line width=0.5mm, red, fill=white] (0,0) circle (.35);
\draw[line width=0.5mm, red, fill=white] (0,2) circle (.35);
\draw[line width=0.5mm, blue, fill=white] (2,2) circle (.35);
\draw[line width=0.5mm, blue, fill=white] (2,0) circle (.35);
\node at (0,0) {$a$};
\node at (0,2) {$a$};
\node at (2,2) {$b$};
\node at (2,0) {$b$};
\end{tikzpicture}
\\
   \midrule
 1+\beta y_i - q^2(1+\beta y_j) &  \beta^2q^2(y_j-y_i) & y_j-y_i & (1-q^2)(1+\beta y_j) & (1-q^2)(1+\beta y_i) \\
   \bottomrule
\end{array}$}
    \caption{The $R$-vertex weights that swap strands $i$ and $j$, where ${\color{red}a}<{\color{blue}b}$ and {\color{violet} $c$} is any color. The first set of weights satisfies the row Yang-Baxter equation, and the second satisfies the column Yang-Baxter equation.}
    \label{RweightsDemazure}
\end{figure}

\begin{example}\label{YBEexample} With boundary conditions as in Figure \ref{YBEexamplefigure}, there are two admissible states of the first system in the column Yang-Baxter  equation (\ref{columnYBE}) with $R$-vertex weights as in Figure \ref{RweightsDemazure} and rectangular weights as in Figure \ref{pipeweights1}. The partition function is $(1-q^2)^2(1+\beta y_i)(x_k\oplus y_i) + (1-q^2)^2(y_j-y_i)(1+\beta(x_k\oplus y_i))$. On the other hand, there is one admissible state of the second system with weight $(1-q^2)^2(1+\beta y_i)(x_k\oplus y_j)$, and indeed this equals the first partition function.
\end{example}

\begin{figure}[h]
\begin{align*}
\scalebox{.85}{\begin{tikzpicture}
  \draw (0,0)--(0,-2);
  \draw (2,0)--(2,-2);
  \draw (-1,-1)--(3,-1);
  \coordinate (a1) at (0,2);
  \coordinate (c1) at (2,0);
  \coordinate (a2) at (2,2);
  \coordinate (c2) at (0,0);
  \draw[line width=0.5mm, red] (a1) to [out=-90,in=150] (1,1);
  \draw[line width=0.5mm, red] (c2) to [out=90,in=-150] (1,1);
  \draw[line width=.5mm,green] (a2) to [out=-90,in=30] (1,1);
  \draw[line width=.5mm,green] (c1) to [out=90,in=-30] (1,1);
  \draw[line width=.5mm,green] (c1)--(2,-1)--(-1,-1);
  \draw[line width=.5mm,red] (c2)--(0,-2);
  \draw[line width=.5mm,blue] (3,-1)--(2,-1)--(2,-2);
  \draw[line width=0.5mm, blue,fill=white] (2,-2) circle (.3);
  \draw[line width=0.5mm, red,fill=white] (0,-2) circle (.3);
  \draw[line width=0.5mm, green,fill=white] (-1,-1) circle (.3);
  \draw[line width=0.5mm, blue,fill=white] (3,-1) circle (.3);
  \draw[line width=0.5mm, red,fill=white] (0,2) circle (.3);
  \draw[line width=0.5mm, green,fill=white] (2,2) circle (.3);
  \draw[line width=0.5mm, green,fill=white] (c1) circle (.3);
  \draw[line width=0.5mm, red,fill=white] (c2) circle (.3);
  \draw[line width=0.5mm, green,fill=white] (1,-1) circle (.3);
  \path[fill=white] (0,-1) circle (.3);
  \path[fill=white] (2,-1) circle (.3);
  \path[fill=white] (1,1) circle (.4);
  \node at (0,-1) {$T_{k,i}$};
  \node at (2,-1) {$T_{k,j}$};
  \node at (1,1){$R_{i,j}$};
  \node at (0,-2) {$1$};
  \node at (-1,-1) {$2$};
  \node at (0,2) {$1$};
  \node at (2,2) {$2$};
  \node at (3,-1) {$3$};
  \node at (2,-2) {$3$};
  \node at (c1) {$2$};
  \node at (c2) {$1$};
  \node at (1,-1) {$2$};
  \node at (4,0) {{\Large $+$}};
  \end{tikzpicture}}
\hspace{1em}
\scalebox{.85}{\begin{tikzpicture}
  \draw (0,0)--(0,-2);
  \draw (2,0)--(2,-2);
  \draw (-1,-1)--(3,-1);
  \coordinate (a1) at (0,2);
  \coordinate (c1) at (2,0);
  \coordinate (a2) at (2,2);
  \coordinate (c2) at (0,0);
  \draw[line width=0.5mm, red] (a1) to [out=-90,in=90] (c1)--(2,-1)--(0,-1)--(0,-2);
  \draw[line width=0.5mm, green] (a2) to [out=-90,in=90] (c2)--(0,-1)--(-1,-1);
  \draw[line width=0.5mm, blue] (3,-1)--(2,-1)--(2,-2);
  \draw[line width=0.5mm, blue,fill=white] (2,-2) circle (.3);
  \draw[line width=0.5mm, red,fill=white] (0,-2) circle (.3);
  \draw[line width=0.5mm, green,fill=white] (-1,-1) circle (.3);
  \draw[line width=0.5mm, blue,fill=white] (3,-1) circle (.3);
  \draw[line width=0.5mm, red,fill=white] (0,2) circle (.3);
  \draw[line width=0.5mm, green,fill=white] (2,2) circle (.3);
  \draw[line width=0.5mm, red,fill=white] (c1) circle (.3);
  \draw[line width=0.5mm, green,fill=white] (c2) circle (.3);
  \draw[line width=0.5mm, red,fill=white] (1,-1) circle (.3);
  \path[fill=white] (0,-1) circle (.3);
  \path[fill=white] (2,-1) circle (.3);
  \path[fill=white] (1,1) circle (.4);
  \node at (0,-1) {$T_{k,i}$};
  \node at (2,-1) {$T_{k,j}$};
  \node at (1,1) {$R_{i,j}$};
  \node at (0,-2) {$1$};
  \node at (-1,-1) {$2$};
  \node at (0,2) {$1$};
  \node at (2,2) {$2$};
  \node at (3,-1) {$3$};
  \node at (2,-2) {$3$};
  \node at (c1) {$1$};
  \node at (c2) {$2$};
  \node at (1,-1) {$1$};
  \node at (4,0) {{\Large $=$}};
  \end{tikzpicture}}
  \hspace{1em}
  \scalebox{.85}{\begin{tikzpicture}
  \draw (0,0)--(0,2);
  \draw (2,0)--(2,2);
  \draw (-1,1)--(3,1);
  \coordinate (a1) at (0,0);
  \coordinate (c1) at (2,-2);
  \coordinate (a2) at (2,0);
  \coordinate (c2) at (0,-2);
  \draw[line width=0.5mm, red] (0,2)--(a1) to [out=-90,in=150] (1,-1);
  \draw[line width=0.5mm, red] (c2) to [out=90,in=-150] (1,-1);
  \draw[line width=.5mm,blue] (3,1)--(2,1)--(a2) to [out=-90,in=30] (1,-1);
  \draw[line width=.5mm,blue] (c1) to [out=90,in=-30] (1,-1);
  \draw[line width=.5mm,green] (-1,1)--(2,1)--(2,2);
  \draw[line width=0.5mm, red,fill=white] (0,-2) circle (.3);
  \draw[line width=0.5mm, blue,fill=white] (2,-2) circle (.3);
  \draw[line width=0.5mm, green,fill=white](-1,1) circle (.3);
  \draw[line width=0.5mm, blue,fill=white](3,1) circle(.3);
  \draw[line width=0.5mm, red,fill=white] (0,2) circle (.3);
  \draw[line width=0.5mm, green,fill=white] (2,2) circle (.3);
  \draw[line width=0.5mm, red,fill=white] (a1) circle (.3);
  \draw[line width=0.5mm, blue,fill=white] (a2) circle (.3);
  \draw[line width=0.5mm, green,fill=white] (1,1) circle (.3);
  \path[fill=white] (0,1) circle (.3);
  \path[fill=white] (2,1) circle (.3);
  \path[fill=white] (1,-1) circle (.4);
  \node at (0,1) {$T_{k,j}$};
  \node at (2,1) {$T_{k,i}$};
  \node at (1,-1) {$R_{i,j}$};
  \node at (0,2) {$1$};
  \node at (2,2) {$2$};
  \node at (3,1) {$3$};
  \node at (2,-2) {$3$};
  \node at (0,-2) {$1$};
  \node at (-1,1) {$2$};
  \node at (a2) {$3$};
  \node at (a1) {$1$};
  \node at (1,1) {$2$};
  \end{tikzpicture}}
\end{align*}
   \caption{The three admissible states that appear in the column Yang-Baxter equation (see (\ref{columnYBE})) with the chosen boundary conditions, and rectangular Boltzmann weights as in Figure \ref{pipeweights1}.}
   \label{YBEexamplefigure}
\end{figure}

\begin{figure}
\centering
\scalebox{.95}{
$
\begin{array}{c@{\hspace{10pt}}c@{\hspace{10pt}}c@{\hspace{5pt}}c@{\hspace{5pt}}c@{\hspace{5pt}}}
\toprule
\multicolumn{5}{l}{\text{{\bf Weights $S$}:}}\\
\toprule
\tt{a}&\tt{b}_1&\tt{b_2}&\tt{c}_1&\tt{c}_2\\
\midrule
\begin{tikzpicture}
\coordinate (a) at (-.75, 0);
\coordinate (b) at (0, .75);
\coordinate (c) at (.75, 0);
\coordinate (d) at (0, -.75);
\coordinate (aa) at (-.75,.5);
\coordinate (cc) at (.75,.5);
\draw[line width=0.5mm, violet] (a)--(0,0);
\draw[line width=0.6mm, violet] (b)--(0,0);
\draw[line width=0.5mm, violet] (c)--(0,0);
\draw[line width=0.6mm, violet] (d)--(0,0);
\draw[line width=0.5mm, violet,fill=white] (a) circle (.25);
\draw[line width=0.5mm, violet,fill=white] (b) circle (.25);
\draw[line width=0.5mm, violet, fill=white] (c) circle (.25);
\draw[line width=0.5mm, violet, fill=white] (d) circle (.25);
\node at (0,1) { };
\node at (a) {$c$};
\node at (b) {$c$};
\node at (c) {$c$};
\node at (d) {$c$};
\end{tikzpicture}
&
\begin{tikzpicture}
\coordinate (a) at (-.75, 0);
\coordinate (b) at (0, .75);
\coordinate (c) at (.75, 0);
\coordinate (d) at (0, -.75);
\coordinate (aa) at (-.75,.5);
\coordinate (cc) at (.75,.5);
\draw[line width=0.5mm, blue] (a)--(c);
\draw[line width=0.6mm, red] (b)--(d);
\draw[line width=0.5mm,blue,fill=white] (a) circle (.25);
\draw[line width=0.5mm,blue,fill=white] (c) circle (.25);
\draw[line width=0.5mm,red,fill=white] (b) circle (.25);
\draw[line width=0.5mm,red,fill=white] (d) circle (.25);
\node at (0,1) { };
\node at (a) {$b$};
\node at (b) {$a$};
\node at (c) {$b$};
\node at (d) {$a$};
\end{tikzpicture}
&
\begin{tikzpicture}
\coordinate (a) at (-.75, 0);
\coordinate (b) at (0, .75);
\coordinate (c) at (.75, 0);
\coordinate (d) at (0, -.75);
\coordinate (aa) at (-.75,.5);
\coordinate (cc) at (.75,.5);
\draw[line width=0.5mm, red] (a)--(c);
\draw[line width=0.6mm, blue] (b)--(d);
\draw[line width=0.5mm,red,fill=white] (a) circle (.25);
\draw[line width=0.5mm,red,fill=white] (c) circle (.25);
\draw[line width=0.5mm,blue,fill=white] (b) circle (.25);
\draw[line width=0.5mm,blue,fill=white] (d) circle (.25);
\node at (0,1) { };
\node at (a) {$a$};
\node at (b) {$b$};
\node at (c) {$a$};
\node at (d) {$b$};
\end{tikzpicture}
& \begin{tikzpicture}
\coordinate (a) at (-.75, 0);
\coordinate (b) at (0, .75);
\coordinate (c) at (.75, 0);
\coordinate (d) at (0, -.75);
\coordinate (aa) at (-.75,.5);
\coordinate (cc) at (.75,.5);
\draw[line width=0.5mm, blue](a)--(0,0)--(b);
\draw[line width=0.5mm,blue,fill=white] (b) circle (.25);
\draw[line width=0.5mm,blue,fill=white] (a) circle (.25);
\draw[line width=0.5mm, red](d)--(0,0)--(c);
\draw[line width=0.5mm,red,fill=white] (c) circle (.25);
\draw[line width=0.5mm,red,fill=white] (d) circle (.25);
\node at (0,1) { };
\node at (a) {$b$};
\node at (b) {$b$};
\node at (c) {$a$};
\node at (d) {$a$};
\end{tikzpicture}
& \begin{tikzpicture}
\coordinate (a) at (-.75, 0);
\coordinate (b) at (0, .75);
\coordinate (c) at (.75, 0);
\coordinate (d) at (0, -.75);
\coordinate (aa) at (-.75,.5);
\coordinate (cc) at (.75,.5);
\draw[line width=0.5mm, red](a)--(0,0)--(b);
\draw[line width=0.5mm,red,fill=white] (b) circle (.25);
\draw[line width=0.5mm,red,fill=white] (a) circle (.25);
\draw[line width=0.5mm, blue](d)--(0,0)--(c);
\draw[line width=0.5mm,blue,fill=white] (c) circle (.25);
\draw[line width=0.5mm,blue,fill=white] (d) circle (.25);
\node at (0,1) { };
\node at (a) {$a$};
\node at (b) {$a$};
\node at (c) {$b$};
\node at (d) {$b$};
\end{tikzpicture}
\\
   \midrule
 1-q^2(1+\beta(x_i\oplus y_j)) & \beta^2q^2(x_i\oplus y_j) &  x_i\oplus y_j & (1-q^2)(1+\beta (x_i\oplus y_j)) & 1-q^2  \\
\toprule
\toprule
\multicolumn{5}{l}{\text{{\bf Weights $S^*$}:}}\\
\toprule
\tt{a}&\tt{b}_1&\tt{b_2}&\tt{c}_1&\tt{c}_2\\
\midrule
\begin{tikzpicture}
\coordinate (a) at (-.75, 0);
\coordinate (b) at (0, .75);
\coordinate (c) at (.75, 0);
\coordinate (d) at (0, -.75);
\coordinate (aa) at (-.75,.5);
\coordinate (cc) at (.75,.5);
\draw[line width=0.5mm, violet] (a)--(0,0);
\draw[line width=0.6mm, violet] (b)--(0,0);
\draw[line width=0.5mm, violet] (c)--(0,0);
\draw[line width=0.6mm, violet] (d)--(0,0);
\draw[line width=0.5mm, violet,fill=white] (a) circle (.25);
\draw[line width=0.5mm, violet,fill=white] (b) circle (.25);
\draw[line width=0.5mm, violet, fill=white] (c) circle (.25);
\draw[line width=0.5mm, violet, fill=white] (d) circle (.25);
\node at (0,1) { };
\node at (a) {$c$};
\node at (b) {$c$};
\node at (c) {$c$};
\node at (d) {$c$};
\end{tikzpicture}
&
\begin{tikzpicture}
\coordinate (a) at (-.75, 0);
\coordinate (b) at (0, .75);
\coordinate (c) at (.75, 0);
\coordinate (d) at (0, -.75);
\coordinate (aa) at (-.75,.5);
\coordinate (cc) at (.75,.5);
\draw[line width=0.5mm, blue] (a)--(c);
\draw[line width=0.6mm, red] (b)--(d);
\draw[line width=0.5mm,blue,fill=white] (a) circle (.25);
\draw[line width=0.5mm,blue,fill=white] (c) circle (.25);
\draw[line width=0.5mm,red,fill=white] (b) circle (.25);
\draw[line width=0.5mm,red,fill=white] (d) circle (.25);
\node at (0,1) { };
\node at (a) {$b$};
\node at (b) {$a$};
\node at (c) {$b$};
\node at (d) {$a$};
\end{tikzpicture}
&
\begin{tikzpicture}
\coordinate (a) at (-.75, 0);
\coordinate (b) at (0, .75);
\coordinate (c) at (.75, 0);
\coordinate (d) at (0, -.75);
\coordinate (aa) at (-.75,.5);
\coordinate (cc) at (.75,.5);
\draw[line width=0.5mm, red] (a)--(c);
\draw[line width=0.6mm, blue] (b)--(d);
\draw[line width=0.5mm,red,fill=white] (a) circle (.25);
\draw[line width=0.5mm,red,fill=white] (c) circle (.25);
\draw[line width=0.5mm,blue,fill=white] (b) circle (.25);
\draw[line width=0.5mm,blue,fill=white] (d) circle (.25);
\node at (0,1) { };
\node at (a) {$a$};
\node at (b) {$b$};
\node at (c) {$a$};
\node at (d) {$b$};
\end{tikzpicture}
& \begin{tikzpicture}
\coordinate (a) at (-.75, 0);
\coordinate (b) at (0, .75);
\coordinate (c) at (.75, 0);
\coordinate (d) at (0, -.75);
\coordinate (aa) at (-.75,.5);
\coordinate (cc) at (.75,.5);
\draw[line width=0.5mm, blue](a)--(0,0)--(d);
\draw[line width=0.5mm,blue,fill=white] (d) circle (.25);
\draw[line width=0.5mm,blue,fill=white] (a) circle (.25);
\draw[line width=0.5mm, red](b)--(0,0)--(c);
\draw[line width=0.5mm,red,fill=white] (c) circle (.25);
\draw[line width=0.5mm,red,fill=white] (b) circle (.25);
\node at (0,1) { };
\node at (a) {$b$};
\node at (b) {$a$};
\node at (c) {$a$};
\node at (d) {$b$};
\end{tikzpicture}
& \begin{tikzpicture}
\coordinate (a) at (-.75, 0);
\coordinate (b) at (0, .75);
\coordinate (c) at (.75, 0);
\coordinate (d) at (0, -.75);
\coordinate (aa) at (-.75,.5);
\coordinate (cc) at (.75,.5);
\draw[line width=0.5mm, red](a)--(0,0)--(d);
\draw[line width=0.5mm,red,fill=white] (d) circle (.25);
\draw[line width=0.5mm,red,fill=white] (a) circle (.25);
\draw[line width=0.5mm, blue](b)--(0,0)--(c);
\draw[line width=0.5mm,blue,fill=white] (c) circle (.25);
\draw[line width=0.5mm,blue,fill=white] (b) circle (.25);
\node at (0,1) { };
\node at (a) {$a$};
\node at (b) {$b$};
\node at (c) {$b$};
\node at (d) {$a$};
\end{tikzpicture}
\\
   \midrule
 1-q^2(1+\beta(x_i\oplus (\ominus y_j))) &  x_i\oplus (\ominus y_j) &  \beta^2q^2(x_i\oplus (\ominus y_j))& (1-q^2)(1+\beta (x_i\oplus (\ominus y_j))) & 1-q^2  \\
   \bottomrule
\end{array}
$}
\caption{The Boltzmann weights $S_{i,j}$ and $S_{i,j}^*$ at a vertex in row $i$ and column $j$ where $\textcolor{red}{a}<\textcolor{blue}{b}$, and ${\color{violet} c}$ is any color.}
\label{cauchyweights}
\end{figure}

We will eventually have the use for a Yang-Baxter equation that interchanges rows of two different transformations of our weights from Figure \ref{pipeweights1}. Let $S_{i,k}$ denote the set of weights, such that the weight of a vertex in row $i$ and column $k$ is equal to the its weight from Figure 3 after reflecting the vertex along the line $y=-x$. Let $S_{i,k}^*$ denote a second set of weights where we instead reflect across the $x$-axis, followed by making the substitution $y_k \mapsto \ominus y_k$. The notation for the second set of weights is owed to the fact that eventually, we will show that the weights $S^*$ are connected to the dual of the polynomial given by the weights $S$. The weights $S$ and $S^*$ can be found in Figure \ref{cauchyweights}. Below, we show exists a set of $R$-vertices designed to swap a row with weights from $S$ with one having weights according to $S^*$. We will use this Yang-Baxter equation to great effect in Section \ref{cauchy}. We call these the \emph{rhombus $R$-vertices} since the swapping of strands with different weights is analogous to operations encoded by the rhomboid tiles of Knutson-Tao puzzles (see, e.g. \cite{ZJ-Wheeler}).

\begin{theorem}\label{SwapYBE}
For any choice of boundary conditions $\alpha, \beta,\gamma, \delta, \epsilon,$ and $\eta$ the partition functions of the following two states are equal:
\begin{align*}
\begin{array}{c}
\scalebox{.85}{\begin{tikzpicture}
  \draw (0,0)--(2,0);
  \draw (0,2)--(2,2);
  \draw (1,-1)--(1,3);
  \coordinate (a1) at (-2,0);
  \coordinate (c1) at (0,2);
  \coordinate (a2) at (-2,2);
  \coordinate (c2) at (0,0);
  \draw (a1) to [out=0,in=180] (c1);
  \draw (a2) to [out=0,in=180] (c2);
  \draw[fill=white] (-2,2) circle (.3);
  \draw[fill=white] (-2,0) circle (.3);
  \draw[fill=white](1,-1) circle (.3);
  \draw[fill=white](1,3) circle(.3);
  \draw[fill=white] (2,2) circle (.3);
  \draw[fill=white] (2,0) circle (.3);
  \node[rectangle, fill=white] at (1,2) { $S_{i,k}$ };
  \node[rectangle, fill=white] at (1,0) {$S_{j,k}^*$};
  \node[rectangle, fill=white] at (-1,1){$R_{i,j}$};
  \node at (1,3){$\gamma$};
  \node at (2,2) {$\delta$};
  \node at (2,0) {$\epsilon$};
  \node at (-2,2){$\beta$};
  \node at (-2,0){$\alpha$};
  \node at (1,-1){$\eta$};
  \node at (3,1) {{\Large $=$}};
  \end{tikzpicture}}
\hspace{1em}
\scalebox{.85}{\begin{tikzpicture}
  \draw (0,0)--(-2,0);
  \draw (0,2)--(-2,2);
  \draw (-1,-1)--(-1,3);
  \coordinate (a1) at (2,0);
  \coordinate (c1) at (0,2);
  \coordinate (a2) at (2,2);
  \coordinate (c2) at (0,0);
  \draw (a1) to [out=180,in=0] (c1);
  \draw (a2) to [out=180,in=0] (c2);
  \draw[fill=white] (2,2) circle (.3);
  \draw[fill=white] (2,0) circle (.3);
  \draw[fill=white](-1,-1) circle (.3);
  \draw[fill=white](-1,3) circle(.3);
  \draw[fill=white] (-2,2) circle (.3);
  \draw[fill=white] (-2,0) circle (.3);
  \node[rectangle, fill=white] at (-1,2) {$S_{j,k}^*$};
  \node[rectangle, fill=white] at (-1,0) {$S_{i,k}$};
  \node[rectangle, fill=white] at (1,1){$R_{i,j}$};
  \node at (-1,3){$\gamma$};
  \node at (-2,2) {$\beta$};
  \node at (-2,0) {$\alpha$};
  \node at (2,2){$\delta$};
  \node at (2,0){$\epsilon$};
  \node at (-1,-1){$\eta$};
  \end{tikzpicture}}
  \end{array},
\end{align*}
  where $S_{i,k}$ and $S_{j,k}^*$ are the Boltzmann weights defined in the paragraph above, and $R_{i,j}$ the $R$-vertex weights from Figure \ref{RweightsCauchy}.
  \end{theorem}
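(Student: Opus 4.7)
The plan is to follow the strategy used in the proof of Theorem~\ref{DemazureYBE}: realize the three vertex families $S_{i,k}$, $S^*_{j,k}$, and $R_{i,j}$ as evaluations of the same Drinfeld-twisted $R$-matrix $\widetilde{R}^F$ from (\ref{twistedRmatrix}) at three compatibly chosen spectral parameters $z_1, z_2, z_3$, and then invoke the quantum Yang-Baxter equation for $\widetilde{R}^F$, which holds because Drinfeld twists preserve the YBE~\cite[Theorem~1]{Reshetikhin-twisted-hopf-algebras}.

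First I would reinterpret the weights $S_{i,k}$. Reflecting a vertex in the line $y=-x$ swaps the horizontal input with the vertical output (and vice versa), which at the matrix level amounts to a partial transpose of $T$. A direct comparison with $\widetilde{R}^F$ shows that $S_{i,k}$ equals an evaluation of $\widetilde{R}^F$ at spectral parameter $z_1 = 1+\beta(x_i\oplus y_k)$, possibly after a diagonal rescaling that cancels between the two sides of the YBE. Next, I would similarly analyze $S^*_{j,k}$: reflection across the $x$-axis followed by the substitution $y_k \mapsto \ominus y_k$ exchanges the $\tt{c}_1/\tt{c}_2$ roles and inverts the multiplicative argument, since $1+\beta(x_j\oplus \ominus y_k) = (1+\beta x_j)/(1+\beta y_k)$. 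This is precisely the form of a "crossing-dual" evaluation of $\widetilde{R}^F$, and I would verify that $S^*_{j,k}$ matches $\widetilde{R}^F$ at parameter $z_2 = (1+\beta x_j)/(1+\beta y_k)$, again up to a scalar gauge. The rhombus $R$-vertex of Figure~\ref{RweightsCauchy} should then be identified with $\widetilde{R}^F$ evaluated at $z_3 = z_1/z_2$, which by construction will involve a ratio of the form $(1+\beta y_k)(1+\beta(x_i\oplus y_k))/(1+\beta x_j)$ simplifying into an expression in $x_i$, $x_j$, $y_k$ consistent with the chosen $R$-vertex weights.

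Once all three vertex families are identified as evaluations $\widetilde{R}^F(z_i)$ with $z_3 = z_1/z_2$, the quantum Yang-Baxter equation
\[
\widetilde{R}^F_{12}(z_1/z_2)\,\widetilde{R}^F_{13}(z_1)\,\widetilde{R}^F_{23}(z_2)
=
\widetilde{R}^F_{23}(z_2)\,\widetilde{R}^F_{13}(z_1)\,\widetilde{R}^F_{12}(z_1/z_2)
\]
applied to the three-fold tensor product associated to the two horizontal lines (carrying the $S$ and $S^*$ weights) and the vertical $R$-vertex line gives exactly the equality of partition functions asserted by the theorem, after absorbing the diagonal rescaling factors that were introduced when matching $S$, $S^*$, and $R$ to $\widetilde{R}^F$. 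The scaling factors must balance on the two sides because they depend only on the \emph{row} parameters $x_i, x_j, y_k$, not on the internal edge labels being summed over.

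The main obstacle is the careful bookkeeping in Step~2: the appearance of $\ominus y_k$ in $S^*$ together with the reflection across the $x$-axis is a genuine "crossing" transformation, so matching it to an evaluation of $\widetilde{R}^F$ requires checking that this combined operation really is implemented on $\widetilde{R}^F$ by inverting the spectral parameter (rather than, e.g., by a different twist of the Hopf algebra structure). If this identification fails or proves too delicate, a fallback is a finite case check: admissibility forces each boundary label to take one of only a few possible values, so one can enumerate the $(\alpha,\beta,\gamma,\delta,\epsilon,\eta)$ tuples and verify equality of the two partition functions case by case, using that each side involves at most two admissible intermediate states on the internal edges.
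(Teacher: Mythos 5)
Your main strategy has a concrete gap at exactly the point you flag as delicate. The parameter bookkeeping does not come out consistent: with your identifications $z_1 = 1+\beta(x_i\oplus y_k) = (1+\beta x_i)(1+\beta y_k)$ for $S_{i,k}$ and $z_2 = (1+\beta x_j)/(1+\beta y_k)$ for $S^*_{j,k}$, the ratio you feed to the middle vertex is $z_1/z_2 = (1+\beta x_i)(1+\beta y_k)^2/(1+\beta x_j)$, which still depends on $y_k$. But the rhombus $R$-vertex weights of Figure~\ref{RweightsCauchy} involve only $x_i$ and $x_j$ (indeed $\tt{b}_1 = x_j + x_i(1+\beta x_j) = x_i\oplus x_j$, so the whole table is the original $T$-weight table with $y\mapsto x_j$, corresponding to spectral parameter $(1+\beta x_i)(1+\beta x_j)$). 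The quantity that is $y_k$-free is the \emph{product} $z_1\cdot z_2 = (1+\beta x_i)(1+\beta x_j)$, not the ratio. A product of spectral parameters does not plug into the ordinary difference-form Yang--Baxter relation $R_{12}(u/v)R_{13}(u)R_{23}(v) = R_{23}(v)R_{13}(u)R_{12}(u/v)$; rather it signals that the $S^*$ row is acting as an \emph{inverse} or \emph{crossing-transposed} copy of $\widetilde R^F$, because the reflection across the $x$-axis is a partial transpose, not a rescaling. Closing this gap would require establishing a crossing identity for $\widetilde R^F$ of the form $\widetilde R^F(z)^{t_2}\propto \widetilde R^F(z^{-1})^{-1}$ (or similar) and then rewriting the YBE accordingly, none of which is set up in the paper and none of which your proposal carries out. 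Until that is done, the quantum-group route does not go through.

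Your fallback is essentially what the paper does, but as stated it is not yet a finite check. The boundary labels range over $\{+,1,\dots,n\}$ with $n$ arbitrary, so there is no bound on the number of tuples $(\alpha,\beta,\gamma,\delta,\epsilon,\eta)$ a priori. What makes the check finite is the observation the paper uses: all three families of Boltzmann weights ($S$, $S^*$, and the rhombus $R$) depend only on the \emph{relative ordering} of the labels incident to a vertex, and at most three distinct labels (including $+$) can appear across the six boundary edges of the rhombus YBE. Hence it suffices to verify the identity for three colors, which can then be done by hand or by a computer algebra system (the paper used SageMath). Once you add that reduction, your fallback becomes a complete proof and coincides with the paper's.
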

\begin{proof}
Because $n$ (the size of our grid, of the permutation, and of the set of colored labels) is arbitrary, then in principle the number of colors appearing in the diagrams associated to the Yang-Baxter equation is unbounded. However, the Boltzmann weights of the rectangular vertices depend only on the relative ordering of the labels, and at most three colors (including $+$) can be present. Thus it suffices to check for a solution in three colors. This is easily accomplished by a computer algebra system or a very lengthy hand calculation. In our case, the $R$-vertex weights were calculated with SageMath.
\end{proof}

\begin{figure}[h]
\centering
\scalebox{.95}{$\begin{array}{c@{\hspace{10pt}}c@{\hspace{25pt}}c@{\hspace{15pt}}c@{\hspace{10pt}}c@{\hspace{15pt}}}
\toprule
\multicolumn{5}{l}{\text{Rhombus Yang-Baxter equation $R$-vertex weights:}}\\
\toprule
\tt{a} & \tt{b_1} & \tt{b_2} & \tt{c_1} & \tt{c_2} \\
\midrule
\begin{tikzpicture}[scale=0.7]
\draw[line width = .5mm, violet] (0,0) to [out = 0, in = 180] (2,2);
\draw[line width = .5mm, violet] (0,2) to [out = 0, in = 180] (2,0);
\draw[line width=0.5mm, violet, fill=white] (0,0) circle (.35);
\draw[line width=0.5mm, violet, fill=white] (0,2) circle (.35);
\draw[line width=0.5mm, violet, fill=white] (2,2) circle (.35);
\draw[line width=0.5mm, violet, fill=white] (2,0) circle (.35);
\node at (0,0) {$c$};
\node at (0,2) {$c$};
\node at (2,2) {$c$};
\node at (2,0) {$c$};
\end{tikzpicture}
&
\begin{tikzpicture}[scale=0.7]
\draw[line width = .5mm, red] (0,0) to [out = 0, in = 180] (2,2);
\draw[line width = .5mm, blue] (0,2) to [out = 0, in = 180] (2,0);
\draw[line width=0.5mm, red, fill=white] (0,0) circle (.35);
\draw[line width=0.5mm, blue, fill=white] (0,2) circle (.35);
\draw[line width=0.5mm, red, fill=white] (2,2) circle (.35);
\draw[line width=0.5mm, blue, fill=white] (2,0) circle (.35);
\node at (0,0) {$a$};
\node at (0,2) {$b$};
\node at (2,2) {$a$};
\node at (2,0) {$b$};
\end{tikzpicture}
&
\begin{tikzpicture}[scale=0.7]
\draw[line width = .5mm, blue] (0,0) to [out = 0, in = 180] (2,2);
\draw[line width = .5mm, red] (0,2) to [out = 0, in = 180] (2,0);
\draw[line width=0.5mm, blue, fill=white] (0,0) circle (.35);
\draw[line width=0.5mm, red, fill=white] (0,2) circle (.35);
\draw[line width=0.5mm, blue, fill=white] (2,2) circle (.35);
\draw[line width=0.5mm, red, fill=white] (2,0) circle (.35);
\node at (0,0) {$b$};
\node at (0,2) {$a$};
\node at (2,2) {$b$};
\node at (2,0) {$a$};
\end{tikzpicture}
&
\begin{tikzpicture}[scale=0.7]
\draw[line width = .5mm,blue] (0,2) to [out = 0, in = 120] (1,1);
\draw[line width = .5mm, red] (2,2) to [out = 180, in=60] (1,1);
\draw[line width = .5mm, blue] (0,0) to [out = 0, in = -120] (1,1);
\draw[line width = .5mm, red] (2,0) to [out = 180, in = -60] (1,1);
\draw[line width=0.5mm, blue, fill=white] (0,0) circle (.35);
\draw[line width=0.5mm, blue, fill=white] (0,2) circle (.35);
\draw[line width=0.5mm, red, fill=white] (2,2) circle (.35);
\draw[line width=0.5mm, red, fill=white] (2,0) circle (.35);
\node at (0,0) {$b$};
\node at (0,2) {$b$};
\node at (2,2) {$a$};
\node at (2,0) {$a$};
\end{tikzpicture}&
\begin{tikzpicture}[scale=0.7]
\draw[line width = .5mm,red] (0,2) to [out = 0, in = 120] (1,1);
\draw[line width = .5mm, blue] (2,2) to [out = 180, in=60] (1,1);
\draw[line width = .5mm, red] (0,0) to [out = 0, in = -120] (1,1);
\draw[line width = .5mm, blue] (2,0) to [out = 180, in = -60] (1,1);
\draw[line width=0.5mm, red, fill=white] (0,0) circle (.35);
\draw[line width=0.5mm, red, fill=white] (0,2) circle (.35);
\draw[line width=0.5mm, blue, fill=white] (2,2) circle (.35);
\draw[line width=0.5mm, blue, fill=white] (2,0) circle (.35);
\node at (0,0) {$a$};
\node at (0,2) {$a$};
\node at (2,2) {$b$};
\node at (2,0) {$b$};
\end{tikzpicture}
\\
   \midrule
1-q^2(1+\beta (x_i\oplus x_j)) &  x_j+x_i(1+ \beta x_j) & \beta^2q^2(x_i\oplus x_j)& (1-q^2)(1+\beta (x_i\oplus x_j)) & 1-q^2 \\
   \bottomrule
\end{array}$}
   \caption{The rhombus $R$-vertex weights that swap a strand $i$ attached to weights $S$ with strand $j$ attached weights given by $S^*$. Here, ${\color{red}a}<{\color{blue}b}$ and ${\color{violet} c}$ is any color.}
   \label{RweightsCauchy}
\end{figure}

\section{Evaluating Partition Functions of the Models}\label{modelproof}

\subsection{The Partition Function of the Chromatic Model}

In order to calculate the partition function of the chromatic model, we begin with a base case set of boundary conditions and then show that the Yang-Baxter equations of Section \ref{solvability} provide operators that shift between different permutations on both labeled boundaries, acting as simple reflections on the right that can both increase and decrease length. Since we may travel in either direction regarding length, there are many possible base cases one could choose; we pick a particularly nice one with only one state.

\begin{proposition}\label{latticebasecase}
Let $n$ be any positive integer and let $w_0$ be the longest word in $S_n$. With Boltzmann weights as defined in Figure \ref{pipeweights1}, 
\[ Z\big(\mathfrak{S}_{1,w_0}(\boldsymbol{x},\boldsymbol{y})\big) =  (1-q^2)^n \prod_{i+j< n+1}(x_i \oplus y_j) \prod_{i+j>n+1} (1-q^2(1+\beta x_i\oplus y_j)).\]
In particular, this partition function matches $\mathcal{G}_{w_0}^{(\beta,q)}(x,y)$.
\end{proposition}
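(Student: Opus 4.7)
The plan is to identify the unique admissible state of $\mathfrak{S}_{1,w_0}$ and read off its weight directly. The first observation is that every admissible vertex type in Figure~\ref{pipeweights1} permits strands to travel only east (either straight through or turning in from the south) or north (either straight through or turning in from the west); consequently each colored strand traces a monotone lattice path using only E and N steps. The boundary data of $\mathfrak{S}_{1,w_0}$ then fixes the endpoints: the strand of color $k$ enters at the west edge of row $n+1-k$ (since $w_0(r)=n+1-r$) and exits at the north edge of column $k$ (since the identity on the top boundary gives labels $1,\ldots,n$ left-to-right). I claim the unique admissible state has strand $k$ travel east along row $n+1-k$ from column $1$ to column $k$, make its W-to-N turn at vertex $(n+1-k,k)$, and travel north along column $k$ to its exit, for every $k=1,\ldots,n$.

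Uniqueness, which is the main subtlety, I will establish by a top-down analysis proceeding row by row. In row $1$, strand $n$ enters from the west and cannot move north (this would immediately exit the grid), so monotonicity forces it to travel east across row $1$; to exit at the top of column $n$, it must make its W-to-N turn exactly at vertex $(1,n)$. For each $c=1,\ldots,n-1$, the top boundary forces strand $c$ onto the north edge of $(1,c)$, and one checks directly that the only admissible vertex type consistent with west and east edges of color $n$ and a north edge of color $c<n$ is type $\tt{b}_1$, which then forces the south edge of $(1,c)$ to color $c$. Thus row $1$ is entirely determined, and in particular strand $c$ must exit $(2,c)$ to the north for each $c<n$. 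Iterating this analysis one row at a time -- in row $r$, strand $n+1-r$ enters from the west, each smaller-index strand $c<n+1-r$ must exit northward out of $(r,c)$, and admissibility forces type $\tt{b}_1$ at columns $1,\ldots,n-r$, type $\tt{c}_2$ (with the virtual $+$ strand on the south and east) at column $n+1-r$, and the empty type $\tt{a}$ on columns to the right -- determines the entire state.

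With the unique state in hand, the Boltzmann weight is computed by classifying each vertex $(i,j)$ relative to the antidiagonal $i+j=n+1$. For $i+j<n+1$, the vertex has a horizontal strand of color $n+1-i$ crossing a vertical strand of color $j<n+1-i$, so it is type $\tt{b}_1$ with weight $x_i\oplus y_j$. For $i+j=n+1$, the vertex is the W-to-N turning vertex of type $\tt{c}_2$, contributing weight $1-q^2$; the $n$ such vertices (one per row) together yield the factor $(1-q^2)^n$. For $i+j>n+1$, the vertex is empty with all four edges labeled $+$, giving type $\tt{a}$ with weight $1-q^2(1+\beta(x_i\oplus y_j))$. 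Multiplying over all vertices yields
\[
Z\bigl(\mathfrak{S}_{1,w_0}(\boldsymbol{x},\boldsymbol{y})\bigr) \;=\; (1-q^2)^n \prod_{i+j<n+1} (x_i\oplus y_j) \prod_{i+j>n+1} \bigl(1-q^2(1+\beta(x_i\oplus y_j))\bigr),
\]
which matches $\mathcal{G}^{(\beta,q)}_{w_0}(\boldsymbol{x};\boldsymbol{y})$ from Definition~\ref{divdiffdef}.
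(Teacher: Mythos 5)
Your proof is correct and rests on the same essential insight as the paper's: the north-east monotonicity of colored strands forces a unique admissible state, whose weight is then read off vertex by vertex. The paper organizes the argument as an induction on $n$, peeling off row $1$ and column $n$ at each step; you instead describe the unique state globally, prove uniqueness by iterating the row-by-row analysis top-down, and classify each vertex $(i,j)$ by its position relative to the antidiagonal $i+j=n+1$ (types $\tt{b}_1$, $\tt{c}_2$, $\tt{a}$ respectively). This is the same argument unrolled rather than recursed, and both presentations are valid; one small wording issue is your claim that strand $n$ cannot turn north because it would ``immediately exit the grid'' -- the actual obstruction is that the top boundary label at column $j<n$ is fixed to color $j\neq n$, which rules out an earlier turn. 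That does not affect the correctness of the proof.
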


\begin{proof}
We induct on $n$, noting that for any $n$ the system $\mathfrak{S}_{1,w_0}$ always has a single admissible state. Note that we will add a superscript $n$ to the system when we need to emphasize in which symmetric group we are working. For the base case $n=1$, the sole admissible state is a single vertex of type $\tt{c}_2$, so $Z(\mathfrak{S}_{1,w_0}^1) = 1-q^2$ and the statement holds. Supposing the statement holds for all $k \leq n-1$, we consider the single admissible state for $w_0 \in S_n$. It is perhaps illustrative to consult Figure \ref{icestate1}, which displays this state for $w_0 \in S_4$. Since color $n$ appears in row 1 on the left boundary, it must travel straight across the row 1 before exiting out the top boundary in column $n$. Therefore, row 1 consists of $n-1$ vertices of type $\tt{b}_1$ as color $n$ crosses each of colors $1,\dots,n-1$, in columns $1,\dots,n-1$ respectively, and one vertex of type $\tt{c}_2$ in column $n$, since $+>n$. Furthermore, the remaining vertices in column $n$ must all be of type $\tt{a}$ in the label $+$. If we remove row 1 and column $n$ from our state, the remaining $n-1\times n-1$ grid  has precisely the boundary conditions for the long word $w_0 \in S_{n-1}$, so we have
\begin{align*}Z(\mathfrak{S}_{w_0}^n;x_1,\dots,x_n;y_1,\dots,y_n) =  &(1-q^2) \prod_{j=1}^{n-1} (x_1 \oplus y_j) \prod_{i = 2}^{n-1} (1 - q^2(1+ \beta(x_i\oplus y_n)))\\ &\hspace{1cm}\cdot Z(\mathfrak{S}_{w_0}^{n-1};x_2,...,x_n;y_1,\dots,y_{n-1}).
\end{align*}
Applying the inductive hypothesis then gives us precisely the desired formula.
\end{proof}

\begin{lemma}\label{permutationdecrease}
For any $w \in S_n$ and any simple reflection $s_i$ such that $\ell(ws_i) = \ell(w) - 1$,
\[ Z(\mathfrak{S}_{v,ws_i}(\boldsymbol{x},\boldsymbol{y})) = \pi_{i}^{(\beta,q)} Z(\mathfrak{S}_{v,w}(\boldsymbol{x},\boldsymbol{y})).\] 
For any $v\in S_n$ and any simple reflection $s_i$ such that $\ell(vs_i) = \ell(v) - 1$, \[Z(\mathfrak{S}_{vs_i,w}(\boldsymbol{x},\boldsymbol{y})) = \twid{\pi}_{i}^{(\beta,q)} Z(\mathfrak{S}_{v,w}(\boldsymbol{x},\boldsymbol{y})).\]
\end{lemma}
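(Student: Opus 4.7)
The plan is a standard ``train argument'' for solvable lattice models. For each identity we augment $\mathfrak{S}_{v,w}$ with an $R$-vertex attached to one boundary, evaluate the partition function of the augmented system in two ways -- once with the $R$-vertex in its initial position and once after sliding it across the grid via iterated use of Theorem~\ref{DemazureYBE} -- and equate the two expressions to read off the claimed operator identity.

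For the first identity, the plan is to attach a row $R$-vertex immediately to the left of rows $i$ and $i+1$ of $\mathfrak{S}_{v,w}$, choosing its two external inputs so that the augmented system preserves the external left boundary of $\mathfrak{S}_{v,w}$ (namely $w(i)$ in row $i$ and $w(i+1)$ in row $i+1$). Summing over the internal edge labels between the $R$-vertex and the lattice, which must be a permutation of $\{w(i),w(i+1)\}$ since all other left-boundary colors are frozen, will produce exactly two nonzero terms: the unpermuted choice gives a type~$\tt{c}_1$ $R$-vertex of weight $(1-q^2)(1+\beta x_{i+1})$ paired with $Z(\mathfrak{S}_{v,w})$, while the permuted choice turns the remaining lattice into $\mathfrak{S}_{v,ws_i}$ and gives a type~$\tt{b}_1$ $R$-vertex of weight $x_{i+1}-x_i$. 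Sliding this same $R$-vertex rightward through all $n$ columns by the row Yang-Baxter equation then interchanges the variables $x_i\leftrightarrow x_{i+1}$ in these two rows and deposits the $R$-vertex on the right boundary, where the uniform label $+$ forces the unique admissible type~$\tt{a}$ configuration of weight $1+\beta x_i - q^2(1+\beta x_{i+1})$. Equating the two evaluations and solving for $Z(\mathfrak{S}_{v,ws_i})$ will reproduce exactly the expression for $\pi_i^{(\beta,q)}Z(\mathfrak{S}_{v,w})$ given by (\ref{isobaricdd}).

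The second identity is obtained by the symmetric argument using the column Yang-Baxter equation: attach a column $R$-vertex above columns $i$ and $i+1$ matching the top boundary of $\mathfrak{S}_{v,w}$, identify the two admissible internal configurations (column-weight types~$\tt{c}_1$ and $\tt{b}_1$, with the latter now producing $Z(\mathfrak{S}_{vs_i,w})$), and slide the $R$-vertex downward through all $n$ rows until it meets the all-$+$ bottom boundary, which again pins it to type~$\tt{a}$. The one algebraic difference is that in the column weight table the type~$\tt{b}_1$ vertex carries $\beta^2q^2(y_{i+1}-y_i)$ rather than $y_{i+1}-y_i$; this single extra factor is precisely the origin of the normalization $1/(\beta^2 q^2)$ built into the definition of $\widetilde{\pi}_i^{(\beta,q)}$, and solving for $Z(\mathfrak{S}_{vs_i,w})$ produces exactly that operator applied to $Z(\mathfrak{S}_{v,w})$.

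The main technical hurdle is combinatorial bookkeeping rather than any deep new idea: one must verify that the stated triple $(\tt{c}_1,\tt{b}_1,\tt{a})$ really is the complete admissible list of $R$-vertex configurations at each stage (this is where the hypothesis $\ell(ws_i)=\ell(w)-1$, equivalently $w(i)>w(i+1)$, is essential, as it fixes the relative color ordering and rules out the other $\tt{b}$- and $\tt{c}$-types), and one must track through the curves of the Yang-Baxter diagrams exactly how each external input is routed to its row or column and how the parameters $x_i,x_{i+1}$ (respectively $y_i,y_{i+1}$) get interchanged under iterated Yang-Baxter moves. Once this case analysis is organized against Figure~\ref{RweightsDemazure}, the identities reduce to direct algebraic comparison with (\ref{isobaricdd}).
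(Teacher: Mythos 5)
Your proposal is correct and follows the same train-argument strategy as the paper: attach an $R$-vertex to the left (resp.\ top) of the relevant pair of rows (resp.\ columns), sum over the two admissible internal configurations (types $\tt{c}_1$ and $\tt{b}_1$ when $\ell(ws_i)<\ell(w)$, resp.\ $\ell(vs_i)<\ell(v)$), slide the $R$-vertex across with the Yang-Baxter equation until it is pinned to type $\tt{a}$ by the all-$+$ boundary, and solve. The paper's own proof only treats the row case in detail and dismisses the column case as ``analogous''; your observation that the column $\tt{b}_1$ weight carries the extra factor $\beta^2q^2$ which cancels against the $1/(\beta^2q^2)$ normalization in the definition of $\widetilde{\pi}_i^{(\beta,q)}$ correctly identifies the one place where the two cases differ algebraically.
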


\begin{proof}
Write $w$ in one line notation as $w = c_1c_2\cdots c_n$. Since $\ell(ws_i) = \ell(w)-1$, we must have $c_{i} > c_{i+1}$. We begin by evaluating the partition function of the following system:
 \[\begin{array}{c}
 \scalebox{.85}{\begin{tikzpicture}
  \draw (0,0)--(3,0);
  \draw (0,2)--(3,2);
  \draw [thick, dotted] (3,0)-- (5,0);
  \draw [thick, dotted] (3,2)-- (5,2);
  \draw (5,0) -- (8,0);
  \draw (5,2) -- (8,2);
  \draw (1,-1)--(1,3);
  \draw (3,-1)--(3,3);
  \draw (5,-1)--(5,3);
  \draw (7,-1)--(7,3);
  \coordinate (a1) at (-2,0);
  \coordinate (c1) at (0,2);
  \coordinate (a2) at (-2,2);
  \coordinate (c2) at (0,0);
  \draw (a1) to [out=0,in=180] (c1);
  \draw (a2) to [out=0,in=180] (c2);
  \path[fill=white] (-1,1) circle (.4);
  \draw[fill=white] (-2,2) circle (.4);
  \draw[fill=white] (-2,0) circle (.4);
  \draw[fill=white] (8,2) circle (.3);
  \draw[fill=white] (8,0) circle (.3);
  \path[fill=white] (1,2) ellipse (.4cm and .3cm);
  \path[fill=white] (1,0) ellipse (.7cm and .3cm);
  \path[fill=white] (3,2) ellipse (.4cm and .3cm);
  \path[fill=white] (3,0) ellipse (.7cm and .3cm);
  \path[fill=white] (5,2) ellipse (.7cm and .3cm);
  \path[fill=white] (5,0) ellipse (.7cm and .3cm);
  \path[fill=white] (7,2) ellipse (.4cm and .3cm);
  \path[fill=white] (7,0) ellipse (.6cm and .3cm);
  \node at (-1,1) {$R_{i,i+1}$};
  \node at (1,2) {$T_{i,1}$};
  \node at (1,0) {$T_{i+1,1}$};
  \node at (3,2) {$T_{i,2}$};
  \node at (3,0) {$T_{i+1,2}$};
  \node at (5,2) {$T_{i,n-1}$};
  \node at (5,0) {$T_{i+1,n-1}$};
  \node at (7,2) {$T_{i,n}$};
  \node at (7,0) {$T_{i+1,n}$};
  \node at (-2,2){$c_{i}$};
  \node at (-2,0){$c_{i+1}$};
  \node at (8,0){$+$};
  \node at (8,2){$+$};
  \node at (4, 1) {$\cdots$};
  \end{tikzpicture}}
  \end{array}.
  \]
Consulting the table of $R$-vertex weights for the row Yang-Baxter Equation (Figure \ref{RweightsDemazure}), we see that we have two options for the $R$-vertex on the left, namely type $\tt{c}_1$ and type $\tt{b}_1$. States with an $R$-vertex of type $\tt{c}_1$ don't flip the boundary conditions $c_i$ and $c_{i+1}$ on the other side of the $R$-vertex, so they will have boundary conditions $\mathfrak{S}_{v,w}$ on the remaining grid and will thus jointly contribute $(1-q^2)(1 + \beta x_{i+1}) Z(\mathfrak{S}_{v,w};\boldsymbol{x};\boldsymbol{y})$ to the partition function. States with an $R$-vertex of type $\tt{b}_1$ will flip the boundary conditions $c_i$ and $c_{i+1}$ on the other side of the $R$-vertex, so they will have boundary conditions $\mathfrak{S}_{v,ws_i}$ on the remaining grid and will contribute in total $(x_{i+1}-x_i)Z(\mathfrak{S}_{v,ws_i};\boldsymbol{x};\boldsymbol{y})$. 

By Theorem \ref{DemazureYBE}, we may repeatedly apply the Yang-Baxter equation to move the $R$-vertex to the right, column by column, according to the familiar train argument, to obtain the following lattice whose partition function matches that of the above system:
\[
\begin{array}{c}
\scalebox{.85}{\begin{tikzpicture}
  \draw (-0.5,0)--(3,0);
  \draw (-0.5,2)--(3,2);
  \draw [thick, dotted] (3,0)-- (5,0);
  \draw [thick, dotted] (3,2)-- (5,2);
  \draw (5,0) -- (8,0);
  \draw (5,2) -- (8,2);
  \draw (1,-1)--(1,3);
  \draw (3,-1)--(3,3);
  \draw (5,-1)--(5,3);
  \draw (7,-1)--(7,3);
  \coordinate (a1) at (8,0);
  \coordinate (c1) at (10,2);
  \coordinate (a2) at (8,2);
  \coordinate (c2) at (10,0);
  \draw (a1) to [out=0,in=180] (c1);
  \draw (a2) to [out=0,in=180] (c2);
  \path[fill=white] (9,1) circle (.4);
  \draw[fill=white] (-0.5,2) circle (.4);
  \draw[fill=white] (-0.5,0) circle (.4);
  \draw[fill=white] (10,2) circle (.3);
  \draw[fill=white] (10,0) circle (.3);
  \path[fill=white] (1,2) ellipse (.7cm and .3cm);
  \path[fill=white] (1,0) ellipse (.4cm and .3cm);
  \path[fill=white] (3,2) ellipse (.7cm and .3cm);
  \path[fill=white] (3,0) ellipse (.4cm and .3cm);
  \path[fill=white] (5,2) ellipse (.8cm and .3cm);
  \path[fill=white] (5,0) ellipse (.7cm and .3cm);
  \path[fill=white] (7,2) ellipse (.7cm and .3cm);
  \path[fill=white] (7,0) ellipse (.4cm and .3cm);
  \node at (9,1) {$R_{i,i+1}$};
  \node at (1,0) {$T_{i,1}$};
  \node at (1,2) {$T_{i+1,1}$};
  \node at (3,0) {$T_{i, 2}$};
  \node at (3,2) {$T_{i+1,2}$};
  \node at (5,0) {$T_{i,n-1}$};
  \node at (5,2) {$T_{i+1,n-1}$};
  \node at (7,0) {$T_{i,n}$};
  \node at (7,2) {$T_{i+1,n}$};
  \node at (-0.5,0){$c_{i+1}$};
  \node at (-0.5,2){$c_{i}$};
  \node at (10,0){$+$};
  \node at (10,2){$+$};
  \node at (4,1) {$\cdots$};
  \end{tikzpicture}}
  \end{array}.
  \]
Referring again to Figure \ref{RweightsDemazure}, there is only one possibility for the $R$-vertex on the right, namely type $\tt{a}$ with label $+$. Therefore, the $R$-vertex always has weight $(1+\beta x_i - q^2(1+ \beta x_{i+1}))$ and the rest of the system has boundary conditions for $\mathfrak{S}_{v,w}$, but with parameters $x_i$ and $x_{i+1}$ flipped. Equating these two expressions for the left and right hand side of the train argument and solving for $Z(\mathfrak{S}_{v,ws_i};\boldsymbol{x};\boldsymbol{y})$, we obtain the desired operator description for the first equation.

The second equation follows from applying analogous train arguments along the columns.
\end{proof}

Parallel results hold in the case where the simple reflection increases the length of $w$ or $v$, in which case the train argument results in the application of the operators $\pi_i^{-1}$ and $\twid{\pi}_i^{-1}$, respectively.

We can then combine these results to conclude that the lattice model gives the biaxial $(\beta,q)$-Grothendieck polynomials, as well as thereby specializing to our double and dual double $(\beta,q)$-Grothendieck polynomials.

\begin{theorem}\label{modelgivesgrotpolys}
 For any $v,w\in S_n$,
 \[ Z(\left(\mathfrak{S}_{v,w}(\boldsymbol{x},\boldsymbol{y})\right)) =  \mathcal{G}^{(\beta,q)}_{v,w}(\boldsymbol{x};\boldsymbol{y}).\]
\end{theorem}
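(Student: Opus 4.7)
The plan is a two-stage induction that reduces the theorem to the ingredients already assembled: Proposition~\ref{latticebasecase} provides a single concrete starting point, and Lemma~\ref{permutationdecrease} provides the matching operators to move off that starting point. The $q$-deformed divided difference operators $\pi_i$ and $\twid{\pi}_i$ satisfy the braid relations and act on disjoint variable sets, hence commute with each other, so the recursive definition of $\mathcal{G}^{(\beta,q)}_{v,w}$ is well-defined and may be reached from $\mathcal{G}^{(\beta,q)}_{1,w_0}$ by applying these operators along any reduced word. The same will be true of the partition function side via the row and column Yang-Baxter equations of Theorem~\ref{DemazureYBE}.

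First I would prove the specialization $v=1$: for all $w\in S_n$,
\[Z(\mathfrak{S}_{1,w}(\boldsymbol{x},\boldsymbol{y})) = \mathcal{G}^{(\beta,q)}_{w}(\boldsymbol{x};\boldsymbol{y}).\]
The base case $w=w_0$ is exactly Proposition~\ref{latticebasecase} matched against the explicit formula for $\mathcal{G}^{(\beta,q)}_{w_0}$ in the first bullet of Definition~\ref{divdiffdef}. For the inductive step, take $w$ with $\ell(w)<\ell(w_0)$ and a simple reflection $s_i$ with $\ell(ws_i)=\ell(w)-1$; the first equation of Lemma~\ref{permutationdecrease} then gives $Z(\mathfrak{S}_{1,ws_i}) = \pi_i^{(\beta,q)}\,Z(\mathfrak{S}_{1,w})$, which mirrors the second bullet of Definition~\ref{divdiffdef}. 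Iterating along a reduced word from $w_0$ down to the desired $w$ finishes this stage; both sides are independent of the chosen reduced word by the braid relations for $\pi_i$.

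Second I would induct on $\ell(v)$, starting from the $v=1$ case just established, which matches the first bullet of Definition~\ref{biaxial-polynomials}. For the inductive step, assume the identity holds at $v$ and take $s_i$ with $\ell(vs_i)=\ell(v)+1$. I would then invoke the analogue of Lemma~\ref{permutationdecrease} for length-increasing reflections, namely
\[Z(\mathfrak{S}_{vs_i,w}(\boldsymbol{x},\boldsymbol{y})) = \twid{\pi}_i^{-1}\,Z(\mathfrak{S}_{v,w}(\boldsymbol{x},\boldsymbol{y})),\]
which is the ``parallel result'' noted immediately after Lemma~\ref{permutationdecrease}. Its justification runs a column train argument dual to the proof of Lemma~\ref{permutationdecrease}: attach a column $R$-vertex above the top boundary swapping the colors labeling the $i$- and $(i+1)$-st top edges, slide it across column by column using Theorem~\ref{DemazureYBE}, and then solve the resulting two-term linear relation between $Z(\mathfrak{S}_{v,w})$ and $Z(\mathfrak{S}_{vs_i,w})$ for the latter. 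Since $\ell(vs_i)>\ell(v)$, the roles of the two $R$-vertex configurations are interchanged from the length-decreasing case, and solving produces the operator $\twid{\pi}_i^{-1}$ acting on the $\boldsymbol{y}$ variables. Comparison with the second bullet of Definition~\ref{biaxial-polynomials} closes the induction.

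The main obstacle I anticipate is the careful bookkeeping in the reverse train argument used to extract $\twid{\pi}_i^{-1}$: one must verify exactly which $R$-vertex configurations remain admissible once the length-increasing swap has fixed the boundary colors, and then arrange the resulting $2\times 2$ linear system so that the division yields precisely the closed-form inverse operator recorded in Section~\ref{grotpoly}. Everything else is a direct assembly of Proposition~\ref{latticebasecase}, Lemma~\ref{permutationdecrease}, and the braid relations, which together guarantee that the iterated application of $\pi_i^{(\beta,q)}$ and $\twid{\pi}_i^{-1}$ on the partition function side matches the corresponding chain of operator applications defining $\mathcal{G}^{(\beta,q)}_{v,w}$, independently of the chosen reduced expressions for $v$ and $w$.
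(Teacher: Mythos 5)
Your proposal is correct and follows essentially the same two-stage route as the paper's proof: establish the $v=1$ case from Proposition~\ref{latticebasecase} and the length-decreasing operators of Lemma~\ref{permutationdecrease}, then pass to general $v$ via the length-increasing column analogue $Z(\mathfrak{S}_{vs_i,w})=\twid{\pi}_i^{-1}Z(\mathfrak{S}_{v,w})$, matching Definition~\ref{biaxial-polynomials}. The paper compresses the second stage into one sentence (citing the ``parallel result'' noted after Lemma~\ref{permutationdecrease}), whereas you correctly identify the reverse train argument as the place where care is needed; your outline of that step and your appeal to the braid relations for well-definedness are exactly the right supporting observations.
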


\begin{proof}
We first show that for any $w \in S_n$, 
\[Z(\left(\mathfrak{S}_{1,w}(\boldsymbol{x},\boldsymbol{y})\right)) =  \mathcal{G}^{(\beta,q)}_w(\boldsymbol{x};\boldsymbol{y}).\]
This follows from applying the length-decreasing operators of Lemma \ref{permutationdecrease} to the base case for $w_0$ in Proposition \ref{latticebasecase}. Then the result follows by applying $\twid{\pi}_i^{-1}$ to the identity above.
\end{proof}

\begin{proposition}\label{subprop:modelgivesgrotpolys}
For any $w\in S_n$,
\[Z(\left(\mathfrak{S}_{w_0w^{-1},w}(\boldsymbol{x},\boldsymbol{y})\right)) =  \mathcal{H}^{(\beta,q)}_w(\boldsymbol{x};\boldsymbol{y}),\]
\end{proposition}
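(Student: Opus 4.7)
By Theorem~\ref{modelgivesgrotpolys}, the partition function in question equals the biaxial polynomial $\mathcal{G}^{(\beta,q)}_{w_0 w^{-1},w}$, so the claim reduces to the algebraic identity
\[
\mathcal{G}^{(\beta,q)}_{w_0 w^{-1},w}(\boldsymbol{x};\boldsymbol{y}) \;=\; \mathcal{H}^{(\beta,q)}_w(\boldsymbol{x};\boldsymbol{y}).
\]
I would prove this by downward induction on $\ell(w)$, equivalently upward induction on the codimension $\ell(w_0 w^{-1}) = \ell(w_0) - \ell(w)$. The base case $w = w_0$ is immediate: then $w_0 w^{-1} = 1$ and both sides reduce to $\mathcal{G}^{(\beta,q)}_{w_0}$ by the initial conditions of Definitions~\ref{biaxial-polynomials} and~\ref{dualdef}.

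For the inductive step, fix $w$ and a simple reflection $s_i$ with $\ell(s_i w) = \ell(w) - 1$. Then $(s_i w)^{-1} = w^{-1} s_i$ and $\ell(w_0(s_iw)^{-1}) = \ell(w_0 w^{-1}) + 1$, so right multiplication by $s_i$ is length-increasing on $w_0 w^{-1}$, and the second bullet of Definition~\ref{biaxial-polynomials} gives
\[
\mathcal{G}^{(\beta,q)}_{w_0 w^{-1} s_i,\, w} \;=\; \twid{\pi}_i^{-1}\, \mathcal{G}^{(\beta,q)}_{w_0 w^{-1},\, w}.
\]
Definition~\ref{dualdef} simultaneously gives $\mathcal{H}^{(\beta,q)}_{s_iw} = \twid{\pi}_i^{-1}\,\mathcal{H}^{(\beta,q)}_w$. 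Applying $\twid{\pi}_i^{-1}$ to both sides of the inductive hypothesis and combining yields
\[
\mathcal{G}^{(\beta,q)}_{w_0(s_iw)^{-1},\, w} \;=\; \mathcal{H}^{(\beta,q)}_{s_iw}.
\]

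The remaining and most delicate step is to replace $w$ by $s_i w$ in the second subscript of the biaxial polynomial, that is, to show
\[
\mathcal{G}^{(\beta,q)}_{w_0(s_iw)^{-1},\, w} \;=\; \mathcal{G}^{(\beta,q)}_{w_0(s_iw)^{-1},\, s_iw}.
\]
This is the main obstacle because the $\mathcal{H}$-recursion operates by \emph{left} multiplication of $w$, while the biaxial recursion naturally builds $v$ by \emph{right} multiplication, so the two recursions do not line up at the abstract level. I would prove this equality at the lattice level by attaching an $R$-vertex from Figure~\ref{RweightsDemazure} to the boundary of $\mathfrak{S}_{w_0(s_iw)^{-1},\,w}$ and performing a train argument based on the row and column Yang-Baxter equations of Theorem~\ref{DemazureYBE}, transporting the colors $i$ and $i+1$ between the top and the left boundary. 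The specific
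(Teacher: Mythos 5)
Your induction is clean through the fourth paragraph: from the base case and the two compatible recursions you correctly derive $\mathcal{G}^{(\beta,q)}_{w_0(s_iw)^{-1},\,w} = \mathcal{H}^{(\beta,q)}_{s_iw}$. But the remaining ``delicate step,'' namely $\mathcal{G}^{(\beta,q)}_{w_0(s_iw)^{-1},\,w} = \mathcal{G}^{(\beta,q)}_{w_0(s_iw)^{-1},\,s_iw}$, is not just delicate --- it is false, and no Yang--Baxter argument will produce it. By Definition~\ref{biaxial-polynomials} the biaxial polynomial $\mathcal{G}^{(\beta,q)}_{v,w}$ is obtained from $\mathcal{G}^{(\beta,q)}_{w}$ by applying an invertible word in the operators $\twid{\pi}_i^{-1}$, so $\mathcal{G}^{(\beta,q)}_{v,w}=\mathcal{G}^{(\beta,q)}_{v,w'}$ would force $\mathcal{G}^{(\beta,q)}_{w}=\mathcal{G}^{(\beta,q)}_{w'}$. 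Concretely, in $S_2$ with $w=w_0=s_1$ and $s_iw=1$, your derived identity is $\mathcal{G}^{(\beta,q)}_{s_1,\,w_0}=\mathcal{H}^{(\beta,q)}_{1}$, which is true; but what the remaining step demands is $\mathcal{G}^{(\beta,q)}_{s_1,\,w_0}=\mathcal{G}^{(\beta,q)}_{s_1,\,1}$, i.e.\ $\twid{\pi}_1^{-1}\mathcal{G}^{(\beta,q)}_{w_0}=\twid{\pi}_1^{-1}\mathcal{G}^{(\beta,q)}_{1}$, i.e.\ $\mathcal{G}^{(\beta,q)}_{w_0}=\mathcal{G}^{(\beta,q)}_{1}$, which is plainly false. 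Diagrammatically, the two systems $\mathfrak{S}_{w_0(s_iw)^{-1},\,w}$ and $\mathfrak{S}_{w_0(s_iw)^{-1},\,s_iw}$ have different left-boundary data, so there is no train argument relating them.

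The resolution is that the proposition as printed contains a typographical error: the second subscript of $\mathfrak{S}$ should be $w_0$, not $w$. The intended statement --- the one that the paper's one-line proof actually establishes, and the one that reappears verbatim in Theorem~\ref{modelgivesdual} --- is $Z\bigl(\mathfrak{S}_{w_0w^{-1},\,w_0}(\boldsymbol{x},\boldsymbol{y})\bigr) = \mathcal{H}^{(\beta,q)}_w(\boldsymbol{x};\boldsymbol{y})$. With that correction your argument closes on the spot: the base case is unchanged, and the inductive step is $\mathcal{G}^{(\beta,q)}_{w_0(s_iw)^{-1},\,w_0}=\twid{\pi}_i^{-1}\mathcal{G}^{(\beta,q)}_{w_0w^{-1},\,w_0}=\twid{\pi}_i^{-1}\mathcal{H}^{(\beta,q)}_w=\mathcal{H}^{(\beta,q)}_{s_iw}$, with the second subscript fixed at $w_0$ throughout and no extra identity needed. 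That is precisely the paper's proof, which you had essentially reproduced by the end of your fourth paragraph before taking the detour.
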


\begin{proof}
Note that the base case for $w_0$ is the same as in Proposition \ref{latticebasecase}. Then, the length-increasing operator acting on the columns on the model is  $\twid{\pi}_i^{-1}$, which gives us \[Z\left(\mathfrak{S}_{w_0w^{-1}s_i,w_0}(\boldsymbol{x},\boldsymbol{y})\right) = Z\left(\mathfrak{S}_{w_0(s_iw)^{-1},w_0}(\boldsymbol{x},\boldsymbol{y})\right).\]
The resulting action on the permutation $w$ is length-decreasing and acts on the left, which matches the action of $\twid{\pi}_i^{-1}$ on the dual polynomials from Definition \ref{dualdef}.
\end{proof}

\subsection{Interesting Specializations when $q=0$}

Now we are ready to show that our six-vertex model simultaneously $q$-deforms both the $\beta$-Grothendieck and dual $\beta$-Grothendieck polynomials. Along with this, we will investigate left and right actions of Demazure operators on our model. In the case of the $\mathcal{G}^{(\beta)}$, these actions will follow from the specialization result, while for the $\mathcal{H}^{(\beta)}$ the reverse is true. The reason for this discrepancy was just a convention choice. Our Drinfeld twist was chosen to give the $\beta$-Grothendieck polynomials immediately, while the inverse twist would have given the dual $\beta$-Grothendieck polynomials more directly.

\begin{proposition}
\label{hudson}
Upon setting $q = 0$, the partition function of the chromatic model specializes to the double $\beta$-Grothendieck polynomials in two ways, namely 
\[Z(\left(\mathfrak{S}_{1,w}(\boldsymbol{x},\boldsymbol{y})\right))|_{q=0}=  \mathcal{G}^{(\beta)}_w(\boldsymbol{x};\boldsymbol{y}) = Z(\left(\mathfrak{S}_{1,w^{-1}}(\boldsymbol{y},\boldsymbol{x})\right))|_{q=0} .\] 
\end{proposition}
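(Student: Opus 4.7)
My plan is to split the proof according to the two claimed equalities. For the first, $Z(\mathfrak{S}_{1,w}(\boldsymbol{x},\boldsymbol{y}))|_{q=0} = \mathcal{G}^{(\beta)}_w(\boldsymbol{x};\boldsymbol{y})$, I will invoke Theorem~\ref{modelgivesgrotpolys} to rewrite the left-hand side as $\mathcal{G}^{(\beta,q)}_w(\boldsymbol{x};\boldsymbol{y})|_{q=0}$, and then verify that the recursion in Definition~\ref{divdiffdef} specializes at $q=0$ to Fomin-Kirillov's original recursion for $\mathcal{G}^{(\beta)}_w$. Concretely, the base case collapses: $\mathcal{G}^{(\beta,q)}_{w_0}(\boldsymbol{x};\boldsymbol{y})|_{q=0} = \prod_{i+j \leq n}(x_i \oplus y_j)$, since the factor $(1-q^2)^n$ and each factor $(1-q^2(1+\beta(x_i\oplus y_j)))$ both become $1$; and the operator specializes cleanly as $\pi_i^{(\beta,q)}|_{q=0} = \partial_i^x(1+\beta x_{i+1}) = \pi_i^{(\beta)}$, using the identity of operators stated just after~(\ref{isobaricdd}). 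A simple induction on $\ell(w_0) - \ell(w)$ then finishes this first part.

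For the second equality, I will apply the first with $w \mapsto w^{-1}$ and the arguments $(\boldsymbol{x},\boldsymbol{y})$ interchanged to obtain $Z(\mathfrak{S}_{1,w^{-1}}(\boldsymbol{y},\boldsymbol{x}))|_{q=0} = \mathcal{G}^{(\beta)}_{w^{-1}}(\boldsymbol{y};\boldsymbol{x})$, reducing the claim to the classical symmetry $\mathcal{G}^{(\beta)}_w(\boldsymbol{x};\boldsymbol{y}) = \mathcal{G}^{(\beta)}_{w^{-1}}(\boldsymbol{y};\boldsymbol{x})$ of double $\beta$-Grothendieck polynomials. I plan to establish this via the pipe dream formula~(\ref{pipedreamequation}) combined with the transposition map: reflecting an $n \times n$ pipe dream across its main diagonal is a bijection $PD(w) \leftrightarrow PD(w^{-1})$ that sends a tile at position $(i,j)$ to a tile of the same type at $(j,i)$, preserves the excess statistic, and, using the commutativity $x_i \oplus y_j = y_j \oplus x_i$ of the formal group law, sends the weight $\prod(x_i \oplus y_j)$ of $P \in PD(w)$ to the weight $\prod(y_j \oplus x_i)$ of the transposed pipe dream evaluated at $(\boldsymbol{y};\boldsymbol{x})$. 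Summing then identifies the two generating functions.

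The main obstacle is a subtle logical-order issue: the pipe dream formula~(\ref{pipedreamequation}) is used in this argument but the paper's own independent proof of it only appears later, via Proposition~\ref{pipedreamproposition} in Section~\ref{pipedreamcorrespondencesection}, so strictly speaking my approach would introduce a mild forward reference. To avoid this, I could alternatively cite the symmetry $\mathcal{G}^{(\beta)}_w(\boldsymbol{x};\boldsymbol{y}) = \mathcal{G}^{(\beta)}_{w^{-1}}(\boldsymbol{y};\boldsymbol{x})$ directly from~\cite{LS-symmetry} and extend it to the double case by the standard argument of matching Demazure recursions from the manifestly symmetric base case $\mathcal{G}^{(\beta)}_{w_0}(\boldsymbol{x};\boldsymbol{y}) = \prod_{i+j \leq n}(x_i \oplus y_j)$, which is visibly invariant under $(\boldsymbol{x},i) \leftrightarrow (\boldsymbol{y},j)$.
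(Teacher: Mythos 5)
Your proof follows the same structure as the paper's: invoke Theorem~\ref{modelgivesgrotpolys}, observe that the $q=0$ specialization of $\mathcal{G}^{(\beta,q)}_w$ recovers the Fomin-Kirillov polynomial, and reduce the second equality to the symmetry $\mathcal{G}^{(\beta)}_w(\boldsymbol{x};\boldsymbol{y})=\mathcal{G}^{(\beta)}_{w^{-1}}(\boldsymbol{y};\boldsymbol{x})$. The only difference is that the paper simply cites the appendix of~\cite{Hudson} for this symmetry (Equation~(\ref{hudsoneq})), while you propose to prove it yourself.

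Your pipe dream argument for the symmetry is correct modulo a small subtlety you should be aware of beyond the forward-reference issue you already flagged: transposition relabels pipes (pipe $i$, originating in row $i$, maps to the pipe originating in row $w(i)$), so the reduction procedure of Section~\ref{grotpoly}, which processes pipes in order of their label, does not manifestly commute with transposition. What saves the argument is that the two natural reduction conventions (northeast vs.\ southwest crossing) are equivalent at the level of the generating function, as discussed in \cite{LenartRobinsonSottile} and used again in Section~\ref{pipedreamcorrespondencesection}; still, invoking this equivalence quietly adds another dependency. Your second alternative, running a Demazure-operator recursion from the symmetric base case, also requires knowing that a left action by $y$-divided-difference operators implements $w\mapsto s_iw$, which in the paper is Corollary~\ref{canusetheYstoo!} and is itself derived \emph{from} Proposition~\ref{hudson} — to avoid circularity you would need to ground it instead in the column Yang-Baxter equation via Lemma~\ref{permutationdecrease}. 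Given these entanglements, the paper's choice to simply cite~\cite{Hudson} for~(\ref{hudsoneq}) is the cleaner option in this logical order.
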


\begin{proof}
The first equality follows from Theorem \ref{modelgivesgrotpolys} and the definition of $\mathcal{G}_w^{(\beta,q)}$ as a $q$-deformation given in Definition \ref{grotpoly}. The second equality follows from first applying the following identity for $\beta$-Grothendieck polynomials (see for example the Appendix of \cite{Hudson})
\begin{align}\label{hudsoneq}
    \mathcal{G}_w^{(\beta)}(\boldsymbol{x}; \boldsymbol{y}) = \mathcal{G}_{w^{-1}}^{(\beta)}(\boldsymbol{y}; \boldsymbol{x})
\end{align}
then again applying Theorem \ref{modelgivesgrotpolys} under the specialization $q=0$.
\end{proof}

Combining Lemma \ref{permutationdecrease} with Proposition \ref{hudson}, we obtain an alternate recursive definition for the $\beta$-Grothendieck polynomials via left actions.

\begin{corollary}\label{canusetheYstoo!}
If we  consider $\pi_{i,y}^{(\beta)} := (\beta^2q^2)\twid{\pi}_i^{(\beta,q)}|_{q=0}$ to be the $\beta$-deformed divided difference operator acting on the $y$ variables (as opposed to the $x$-variable definition in Section \ref{grotpoly}), we have
\[\mathcal{G}^{(\beta)}_{s_iw}(\boldsymbol{x};\boldsymbol{y}) = \pi_{i,y}^{(\beta)} (\mathcal{G}^{(\beta)}_w(\boldsymbol{x};\boldsymbol{y}))\]
whenever $\ell(s_iw) = \ell(w) - 1$.
\end{corollary}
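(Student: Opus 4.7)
The plan is to combine the symmetry equality from Proposition \ref{hudson} with the length-decreasing recursion from Definition \ref{divdiffdef} (at $q=0$). The corollary asserts a left action on $w$, which I will realize by transferring through the inversion symmetry \eqref{hudsoneq} to a right action on $w^{-1}$, and then back.

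First, I would observe that $\ell(s_iw) = \ell(w) - 1$ is equivalent to $\ell(w^{-1}s_i) = \ell(w^{-1}) - 1$, since $\ell(u) = \ell(u^{-1})$ for every $u \in S_n$ and $(s_iw)^{-1} = w^{-1}s_i$. Applying \eqref{hudsoneq} with $w$ replaced by $s_iw$ gives
\[
\mathcal{G}^{(\beta)}_{s_iw}(\boldsymbol{x}; \boldsymbol{y}) = \mathcal{G}^{(\beta)}_{w^{-1}s_i}(\boldsymbol{y}; \boldsymbol{x}).
\]
Since $\ell(w^{-1}s_i) = \ell(w^{-1}) - 1$, the standard divided difference recursion (Definition \ref{divdiffdef} at $q=0$), applied in the tuple $\boldsymbol{y}$ which here plays the role of the first variable set, rewrites the right-hand side as $\pi^{(\beta)}_{i,y}\, \mathcal{G}^{(\beta)}_{w^{-1}}(\boldsymbol{y};\boldsymbol{x})$, where $\pi^{(\beta)}_{i,y}$ denotes the operator $\partial_i^y(1+\beta y_{i+1})$ acting on the $\boldsymbol{y}$ variables. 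A second application of \eqref{hudsoneq} replaces $\mathcal{G}^{(\beta)}_{w^{-1}}(\boldsymbol{y};\boldsymbol{x})$ by $\mathcal{G}^{(\beta)}_{w}(\boldsymbol{x};\boldsymbol{y})$, and since $\pi^{(\beta)}_{i,y}$ acts only on $\boldsymbol{y}$ and is unaffected by the name-change of $\boldsymbol{x}$, we conclude
\[
\mathcal{G}^{(\beta)}_{s_iw}(\boldsymbol{x};\boldsymbol{y}) = \pi^{(\beta)}_{i,y}\, \mathcal{G}^{(\beta)}_{w}(\boldsymbol{x};\boldsymbol{y}).
\]

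It then only remains to verify that this $\pi^{(\beta)}_{i,y}$ matches the expression $(\beta^2q^2)\twid{\pi}^{(\beta,q)}_i\big|_{q=0}$ defined in the statement. This is immediate from the explicit formula for $\twid{\pi}_i^{(\beta,q)}$ recorded in Section \ref{grotpoly}: multiplication by $\beta^2q^2$ clears the prefactor $1/(\beta^2q^2)$ in its definition, leaving $\partial_i^y(1+\beta y_{i+1}) - q^2(1+\beta y_{i+1})\partial_i^y$, and setting $q=0$ kills the second term and yields exactly $\partial_i^y(1+\beta y_{i+1})$.

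As an alternative route that stays closer to the lattice model, one can instead begin from the second equality in Proposition \ref{hudson}, $\mathcal{G}^{(\beta)}_w(\boldsymbol{x};\boldsymbol{y}) = Z(\mathfrak{S}_{1,w^{-1}}(\boldsymbol{y},\boldsymbol{x}))|_{q=0}$, and apply the first part of Lemma \ref{permutationdecrease} in the variables $(\boldsymbol{y},\boldsymbol{x})$ to the length-decreasing move $w^{-1} \mapsto w^{-1}s_i$; the $\pi_i^{(\beta,q)}$ operator produced there is automatically the one acting on $\boldsymbol{y}$, and taking $q \to 0$ gives the same identity. Either way, there is no substantive obstacle; the only care required is bookkeeping of which tuple of variables each operator acts on, and confirming that the normalizing factor $\beta^2q^2$ in the definition of $\pi^{(\beta)}_{i,y}$ is precisely what is needed to cancel the $1/(\beta^2q^2)$ in $\twid{\pi}^{(\beta,q)}_i$ before the $q=0$ specialization is taken.
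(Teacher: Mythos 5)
Your proof is correct. The paper's stated proof is a single sentence pointing to Lemma \ref{permutationdecrease} combined with Proposition \ref{hudson}, which is precisely your alternative (lattice-model) route: start from $\mathcal{G}^{(\beta)}_w(\boldsymbol{x};\boldsymbol{y}) = Z(\mathfrak{S}_{1,w^{-1}}(\boldsymbol{y},\boldsymbol{x}))|_{q=0}$ and apply the row train argument with the row parameters being $\boldsymbol{y}$, so that the $\pi_i^{(\beta,q)}$ produced by Lemma \ref{permutationdecrease} acts on $\boldsymbol{y}$. Your primary route is a mild variant that bypasses the lattice model entirely, working purely at the level of the divided-difference recursion: apply Definition \ref{divdiffdef} at $q=0$ with the variable tuples interchanged, and transport the right action on $w^{-1}$ through the symmetry \eqref{hudsoneq} to a left action on $w$. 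This makes explicit what the paper's one-line justification leaves implicit, and it has the small advantage of being self-contained in the polynomial definitions without invoking solvability. The normalization check at the end, confirming $\pi_{i,y}^{(\beta)} = (\beta^2 q^2)\twid{\pi}_i^{(\beta,q)}|_{q=0} = \partial_i^y(1+\beta y_{i+1})$, is correct and is exactly the bookkeeping needed to tie your operator to the one named in the corollary's statement.
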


Next, we prove the corresponding results for the dual polynomials. In doing so, we justify the name choice: our dual polynomials specialize when $q=0$ to the dual $\beta$-Grothendieck polynomials.

\begin{theorem}
We have
\[ \mathcal{H}^{(\beta,q)}_w(\boldsymbol{x};\boldsymbol{y})|_{q=0} = \mathcal{H}^{(\beta)}_w(\boldsymbol{x};\boldsymbol{y}).
\]
Then, for any $w\in S_n$, there are two boundary system conditions on the chromatic model that give the dual double $\beta$-Grothendieck polynomials. That is,
\[Z(\left(\mathfrak{S}_{w_0w^{-1},w_0}(\boldsymbol{x},\boldsymbol{y})\right))  = \mathcal{H}^{(\beta)}_w(\boldsymbol{x};\boldsymbol{y}) = Z(\left(\mathfrak{S}_{w_0w,w_0}(\boldsymbol{y},\boldsymbol{x})\right)).\]
\label{modelgivesdual}
\end{theorem}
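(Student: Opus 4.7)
The strategy is to first establish the specialization $\mathcal{H}^{(\beta,q)}_w|_{q=0} = \mathcal{H}^{(\beta)}_w$ and then deduce the two partition function identities as essentially immediate consequences of Proposition~\ref{subprop:modelgivesgrotpolys} together with an inversion symmetry of the dual $\beta$-Grothendieck polynomials.

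Write $\twid{\mathcal{H}}_w := \mathcal{H}^{(\beta,q)}_w|_{q=0}$. The base case is $\twid{\mathcal{H}}_{w_0} = \mathcal{G}^{(\beta)}_{w_0} = \mathcal{H}^{(\beta)}_{w_0}$, and the explicit formula for $\twid{\pi}_i^{-1}$ specializes at $q=0$ to $(1+\beta y_i)\partial_i^y$, yielding the recursion $\twid{\mathcal{H}}_{s_iw} = (1+\beta y_i)\partial_i^y\,\twid{\mathcal{H}}_w$ whenever $\ell(s_iw) = \ell(w)-1$. This operator acts on the $\boldsymbol{y}$-variables by left multiplication on $w$, whereas the Lascoux-Sch\"utzenberger operator $\mu_i = (1+\beta x_i)\partial_i^x$ from Definition~\ref{dual-polynomials-definition} acts on the $\boldsymbol{x}$-variables by right multiplication. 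The bridge between the two is the involution $\mathrm{sw}\colon f(\boldsymbol{x},\boldsymbol{y}) \mapsto f(\boldsymbol{y},\boldsymbol{x})$ composed with $w \mapsto w^{-1}$. Concretely, set $\chi_w(\boldsymbol{x},\boldsymbol{y}) := \twid{\mathcal{H}}_{w^{-1}}(\boldsymbol{y},\boldsymbol{x})$; applying $\mathrm{sw}$ to the $\twid{\mathcal{H}}$-recursion, substituting $w \mapsto w^{-1}$, and using $(ws_i)^{-1}=s_iw^{-1}$ shows that $\chi_{ws_i} = \mu_i\chi_w$ whenever $\ell(ws_i) = \ell(w)-1$. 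Since the product formula for $\mathcal{G}^{(\beta)}_{w_0}$ is symmetric under $\boldsymbol{x} \leftrightarrow \boldsymbol{y}$, we also have $\chi_{w_0} = \mathcal{G}^{(\beta)}_{w_0} = \mathcal{H}^{(\beta)}_{w_0}$, so Definition~\ref{dual-polynomials-definition} forces $\chi_w = \mathcal{H}^{(\beta)}_w$; that is, $\twid{\mathcal{H}}_{w^{-1}}(\boldsymbol{y},\boldsymbol{x}) = \mathcal{H}^{(\beta)}_w(\boldsymbol{x},\boldsymbol{y})$. To upgrade this to $\twid{\mathcal{H}}_w = \mathcal{H}^{(\beta)}_w$ I invoke the inversion symmetry $\mathcal{H}^{(\beta)}_w(\boldsymbol{x},\boldsymbol{y}) = \mathcal{H}^{(\beta)}_{w^{-1}}(\boldsymbol{y},\boldsymbol{x})$, which is read off immediately from the expansion $\mathcal{H}^{(\beta)}_w = \sum_{v\geq w}\beta^{\ell(v)-\ell(w)}\mathcal{G}^{(\beta)}_v$ recorded after Definition~\ref{dual-polynomials-definition}, identity~\eqref{hudsoneq} for each summand, and the invariance of Bruhat order and length under inversion.

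For the two partition function identities, Proposition~\ref{subprop:modelgivesgrotpolys} together with the specialization just proved gives $Z(\mathfrak{S}_{w_0w^{-1},w_0}(\boldsymbol{x},\boldsymbol{y})) = \mathcal{H}^{(\beta,q)}_w|_{q=0} = \mathcal{H}^{(\beta)}_w(\boldsymbol{x};\boldsymbol{y})$ directly. For the second form, replace $w$ by $w^{-1}$ in this identity and interchange $\boldsymbol{x} \leftrightarrow \boldsymbol{y}$ to obtain $Z(\mathfrak{S}_{w_0w,w_0}(\boldsymbol{y},\boldsymbol{x})) = \mathcal{H}^{(\beta)}_{w^{-1}}(\boldsymbol{y};\boldsymbol{x}) = \mathcal{H}^{(\beta)}_w(\boldsymbol{x};\boldsymbol{y})$, applying the inversion symmetry once more at the last step. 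The main obstacle is the careful bookkeeping required to translate the left-$\boldsymbol{y}$ recursion for $\twid{\mathcal{H}}_w$ into the right-$\boldsymbol{x}$ recursion defining $\mathcal{H}^{(\beta)}_w$ via $\mathrm{sw}$ and $w\mapsto w^{-1}$, together with first verifying the inversion symmetry of $\mathcal{H}^{(\beta)}$; once these are in hand, the rest of the argument is formal manipulation of the recursive definitions.
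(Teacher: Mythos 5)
Your proof is correct, but it takes a genuinely different path from the paper's. The paper proves Theorem~\ref{modelgivesdual} by first establishing Lemma~\ref{dualtoregular}, an explicit lattice-model state bijection giving the formula
\[
\mathcal{H}^{(\beta,q)}_w(\boldsymbol{x};\boldsymbol{y})\big|_{q=0}
= (-1)^{\ell(w)}\prod_{i+j\leq n}(1+\beta(x_i\oplus y_j))\,\mathcal{G}^{(\beta)}_w(\ominus\boldsymbol{x};\ominus\boldsymbol{y}),
\]
and then derives from it (via~\eqref{hudsoneq}) both the inversion symmetry of $\mathcal{H}^{(\beta,q)}|_{q=0}$ and the right-$\boldsymbol{x}$ recursion $\mu^{(\beta)}_{i,x}$ in Lemma~\ref{Hidentity}, which is then matched to Definition~\ref{dual-polynomials-definition}. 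You bypass Lemma~\ref{dualtoregular} entirely: you get the matching recursion by directly conjugating $\twid{\pi}_i^{-1}|_{q=0}=(1+\beta y_i)\partial_i^y$ by the $\boldsymbol{x}\leftrightarrow\boldsymbol{y}$ swap combined with $w\mapsto w^{-1}$, and you get the inversion symmetry $\mathcal{H}^{(\beta)}_w(\boldsymbol{x},\boldsymbol{y})=\mathcal{H}^{(\beta)}_{w^{-1}}(\boldsymbol{y},\boldsymbol{x})$ by applying~\eqref{hudsoneq} term-by-term to the Bruhat-order expansion $\mathcal{H}^{(\beta)}_w=\sum_{w_0\geq v\geq w}\beta^{\ell(v)-\ell(w)}\mathcal{G}^{(\beta)}_v$ recorded after Definition~\ref{dual-polynomials-definition}, using invariance of length and Bruhat order under $v\mapsto v^{-1}$ and $w_0^{-1}=w_0$. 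I verified the bookkeeping: $(ws_i)^{-1}=s_iw^{-1}$ preserves the length hypothesis, and the conjugation sends $\mu_i=(1+\beta x_i)\partial_i^x$ to $(1+\beta y_i)\partial_i^y$, so $\chi_w:=\twid{\mathcal{H}}_{w^{-1}}(\boldsymbol{y},\boldsymbol{x})$ does satisfy the defining recursion and base case for $\mathcal{H}^{(\beta)}_w$. Your deduction of the second partition-function identity by replacing $w\mapsto w^{-1}$, swapping variables, and applying the inversion symmetry once more is also clean and avoids the paper's further appeal to Lemma~\ref{dualtoregular}. The trade-off: your route is shorter and more elementary for this particular theorem but leans on the Bruhat-order expansion of $\mathcal{H}^{(\beta)}$, which the paper states without proof; the paper's route develops the explicit $\ominus$-formula of Lemma~\ref{dualtoregular}, which has independent content and a nice lattice-model interpretation.
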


Proving this theorem requires two key lemmas. The first lemma is a slight generalization of an identity between Grothendieck polynomials ($\beta = -1$) and their duals proven by Lenart, Robinson, and Sottile \cite[Corollary~6.26]{LenartRobinsonSottile}, inspired by a similar identity proven by Lascoux \cite{Lascoux-anneau}. The second lemma is an analogue of (\ref{hudsoneq}) for the dual $(\beta,q)$-Grothendieck polynomials under $q=0$, which provides us with a second recursive formula for these polynomials.
Recall that our formal group law notation implies that $\ominus x_j:=\frac{-x_j}{1+\beta x_j}$.

\begin{lemma}\label{dualtoregular}
Specializing $q=0$, the $(\beta,q)$-Grothendieck polynomials satisfy the following relation with the $\beta$-Grothendieck polynomials:
\begin{align}\mathcal{H}^{(\beta,q)}_w(\boldsymbol{x};\boldsymbol{y})|_{q=0} = (-1)^{\ell(w)}\prod_{i+j\leq n} (1+\beta(x_i\oplus y_j))\cdot \mathcal{G}^{(\beta)}_w(\ominus \boldsymbol{x};\ominus \boldsymbol{y}). \label{relate-G-H-equation}\end{align}
\end{lemma}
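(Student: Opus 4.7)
My plan is to proceed by downward induction on $\ell(w)$, beginning with $w=w_0$ and using the recursion $\mathcal{H}^{(\beta,q)}_{s_iw}=\widetilde{\pi}_i^{-1}\mathcal{H}^{(\beta,q)}_w$ of Definition \ref{dualdef} to descend. At $q=0$ one has $\widetilde{\pi}_i^{-1}|_{q=0}=(1+\beta y_i)\partial_i^y$ and the second product in the formula for $\mathcal{H}^{(\beta,q)}_{w_0}$ collapses to $1$, so the base case $w=w_0$ reduces to the identity
\[
\prod_{i+j\leq n}(x_i\oplus y_j)=(-1)^{\binom{n}{2}}\prod_{i+j\leq n}(1+\beta(x_i\oplus y_j))\cdot\prod_{i+j\leq n}(\ominus x_i\oplus\ominus y_j),
\]
which I would verify using $\ominus u\oplus\ominus v=\ominus(u\oplus v)=-(u\oplus v)/(1+\beta(u\oplus v))$ together with $\#\{(i,j):i+j\leq n\}=\binom{n}{2}=\ell(w_0)$.

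For the inductive step, given a simple reflection $s_i$ with $\ell(s_iw)=\ell(w)-1$, I would combine the $\mathcal{H}^{(\beta,q)}$-recursion with the analogous recursion $\mathcal{G}^{(\beta)}_{s_iw}=\pi^{(\beta)}_{i,y}\mathcal{G}^{(\beta)}_w$ in the $\boldsymbol{y}$-variables (Corollary \ref{canusetheYstoo!}). Since $(-1)^{\ell(s_iw)}=-(-1)^{\ell(w)}$, the inductive step reduces to establishing the operator identity
\[
(1+\beta y_i)\partial_i^y\bigl[P\cdot F(\ominus\boldsymbol{x};\ominus\boldsymbol{y})\bigr]=-P\cdot[\pi^{(\beta)}_{i,y}F](\ominus\boldsymbol{x};\ominus\boldsymbol{y})
\]
for every polynomial $F$, where $P=\prod_{i+j\leq n}(1+\beta(x_i\oplus y_j))$. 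Three ingredients combine to prove this: the twisted Leibniz rule $\partial_i^y(AB)=\partial_i^y(A)\cdot B+s_i(A)\cdot\partial_i^y(B)$; the substitution rule $\partial_i^y[H(\ominus\boldsymbol{y})]=-[\partial_i^y H](\ominus\boldsymbol{y})/[(1+\beta y_i)(1+\beta y_{i+1})]$ (which follows from $\ominus y_i-\ominus y_{i+1}=(y_{i+1}-y_i)/[(1+\beta y_i)(1+\beta y_{i+1})]$); and the two algebraic identities $s_i(P)/P=(1+\beta y_{i+1})/(1+\beta y_i)$ and $\partial_i^y(P)=\beta P/(1+\beta y_i)$.

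The first $P$-identity is a telescoping computation: the only factor of $P$ involving $y_i$ but not present in the corresponding $y_{i+1}$ slot is $(1+\beta(x_{n-i}\oplus y_i))$, and swapping $y_i\leftrightarrow y_{i+1}$ replaces it with $(1+\beta(x_{n-i}\oplus y_{i+1}))$, whose ratio collapses to $(1+\beta y_{i+1})/(1+\beta y_i)$ after the factor of $(1+\beta x_{n-i})$ cancels; the second identity is then immediate. The main obstacle is bookkeeping the twists: the substitution $\boldsymbol{y}\mapsto\ominus\boldsymbol{y}$ rescales $\partial_i^y$, and the asymmetry of $P$ produces an extra Leibniz term when expanding $\partial_i^y(P\cdot F(\ominus\boldsymbol{y}))$. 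These conspire exactly: $s_i(P)/P$ cancels the substitution-rescaling factor, and $\partial_i^y(P)$ produces the counter-term that absorbs the $-\beta F$ piece in the expansion $\pi^{(\beta)}_{i,y}F=(1+\beta y_i)\partial_i^y F-\beta F$. Both sides then reduce to $\beta P\cdot F(\ominus\boldsymbol{y})-P(\partial_i^y F)(\ominus\boldsymbol{y})/(1+\beta y_i)$, closing the induction.
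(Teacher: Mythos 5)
Your proof is correct, and it takes a genuinely different route from the paper. The paper proves Lemma \ref{dualtoregular} by a bijection-of-states argument on the lattice model: it flips a state of $\mathfrak{S}_{1,w^{-1}}(\boldsymbol{y};\boldsymbol{x})$ across the antidiagonal, reverses the color ordering, and tracks how each of the five vertex weights transforms under the composite symmetry together with the substitution $x\mapsto\ominus x$, $y\mapsto\ominus y$ (see Figure \ref{pipeweights1mod}), then collects the sign and $(1+\beta(x_i\oplus y_j))$ discrepancies. You instead do downward induction on $\ell(w)$, reducing everything to the purely operator-theoretic identity
\[
(1+\beta y_i)\,\partial_i^y\bigl[P\cdot F(\ominus\boldsymbol{y})\bigr]=-P\cdot\bigl[\pi^{(\beta)}_{i,y}F\bigr](\ominus\boldsymbol{y}),
\qquad P=\textstyle\prod_{a+b\le n}(1+\beta(x_a\oplus y_b)),
\]
via the twisted Leibniz rule, the chain-rule factor for $\partial_i^y$ under $\boldsymbol{y}\mapsto\ominus\boldsymbol{y}$, and the two $P$-identities. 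I verified all the ingredients: $\ominus u\oplus\ominus v=\ominus(u\oplus v)$ and the count $\#\{(i,j):i+j\le n\}=\binom{n}{2}=\ell(w_0)$ give the base case; $s_i(P)/P=(1+\beta y_{i+1})/(1+\beta y_i)$ (after the paired factors with indices $a\le n-i-1$ cancel in the ratio, leaving only the $a=n-i$ factor, which simplifies upon $(1+\beta x_{n-i})$-cancellation) and hence $\partial_i^y(P)=\beta P/(1+\beta y_i)$; the substitution rule follows from $\ominus y_i-\ominus y_{i+1}=(y_{i+1}-y_i)/[(1+\beta y_i)(1+\beta y_{i+1})]$; and both sides indeed simplify to $\beta P F(\ominus\boldsymbol{y})-\tfrac{P}{1+\beta y_i}[\partial_i^y F](\ominus\boldsymbol{y})$, using $1+\beta(\ominus y_i)=1/(1+\beta y_i)$ on the right. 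Your approach buys a lattice-model-free proof (modulo the citation of Corollary \ref{canusetheYstoo!}, which itself could be derived directly from the involution (\ref{hudsoneq})), and your operator identity actually holds for arbitrary $F$, which is slightly more than is needed; the paper's approach buys a conceptual picture in which the relation between $\mathcal{H}$ and $\mathcal{G}$ is visibly a diagram symmetry of the model.
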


\begin{proof}
First, note that there is a correspondence of states between the systems $\mathfrak{S}_{w_0w^{-1},w_0}(\boldsymbol{x};\boldsymbol{y})$ and $\mathfrak{S}_{1,w^{-1}}(\boldsymbol{y};\boldsymbol{x}).$ Given a state for the system $\mathfrak{S}_{1,w^{-1}}(\boldsymbol{y};\boldsymbol{x}),$ flip over the diagonal to obtain a state for $\mathfrak{S}_{w^{-1},1}(\boldsymbol{x};\boldsymbol{y}).$ Then, reverse the order of the colors, which corresponds to multiplying the boundary conditions on the left by $w_0$, and relabel internal paths accordingly, which obtains a state for $\mathfrak{S}_{w_0w^{-1},w_0}(\boldsymbol{x};\boldsymbol{y}).$

To maintain the same partition function, we must apply analogous transformations to the set of weights: flipping over the diagonal swaps the weights for $\tt{b}_1$ and $\tt{b}_2$, but then reversing the order of the colors swaps them back, in addition to swapping $\tt{c}_1$ and $\tt{c}_2$. To account for the swap of $\tt{c}_1$ and $\tt{c}_2$, we substitute in $x\mapsto \ominus x, y\mapsto \ominus y$. Note that $(\ominus x) \oplus (\ominus y) = -\frac{x\oplus y}{1 + \beta (x \oplus y)}$, so up to denominator, this substitution switches the type $\tt{c}$ vertices back, while only affecting other vertices by a sign in $\tt{b}_1$ (See Figure \ref{pipeweights1mod}).

\begin{figure}[h]
\centering
\scalebox{.95}{
$
\begin{array}{c@{\hspace{10pt}}c@{\hspace{10pt}}c@{\hspace{10pt}}c@{\hspace{10pt}}c@{\hspace{25pt}}}
\toprule
\tt{a}&\tt{b}_1&\tt{b_2}&\tt{c}_1&\tt{c}_2\\
\midrule
\begin{tikzpicture}
\coordinate (a) at (-.75, 0);
\coordinate (b) at (0, .75);
\coordinate (c) at (.75, 0);
\coordinate (d) at (0, -.75);
\coordinate (aa) at (-.75,.5);
\coordinate (cc) at (.75,.5);
\draw[line width=0.5mm, violet] (a)--(0,0);
\draw[line width=0.6mm, violet] (b)--(0,0);
\draw[line width=0.5mm, violet] (c)--(0,0);
\draw[line width=0.6mm, violet] (d)--(0,0);
\draw[line width=0.5mm, violet,fill=white] (a) circle (.25);
\draw[line width=0.5mm, violet,fill=white] (b) circle (.25);
\draw[line width=0.5mm, violet, fill=white] (c) circle (.25);
\draw[line width=0.5mm, violet, fill=white] (d) circle (.25);
\node at (0,1) { };
\node at (a) {$c$};
\node at (b) {$c$};
\node at (c) {$c$};
\node at (d) {$c$};
\end{tikzpicture}
&
\begin{tikzpicture}
\coordinate (a) at (-.75, 0);
\coordinate (b) at (0, .75);
\coordinate (c) at (.75, 0);
\coordinate (d) at (0, -.75);
\coordinate (aa) at (-.75,.5);
\coordinate (cc) at (.75,.5);
\draw[line width=0.5mm, blue] (a)--(c);
\draw[line width=0.6mm, red] (b)--(d);
\draw[line width=0.5mm,blue,fill=white] (a) circle (.25);
\draw[line width=0.5mm,blue,fill=white] (c) circle (.25);
\draw[line width=0.5mm,red,fill=white] (b) circle (.25);
\draw[line width=0.5mm,red,fill=white] (d) circle (.25);
\node at (0,1) { };
\node at (a) {$b$};
\node at (b) {$a$};
\node at (c) {$b$};
\node at (d) {$a$};
\end{tikzpicture}
&
\begin{tikzpicture}
\coordinate (a) at (-.75, 0);
\coordinate (b) at (0, .75);
\coordinate (c) at (.75, 0);
\coordinate (d) at (0, -.75);
\coordinate (aa) at (-.75,.5);
\coordinate (cc) at (.75,.5);
\draw[line width=0.5mm, red] (a)--(c);
\draw[line width=0.6mm, blue] (b)--(d);
\draw[line width=0.5mm,red,fill=white] (a) circle (.25);
\draw[line width=0.5mm,red,fill=white] (c) circle (.25);
\draw[line width=0.5mm,blue,fill=white] (b) circle (.25);
\draw[line width=0.5mm,blue,fill=white] (d) circle (.25);
\node at (0,1) { };
\node at (a) {$a$};
\node at (b) {$b$};
\node at (c) {$a$};
\node at (d) {$b$};
\end{tikzpicture}
& \begin{tikzpicture}
\coordinate (a) at (-.75, 0);
\coordinate (b) at (0, .75);
\coordinate (c) at (.75, 0);
\coordinate (d) at (0, -.75);
\coordinate (aa) at (-.75,.5);
\coordinate (cc) at (.75,.5);
\draw[line width=0.5mm, blue](a)--(0,0)--(b);
\draw[line width=0.5mm,blue,fill=white] (b) circle (.25);
\draw[line width=0.5mm,blue,fill=white] (a) circle (.25);
\draw[line width=0.5mm, red](d)--(0,0)--(c);
\draw[line width=0.5mm,red,fill=white] (c) circle (.25);
\draw[line width=0.5mm,red,fill=white] (d) circle (.25);
\node at (0,1) { };
\node at (a) {$b$};
\node at (b) {$b$};
\node at (c) {$a$};
\node at (d) {$a$};
\end{tikzpicture}
& \begin{tikzpicture}
\coordinate (a) at (-.75, 0);
\coordinate (b) at (0, .75);
\coordinate (c) at (.75, 0);
\coordinate (d) at (0, -.75);
\coordinate (aa) at (-.75,.5);
\coordinate (cc) at (.75,.5);
\draw[line width=0.5mm, red](a)--(0,0)--(b);
\draw[line width=0.5mm,red,fill=white] (b) circle (.25);
\draw[line width=0.5mm,red,fill=white] (a) circle (.25);
\draw[line width=0.5mm, blue](d)--(0,0)--(c);
\draw[line width=0.5mm,blue,fill=white] (c) circle (.25);
\draw[line width=0.5mm,blue,fill=white] (d) circle (.25);
\node at (0,1) { };
\node at (a) {$a$};
\node at (b) {$a$};
\node at (c) {$b$};
\node at (d) {$b$};
\end{tikzpicture}
\\
  \midrule
 1 & -\frac{x_i\oplus y_j}{1+ \beta (x_i\oplus y_j)} & 0 & \frac{1}{1+\beta (x_i\oplus y_j)} & 1 \\
  \bottomrule
\end{array}
$}
\caption{The Boltzmann weights from Figure \ref{pipeweights1} under the substitution $x\mapsto \ominus x, y\mapsto \ominus y$ and $q=0$.}
\label{pipeweights1mod}
\end{figure}

Comparing states of $\mathfrak{S}_{w_0w^{-1},w_0}(\boldsymbol{x};\boldsymbol{y})$ under this weight system to those under our usual weights of Figure \ref{pipeweights1}, we see that each $\tt{b}_1$ vertex will contribute to the length of $w$ and add a factor of $(-1)$. Furthermore, since we are in the $q=0$ case, every vertex above the antidiagonal is of type $\tt{b}_1$ or $\tt{c}_1$, and each of these weights differs between the systems by a factor of $(1+\beta(x_i\oplus y_j))$. Correcting for these factors, we then obtain the desired equation.
\end{proof}

\begin{lemma}\label{Hidentity}
We have an analogous relation to (\ref{hudsoneq}) for the dual polynomials: that is, $$\mathcal{H}^{(\beta,q)}_w(\boldsymbol{x};\boldsymbol{y})|_{q=0}=\mathcal{H}^{(\beta,q)}_{w^{-1}}(\boldsymbol{y};\boldsymbol{x})|_{q=0}.$$
As in Corollary \ref{canusetheYstoo!} for the $\beta$-Grothendieck polynomials, we obtain a second recursive definition for this specialization $\mathcal{H}^{(\beta,q)}_w(\boldsymbol{x};\boldsymbol{y})|_{q=0}$ via a right action. That is, if we let $\mu^{(\beta)}_{i,x}:= (\beta^2q^2) \pi_i^{-1}|_{q=0}$ be the dual divided difference operators acting on the $x$-variables, we obtain
\[ \mathcal{H}^{(\beta,q)}_{ws_i}(\boldsymbol{x};\boldsymbol{y})|_{q=0} = \mu^{(\beta)}_{i,x}( \mathcal{H}^{(\beta,q)}_w(\boldsymbol{x};\boldsymbol{y})|_{q=0})\]
in the case that $\ell(s_iw) = \ell(w) -1$.
\end{lemma}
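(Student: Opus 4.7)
My plan is to prove the two statements of Lemma~\ref{Hidentity} in order, with the first (the symmetry identity) serving as the engine for the second (the alternate recursion).

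For the symmetry identity, the plan is to simply combine Lemma~\ref{dualtoregular} with Equation~(\ref{hudsoneq}). Starting from
\[
\mathcal{H}^{(\beta,q)}_w(\boldsymbol{x};\boldsymbol{y})|_{q=0} = (-1)^{\ell(w)}\prod_{i+j\le n}(1+\beta(x_i\oplus y_j))\cdot\mathcal{G}^{(\beta)}_w(\ominus\boldsymbol{x};\ominus\boldsymbol{y}),
\]
I would apply (\ref{hudsoneq}) to the right-hand factor to get $\mathcal{G}^{(\beta)}_{w^{-1}}(\ominus\boldsymbol{y};\ominus\boldsymbol{x})$, and then observe that $\ell(w)=\ell(w^{-1})$, that $\oplus$ is commutative so $x_i\oplus y_j = y_j\oplus x_i$, and that the index set $\{(i,j):i+j\le n\}$ is invariant under swapping $i\leftrightarrow j$. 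The resulting expression is exactly $\mathcal{H}^{(\beta,q)}_{w^{-1}}(\boldsymbol{y};\boldsymbol{x})|_{q=0}$ by another application of Lemma~\ref{dualtoregular}. This part is essentially bookkeeping.

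For the alternate recursive definition, I would combine the symmetry with the left-action recursion coming from Definition~\ref{dualdef}. First, specializing the formula for $\twid{\pi}_i^{-1}$ recorded in Section~\ref{grotpoly} shows that $(\beta^2q^2)\twid{\pi}_i^{-1}|_{q=0} = (1+\beta y_i)\partial_i^y$, so Definition~\ref{dualdef} gives, whenever $\ell(s_iw)=\ell(w)-1$,
\[
\mathcal{H}^{(\beta,q)}_{s_iw}(\boldsymbol{x};\boldsymbol{y})|_{q=0} = (1+\beta y_i)\partial_i^y\,\mathcal{H}^{(\beta,q)}_{w}(\boldsymbol{x};\boldsymbol{y})|_{q=0}.
\]
Now I apply the symmetry of part (a) to both sides: the left side becomes $\mathcal{H}^{(\beta,q)}_{w^{-1}s_i}(\boldsymbol{y};\boldsymbol{x})|_{q=0}$ (using $(s_iw)^{-1}=w^{-1}s_i$), and on the right I replace $\mathcal{H}^{(\beta,q)}_{w}(\boldsymbol{x};\boldsymbol{y})|_{q=0}$ by $\mathcal{H}^{(\beta,q)}_{w^{-1}}(\boldsymbol{y};\boldsymbol{x})|_{q=0}$. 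The operator $\partial_i^y$ then acts on what is now the \emph{first} argument of the polynomial. Relabeling the dummy variables $\boldsymbol{y}\leftrightarrow\boldsymbol{x}$ throughout and substituting $w\mapsto w^{-1}$ (noting that the length condition $\ell(s_iw)=\ell(w)-1$ is equivalent to $\ell(ws_i)=\ell(w)-1$ after this inversion) yields the identity
\[
\mathcal{H}^{(\beta,q)}_{ws_i}(\boldsymbol{x};\boldsymbol{y})|_{q=0} = (1+\beta x_i)\partial_i^x\,\mathcal{H}^{(\beta,q)}_{w}(\boldsymbol{x};\boldsymbol{y})|_{q=0} = \mu^{(\beta)}_{i,x}\bigl(\mathcal{H}^{(\beta,q)}_{w}(\boldsymbol{x};\boldsymbol{y})|_{q=0}\bigr),
\]
which is the stated recursion.

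The only delicate point, and the step I expect to have to write most carefully, is the bookkeeping in part (b): keeping track of which slot the differential operator acts on after each application of the symmetry, and verifying that the length conditions translate correctly under $w\leftrightarrow w^{-1}$. Everything else reduces to direct substitution into Lemma~\ref{dualtoregular} and Equation~(\ref{hudsoneq}), together with the explicit formulas for $\twid{\pi}_i^{-1}$ and $\pi_i^{-1}$ at $q=0$ recorded in Section~\ref{grotpoly}.
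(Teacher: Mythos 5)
Your proposal is correct and takes essentially the same route as the paper: prove the symmetry by combining Lemma~\ref{dualtoregular} with (\ref{hudsoneq}), then conjugate the left recursion from Definition~\ref{dualdef} through that symmetry to get the right action by $\mu_{i,x}^{(\beta)}$; the paper only sketches the second half as ``passing the divided difference operators through this identity,'' which you make explicit. One small slip: the prefactor $\beta^2 q^2$ should not appear in your formula $(\beta^2 q^2)\twid{\pi}_i^{-1}|_{q=0} = (1+\beta y_i)\partial_i^y$, since unlike $\pi_i^{-1}$ the operator $\twid{\pi}_i^{-1}$ carries no $1/(\beta^2 q^2)$ factor, and as written the left side vanishes; the correct statement, and the one you actually use in the next display, is $\twid{\pi}_i^{-1}|_{q=0} = (1+\beta y_i)\partial_i^y$.
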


\begin{proof}
Beginning with Lemma \ref{dualtoregular}, apply the identity (\ref{hudsoneq}):
\begin{align*}
\mathcal{H}^{(\beta,q)}_w(\boldsymbol{x};\boldsymbol{y})|_{q=0} &= (-1)^{\ell(w)} \prod_{i+j\leq n} (1+\beta(x_i\oplus y_j))\cdot \mathcal{G}^{(\beta)}_{w^{-1}}(\ominus \boldsymbol{y};\ominus \boldsymbol{x})
\intertext{Then, since the product term is symmetric in $x$ and $y$, and $\ell(w) = \ell(w^{-1})$,}
&= (-1)^{\ell(w^{-1})} \prod_{i+j\leq n} (1+\beta(y_i\oplus x_j))\cdot \mathcal{G}^{(\beta)}_{w^{-1}}(\ominus \boldsymbol{y};\ominus \boldsymbol{x})\\ &= \mathcal{H}^{(\beta,q)}_{w^{-1}}(\boldsymbol{y};\boldsymbol{x})|_{q=0}.
\end{align*}
The new recursion then follows from passing the divided difference operators of Definition \ref{dualdef} through this identity.
\end{proof}

\begin{proof}[Proof of Theorem \ref{modelgivesdual}]
The base case $\mathcal{H}^{(\beta,q)}_{w_0}(\boldsymbol{x};\boldsymbol{y})$ specializes to $\mathcal{H}^{(\beta)}_{w_0}(\boldsymbol{x};\boldsymbol{y})$. Furthermore, note that upon setting $q=0$, our operator $\mu_{i,x}^{(\beta)} = \mu_i$, the operator from Definition \ref{dual-polynomials-definition}, so the first statement follows. The second identity then follows from combining Proposition \ref{subprop:modelgivesgrotpolys} with the first identity and Lemma \ref{dualtoregular}.
\end{proof}

\section{Correspondence with Pipe Dreams} \label{pipedreamcorrespondencesection}
In Proposition~\ref{hudson} of the prior section, we proved that when $q=0$ two different specializations of the chromatic model give the double $\beta$-Grothendieck polynomials according to their definition in terms of divided difference operators. In this section, we explain the correspondence between these specializations and pipe dreams, thus providing another way of seeing that the pipe dream and divided difference definitions of Grothendieck polynomials are equivalent.  In particular, the admissible states of both specializations biject easily with reduced pipe dreams. However, the weights of corresponding states account for different sets of nonreduced pipe dreams, corresponding to the two different (yet equivalent) methods for reducing pipe dreams. See Section 6 of \cite{LenartRobinsonSottile} for an explanation of the equivalence of the two reductions. Since the Grothendieck polynomials are obtained from setting $q=0$, the entirety of this section we will assume $q=0$.

\begin{proposition}\label{reducedpipebijection}
For $w\in S_n$, there exists a bijection between admissible lattice model states in $\mathfrak{S}_{1,w^{-1}}(\boldsymbol{y},\boldsymbol{x})$ with $q=0$ and reduced pipe dreams for $w$.
\end{proposition}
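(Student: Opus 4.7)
The plan is to define the bijection explicitly at the level of tile patterns and then verify both that it is well-defined as a map into reduced pipe dreams for $w$ and that it admits an inverse. Given a lattice state $\mathfrak{s} \in \mathfrak{S}_{1, w^{-1}}(\boldsymbol{y}, \boldsymbol{x})$ with $q=0$, I will map each vertex of type $\tt{b}_1$ -- the only type of crossing that can appear in an admissible state, since $\tt{b}_2$ has weight zero at $q=0$ -- to a crossing tile, and each vertex of type $\tt{a}$, $\tt{c}_1$, or $\tt{c}_2$ to an elbow tile. Because the strand of color $c$ in the lattice runs from the left boundary at row $w(c)$ to the top boundary at column $c$, whereas the pipe labeled $i$ in a pipe dream for $w$ must go from row $i$ to column $w(i)$, the natural identification is obtained by reflecting the resulting tile pattern across the main diagonal; this reflection preserves tile types and sends the strand of color $c$ in the lattice to the pipe labeled $c$ in the pipe dream.

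To verify the output is a valid pipe dream for $w$, I need to check two properties. First, crossings must only appear above the anti-diagonal: this will follow by propagating the ``$+$'' boundary labels on the right and bottom of the lattice, with an iterative argument starting at the bottom-right corner showing that vertices in the lower-right triangular region are forced to be of elbow type. Second, no two pipes may cross more than once. I will argue this directly in the lattice picture: suppose two strands of colors $c_1 < c_2$ were to share two distinct crossings, each of type $\tt{b}_1$ by admissibility. At each such crossing the smaller color $c_1$ runs vertically and the larger $c_2$ runs horizontally, so immediately after the first crossing $c_1$ moves upward in a fixed column and $c_2$ moves rightward in a fixed row. Since strands advance only up or right, for the two to meet again each must subsequently turn at elbow vertices ($c_1$ rightward, $c_2$ upward); but then at the second meeting $c_1$ is necessarily horizontal and $c_2$ vertical, giving a $\tt{b}_2$ vertex and contradicting admissibility.

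For the inverse direction I will take any reduced pipe dream $P \in PD(w)$, reflect it diagonally to produce a lattice tile pattern, color the strands using the boundary data of $\mathfrak{S}_{1,w^{-1}}(\boldsymbol{y},\boldsymbol{x})$, and check that the resulting state is admissible, i.e., that no $\tt{b}_2$ vertex arises. I expect this verification to be the main obstacle. It amounts to showing that at every crossing in $P$ the smaller-indexed pipe passes horizontally: if instead the smaller pipe $p_1$ were to enter a crossing vertically (from below at some row $i+1$) while the larger $p_2$ entered horizontally (from the left at row $i$), then just before this crossing $p_2$ would occupy a row above $p_1$, contradicting the fact that $p_1 < p_2$ started higher on the left boundary and has not yet crossed $p_2$ (by reducedness of $P$). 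Hence $p_1$ is horizontal at every crossing of $P$, and after reflection the corresponding lattice vertex is of type $\tt{b}_1$. Since the forward and inverse maps are plainly mutually inverse on tile patterns, combining the two directions establishes the desired bijection.
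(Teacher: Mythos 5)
Your proof is correct and takes essentially the same approach as the paper's: identify the tile types ($\tt{b}_1$ crossings $\leftrightarrow$ crossing tiles, all else $\leftrightarrow$ elbows), reflect across the diagonal so the strand of color $c$ becomes the pipe labeled $c$, and check that the resulting object satisfies the pipe-dream/admissibility constraints. You flesh out the ``at most one crossing'' and ``smaller pipe is horizontal at every crossing of a reduced pipe dream'' steps that the paper asserts without argument; in the double-crossing step, your claim that the second meeting forces $c_1$ horizontal and $c_2$ vertical is correct but is best justified by the observation that elbow vertices preserve the ``$c_1$ weakly above and to the left of $c_2$'' relation established immediately after the first $\tt{b}_1$ crossing, so the first shared vertex thereafter must have $c_1$ entering from the west.
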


\begin{proof}
Given a reduced pipe dream for a permutation $w$, overlay it on a square lattice grid. Then label pipes on the left boundary by $1,2,...,n$ from top to bottom. Assign a label of $i$ on an edge if the edge is along the strand of pipe $i$, and label $+$ if no pipe travels along that edge. Depending on the labels of the pipes involved, bent tiles \raisebox{-1 ex}{\begin{tikzpicture}[scale=.5]
    \draw (0,0) -- (1,0);
    \draw (0,1) -- (1,1);
    \draw (0,0) -- (0,1);
    \draw (1,0) -- (1,1);
    \draw[rounded corners = 2mm, line width = .5mm] (0,.5) -- (.5,.5)-- (.5,1);
    \draw[rounded corners = 2mm, line width = .5mm] (.5,0) --(.5,.5)-- (1,.5);
\end{tikzpicture}}
will form vertices of types $\tt{c}_1$ and $\tt{c}_2$. Crossing tiles 
\raisebox{-1 ex}{\begin{tikzpicture}[scale=.5]
    \draw (0,0) -- (1,0);
    \draw (0,1) -- (1,1);
    \draw (0,0) -- (0,1);
    \draw (1,0) -- (1,1);
    \draw[rounded corners = 2mm, line width = .5mm] (0,.5)--(1,.5);
    \draw[rounded corners = 2mm, line width = .5mm] (.5,0) --(.5,1);
\end{tikzpicture}} give vertices of type $\tt{b}_2$, since we have at most one crossing of any pair of pipes and according to our labeling, these cannot be of type $\tt{b}_1$. Blank tiles will give vertices of type $\tt{a}$. Furthermore, in the pipe dream definition, the permutation $w = w(1) \, w(2) \, w(3) \cdots w(n)$ corresponds to the $x$-coordinates of pipes on the top boundary; examining the newly-built lattice model, we see that color $i$ will appear in the $w(i)$-th place on the top boundary. Thus, from the reduced pipe dream, we have a state $\mathfrak{s}$ of the model $\mathfrak{S}_{w^{-1},1}(\boldsymbol{x},\boldsymbol{y})$, where crossing tiles in the pipe dream correspond to vertices of type $\tt{b}_2$, of weight zero (as $q=0$ in this section). However, the weight of a crossing tile of a pipe dream in row $i$ and column $j$ is $x_i \oplus y_j$. Thus, we reflect the state $\mathfrak{s}$ along the line $y=-x$, thereby swapping the label on the top and left boundaries, the row and column variables, and vertices of type $\tt{b}_2$ with ones of type $\tt{b}_1$. The result is a state of $\mathfrak{S}_{1,w^{-1}}(\boldsymbol{y},\boldsymbol{x})$.

Conversely, given an admissible state of $\mathfrak{S}_{1,w^{-1}}(\boldsymbol{y},\boldsymbol{x})$, again reflect the state across the diagonal, and the lack of a vertex of type $\tt{b}_2$ in admissible states of $\mathfrak{S}_{1,w^{-1}}(\boldsymbol{y},\boldsymbol{x})$ means the reflected state has no vertices of type $\tt{b}_1$. This forces the colored strands to only tile on or above above the diagonal and any pair of pipes to cross at most once, so we have a reduced pipe dream. A similar argument to the above tells us that this is a reduced pipe dream for $w$, so we obtain a bijection between admissible states of $\mathfrak{S}_{1,w^{-1}}(\boldsymbol{y},\boldsymbol{x})$ and reduced pipe dreams for $w$. 
\end{proof}

One of the advantages of the lattice model point of view is that we can use a single state to account for nonreduced pipe dreams as well as reduced ones by using the Boltzmann weights. In particular, one of the drawbacks of a pipe dream phrasing of the $\beta$-Grothendieck polynomials is that it is difficult to immediately read off the permutation corresponding to a nonreduced pipe dream and, relatedly, to obtain all nonreduced pipe dreams corresponding to a certain permutation. By incorporating the reduction process into the weights, that difficulty is removed. That is, our set of weights is constructed such that the weight of an admissible state of $\mathfrak{S}_{1,w^{-1}}(\boldsymbol{y},\boldsymbol{x})$ is equal to the sum of the weights of all pipe dreams with reduction equal to its corresponding reduced pipe dream. An example of this correspondence is seen in Figure \ref{pipedreamsmodel2}, while Figure \ref{reductionisweird} gives an example with a particularly subtle reduction.

\begin{proposition}[See \cite{KnutsonMiller-subwords,LenartRobinsonSottile}] \label{pipedreamproposition} For any $w \in S_n$,
\[\mathcal{G}^{(\beta)}_w(\boldsymbol{x};\boldsymbol{y}) = \sum_{P\in PD(w)} \emph{\text{wt}}(P) \beta^{\emph{\text{ex}}(P)}.\]
\end{proposition}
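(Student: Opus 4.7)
The plan is to leverage the preceding results, particularly Proposition \ref{hudson} and Proposition \ref{reducedpipebijection}, and then do a tile-by-tile weight analysis to account for non-reduced pipe dreams. First, apply Proposition \ref{hudson} to rewrite
\[\mathcal{G}^{(\beta)}_w(\boldsymbol{x};\boldsymbol{y}) = Z\bigl(\mathfrak{S}_{1,w^{-1}}(\boldsymbol{y},\boldsymbol{x})\bigr)\big|_{q=0} = \sum_{\mathfrak{s}} B(\mathfrak{s}),\]
where the sum runs over admissible states of $\mathfrak{S}_{1,w^{-1}}(\boldsymbol{y},\boldsymbol{x})$ with $q=0$. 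By Proposition \ref{reducedpipebijection}, these states are in bijection with reduced pipe dreams $P_0$ for $w$, so I can index the sum by $P_0$ with corresponding state $\mathfrak{s}_{P_0}$. On the other side of the desired equality, partition $PD(w)$ according to reductions, $PD(w) = \bigsqcup_{P_0}\{P : \operatorname{red}(P) = P_0\}$, so the proposition reduces to proving, for each reduced $P_0$:
\[B(\mathfrak{s}_{P_0}) = \sum_{P:\,\operatorname{red}(P)=P_0} \operatorname{wt}(P)\,\beta^{\operatorname{ex}(P)}.\]

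Next, analyze the weights from Figure \ref{pipeweights1} at $q=0$: vertices of type $\tt{a}$ have weight $1$, type $\tt{b}_1$ weight $x_i\oplus y_j$, type $\tt{b}_2$ weight $0$, type $\tt{c}_1$ weight $1+\beta(x_i\oplus y_j)$, and type $\tt{c}_2$ weight $1$. Under the bijection (applied after the reflection in Proposition \ref{reducedpipebijection}), crossing tiles of $P_0$ correspond to $\tt{b}_1$ vertices, contributing the factor $x_i\oplus y_j$ that appears in $\operatorname{wt}(P)$ for \emph{every} $P$ with $\operatorname{red}(P)=P_0$; empty tiles correspond to $\tt{a}$ vertices contributing $1$; and bump tiles split into $\tt{c}_1$ and $\tt{c}_2$ depending on the relative colors at their edges. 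Expanding the product $B(\mathfrak{s}_{P_0})=\prod_v B(v)$ by distributivity at each $\tt{c}_1$ vertex, each $\tt{c}_1$ position contributes either $1$ or $\beta(x_i\oplus y_j)$. This yields a sum indexed by subsets of $\tt{c}_1$ positions, where each subset corresponds to the pipe dream $P$ obtained from $P_0$ by replacing the chosen bumps with crossing tiles. The $\beta$-power counts the number of such replacements (the excess), and the weight accumulates the appropriate $x_i\oplus y_j$ factors.

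The main obstacle is showing that this distributive expansion is exactly in bijection with $\{P : \operatorname{red}(P)=P_0\}$, each term appearing once and with the correct weight. Two points need verification: (i) replacing the bump at any $\tt{c}_1$ position with a crossing produces a pipe dream whose reduction is still $P_0$ and whose permutation is still $w$ (so the additional crossing must be between two pipes that have already crossed in $P_0$, giving excess exactly one per replacement); and (ii) replacing a $\tt{c}_2$ bump would either alter the permutation or the reduction, hence must be excluded, which is correctly enforced by the $\tt{c}_2$ weight being just $1$. The key geometric fact is that the $\tt{c}_1$ configuration (larger color bending through the left-top, smaller through the bottom-right) occurs precisely when, relative to the pipe-by-pipe reduction procedure recalled before Figure \ref{pipedreamreduction}, the two pipes at that tile have already crossed; conversely, the $\tt{c}_2$ configuration occurs precisely when they have not. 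This last correspondence is where Figure \ref{reductionisweird} shows a delicate example, and careful tracing of the reduction order is required. Once these two points are in hand, the tile-by-tile expansion of $B(\mathfrak{s}_{P_0})$ matches $\sum_{P:\operatorname{red}(P)=P_0}\operatorname{wt}(P)\beta^{\operatorname{ex}(P)}$ term-for-term, completing the proof.
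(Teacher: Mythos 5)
Your proposal is correct and follows essentially the same route as the paper's own proof: you reduce to $Z(\mathfrak{S}_{1,w^{-1}}(\boldsymbol{y},\boldsymbol{x}))|_{q=0}$ via Proposition~\ref{hudson}, invoke the bijection of Proposition~\ref{reducedpipebijection} with reduced pipe dreams, and then expand the product of Boltzmann weights distributively at the $\tt{c}_1$ vertices, identifying each choice of ``$\beta(x_i\oplus y_j)$'' with swapping in an extra crossing between already-crossed pipes. The two verification points you flag --- that a $\tt{c}_1$ vertex occurs exactly where the two pipes have already crossed southwest, and that the resulting swaps exhaust $\{P:\operatorname{red}(P)=P_0\}$ bijectively --- are asserted at the same level of informality in the paper's proof, so your argument matches it in both structure and rigor.
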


\begin{proof}
Consider the strands in a state $\mathfrak{s}$ of $\mathfrak{S}_{1,w^{-1}}(\boldsymbol{y},\boldsymbol{x})$: by Proposition \ref{reducedpipebijection}, these form a unique reduced pipe dream $P$ associated to $w$. In this bijection, a vertex of weight $x_i\oplus y_j$ in $\mathfrak{s}$ corresponds to a crossing tile in row $i$ and column $j$ of $P$ (we call this location $(i,j)$) of $P$. In addition, the vertices of weight $1+\beta (x_i\oplus y_j)$ correspond to tiles in $P$ where there is no crossing, but where the two pipes on the tile have already crossed further southwest. Swapping this tile out for a crossing tile produces a nonreduced pipe dream that reduces to $P$ with one extra crossing in location $(i,j)$, thus with excess 1 and weight $\text{wt}(P)\cdot (x_i\oplus y_j)$.

This swapping may be done independently at all the vertices with weights $1+\beta (x_i\oplus y_j)$; swapping any such tiles in locations $(i_1,j_1),\ldots (i_m,j_m)$ to crossing tiles corresponds to a pipe dream $P'$ that reduces to $P$ with excess $m$ and weight $\text{wt}(P') = \text{wt}(P)\cdot \prod_{k=1}^m (x_{i_k}\oplus y_{j_k})$. Conversely, every pipe dream that reduces to $P$ can be constructed in this way, by adding back in an extra crossing of a pair of pipes.

In other words, flipping one of these tiles at a vertex of weight $1+\beta (x_i\oplus y_j)$ corresponds, in the partition function, to choosing the $\beta (x_i\oplus y_j)$ part of the weight. On the other hand, not flipping the tile corresponds to choosing the 1 part of the weight. The binomial theorem tells us that \[\text{wt}(\mathfrak{s}) = \sum_{P'\to P} \text{wt}(P')\beta^{\text{ex}(P')},\] where the sum is over all pipe dreams $P'$ that reduce to $P$. Since every pipe dream has exactly one reduction, and since reduced pipe dreams are in bijection with states in our lattice model, we can sum over states and (combining with the result of Theorem~\ref{hudson}) arrive at our result: \[\mathcal{G}^{(\beta)}_w(\boldsymbol{x};\boldsymbol{y}) = Z(\mathfrak{S}_{1,w^{-1}}(\boldsymbol{y},\boldsymbol{x})) = \sum_{\mathfrak{s}\in \mathfrak{S}_{1,w^{-1}}(\boldsymbol{y},\boldsymbol{x})} \text{wt}(\mathfrak{s}) = \sum_{PD(w)} \text{wt}(P)\beta^{\text{ex}(P)}. \qedhere \]
\end{proof}

\begin{example}
We follow the same convention as \cite{PechenikSearles} for pipe dream reduction, in which we consider each pair of pipes and eliminate all crossings between them except for the most southwestern crossing. One problem that arises is the order in which we reduce pipes. Consider the nonreduced pipe dream from Figure \ref{pipedreamreduction}, reproduced on the far left in Figure \ref{reductionisweird}. It is not immediately clear which pair of pipes we should consider first, as there is are double crossings of pipes 2 and 4 as well as pipes 3 and 4 (following the naming convention of Proposition \ref{reducedpipebijection}). As the reader may verify, this choice drastically affects what reduced pipe dream we obtain and thus to what permutation the nonreduced pipe dream corresponds. The lattice model eliminates the need to define a convention for choice of pairs, since it starts from the reduced pipe dream. However, it ``sees" the eliminated crossing of pipes 2 and 4 in the corresponding reduced pipe dream in the vertex at row $2$ and column $1$, and the weight of that vertex $(1 + \beta (x_1 \oplus y_2))$ gives a term for the reduced pipe dream (coming from the summand $1$) and a term for the nonreduced pipe dream (coming from $\beta (x_1 \oplus y_2$)).
\begin{figure}[h]
    \centering
\raisebox{1.25 cm}{\scalebox{.8}{\begin{tikzpicture}
\draw (0,0) -- (4,0);
\draw (0,1) -- (4,1);
\draw (0,2) -- (4,2);
\draw (0,3) -- (4,3);
\draw (0,4) -- (4,4);
\draw (0,0) -- (0,4);
\draw (1,0) -- (1,4);
\draw (2,0) -- (2,4);
\draw (3,0) -- (3,4);
\draw (4,0) -- (4,4);
\draw[rounded corners = 3mm, line width = .5mm] (0,3.5) -- (.5,3.5) -- (.5, 4);
\draw[rounded corners= 3mm, line width = .5mm] (0, 2.5) -- (1.5,2.5) -- (1.5,4);
\draw[rounded corners = 3mm, line width = .5mm] (0,1.5) -- (1.5,1.5) -- (1.5,2.5)  -- (2.5,2.5) -- (2.5,4);
\draw[rounded corners = 3mm, line width = .5mm] (0,.5) -- (.5,.5) -- (.5,3.5)  -- (3.5,3.5) -- (3.5,4);
\end{tikzpicture}}}
\hspace{1cm}
\raisebox{1.25 cm}{\scalebox{.8}{\begin{tikzpicture}
\draw (0,0) -- (4,0);
\draw (0,1) -- (4,1);
\draw (0,2) -- (4,2);
\draw (0,3) -- (4,3);
\draw (0,4) -- (4,4);
\draw (0,0) -- (0,4);
\draw (1,0) -- (1,4);
\draw (2,0) -- (2,4);
\draw (3,0) -- (3,4);
\draw (4,0) -- (4,4);
\draw[rounded corners = 3mm, line width = .5mm] (0,3.5) -- (.5,3.5) -- (.5, 4);
\draw[rounded corners= 3mm, line width = .5mm] (0, 2.5) -- (1.5,2.5) -- (1.5,3.5)--(3.5,3.5)--(3.5,4);
\draw[rounded corners = 3mm, line width = .5mm] (0,1.5) -- (1.5,1.5) -- (1.5,2.5)  -- (2.5,2.5) -- (2.5,4);
\draw[rounded corners = 3mm, line width = .5mm] (0,.5) -- (.5,.5) -- (.5,3.5)  -- (1.5,3.5) -- (1.5,4);
\end{tikzpicture}}}
\hspace{.5cm}
\scalebox{.7}{
\begin{tikzpicture}
  \coordinate (ab) at (1,0);
  \coordinate (ad) at (3,0);
  \coordinate (af) at (5,0);
  \coordinate (ah) at (7,0);
  \coordinate (ba) at (0,1);
  \coordinate (bc) at (2,1);
  \coordinate (be) at (4,1);
  \coordinate (bg) at (6,1);
  \coordinate (bi) at (8,1);
  \coordinate (cb) at (1,2);
  \coordinate (cd) at (3,2);
  \coordinate (cf) at (5,2);
  \coordinate (ch) at (7,2);
  \coordinate (da) at (0,3);
  \coordinate (dc) at (2,3);
  \coordinate (de) at (4,3);
  \coordinate (dg) at (6,3);
  \coordinate (di) at (8,3);
  \coordinate (eb) at (1,4);
  \coordinate (ed) at (3,4);
  \coordinate (ef) at (5,4);
  \coordinate (eh) at (7,4);
  \coordinate (fa) at (0,5);
  \coordinate (fc) at (2,5);
  \coordinate (fe) at (4,5);
  \coordinate (fg) at (6,5);
  \coordinate (fi) at (8,5);
  \coordinate (gb) at (1,6);
  \coordinate (gd) at (3,6);
  \coordinate (gf) at (5,6);
  \coordinate (gh) at (7,6);
  \coordinate (ha) at (0,7);
  \coordinate (hc) at (2,7);
  \coordinate (he) at (4,7);
  \coordinate (hg) at (6,7);
  \coordinate (hi) at (8,7);
  \coordinate (ib) at (1,8);
  \coordinate (id) at (3,8);
  \coordinate (if) at (5,8);
  \coordinate (ih) at (7,8);
  \coordinate (bb) at (1,1);
  \coordinate (bd) at (3,1);
  \coordinate (bf) at (5,1);
  \coordinate (bh) at (7,1);
  \coordinate (db) at (1,3);
  \coordinate (dd) at (3,3);
  \coordinate (df) at (5,3);
  \coordinate (dh) at (7,3);
  \coordinate (fb) at (1,5);
  \coordinate (fd) at (3,5);
  \coordinate (ff) at (5,5);
  \coordinate (fh) at (7,5);
  \coordinate (hb) at (1,7);
  \coordinate (hd) at (3,7);
  \coordinate (hf) at (5,7);
  \coordinate (hh) at (7,7);
  \coordinate (bax) at (0,1.5);
  \coordinate (bcx) at (2,1.5);
  \coordinate (bex) at (4,1.5);
  \coordinate (bgx) at (6,1.5);
  \coordinate (bix) at (8,1.5);
  \coordinate (dax) at (0,3.5);
  \coordinate (dcx) at (2,3.5);
  \coordinate (dex) at (4,3.5);
  \coordinate (dgx) at (6,3.5);
  \coordinate (dix) at (8,3.5);
  \coordinate (fax) at (0,5.5);
  \coordinate (fcx) at (2,5.5);
  \coordinate (fex) at (4,5.5);
  \coordinate (fgx) at (6,5.5);
  \coordinate (fix) at (8,5.5);
  \coordinate (hax) at (0,7.5);
  \coordinate (hcx) at (2,7.5);
  \coordinate (hex) at (4,7.5);
  \coordinate (hgx) at (6,7.5);
  \coordinate (hix) at (8,7.5);
  \draw (ab)--(ib);
  \draw (ad)--(id);
  \draw (af)--(if);
  \draw (ah)--(ih);
  \draw (ba)--(bi);
  \draw (da)--(di);
  \draw (fa)--(fi);
  \draw (ha)--(hi);
  \draw[line width=0.5mm,green] (ba)--(bb)--(fb)--(fd)--(id);
  \draw[line width=0.5mm,blue] (da)--(dd)--(fd)--(ff)--(if);
  \draw[line width=0.5mm,violet] (fa)--(fb)--(hb)--(hh)--(ih);
  \draw[line width=0.5mm,red] (ha)--(hb)--(ib);
  \draw[fill=white] (ab) circle (.25);
  \draw[fill=white] (ad) circle (.25);
  \draw[fill=white] (af) circle (.25);
  \draw[fill=white] (ah) circle (.25);
  \draw[line width=0.5mm,green,fill=white] (ba) circle (.25);
  \draw[fill=white] (bc) circle (.25);
  \draw[fill=white] (be) circle (.25);
  \draw[fill=white] (bg) circle (.25);
  \draw[fill=white] (bi) circle (.25);
  \draw[line width=0.5mm,green,fill=white] (cb) circle (.25);
  \draw[fill=white] (cd) circle (.25);
  \draw[fill=white] (cf) circle (.25);
  \draw[fill=white] (ch) circle (.25);
  \draw[line width=0.5mm,blue,fill=white] (da) circle (.25);
  \draw[line width=0.5mm,blue,fill=white] (dc) circle (.25);
  \draw[fill=white] (de) circle (.25);
  \draw[fill=white] (dg) circle (.25);
  \draw[fill=white] (di) circle (.25);
  \draw[line width=0.5mm,green,fill=white] (eb) circle (.25);
  \draw[line width=0.5mm,blue,fill=white] (ed) circle (.25);
  \draw[fill=white] (ef) circle (.25);
  \draw[fill=white] (eh) circle (.25);
  \draw[line width=0.5mm,violet,fill=white] (fa) circle (.25);
  \draw[line width=0.5mm,green,fill=white] (fc) circle (.25);
  \draw[line width=0.5mm,blue,fill=white] (fe) circle (.25);
  \draw[fill=white] (fg) circle (.25);
  \draw[fill=white] (fi) circle (.25);
  \draw[line width=0.5mm,violet,fill=white] (gb) circle (.25);
  \draw[line width=0.5mm,green,fill=white] (gd) circle (.25);
  \draw[line width=0.5mm,blue,fill=white] (gf) circle (.25);
  \draw[fill=white] (gh) circle (.25);
  \draw[line width=0.5mm,red,fill=white] (ha) circle (.25);
  \draw[line width=0.5mm,violet,fill=white] (hc) circle (.25);
  \draw[line width=0.5mm,violet,fill=white] (he) circle (.25);
  \draw[line width=0.5mm,violet,fill=white] (hg) circle (.25);
  \draw[fill=white] (hi) circle (.25);
  \draw[line width=0.5mm,red,fill=white] (ib) circle (.25);
  \draw[line width=0.5mm,green,fill=white] (id) circle (.25);
  \draw[line width=0.5mm,blue,fill=white] (if) circle (.25);
  \draw[line width=0.5mm,violet,fill=white] (ih) circle (.25);
 \path[fill=white] (bb) circle (.3);
  \path[fill=white] (bd) circle (.3);
  \path[fill=white] (bf) circle (.3);
  \path[fill=white] (bh) circle (.3);
  \path[fill=white] (db) circle (.3);
  \path[fill=white] (dd) circle (.3);
  \path[fill=white] (df) circle (.3);
  \path[fill=white] (dh) circle (.3);
  \path[fill=white] (fb) circle (.3);
  \path[fill=white] (fd) circle (.3);
  \path[fill=white] (ff) circle (.3);
  \path[fill=white] (fh) circle (.3);
   \path[fill=white] (hb) circle (.3);
  \path[fill=white] (hd) circle (.3);
  \path[fill=white] (hf) circle (.3);
  \path[fill=white] (hh) circle (.3);
  \node at (bb) {$T_{4,1}'$};
  \node at (bd) {$T_{4,2}'$};
  \node at (bf) {$T_{4,3}'$};
  \node at (bh) {$T_{4,4}'$};
  \node at (db) {$T_{3,1}'$};
  \node at (dd) {$T_{3,2}'$};
  \node at (df) {$T_{3,3}'$};
  \node at (dh) {$T_{3,4}'$};
  \node at (fb) {$T_{2,1}'$};
  \node at (fd) {$T_{2,2}'$};
  \node at (ff) {$T_{2,3}'$};
  \node at (fh) {$T_{2,4}'$};
  \node at (hb) {$T_{1,1}'$};
  \node at (hd) {$T_{1,2}'$};
  \node at (hf) {$T_{1,3}'$};
  \node at (hh) {$T_{1,4}'$};
  \node at (-2,7) {row:};
  \node at (-1.2,7) {1};
  \node at (-1.2,5) {2};
  \node at (-1.2,3) {3};
  \node at (-1.2,1) {4};
  \node at (ib) {$1$};
  \node at (id) {$2$};
  \node at (if) {$3$};
  \node at (ih) {$4$};
  \node at (ha) {$1$};
  \node at (hc) {$4$};
  \node at (he) {$4$};
  \node at (hg) {$4$};
  \node at (hi) {$+$};
  \node at (gb) {$4$};
  \node at (gd) {$2$};
  \node at (gf) {$3$};
  \node at (gh) {$+$};
  \node at (fa) {$4$};
  \node at (fc) {$2$};
  \node at (fe) {$3$};
  \node at (fg) {$+$};
  \node at (fi) {$+$};
  \node at (eb) {$2$};
  \node at (ed) {$3$};
  \node at (ef) {$+$};
  \node at (eh) {$+$};
  \node at (da) {$3$};
  \node at (dc) {$3$};
  \node at (de) {$+$};
  \node at (dg) {$+$};
  \node at (di) {$+$};
  \node at (cb) {$2$};
  \node at (cd) {$+$};
  \node at (cf) {$+$};
  \node at (ch) {$+$};
  \node at (ba) {$2$};
  \node at (bc) {$+$};
  \node at (be) {$+$};
  \node at (bg) {$+$};
  \node at (bi) {$+$};
  \node at (ab) {$+$};
  \node at (ad) {$+$};
  \node at (af) {$+$};
  \node at (ah) {$+$};
  \node at (7,9) {$4$};
  \node at (5,9) {$3$};
  \node at (3,9) {$2$};
  \node at (1,9) {$1$};
  \node at (0,9.04) {column:};
\end{tikzpicture}}
    \caption{From left to right: a nonreduced pipe dream with subtle reduction, its reduced pipe dream, and the corresponding lattice model state that encapsulates both pipe dreams, for $w = 1432$.}
    \label{reductionisweird}
\end{figure}
\end{example}

\begin{figure}[h]
\centering
\scalebox{.8}{
\begin{tikzpicture}  \coordinate (ab) at (1,0);
  \coordinate (ad) at (3,0);
  \coordinate (af) at (5,0);
  \coordinate (ba) at (0,1);
  \coordinate (bc) at (2,1);
  \coordinate (be) at (4,1);
  \coordinate (bg) at (6,1);
  \coordinate (cb) at (1,2);
  \coordinate (cd) at (3,2);
  \coordinate (cf) at (5,2);
  \coordinate (da) at (0,3);
  \coordinate (dc) at (2,3);
  \coordinate (de) at (4,3);
  \coordinate (dg) at (6,3);
  \coordinate (eb) at (1,4);
  \coordinate (ed) at (3,4);
  \coordinate (ef) at (5,4);
  \coordinate (fa) at (0,5);
  \coordinate (fc) at (2,5);
  \coordinate (fe) at (4,5);
  \coordinate (fg) at (6,5);
  \coordinate (gb) at (1,6);
  \coordinate (gd) at (3,6);
  \coordinate (gf) at (5,6);
  \coordinate (bb) at (1,1);
  \coordinate (bd) at (3,1);
  \coordinate (bf) at (5,1);
  \coordinate (db) at (1,3);
  \coordinate (dd) at (3,3);
  \coordinate (df) at (5,3);
  \coordinate (fb) at (1,5);
  \coordinate (fd) at (3,5);
  \coordinate (ff) at (5,5);
  \draw (ba)--(bg);
  \draw (da)--(dg);
  \draw (fa)--(fg);
  \draw (ab)--(gb);
  \draw (ad)--(gd);
  \draw (af)--(gf);
  \draw[line width=0.5mm,red] (fa)--(fb)--(gb);
  \draw[line width=0.5mm,blue] (da)--(db)--(fb) -- (ff) -- (gf);
  \draw[line width=0.5mm,green] (ba)--(bb)--(db) --(dd)--(gd);
  \draw[fill=white] (ab) circle (.25);
  \draw[fill=white] (ad) circle (.25);
  \draw[fill=white] (af) circle (.25);
  \draw[line width=0.5mm,green,fill=white] (ba) circle (.25);
  \draw[fill=white] (bc) circle (.25);
  \draw[fill=white] (be) circle (.25);
  \draw[fill=white] (bg) circle (.25);
  \draw[line width=0.5mm,green,fill=white] (cb) circle (.25);
  \draw[fill=white] (cd) circle (.25);
  \draw[fill=white] (cf) circle (.25);
  \draw[line width=0.5mm,blue,fill=white] (da) circle (.25);
  \draw[line width=0.5mm,green,fill=white] (dc) circle (.25);
  \draw[fill=white] (de) circle (.25);
  \draw[fill=white] (dg) circle (.25);
  \draw[line width=0.5mm,blue,fill=white] (eb) circle (.25);
  \draw[line width=0.5mm,green,fill=white] (ed) circle (.25);
  \draw[fill=white] (ef) circle (.25);
  \draw[line width=0.5mm,red,fill=white] (fa) circle (.25);
  \draw[line width=0.5mm,blue,fill=white] (fc) circle (.25);
  \draw[line width=0.5mm,blue,fill=white] (fe) circle (.25);
  \draw[fill=white] (fg) circle (.25);
  \draw[line width=0.5mm,red,fill=white] (gb) circle (.25);
  \draw[line width=0.5mm,green,fill=white] (gd) circle (.25);
  \draw[line width=0.5mm,blue,fill=white] (gf) circle (.25);
  \path[fill=white] (bb) circle (.4);
  \path[fill=white] (bd) circle (.4);
  \path[fill=white] (bf) circle (.4);
  \path[fill=white] (db) circle (.4);
  \path[fill=white] (dd) circle (.4);
  \path[fill=white] (df) circle (.4);
  \path[fill=white] (fb) circle (.4);
  \path[fill=white] (fd) circle (.4);
  \path[fill=white] (ff) circle (.4);
   \node at (bb) {$T_{3,1}'$};
   \node at (bd) {$T_{3,2}'$};
   \node at (bf) {$T_{3,3}'$};
   \node at (db) {$T_{2,1}'$};
   \node at (dd) {$T_{2,2}'$};
   \node at (df) {$T_{2,3}'$};
   \node at (fb) {$T_{1,1}'$};
   \node at (fd) {$T_{1,2}'$};
   \node at (ff) {$T_{1,3}'$};
   \node at (gb) {$1$};
   \node at (gd) {$2$};
   \node at (gf) {$3$};
   \node at (fa) {$1$};
   \node at (fc) {$3$};
   \node at (fe) {$3$};
   \node at (fg) {$+$};
   \node at (eb) {$3$};
   \node at (ed) {$2$};
   \node at (ef) {$+$}; 
   \node at (da) {$3$};
   \node at (dc) {$2$};
   \node at (de) {$+$};
   \node at (dg) {$+$};
   \node at (cb) {$2$};
   \node at (cd) {$+$};
   \node at (cf) {$+$};   
   \node at (ba) {$2$};
   \node at (bc) {$+$};
   \node at (be) {$+$};
   \node at (bg) {$+$}; 
   \node at (ab) {$+$};
   \node at (ad) {$+$};
   \node at (af) {$+$};
   \node at (-1,3) {$\mathfrak{s}_1 = $};
   \draw [decorate,decoration={brace,amplitude=10pt},xshift=0pt,yshift=4pt] (1,-1) -- (5,-1);
 
  \draw (-1,-4) -- (2,-4);
  \draw (-1,-3) -- (2,-3);
  \draw (-1,-2) -- (2,-2);
  \draw (-1,-1) -- (2,-1);
  \draw (-1,-4) -- (-1,-1);
  \draw (0,-4) -- (0,-1);
  \draw (1,-4) -- (1,-1);
  \draw (2,-4) -- (2,-1);
  \draw[rounded corners = 3mm, line width = .5mm] (-1,-1.5) -- (-.5,-1.5) -- (-.5, -1);
  \draw[rounded corners= 3mm, line width = .5mm] (-1, -2.5) -- (.5,-2.5) -- (.5,-1.5) -- (1.5,-1.5) -- (1.5,-1);
  \draw[rounded corners = 3mm, line width = .5mm] (-1,-3.5) -- (-.5,-3.5) -- (-.5,-1.5)  -- (.5,-1.5) -- (.5,-1);
  \node at (-2, -2.5) {$P_1 = $};
  \draw (4,-4) -- (7,-4);
  \draw (4,-3) -- (7,-3);
  \draw (4,-2) -- (7,-2);
  \draw (4,-1) -- (7,-1);
  \draw (4,-4) -- (4,-1);
  \draw (5,-4) -- (5,-1);
  \draw (6,-4) -- (6,-1);
  \draw (7,-4) -- (7,-1);
  \draw[rounded corners = 3mm, line width = .5mm] (4,-1.5) -- (4.5,-1.5) -- (4.5, -1);
  \draw[rounded corners= 3mm, line width = .5mm] (4, -2.5) -- (5.5,-2.5) -- (5.5,-1);
  \draw[rounded corners = 3mm, line width = .5mm] (4,-3.5) -- (4.5,-3.5) -- (4.5,-1.5)  -- (6.5,-1.5) -- (6.5,-1);
  \node at (3, -2.5) {$P_2 = $};
\end{tikzpicture}}
\ 
\scalebox{.8}{
\begin{tikzpicture}
 \coordinate (ab) at (1,0);
  \coordinate (ad) at (3,0);
  \coordinate (af) at (5,0);
  \coordinate (ba) at (0,1);
  \coordinate (bc) at (2,1);
  \coordinate (be) at (4,1);
  \coordinate (bg) at (6,1);
  \coordinate (cb) at (1,2);
  \coordinate (cd) at (3,2);
  \coordinate (cf) at (5,2);
  \coordinate (da) at (0,3);
  \coordinate (dc) at (2,3);
  \coordinate (de) at (4,3);
  \coordinate (dg) at (6,3);
  \coordinate (eb) at (1,4);
  \coordinate (ed) at (3,4);
  \coordinate (ef) at (5,4);
  \coordinate (fa) at (0,5);
  \coordinate (fc) at (2,5);
  \coordinate (fe) at (4,5);
  \coordinate (fg) at (6,5);
  \coordinate (gb) at (1,6);
  \coordinate (gd) at (3,6);
  \coordinate (gf) at (5,6);
  \coordinate (bb) at (1,1);
  \coordinate (bd) at (3,1);
  \coordinate (bf) at (5,1);
  \coordinate (db) at (1,3);
  \coordinate (dd) at (3,3);
  \coordinate (df) at (5,3);
  \coordinate (fb) at (1,5);
  \coordinate (fd) at (3,5);
  \coordinate (ff) at (5,5);
  \draw (ba)--(bg);
  \draw (da)--(dg);
  \draw (fa)--(fg);
  \draw (ab)--(gb);
  \draw (ad)--(gd);
  \draw (af)--(gf);
  \draw[line width=0.5mm,red] (fa)--(fb)--(gb);
  \draw[line width=0.5mm,blue] (da)--(dd)--(fd) -- (ff) -- (gf);
  \draw[line width=0.5mm,green] (ba)--(bb)--(fb) --(fd)--(gd);
  \draw[fill=white] (ab) circle (.25);
  \draw[fill=white] (ad) circle (.25);
  \draw[fill=white] (af) circle (.25);
  \draw[line width=0.5mm,green,fill=white] (ba) circle (.25);
  \draw[fill=white] (bc) circle (.25);
  \draw[fill=white] (be) circle (.25);
  \draw[fill=white] (bg) circle (.25);
  \draw[line width=0.5mm,green,fill=white] (cb) circle (.25);
  \draw[fill=white] (cd) circle (.25);
  \draw[fill=white] (cf) circle (.25);
  \draw[line width=0.5mm,blue,fill=white] (da) circle (.25);
  \draw[line width=0.5mm,blue,fill=white] (dc) circle (.25);
  \draw[fill=white] (de) circle (.25);
  \draw[fill=white] (dg) circle (.25);
  \draw[line width=0.5mm,green,fill=white] (eb) circle (.25);
  \draw[line width=0.5mm,blue,fill=white] (ed) circle (.25);
  \draw[fill=white] (ef) circle (.25);
  \draw[line width=0.5mm,red,fill=white] (fa) circle (.25);
  \draw[line width=0.5mm,green,fill=white] (fc) circle (.25);
  \draw[line width=0.5mm,blue,fill=white] (fe) circle (.25);
  \draw[fill=white] (fg) circle (.25);
  \draw[line width=0.5mm,red,fill=white] (gb) circle (.25);
  \draw[line width=0.5mm,green,fill=white] (gd) circle (.25);
  \draw[line width=0.5mm,blue,fill=white] (gf) circle (.25);
  \path[fill=white] (bb) circle (.4);
  \path[fill=white] (bd) circle (.4);
  \path[fill=white] (bf) circle (.4);
  \path[fill=white] (db) circle (.4);
  \path[fill=white] (dd) circle (.4);
  \path[fill=white] (df) circle (.4);
  \path[fill=white] (fb) circle (.4);
  \path[fill=white] (fd) circle (.4);
  \path[fill=white] (ff) circle (.4);
  \node at (bb) {$T_{3,1}'$};
  \node at (bd) {$T_{3,2}'$};
  \node at (bf) {$T_{3,3}'$};
  \node at (db) {$T_{2,1}'$};
  \node at (dd) {$T_{2,2}'$};
  \node at (df) {$T_{2,3}'$};
  \node at (fb) {$T_{1,1}'$};
  \node at (fd) {$T_{1,2}'$};
  \node at (ff) {$T_{1,3}'$};
  \node at (gb) {$1$};
  \node at (gd) {$2$};
  \node at (gf) {$3$};
  \node at (fa) {$1$};
  \node at (fc) {$2$};
  \node at (fe) {$3$};
  \node at (fg) {$+$};
  \node at (eb) {$2$};
  \node at (ed) {$3$};
  \node at (ef) {$+$}; 
  \node at (da) {$3$};
  \node at (dc) {$3$};
  \node at (de) {$+$};
  \node at (dg) {$+$};
  \node at (cb) {$2$};
  \node at (cd) {$+$};
  \node at (cf) {$+$};   
  \node at (ba) {$2$};
  \node at (bc) {$+$};
  \node at (be) {$+$};
  \node at (bg) {$+$}; 
  \node at (ab) {$+$};
  \node at (ad) {$+$};
  \node at (af) {$+$};
  \node at (-1,3) {$\mathfrak{s}_2 = $};
  \draw [decorate,decoration={brace,amplitude=10pt},xshift=0pt,yshift=4pt] (1,-1) -- (5,-1);
  \draw (1.5,-4) -- (4.5,-4);
  \draw (1.5,-3) -- (4.5,-3);
  \draw (1.5,-2) -- (4.5,-2);
  \draw (1.5,-1) -- (4.5,-1);
  \draw (1.5,-4) -- (1.5,-1);
  \draw (2.5,-4) -- (2.5,-1);
  \draw (3.5,-4) -- (3.5,-1);
  \draw (4.5,-4) -- (4.5,-1);
  \draw[rounded corners = 3mm, line width = .5mm] (1.5,-1.5) -- (2,-1.5) -- (2, -1);
  \draw[rounded corners= 3mm, line width = .5mm] (1.5, -2.5) -- (2,-2.5) -- (2,-1.5) -- (4,-1.5) -- (4,-1);
  \draw[rounded corners = 3mm, line width = .5mm] (1.5,-3.5) -- (2,-3.5) -- (2,-2.5)  -- (3,-2.5) -- (3,-1);
  \node at (.5, -2.5) {$P_3 = $};
\end{tikzpicture}}
\caption{The correspondence between pipe dreams and admissible states of $\mathfrak{S}_{1,w^{-1}}(\boldsymbol{y},\boldsymbol{x})$ when $w=132$, where $T'_{i,j}$ denotes the labels $y_i,x_j$. Here, the reduced pipe dream $P_1$ corresponds to state $\mathfrak{s}_1$, and the reduced pipe dream $P_3$ corresponds to $\mathfrak{s}_2$. Notice that $B(\mathfrak{s}_1) = \text{wt}(P_1) + \text{wt}(P_2)$ and $B(\mathfrak{s}_2) = \text{wt}(P_3).$ Thus, the weight of the single nonreduced pipe dream $P_2$ is accounted for by the lattice state corresponding to its reduction.}
\label{pipedreamsmodel2}
\end{figure}

A similar bijection can be made between reduced pipe dreams for a permutation and admissible states of $\mathfrak{S}_{1,w}(\boldsymbol{x},\boldsymbol{y})$. This time, the edges of the system are labeled by $w(i)$ if the edge is along the strand of pipe $i$, and no reflection of the state. This correspondence is compatible with a different kind of reduction, where all but the northeastern-most crossing is removed. In this correspondence, the weight of an admissible state of $\mathfrak{S}_{1,w}(\boldsymbol{x},\boldsymbol{y})$ has weight equal to the sum of the weights of all pipe dreams whose reduction under this different method is the pipe dream corresponding to the admissible state. Figure \ref{pipedreamsmodel1} gives an example for the permutation $132.$

\begin{figure}[h]
\centering
\scalebox{.8}{
\begin{tikzpicture}
  \coordinate (ab) at (1,0);
  \coordinate (ad) at (3,0);
  \coordinate (af) at (5,0);
  \coordinate (ba) at (0,1);
  \coordinate (bc) at (2,1);
  \coordinate (be) at (4,1);
  \coordinate (bg) at (6,1);
  \coordinate (cb) at (1,2);
  \coordinate (cd) at (3,2);
  \coordinate (cf) at (5,2);
  \coordinate (da) at (0,3);
  \coordinate (dc) at (2,3);
  \coordinate (de) at (4,3);
  \coordinate (dg) at (6,3);
  \coordinate (eb) at (1,4);
  \coordinate (ed) at (3,4);
  \coordinate (ef) at (5,4);
  \coordinate (fa) at (0,5);
  \coordinate (fc) at (2,5);
  \coordinate (fe) at (4,5);
  \coordinate (fg) at (6,5);
  \coordinate (gb) at (1,6);
  \coordinate (gd) at (3,6);
  \coordinate (gf) at (5,6);
  \coordinate (bb) at (1,1);
  \coordinate (bd) at (3,1);
  \coordinate (bf) at (5,1);
  \coordinate (db) at (1,3);
  \coordinate (dd) at (3,3);
  \coordinate (df) at (5,3);
  \coordinate (fb) at (1,5);
  \coordinate (fd) at (3,5);
  \coordinate (ff) at (5,5);
  \draw (ba)--(bg);
  \draw (da)--(dg);
  \draw (fa)--(fg);
  \draw (ab)--(gb);
  \draw (ad)--(gd);
  \draw (af)--(gf);
  \draw[line width=0.5mm,red] (fa)--(fb)--(gb);
  \draw[line width=0.5mm,blue] (da)--(dd)--(fd) -- (ff) -- (gf);
  \draw[line width=0.5mm,green] (ba)--(bb)--(fb) --(fd)--(gd);
  \draw[fill=white] (ab) circle (.25);
  \draw[fill=white] (ad) circle (.25);
  \draw[fill=white] (af) circle (.25);
  \draw[line width=0.5mm,green,fill=white] (ba) circle (.25);
  \draw[fill=white] (bc) circle (.25);
  \draw[fill=white] (be) circle (.25);
  \draw[fill=white] (bg) circle (.25);
  \draw[line width=0.5mm,green,fill=white] (cb) circle (.25);
  \draw[fill=white] (cd) circle (.25);
  \draw[fill=white] (cf) circle (.25);
  \draw[line width=0.5mm,blue,fill=white] (da) circle (.25);
  \draw[line width=0.5mm,blue,fill=white] (dc) circle (.25);
  \draw[fill=white] (de) circle (.25);
  \draw[fill=white] (dg) circle (.25);
  \draw[line width=0.5mm,green,fill=white] (eb) circle (.25);
  \draw[line width=0.5mm,blue,fill=white] (ed) circle (.25);
  \draw[fill=white] (ef) circle (.25);
  \draw[line width=0.5mm,red,fill=white] (fa) circle (.25);
  \draw[line width=0.5mm,green,fill=white] (fc) circle (.25);
  \draw[line width=0.5mm,blue,fill=white] (fe) circle (.25);
  \draw[fill=white] (fg) circle (.25);
  \draw[line width=0.5mm,red,fill=white] (gb) circle (.25);
  \draw[line width=0.5mm,green,fill=white] (gd) circle (.25);
  \draw[line width=0.5mm,blue,fill=white] (gf) circle (.25);
  \path[fill=white] (bb) circle (.4);
  \path[fill=white] (bd) circle (.4);
  \path[fill=white] (bf) circle (.4);
  \path[fill=white] (db) circle (.4);
  \path[fill=white] (dd) circle (.4);
  \path[fill=white] (df) circle (.4);
  \path[fill=white] (fb) circle (.4);
  \path[fill=white] (fd) circle (.4);
  \path[fill=white] (ff) circle (.4);
  \node at (bb) {$T_{3,1}$};
  \node at (bd) {$T_{3,2}$};
  \node at (bf) {$T_{3,3}$};
  \node at (db) {$T_{2,1}$};
  \node at (dd) {$T_{2,2}$};
  \node at (df) {$T_{2,3}$};
  \node at (fb) {$T_{1,1}$};
  \node at (fd) {$T_{1,2}$};
  \node at (ff) {$T_{1,3}$};
  \node at (gb) {$1$};
  \node at (gd) {$2$};
  \node at (gf) {$3$};
  \node at (fa) {$1$};
  \node at (fc) {$2$};
  \node at (fe) {$3$};
  \node at (fg) {$+$};
  \node at (eb) {$2$};
  \node at (ed) {$3$};
  \node at (ef) {$+$}; 
  \node at (da) {$3$};
  \node at (dc) {$3$};
  \node at (de) {$+$};
  \node at (dg) {$+$};
  \node at (cb) {$2$};
  \node at (cd) {$+$};
  \node at (cf) {$+$};   
  \node at (ba) {$2$};
  \node at (bc) {$+$};
  \node at (be) {$+$};
  \node at (bg) {$+$}; 
  \node at (ab) {$+$};
  \node at (ad) {$+$};
  \node at (af) {$+$};
  \node at (-1,3) {$\overline{\mathfrak{s}}_1 = $};
  \draw [decorate,decoration={brace,amplitude=10pt},xshift=0pt,yshift=4pt] (1,-1) -- (5,-1);
  \draw (1.5,-4) -- (4.5,-4);
  \draw (1.5,-3) -- (4.5,-3);
  \draw (1.5,-2) -- (4.5,-2);
  \draw (1.5,-1) -- (4.5,-1);
  \draw (1.5,-4) -- (1.5,-1);
  \draw (2.5,-4) -- (2.5,-1);
  \draw (3.5,-4) -- (3.5,-1);
  \draw (4.5,-4) -- (4.5,-1);
  \draw[rounded corners = 3mm, line width = .5mm] (1.5,-1.5) -- (2,-1.5) -- (2, -1);
  \draw[rounded corners= 3mm, line width = .5mm] (1.5, -2.5) -- (3,-2.5) -- (3,-1.5) -- (4,-1.5) -- (4,-1);
  \draw[rounded corners = 3mm, line width = .5mm] (1.5,-3.5) -- (2,-3.5) -- (2,-1.5)  -- (3,-1.5) -- (3,-1);
 \node at (.5, -2.5) {$P_1 = $};
\end{tikzpicture}}
 \
 \scalebox{.8}{
\begin{tikzpicture}
  \coordinate (ab) at (1,0);
  \coordinate (ad) at (3,0);
  \coordinate (af) at (5,0);
  \coordinate (ba) at (0,1);
  \coordinate (bc) at (2,1);
  \coordinate (be) at (4,1);
  \coordinate (bg) at (6,1);
  \coordinate (cb) at (1,2);
  \coordinate (cd) at (3,2);
  \coordinate (cf) at (5,2);
  \coordinate (da) at (0,3);
  \coordinate (dc) at (2,3);
  \coordinate (de) at (4,3);
  \coordinate (dg) at (6,3);
  \coordinate (eb) at (1,4);
  \coordinate (ed) at (3,4);
  \coordinate (ef) at (5,4);
  \coordinate (fa) at (0,5);
  \coordinate (fc) at (2,5);
  \coordinate (fe) at (4,5);
  \coordinate (fg) at (6,5);
  \coordinate (gb) at (1,6);
  \coordinate (gd) at (3,6);
  \coordinate (gf) at (5,6);
  \coordinate (bb) at (1,1);
  \coordinate (bd) at (3,1);
  \coordinate (bf) at (5,1);
  \coordinate (db) at (1,3);
  \coordinate (dd) at (3,3);
  \coordinate (df) at (5,3);
  \coordinate (fb) at (1,5);
  \coordinate (fd) at (3,5);
  \coordinate (ff) at (5,5);
  \draw (ba)--(bg);
  \draw (da)--(dg);
  \draw (fa)--(fg);
  \draw (ab)--(gb);
  \draw (ad)--(gd);
  \draw (af)--(gf);
  \draw[line width=0.5mm,red] (fa)--(fb)--(gb);
  \draw[line width=0.5mm,blue] (da)--(db)--(fb) -- (ff) -- (gf);
  \draw[line width=0.5mm,green] (ba)--(bb)--(db) --(dd)--(gd);
  \draw[fill=white] (ab) circle (.25);
  \draw[fill=white] (ad) circle (.25);
  \draw[fill=white] (af) circle (.25);
  \draw[line width=0.5mm,green,fill=white] (ba) circle (.25);
  \draw[fill=white] (bc) circle (.25);
  \draw[fill=white] (be) circle (.25);
  \draw[fill=white] (bg) circle (.25);
  \draw[line width=0.5mm,green,fill=white] (cb) circle (.25);
  \draw[fill=white] (cd) circle (.25);
  \draw[fill=white] (cf) circle (.25);
  \draw[line width=0.5mm,blue,fill=white] (da) circle (.25);
  \draw[line width=0.5mm,green,fill=white] (dc) circle (.25);
  \draw[fill=white] (de) circle (.25);
  \draw[fill=white] (dg) circle (.25);
  \draw[line width=0.5mm,blue,fill=white] (eb) circle (.25);
  \draw[line width=0.5mm,green,fill=white] (ed) circle (.25);
  \draw[fill=white] (ef) circle (.25);
  \draw[line width=0.5mm,red,fill=white] (fa) circle (.25);
  \draw[line width=0.5mm,blue,fill=white] (fc) circle (.25);
  \draw[line width=0.5mm,blue,fill=white] (fe) circle (.25);
  \draw[fill=white] (fg) circle (.25);
  \draw[line width=0.5mm,red,fill=white] (gb) circle (.25);
  \draw[line width=0.5mm,green,fill=white] (gd) circle (.25);
  \draw[line width=0.5mm,blue,fill=white] (gf) circle (.25);
  \path[fill=white] (bb) circle (.4);
  \path[fill=white] (bd) circle (.4);
  \path[fill=white] (bf) circle (.4);
  \path[fill=white] (db) circle (.4);
  \path[fill=white] (dd) circle (.4);
  \path[fill=white] (df) circle (.4);
  \path[fill=white] (fb) circle (.4);
  \path[fill=white] (fd) circle (.4);
  \path[fill=white] (ff) circle (.4);
   \node at (bb) {$T_{3,1}$};
   \node at (bd) {$T_{3,2}$};
   \node at (bf) {$T_{3,3}$};
   \node at (db) {$T_{2,1}$};
   \node at (dd) {$T_{2,2}$};
   \node at (df) {$T_{2,3}$};
   \node at (fb) {$T_{1,1}$};
   \node at (fd) {$T_{1,2}$};
   \node at (ff) {$T_{1,3}$};
   \node at (gb) {$1$};
   \node at (gd) {$2$};
   \node at (gf) {$3$};
   \node at (fa) {$1$};
   \node at (fc) {$3$};
   \node at (fe) {$3$};
   \node at (fg) {$+$};
   \node at (eb) {$3$};
   \node at (ed) {$2$};
   \node at (ef) {$+$}; 
   \node at (da) {$3$};
   \node at (dc) {$2$};
   \node at (de) {$+$};
   \node at (dg) {$+$};
   \node at (cb) {$2$};
   \node at (cd) {$+$};
   \node at (cf) {$+$};   
   \node at (ba) {$2$};
   \node at (bc) {$+$};
   \node at (be) {$+$};
   \node at (bg) {$+$}; 
   \node at (ab) {$+$};
   \node at (ad) {$+$};
   \node at (af) {$+$};
   \node at (-1,3) {$\overline{\mathfrak{s}}_2 = $};
   \draw [decorate,decoration={brace,amplitude=10pt},xshift=0pt,yshift=4pt] (1,-1) -- (5,-1);
  \draw (-1,-4) -- (2,-4);
  \draw (-1,-3) -- (2,-3);
  \draw (-1,-2) -- (2,-2);
  \draw (-1,-1) -- (2,-1);
  \draw (-1,-4) -- (-1,-1);
  \draw (0,-4) -- (0,-1);
  \draw (1,-4) -- (1,-1);
  \draw (2,-4) -- (2,-1);
  \draw[rounded corners = 3mm, line width = .5mm] (-1,-1.5) -- (-.5,-1.5) -- (-.5, -1);
  \draw[rounded corners= 3mm, line width = .5mm] (-1, -2.5) -- (.5,-2.5) -- (.5,-1);
  \draw[rounded corners = 3mm, line width = .5mm] (-1,-3.5) -- (-.5,-3.5) -- (-.5,-1.5)  -- (1.5,-1.5) -- (1.5,-1);
  \node at (-2, -2.5) {$P_2 = $};
  \draw (4,-4) -- (7,-4);
  \draw (4,-3) -- (7,-3);
  \draw (4,-2) -- (7,-2);
  \draw (4,-1) -- (7,-1);
  \draw (4,-4) -- (4,-1);
  \draw (5,-4) -- (5,-1);
  \draw (6,-4) -- (6,-1);
  \draw (7,-4) -- (7,-1);
  \draw[rounded corners = 3mm, line width = .5mm] (4,-1.5) -- (4.5,-1.5) -- (4.5, -1);
  \draw[rounded corners= 3mm, line width = .5mm] (4, -2.5) -- (4.5,-2.5) -- (4.5,-1.5)--(6.5,-1.5)--(6.5,-1);
  \draw[rounded corners = 3mm, line width = .5mm] (4,-3.5) -- (4.5,-3.5) -- (4.5,-2.5)  -- (5.5,-2.5) -- (5.5,-1);
  \node at (3, -2.5) {$P_3 = $};
\end{tikzpicture}}
\caption{The correspondence for the states of $\mathfrak{S}_{1,w}(\boldsymbol{x},\boldsymbol{y})$ when $w = 132$.  This time, $B(\overline{\mathfrak{s}}_1) = \text{wt}(P_1)$ and $B(\overline{\mathfrak{s}}_2) = \text{wt}(P_2) + \text{wt}(P_3)$. In this case, the weight of the nonreduced pipe dream is accounted for by the state corresponding to removal of the southwest crossing, a reduction which is different from the convention defined in Section \ref{grotpoly}.}
\label{pipedreamsmodel1}
\end{figure}

\section{Generalized Cauchy Identity}\label{cauchy}

Given an integer partition $\lambda$, let $s_\lambda$ denote the corresponding Schur function. The classical (dual) Cauchy identity for Schur polynomials states that \[\prod_{i,j} (1+x_iy_j) = \sum_\lambda s_\lambda(\boldsymbol{x})s_{\lambda'}(\boldsymbol{y}),\] where $\lambda'$ is the conjugate partition of $\lambda$.
It may be viewed as a result on tensor products of $\GL_n$ representations, a symmetric functions identity, or a by-product of the Boson-Fermion correspondence. The identity allows one to conclude that the Schur polynomials form a self-dual orthogonal basis of symmetric functions with respect to the Hall inner product. 

Generalized Cauchy formulas similarly allow one to obtain dual bases in $\lambda$-rings (see \cite{Lascoux-lambda-rings}). Such formulas have been obtained for many classes of polynomials, including Macdonald and LLT polynomials, factorial Schur functions, $k$-Schur functions, and various skew generalizations among them. See, for example,~\cite{Macdonald-schubert-polys} for a statement of the Cauchy formula for Schubert polynomials; this Cauchy identity describes structure in the cohomology ring of vector bundles on products of flag varieties \cite{Anderson}. Fomin and Kirillov proved a Cauchy identity for $\beta$-Grothendieck polynomials \cite{FominKirillov-gpybe} by proving relations between products in Yang-Baxter algebras.

In this section, we give a lattice model proof of Fomin and Kirillov's Cauchy identity that relies heavily on the solvability of the model. In fact, we'll prove a generalized Cauchy identity for the polynomials $\mathcal{G}_w^{(\beta, 0)}(\boldsymbol{x},\boldsymbol{y})$, and show that when $w=w_0$ this specializes to the earlier identity for $\beta$-Grothendieck polynomials. In \cite{BMN-factorial-schur}, Bump, McNamara, and Nakasuji gave a lattice model proof of the dual Cauchy identity for factorial Schur functions, and our proof is in much the same spirit. We finish the section by proving a Cauchy identity for the biaxial $\beta$-Grothendieck polynomials using similar methods. Unless otherwise specified, $q=0$ for this section, which applies to any vertex weights applied. Setting $q=0$ in weights $S$ and $S^*$ would make these 5-vertex models, which results in fewer admissible states. Furthermore, colored strands in any admissible state are confined to one side of a diagonal (see Figures \ref{pipedreamsmodel1} or \ref{pipedreamsmodel2} for reference). For ease of notation, we drop the superscript $q$ throughout this section, writing $\mathcal{G}_{v,w}^{(\beta)}(\boldsymbol{x},\boldsymbol{y}):= \mathcal{G}_{v,w}^{(\beta, 0)}(\boldsymbol{x},\boldsymbol{y})$.

\begin{theorem}[Generalized Cauchy Identity]\label{GenCauchy}
For any $w \in S_n$,
\[\mathcal{G}^{(\beta)}_w(\boldsymbol{x};\boldsymbol{y}) = \sum_{v\in S_n} \mathcal{G}^{(\beta)}_v(\boldsymbol{y}; \boldsymbol{z})\mathcal{G}^{(\beta)}_{v^{-1},w}(\boldsymbol{x}; \ominus \boldsymbol{z}).\]
\end{theorem}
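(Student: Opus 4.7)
The plan is to construct a stacked lattice model whose partition function can be computed in two different ways: one evaluation recovers the right-hand side via decomposition over an internal boundary, and the other uses the rhombus Yang-Baxter equation to reduce to the left-hand side.

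Consider a $2n \times n$ lattice system $\mathfrak{C}$ whose top $n$ rows use the weights $S$ of Figure~\ref{cauchyweights} with row parameters $y_1, \ldots, y_n$ and column parameters $z_1, \ldots, z_n$, and whose bottom $n$ rows use the weights $S^*$ with row parameters $x_1, \ldots, x_n$ and the same column parameters. The boundary labels are: the identity $1, 2, \ldots, n$ along the top; all $+$ along the left in rows $1, \ldots, n$; the labels $w(1), \ldots, w(n)$ along the left in rows $n+1, \ldots, 2n$; and all $+$ along the right and bottom.

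For the first evaluation yielding the right-hand side, decompose $Z(\mathfrak{C})$ by conditioning on the labels at the internal horizontal slice between the top and bottom halves. Admissibility together with a color-counting argument forces these labels to constitute a permutation $v^{-1}$ of $\{1, \ldots, n\}$. The top half, with $S$ weights, evaluates via Proposition~\ref{hudson} (after identifying $S$-weight partition functions with reflected Figure~\ref{pipeweights1} partition functions) to $\mathcal{G}^{(\beta)}_v(\boldsymbol{y}, \boldsymbol{z})$, while the bottom half with $S^*$ weights implements the column substitution $z_j \mapsto \ominus z_j$ and so coincides with $\mathfrak{S}_{v^{-1}, w}(\boldsymbol{x}, \ominus \boldsymbol{z})$, giving $\mathcal{G}^{(\beta)}_{v^{-1}, w}(\boldsymbol{x}, \ominus \boldsymbol{z})$ by Theorem~\ref{modelgivesgrotpolys}. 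Summing over $v$ then yields the right-hand side.

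For the second evaluation yielding the left-hand side, introduce rhombus $R$-vertices from Theorem~\ref{SwapYBE} on the right boundary of $\mathfrak{C}$, one between each pair of adjacent $S$ and $S^*$ rows; since the right boundary is all $+$, these $R$-vertices are forced into type $\tt{a}$ and contribute trivial factors ($1$ at $q = 0$). Propagate each $R$-vertex leftward through the grid via a train argument, iterating across all $n^2$ necessary swaps to interchange the two halves. After the interchange, in each column the $S^*$ vertex (with column parameter $z_j$, effectively using $\ominus z_j$) stacks directly above the $S$ vertex (with $z_j$). Using the underlying $R$-matrix identity from Proposition~\ref{rectangularweightquantumgroupprop} at conjugate spectral parameters, these column pairs contract to single vertices of Figure~\ref{pipeweights1} weights with row parameter $x_i$ and column parameter $y_k$, cancelling the $z$-dependence. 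The accumulated $R$-vertices on the left similarly simplify owing to the boundary constraints. The resulting effective $n \times n$ system is $\mathfrak{S}_{1, w}(\boldsymbol{x}, \boldsymbol{y})$, whose partition function is $\mathcal{G}^{(\beta)}_w(\boldsymbol{x}, \boldsymbol{y})$.

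The main technical obstacle is performing the bookkeeping of the second evaluation: specifically, verifying the vertex-by-vertex contraction identity that eliminates the $z$-dependence, and ensuring compatibility of the train argument across the $n^2$ rhombus YBE applications. This contraction identity is essentially a unitarity-type relation for the underlying $U_q(\widehat{\mathfrak{sl}}_{n+1})$ $R$-matrix under the Drinfeld twist, which is built into the construction of the $S$ and $S^*$ weights.
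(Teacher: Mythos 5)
Your high-level strategy --- stack two lattice models and evaluate the resulting $2n \times n$ partition function in two ways --- is the same as the paper's, and your first evaluation (decompose at the internal midline and read off $\mathcal{G}^{(\beta)}_v(\boldsymbol{y};\boldsymbol{z})$ and $\mathcal{G}^{(\beta)}_{v^{-1},w}(\boldsymbol{x};\ominus\boldsymbol{z})$ from the two halves after appropriate reflections) matches the paper's strategy as well, modulo some reshuffling of boundary conditions relative to the paper's system $\mathfrak{R}_w$.

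The second evaluation, however, has two genuine gaps. First, the assertion that the rhombus $R$-vertices attached to the right boundary are \emph{forced} into type $\tt{a}$ does not follow from the boundary conditions alone. In a train argument only the $R$-vertex's \emph{external} edges are fixed; the two internal edges joining it to the grid are summed over, so type $\tt{c}_2$ (with a matching colored label entering the grid's right boundary at one $S$ row and one $S^*$ row) remains a priori admissible. Ruling this out is exactly the content of the paper's Lemmas~\ref{nostupidcases} and~\ref{nootherstupidcases}, which carry out a careful content-counting argument with no counterpart in your account.

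Second, and decisively, the claimed column-by-column contraction does not exist. After interchanging the two halves each column still contains $n$ vertices with weights $S$ and $n$ with weights $S^*$; summing over a shared internal edge between two vertically stacked vertices produces a six-leg object (two horizontal pairs and one vertical pair), which cannot equal a single four-leg vertex of Figure~\ref{pipeweights1}. Proposition~\ref{rectangularweightquantumgroupprop} merely exhibits the weights as a twisted $R$-matrix; it contains no unitarity relation, and nothing supplies a mechanism converting a column spectral parameter $z_j$ into $y_k$ as your argument requires. The $z$-independence of the stacked partition function is a \emph{consequence} of Proposition~\ref{rainbowsystem}, not a device available to prove it.

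What the paper actually does for the second evaluation is quite different. Lemma~\ref{rainbowlemma} establishes the base case $w = w_0$ by a rhombus-YBE train argument that pushes row $x_1$ downward past $y_1,\ldots,y_{n-1}$ one row at a time; in each swap the $R$-vertex emerging on the opposite boundary is forced (after invoking Lemma~\ref{nootherstupidcases}) to type $\tt{b}_1$, contributing a multiplicative factor $(x_1\oplus y_i)$. The last two rows then become frozen, and one induces on $n$. Proposition~\ref{rainbowsystem} extends the base case to arbitrary $w$ by showing $Z(\mathfrak{R}_w)$ satisfies the divided difference recursion defining $\mathcal{G}^{(\beta)}_w$. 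Neither the accumulation of $(x_i\oplus y_j)$ factors, nor the induction on $n$, nor the $w$-recursion appears in your argument --- and the $n^2$ row swaps you perform would in any case leave a $2n\times n$ lattice, not an $n\times n$ one.
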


We obtain this identity by introducing a new system of lattice models built from our earlier systems, then evaluating its partition function in two different ways. Given any permutation $w$, let $\mathfrak{R}_w(\boldsymbol{x};\boldsymbol{y};\boldsymbol{z})$ be the lattice model system with $2n$ rows and $n$ columns under the following set of conditions (see Figure~\ref{Cauchyrainbow}):
\begin{itemize}
\item Row parameters, from top to bottom, are $x_n,...,x_1,y_1,....,y_n$. (Because of repeated indices, we refer to rows as ``row $x_i$,'' etc.)
\item Column parameters are $z_1, \ldots, z_n$ from left to right.
\item  Boundary labeling conditions on the top, right, and bottom boundaries are all $+$.
\item For boundary conditions on the left boundary, the top $n$ rows are labeled by $w$ from bottom to top; the bottom $n$ rows are labeled with the identity permutation from top to bottom. 
\item The top $n$ rows take weights $S^*$ and the bottom $n$ rows take weights $S$ (see Figure \ref{cauchyweights}).
\end{itemize}

\begin{figure}[h]
\begin{center}
\scalebox{.8}{
\begin{tikzpicture}
  \coordinate (ab) at (1,0);
  \coordinate (ad) at (3,0);
  \coordinate (af) at (5,0);
  \coordinate (ba) at (0,1);
  \coordinate (bc) at (2,1);
  \coordinate (be) at (4,1);
  \coordinate (bg) at (6,1);
  \coordinate (cb) at (1,2);
  \coordinate (cd) at (3,2);
  \coordinate (cf) at (5,2);
  \coordinate (da) at (0,3);
  \coordinate (dc) at (2,3);
  \coordinate (de) at (4,3);
  \coordinate (dg) at (6,3);
  \coordinate (eb) at (1,4);
  \coordinate (ed) at (3,4);
  \coordinate (ef) at (5,4);
  \coordinate (fa) at (0,5);
  \coordinate (fc) at (2,5);
  \coordinate (fe) at (4,5);
  \coordinate (fg) at (6,5);
  \coordinate (gb) at (1,6);
  \coordinate (gd) at (3,6);
  \coordinate (gf) at (5,6);
  \coordinate (ha) at (0,7);
  \coordinate (hc) at (2,7);
  \coordinate (he) at (4,7);
  \coordinate (hg) at (6,7);
  \coordinate (ib) at (1,8);
  \coordinate (id) at (3,8);
  \coordinate (if) at (5,8);
  \coordinate (ja) at (0,9);
  \coordinate (jc) at (2,9);
  \coordinate (je) at (4,9);
  \coordinate (jg) at (6,9);  
  \coordinate (kb) at (1,10);
  \coordinate (kd) at (3,10);
  \coordinate (kf) at (5,10);
  \coordinate (la) at (0,11);
  \coordinate (lc) at (2,11);
  \coordinate (le) at (4,11);
  \coordinate (lg) at (6,11);  
  \coordinate (mb) at (1,12);
  \coordinate (md) at (3,12);
  \coordinate (mf) at (5,12);
  \draw (ab)--(mb);
  \draw (ad)--(md);
  \draw (af)--(mf);
  \draw (ba)--(bg);
  \draw (da)--(dg);
  \draw (fa)--(fg);
  \draw (ha)--(hg);
  \draw (ja)--(jg);
  \draw (la)--(lg);
  \draw[fill=white] (ab) circle (.25);
  \draw[fill=white] (ad) circle (.25);
  \draw[fill=white] (af) circle (.25);
  \draw[line width=0.5mm,blue,fill=white] (ba) circle (.25);
  \draw[fill=white] (bc) circle (.25);
  \draw[fill=white] (be) circle (.25);
  \draw[fill=white] (bg) circle (.25);
  \draw[fill=white] (cb) circle (.25);
  \draw[fill=white] (cd) circle (.25);
  \draw[fill=white] (cf) circle (.25);
  \draw[line width=0.5mm,green,fill=white] (da) circle (.25);
  \draw[fill=white] (dc) circle (.25);
  \draw[fill=white] (de) circle (.25);
  \draw[fill=white] (dg) circle (.25);
  \draw[fill=white] (eb) circle (.25);
  \draw[fill=white] (ed) circle (.25);
  \draw[fill=white] (ef) circle (.25);
  \draw[line width=0.5mm,red,fill=white] (fa) circle (.25);
  \draw[fill=white] (fc) circle (.25);
  \draw[fill=white] (fe) circle (.25);
  \draw[fill=white] (fg) circle (.25);
  \draw[fill=white] (gb) circle (.25);
  \draw[fill=white] (gd) circle (.25);
  \draw[fill=white] (gf) circle (.25);
  \draw[fill=white] (ha) circle (.25);
  \draw[fill=white] (hc) circle (.25);
  \draw[fill=white] (he) circle (.25);
  \draw[fill=white] (hg) circle (.25);
  \draw[fill=white] (ib) circle (.25);
  \draw[fill=white] (id) circle (.25);
  \draw[fill=white] (if) circle (.25);
  \draw[fill=white] (ja) circle (.25);
  \draw[fill=white] (jc) circle (.25);
  \draw[fill=white] (je) circle (.25);
  \draw[fill=white] (jg) circle (.25);
  \draw[fill=white] (kb) circle (.25);
  \draw[fill=white] (kd) circle (.25);
  \draw[fill=white] (kf) circle (.25);
  \draw[fill=white] (la) circle (.25);
  \draw[fill=white] (lc) circle (.25);
  \draw[fill=white] (le) circle (.25);
  \draw[fill=white] (lg) circle (.25);
  \draw[fill=white] (mb) circle (.25);
  \draw[fill=white] (md) circle (.25);
  \draw[fill=white] (mf) circle (.25);
  \node at (-.5,11) {$x_3$};
  \node at (-.5,9) {$x_2$};
  \node at (-.5,7) {$x_1$};
  \node at (-.5,5) {$y_1$};
  \node at (-.5,3) {$y_2$};
  \node at (-.5,1) {$y_3$};
  \node at (1, 12.5) {$z_1$};
  \node at (3, 12.5) {$z_2$};
  \node at (5, 12.5) {$z_3$};
  \node at (ab) {$+$};
  \node at (ad) {$+$};
  \node at (af) {$+$};
  \node at (mb) {$+$};
  \node at (md) {$+$};
  \node at (mf) {$+$};
  \node at (ba) {$3$};
  \node at (da) {$2$};
  \node at (fa) {$1$};
  \node at (-1.5,9) {$w$};
  \node at (bg) {$+$};
  \node at (dg) {$+$};
  \node at (fg) {$+$};
  \node at (hg) {$+$};
  \node at (jg) {$+$};
  \node at (lg) {$+$};
\draw [decorate,decoration={brace,amplitude=10pt,mirror,raise=4pt},yshift=0pt] (6.5,6) -- (6.5,12) node [black,midway,xshift=2.5cm] {weights $S^*$};
\draw [decorate,decoration={brace,amplitude=10pt,mirror,raise=4pt},yshift=0pt] (6.5,0) -- (6.5,6) node [black,midway,xshift=2.5cm] {weights $S$};
\draw [->,>=stealth] (-1.2,7)--(-1.2,11);
\draw[-, dotted, thick] (-2,6)--(8,6);
\end{tikzpicture}}
\end{center}
\caption{Boundary conditions for $\mathfrak{R}_w(\boldsymbol{x};\boldsymbol{y}; \boldsymbol{z})$ defined in Proposition \ref{rainbowsystem} when $n=3$.}\label{Cauchyrainbow}
\end{figure}

\begin{proposition}\label{rainbowsystem}
For $w\in S_n$, $Z(\mathfrak{R}_w(\boldsymbol{x};\boldsymbol{y};\boldsymbol{z})) = \mathcal{G}^{(\beta)}_w(\boldsymbol{x};\boldsymbol{y})$.
\end{proposition}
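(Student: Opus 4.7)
The plan is to use the rhombus Yang--Baxter equation of Theorem \ref{SwapYBE} to transform $\mathfrak{R}_w$ into an equivalent system whose partition function is manifestly $\mathcal{G}^{(\beta)}_w(\boldsymbol{x};\boldsymbol{y})$. This mirrors the BMN strategy \cite{BMN-factorial-schur} for the factorial Schur Cauchy identity: exploit the fact that the $\boldsymbol{z}$ parameters are ``auxiliary'' and must cancel out of the partition function by solvability, then specialize.

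First, I will attach a rhombus $R$-vertex (with weights from Figure \ref{RweightsCauchy}, specialized to $q=0$) to the right of $\mathfrak{R}_w$ between the bottommost $S^*$-row and the topmost $S$-row, choosing both external labels to be $+$. Since the rightmost column of $\mathfrak{R}_w$ carries the labels $+,+$, the only admissible configuration for this $R$-vertex is type $\mathtt{a}$ with color $+$, which at $q=0$ has weight $1$; hence its insertion does not change the partition function. Propagating this $R$-vertex leftward column-by-column via the rhombus YBE swaps the adjacent $S^*$-row and $S$-row, and the resulting $R$-vertex on the left has external labels inherited from the original left boundary of $\mathfrak{R}_w$. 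Iterating this over all $n^2$ pairs $(x_i$-row, $y_j$-row$)$, I obtain an equivalent system in which all $S$-rows (with parameters $y_1,\ldots,y_n$) sit on top and all $S^*$-rows (with parameters $x_n,\ldots,x_1$) sit on bottom, preceded on the left by a braid of $n^2$ rhombus $R$-vertices inheriting the original external labels $w(n),\ldots,w(1),1,\ldots,n$.

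Next, I will analyze the rearranged system. The key point is that the rhombus $R$-vertex weights in Figure \ref{RweightsCauchy} involve the combinations $x_i \oplus x_j$ (purely in $\boldsymbol{x}$), independent of $\boldsymbol{z}$, while the rearranged grid has two uncoupled halves: the top half is now an $S$-weighted grid on parameters $(\boldsymbol{y},\boldsymbol{z})$, which by Theorem \ref{modelgivesgrotpolys} is a sum over interface configurations of biaxial polynomials in $(\boldsymbol{y},\boldsymbol{z})$, and the bottom half is an $S^*$-weighted grid on $(\boldsymbol{x},\boldsymbol{z})$, which similarly unwinds by vertical flipping into a standard $\mathfrak{S}$-system with parameters $(\boldsymbol{x},\ominus\boldsymbol{z})$. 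Using Theorem \ref{modelgivesgrotpolys}, Proposition \ref{hudson}, and the fact that the braid's admissibility constraints force the interface labels to match in a controlled way, the $\boldsymbol{z}$-dependence cancels out of the total partition function.

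Finally, once $\boldsymbol{z}$-independence is established, I will specialize $\boldsymbol{z}$ to a convenient value (for instance $\boldsymbol{z}=0$, under which the $S$- and $S^*$-weights become column-independent) to collapse the braid and grid into the standard system $\mathfrak{S}_{1,w}(\boldsymbol{x},\boldsymbol{y})$, whose partition function is exactly $\mathcal{G}^{(\beta)}_w(\boldsymbol{x};\boldsymbol{y})$ by Theorem \ref{modelgivesgrotpolys} at $q=0$. The main obstacle will be the combinatorial bookkeeping of the braid of $n^2$ rhombus $R$-vertices: verifying the precise cancellation of $\boldsymbol{z}$-dependence between the braid weights and the rearranged grid requires careful tracking of admissible configurations, since the braid's internal labels couple nontrivially to both halves of the rearranged grid.
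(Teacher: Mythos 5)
Your proposal shares the paper's starting move (inserting a rhombus $R$-vertex on the right with $+,+$ external labels and propagating it with Theorem~\ref{SwapYBE}), but it diverges in ways that leave genuine gaps.

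The most concrete gap is in the very first step. The claim that ``the only admissible configuration for this $R$-vertex is type $\mathtt{a}$ with color $+$'' is not true as stated. The external (right) labels of the inserted $R$-vertex are $+,+$, but its left edges --- the edges shared with the grid --- are internal and summed over. From Figure~\ref{RweightsCauchy} at $q=0$, a type $\mathtt{c}_2$ vertex with colored left labels $a,a$ and right labels $+,+$ also has nonzero weight ($1-q^2=1$). Such a state would pair the $R$-vertex with a modified grid whose right boundary carries a color $a$ on both the $S^*$-row and the $S$-row. Showing that this modified grid has no admissible states is exactly the content of Lemma~\ref{nostupidcases} and its generalization Lemma~\ref{nootherstupidcases}, and these are proved by a careful content-counting argument on the colored strands. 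Your proposal asserts the conclusion without this justification, and the same gap recurs at each of the $n^2$ swaps you propose.

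The second issue is circularity in your treatment of $\boldsymbol{z}$. You want to show $\boldsymbol{z}$-independence of $Z(\mathfrak{R}_w)$ and then specialize $\boldsymbol{z}=0$, but $\boldsymbol{z}$-independence is essentially equivalent to the proposition itself: the target $\mathcal{G}^{(\beta)}_w(\boldsymbol{x};\boldsymbol{y})$ has no $\boldsymbol{z}$ in it. Moreover, your route for establishing this --- splitting the rearranged system at the interface and expressing both halves as biaxial Grothendieck polynomials in $(\boldsymbol{y},\boldsymbol{z})$ and $(\boldsymbol{x},\ominus\boldsymbol{z})$ --- is precisely the argument used for the Cauchy identity (Theorem~\ref{GenCauchy}), which is logically downstream of Proposition~\ref{rainbowsystem}. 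Relying on it here would be circular.

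By contrast, the paper does not fully separate the $S$ and $S^*$ rows. Lemma~\ref{rainbowlemma} handles only the base case $w=w_0$, performs just $n-1$ swaps (moving $x_1$ below $y_1,\ldots,y_{n-1}$ to pick up $\prod_{i<n}(x_1\oplus y_i)$), observes that the bottom two rows then contribute weight $1$, and reduces to $\mathfrak{R}_{w_0}^{n-1}$ by induction on $n$. The general-$w$ case is then handled separately via a row-YBE train argument using the $S^*$-row $R$-vertices (Figure~\ref{dualRvertexweights}), giving the recursion $Z(\mathfrak{R}_{ws_i}) = \pi_i^{(\beta)} Z(\mathfrak{R}_w)$ which matches the defining divided-difference recursion. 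Your proposal does not mention this Demazure recursion and seems to aim to treat all $w$ at once through the separation, which does not obviously close.

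To repair your approach you would need, at minimum, (i) a proof of the analogue of Lemma~\ref{nootherstupidcases} to justify that the type~$\mathtt{c}_2$ contributions vanish at every intermediate stage, (ii) a non-circular argument for $\boldsymbol{z}$-independence, and (iii) an explicit evaluation of the resulting $n^2$-crossing braid after separation --- none of which the proposal supplies. The paper's two-pronged strategy (induction on $n$ for $w_0$, then Demazure recursion on $w$) sidesteps all three difficulties.
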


This proposition essentially follows from repeated use of the now familiar train argument. However, the base case $w=w_0$ in our recursive argument is surprisingly difficult. For this, we will need a pair of lemmas in increasing levels of generality. The proof of both lemmas makes use of a new statistic on colored edge labels called \emph{content}. To define it, we need an absolute numbering on rows and columns independent of the row and column parameter indices of each particular model. We number columns ascending from left to right and rows ascending from bottom to top. Note that each horizontal edge of the lattice occurs in a row $i$ and between columns $j$ and $j+1$ (where we can extend the numbering to include boundary edges), so we may assign coordinates to the horizontal edge of the form $(i, j+\frac{1}{2})$. Similarly, vertical edges lie between rows, so their coordinates are of the form $(i+\frac{1}{2}, j)$. For each colored edge in an admissible state, define the \emph{content} of that edge to be the integer given by the sum of its coordinates minus $\frac{1}{2}$. An example of the content of colored edges in a state of the top half of $\mathfrak{R}_w(\boldsymbol{x};\boldsymbol{y}; \boldsymbol{z})$ is given in Figure~\ref{contentdiagram}.

\begin{lemma}\label{nostupidcases} If we change the right boundary edge in the system $\mathfrak{R}_w(\boldsymbol{x};\boldsymbol{y}; \boldsymbol{z})$ by replacing a label $+$ in row $x_i$ with color $c$ and a label $+$ in row $y_j$ with color $c$ for any choice of $i,j \in [1,n]$, then the resulting system has no admissible states.
\end{lemma}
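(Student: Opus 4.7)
The plan is to derive a contradiction by computing in two different ways the sum of content changes along the colored strands in the top half of $\mathfrak{R}_w$ (the $n$ rows with weights $S^*$), using the content statistic just introduced. Suppose for contradiction that an admissible state of the modified system exists, and for each colored strand $P$ in the top half set $\sigma(P):=\mathrm{content}(\text{SE endpoint of }P)-\mathrm{content}(\text{NW endpoint of }P)$ and $\Sigma:=\sum_P \sigma(P)$.

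First I would evaluate $\Sigma$ from its endpoints. Since the only $S^*$ bend pairs are $\{\text{left},\text{bottom}\}$ and $\{\text{top},\text{right}\}$, every colored strand in the top half is a monotone NW-to-SE staircase with endpoints on $\{\text{left},\text{top}\}$ and $\{\text{right},\text{bottom}\}$ respectively. The top boundary is $+$, so every NW endpoint lies on the left boundary, contributing one strand per color $c'$ with starting content $n+w^{-1}(c')$. The colored SE endpoints consist of the modified right edge at row $x_i$ (the unique colored right edge, carrying $c$, with ending content $2n+i$) and the colored middle edges. Each color $c'\neq c$ must therefore exit on the middle boundary at some distinct column $k_{c'}$, with content $n+k_{c'}$, leaving exactly one uncolored middle column $k_0\in\{1,\ldots,n\}$. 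Expanding,
\[
\Sigma = (2n+i) + \sum_{c'\neq c}(n+k_{c'}) - \sum_{k=1}^n (n+k) = n+i-k_0.
\]

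Next I would evaluate $\Sigma$ vertex-by-vertex. Writing $\sigma(P)=\sum_{v\in P}\Delta_v^P$ as a telescoping sum of per-vertex changes yields $\Sigma=\sum_v \Delta_v$, where $\Delta_v$ sums these changes over all colored strands through $v$. The left and bottom edges of a vertex $(r,j)$ have content $r+j-1$, and the top and right edges have content $r+j$; so a colored strand contributes $+1$ when passing straight horizontally, $-1$ when passing straight vertically (downward, i.e., in the NW-to-SE direction), and $0$ at a bend. At $q=0$ the vertex $\mathtt{b}_2$ has weight $0$ and is inadmissible, leaving only $\mathtt{a},\mathtt{b}_1,\mathtt{c}_1,\mathtt{c}_2$. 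A case check then gives $\Delta_v=0$ at every admissible type except $\mathtt{b}_1$ with horizontal label $+$ (in which case the vertical label, being strictly smaller than $+$, is a colored strand), where $\Delta_v=-1$. Hence $\Sigma\leq 0$, and combined with the endpoint evaluation $n+i-k_0=\Sigma\leq 0$ forces $k_0\geq n+i\geq n+1$, contradicting $k_0\leq n$; this rules out any admissible state.

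The crux of the argument, and the only potential obstacle, is the bound $\Delta_v\leq 0$ at every admissible vertex. It hinges essentially on the exclusion of $\mathtt{b}_2$ at $q=0$: without this exclusion, a colored horizontal strand could cross a vertical $+$ strand and contribute $\Delta_v=+1$, destroying the upper bound on $\Sigma$. Everything else reduces to bookkeeping on the boundary of the top half.
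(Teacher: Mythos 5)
Your proof is correct and takes essentially the same approach as the paper's: both compute the total content change along colored strands in the $S^*$ half, show that each admissible vertex type (with $\mathtt{b}_2$ excluded at $q=0$) contributes a net content change of a fixed sign, and then compare the total content at the NW endpoints (left boundary) against the SE endpoints (middle and modified right edge) to derive a contradiction. The only cosmetic difference is that you compute $\Sigma = n + i - k_0$ exactly using an absolute row numbering, whereas the paper renumbers the top-half rows $1,\ldots,n$ and works with lower bounds; the mechanism is identical.
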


\begin{proof}
Consider only the top $n$ rows of this modified system, having Boltzmann weights $S^*$. We may view colored strands as entering this half from two sources: the bottom half (i.e., along the dotted line in Figure~\ref{Cauchyrainbow}) or from the newly colored right boundary edge with color $c$. Strands here travel up and to the left according to the admissible vertices. There is some ambiguity about how to interpret the movement of strands at an $\tt{a}$ vertex, since all adjacent edges are the same color; without loss of generality, we assume that the strand moving upward on the south edge travels left to the west edge at the vertex, while the strand on the east edge travels up to the north edge at the vertex. With these conventions, we may order the colored edges in a given strand, starting from the edge at which the strand enters the bottom and ending with the edge at which the strand exits out the left boundary. For each vertex that a given colored strand passes through, consider what happens to the content of the next edge in the strand as we move past the vertex. Traveling through a vertex either increases the content of the next colored label by 1 (in type $\tt{b_1}$ if $c$ is the smaller color); decreases the content of the next colored label by 1 (in type $\tt{b_1}$ if $c$ is the larger color); or leaves the content of the next colored label unchanged (noting our conventions on $\tt{a}$ vertices above).

Consider now the total content of all the colors as we move along all strands at once up and to the left. A vertex of type $\tt{b}_1$ with both $a<+$ and $b<+$ increases the content by one for the strand of smaller color, but decreases it by one for the strand of larger color. Thus the only vertex that changes the total content of all the strands is the type $\tt{b}_1$, with the larger color as $+$, which increases the total content by 1.
Therefore, the total content at the end of the strands along the left border must be greater than or equal to the total content at the beginning of the strands along the bottom. The total ending content of all colored edges on the left boundary of the top portion is $1+2+\ldots+n = \frac{n(n+1)}{2}$. To compute a lower bound on the total starting content along the bottom and right of the top half, note that colored strands move rightward and upward in the bottom half, according to the set of admissible vertices. The strand with color $c$ must then enter from the left boundary of the bottom half and exit at the lone edge labeled $c$ in the right boundary at row $y_j$. The remaining colored strands travel up to the top half of the lattice and their total content is at least $1+2+\cdots + n-1$. Finally we have an additional colored strand with color $c$ entering on the right boundary of the top half of the model, whose content is at least $n+1$. So the total starting content in the top half is at least $1+2+\ldots + n-1 + n+1 = \frac{n(n+1)}{2}+1$. This is more than the total ending content, giving a contradiction, so no admissible state is possible.
\end{proof}

\begin{figure}[h]
\begin{center}
\scalebox{.8}{
\begin{tikzpicture}
  \coordinate (ab) at (1,0);
  \coordinate (ad) at (3,0);
  \coordinate (af) at (5,0);
  \coordinate (ba) at (0,1);
  \coordinate (bc) at (2,1);
  \coordinate (be) at (4,1);
  \coordinate (bg) at (6,1);
  \coordinate (cb) at (1,2);
  \coordinate (cd) at (3,2);
  \coordinate (cf) at (5,2);
  \coordinate (da) at (0,3);
  \coordinate (dc) at (2,3);
  \coordinate (de) at (4,3);
  \coordinate (dg) at (6,3);
  \coordinate (eb) at (1,4);
  \coordinate (ed) at (3,4);
  \coordinate (ef) at (5,4);
  \coordinate (fa) at (0,5);
  \coordinate (fc) at (2,5);
  \coordinate (fe) at (4,5);
  \coordinate (fg) at (6,5);
  \coordinate (gb) at (1,6);
  \coordinate (gd) at (3,6);
  \coordinate (gf) at (5,6);
  \coordinate (bb) at (1,1);
  \coordinate (bd) at (3,1);
  \coordinate (bf) at (5,1);
  \coordinate (db) at (1,3);
  \coordinate (dd) at (3,3);
  \coordinate (df) at (5,3);
  \coordinate (fb) at (1,5);
  \coordinate (fd) at (3,5);
  \coordinate (ff) at (5,5);
  \draw (ba)--(bg);
  \draw (da)--(dg);
  \draw (fa)--(fg);
  \draw (ab)--(gb);
  \draw (ad)--(gd);
  \draw (af)--(gf);
  \draw[line width=0.5mm,red] (fa)--(fb)--(ab);
  \draw[line width=0.5mm,green] (da)--(dd)--(bd) -- (bf) -- (af);
  \draw[line width=0.5mm,blue] (ba)--(bd)--(ad);
  \draw[line width=0.5mm,red,fill=white] (ab) circle (.25);
  \draw[line width=0.5mm,blue,fill=white] (ad) circle (.25);
  \draw[line width=0.5mm,green,fill=white] (af) circle (.25);
  \draw[line width=0.5mm,blue,fill=white] (ba) circle (.25);
  \draw[line width=0.5mm,blue,fill=white] (bc) circle (.25);
  \draw[line width=0.5mm,green,fill=white] (be) circle (.25);
  \draw[fill=white] (bg) circle (.25);
  \draw[line width=0.5mm,red,fill=white] (cb) circle (.25);
  \draw[line width=0.5mm,green,fill=white] (cd) circle (.25);
  \draw[fill=white] (cf) circle (.25);
  \draw[line width=0.5mm,green,fill=white] (da) circle (.25);
  \draw[line width=0.5mm,green,fill=white] (dc) circle (.25);
  \draw[fill=white] (de) circle (.25);
  \draw[fill=white] (dg) circle (.25);
  \draw[line width=0.5mm,red,fill=white] (eb) circle (.25);
  \draw[fill=white] (ed) circle (.25);
  \draw[fill=white] (ef) circle (.25);
  \draw[line width=0.5mm,red,fill=white] (fa) circle (.25);
  \draw[fill=white] (fc) circle (.25);
  \draw[fill=white] (fe) circle (.25);
  \draw[fill=white] (fg) circle (.25);
  \draw[fill=white] (gb) circle (.25);
  \draw[fill=white] (gd) circle (.25);
  \draw[fill=white] (gf) circle (.25);
  \path[fill=white] (bb) circle (.3);
  \path[fill=white] (bd) circle (.3);
  \path[fill=white] (bf) circle (.3);
  \path[fill=white] (db) circle (.3);
  \path[fill=white] (dd) circle (.3);
  \path[fill=white] (df) circle (.3);
  \path[fill=white] (fb) circle (.3);
  \path[fill=white] (fd) circle (.3);
  \path[fill=white] (ff) circle (.3);
  \node at (bb) {\small $T_{1,1}$};
  \node at (bd) {\small $T_{1,2}$};
  \node at (bf) {\small $T_{1,3}$};
  \node at (db) {\small $T_{2,1}$};
  \node at (dd) {\small $T_{2,2}$};
  \node at (df) {\small $T_{2,3}$};
  \node at (fb) {\small $T_{3,1}$};
  \node at (fd) {\small $T_{3,2}$};
  \node at (ff) {\small $T_{3,3}$};
  \node at (gb) {$+$};
  \node at (gd) {$+$};
  \node at (gf) {$+$};
  \node at (fa) {$1$};
  \node at (fc) {$+$};
  \node at (fe) {$+$};
  \node at (fg) {$+$};
  \node at (eb) {$1$};
  \node at (ed) {$+$};
  \node at (ef) {$+$}; 
  \node at (da) {$2$};
  \node at (dc) {$2$};
  \node at (de) {$+$};
  \node at (dg) {$+$};
  \node at (cb) {$1$};
  \node at (cd) {$2$};
  \node at (cf) {$+$};   
  \node at (ba) {$3$};
  \node at (bc) {$3$};
  \node at (be) {$2$};
  \node at (bg) {$+$}; 
  \node at (ab) {$1$};
  \node at (ad) {$3$};
  \node at (af) {$2$};
  \node at (0,7) {column:};
  \node at (-2, 5) {row:};
  \node at (-1.2,5) {$3$};
  \node at (-1.2,3) {$2$};
  \node at (-1.2,1) {$1$};
  \node at (1,7) {$1$};
  \node at (3,7) {$2$};
  \node at (5,7) {$3$};
  \node at (0,5.5) {\small{3}};
  \node at (0,3.5) {\small{2}};
  \node at (0,1.5) {\small{1}};
  \node at (2, 1.5) {\small{2}};
  \node at (4, 1.5) {\small{3}}; 
  \node at (2,3.5) {\small{3}};
  \node at (.5,4) {\small{3}};
  \node at (.5,2) {\small{2}};
  \node at (.5,0) {\small{1}};
  \node at (2.5,2) {\small{3}};
  \node at (2.5,0) {\small{2}};
  \node at (4.5,0) {\small{3}};
\end{tikzpicture}}
\end{center}
\caption{The unique admissible state in the top half of $\mathfrak{R}_{w_0}(\boldsymbol{x};\boldsymbol{y}; \boldsymbol{z})$. The content (the sum of coordinates $- \frac{1}{2}$) is the number above or to the left of the color.}
\label{contentdiagram}
\end{figure}

Since we will eventually want to swap all the $S$ rows past the $S^*$ rows in the system $\mathfrak{R}_w$, we need a similar result to hold for any system in which we have swapped some of these rows.

\begin{lemma} \label{nootherstupidcases}
Let $\mathfrak{R}'(\boldsymbol{x};\boldsymbol{y};\boldsymbol{z})$ be the system with $2n$ rows and $n$ columns:
\begin{itemize}
\item Row parameters, from top to bottom, are $x_n,...,x_1,y_1,....,y_n$.
\item Column parameters are $z_1, \ldots, z_n$ from left to right.
\item The top, right, and bottom boundary edges are all labeled $+$.
\item $n$ of the rows take weights $S^*$, while the other $n$ rows take weights $S$.
\item On the left boundary, the $n$ rows with weights $S^*$ are labeled with colors $1, \ldots, n$, in any order; the $n$ rows with weights $S$ are also labeled with colors $1, \ldots, n$, in any order.
\item For a given color $i$, the row with $S^*$ weights with label $i$ on its left boundary is higher than the row with weights  $S$ with label $i$ on its left boundary.
\end{itemize}
Let $\mathfrak{R}''(\boldsymbol{x};\boldsymbol{y};\boldsymbol{z})$ be the system $\mathfrak{R}'(\boldsymbol{x};\boldsymbol{y};\boldsymbol{z})$ with two $+$ labels on the right boundary replaced by a certain label $c$ between 1 and $n$ such that:
\begin{itemize}
\item Of the two rows, one takes weights $S^*$, while the other takes weights $S$.
\item The $S^*$ row is higher than the $S$ row.
\end{itemize}
Then $\mathfrak{R}''(\boldsymbol{x};\boldsymbol{y};\boldsymbol{z})$ has no admissible states.
\end{lemma}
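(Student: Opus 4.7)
The strategy is to generalize the content-based argument from the proof of Lemma \ref{nostupidcases} to the interleaved setting of $\mathfrak{R}''$. Assume for contradiction that an admissible state $\sigma$ exists, and isolate the subsystem $\Sigma$ consisting of the $n$ rows with weights $S^*$ (which need not be contiguous). In $\sigma$, each colored strand decomposes into connected components inside $\Sigma$, and each such component begins and ends on the boundary of $\Sigma$. This boundary consists of four types of edges: the $n$ left-boundary colored edges of the $S^*$ rows (with colors $1,\ldots,n$ in some order); the $n$ right-boundary edges of the $S^*$ rows (all labeled $+$ except for a single edge labeled $c$); horizontal edges at the top of each $S^*$ row (bordering an adjacent $S$ row above, or the overall top boundary labeled $+$); and horizontal edges at the bottom of each $S^*$ row (bordering an adjacent $S$ row below, or the overall bottom boundary labeled $+$).

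Next, I would orient each strand component of $\Sigma$ from its ``lower-right entry'' edge to its ``upper-left exit'' edge, following the same convention as in the proof of Lemma \ref{nostupidcases}. The local vertex-by-vertex analysis used there applies unchanged to any $S^*$-row vertex (since it depends only on the $S^*$ weights, not on the row's position in the overall system): the only vertex type that produces a net increase in total content summed over traversing strands is $b_1$ with larger color equal to $+$, contributing $+1$, while all other $S^*$ vertex types preserve total content. This yields the inequality
\[
  \sum_{\text{exit edges}} (\text{content}) \;\geq\; \sum_{\text{entry edges}} (\text{content}).
\]

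The final combinatorial step is to show that this inequality fails. The exit content consists of the sum of contents of the left-boundary edges of the $S^*$ rows (equal to $1+2+\cdots+n = n(n+1)/2$ in the $\Sigma$-local row indexing) together with contributions from strand components exiting upward through top-interface edges into $S$ rows above. The entry content consists of the contribution from the $c$-colored right-boundary edge in the $S^*$ row (worth at least $n+1$, since its content is the sum of its local row index and $n$) together with contributions from components entering through bottom-interface edges from $S$ rows below. Using the interleaving condition, which says that for each color $i$ the $S^*$ row with label $i$ sits above the $S$ row with label $i$, one matches ``downward'' strand flow through $S$ rows with the top/bottom interface contributions and shows that the net interface contribution to entries is at least the net interface contribution to exits. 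Combined with the extra $+1$ contributed by the $c$-colored right boundary, the total entry content strictly exceeds $n(n+1)/2$, contradicting the inequality.

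The main obstacle is the careful bookkeeping of strand flow across the $S^*/S$ interfaces, since these labels depend on $\sigma$ rather than being fixed by the boundary conditions; in Lemma \ref{nostupidcases} the interface was a single contiguous cut between the two halves, whereas here it may consist of many disjoint pieces. I expect the crucial step to be showing that the interleaving condition forces a specific pairing between strands exiting $\Sigma$ upward through a top-interface edge and strands entering $\Sigma$ from below through a bottom-interface edge, in such a way that the net contribution to entries dominates that to exits --- thereby leaving the $+n+1$ contribution of the $c$-colored right-boundary edge to produce the final contradiction.
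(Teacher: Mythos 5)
Your plan correctly identifies the right tool (the content statistic from Lemma \ref{nostupidcases}) and correctly derives the easy inequality: since $S^*$ vertices never decrease content, the total content over all exit edges of the $S^*$-row subsystem $\Sigma$ is at least the total content over all entry edges. But the proof you need is the \emph{violation} of this inequality, and that is exactly the step you leave as an expectation (``I expect the crucial step to be showing that the interleaving condition forces a specific pairing\dots''). That step is where all the difficulty lives, and your sketched route does not obviously close it.

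The obstacle is precisely the one you flag: the top- and bottom-interface contributions $T$ and $B$ are not determined by the boundary data, so the quantities on both sides of $L + T \ge C_c + B$ are state-dependent. Your plan asserts that interleaving produces a matching of upward exits with downward re-entries for which the entry content dominates, but there is no canonical such pairing --- strands can exit $\Sigma$ and re-enter many times at positions chosen by the state, and in the intervening $S$ rows they gain content at a rate (``twice the rows risen'') that your $\Sigma$-only accounting does not see. Worse, you invoke a ``$\Sigma$-local row indexing'' to make $L = n(n+1)/2$, but the interface edges that contribute to $T$ and $B$ live at the boundary of $\Sigma$, where local and global coordinates disagree whenever the $S^*$ rows are non-contiguous, so the bookkeeping is genuinely nontrivial rather than a formality.

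The paper's proof sidesteps the subsystem decomposition entirely and is structured quite differently. It first analyzes the unmodified system $\mathfrak{R}'$ \emph{globally}: the total content increase along all strands is forced by the boundary to equal $n^2 - 2s$, and a vertex-by-vertex lower bound shows this is also the \emph{minimum} possible increase, so the inequality is an \emph{equality} and every vertex contributes exactly the minimum. This exact balance is what licenses a perturbation argument for $\mathfrak{R}''$: there one keeps the same endpoints but raises the starting point of the $c$ strand to the right boundary (a content jump of $C = n + r_S(c) - r'(c)$), and then case-splits on whether $r'(c) < r_S(c)$ or $r'(c) > r_S(c)$ to show the strand cannot shed or absorb enough content through $S$ rows to compensate --- in each case $C - D_1 \ge 1$ or $C + D_2 \ge 1$. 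Your plan has no analogue of the tightness-in-$\mathfrak{R}'$ step nor of this case split, and without them the one-sided inequality you do have cannot be turned into a contradiction. To complete your route you would essentially have to reprove the global minimality statement and then translate its consequences back into bounds on $B - T$, at which point you have reconstructed the paper's argument in less convenient coordinates.
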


\begin{proof}
This is a generalization of Lemma~\ref{nostupidcases}, and will again make use of the total content of all colored edges in the lattice (not including $+$). Consider first the system $\mathfrak{R}'(\boldsymbol{x};\boldsymbol{y};\boldsymbol{z})$. In any admissible state, our condition on the location of colors along the left boundary ensures that each colored strand begins on the left boundary at a $S$ row and travels upwards (moving left and right through the model) and exits on the left of a $S^*$ row. Let us consider the increase in total content from all colored strands as we move along the strand from entry to exit.

Let $r_S(i)$ (resp. $r_{S^*}(i))$ be the row in which color $i$ appears on the left boundary in $S$ (resp. $S^*$), counting from the top down. On one hand, the total content in $\mathfrak{R}'(\boldsymbol{x};\boldsymbol{y};\boldsymbol{z})$ must increase by precisely \[\sum_{i=1}^n (r_S(i)-r_{S^*}(i)) = n^2 - 2s,\] where $s$ is the number of pairs of a $S$ row and a $S^*$ row where the $S^*$ row is higher.

On the other hand, we will show that this content increase is the minimum that we could expect. Recall from Lemma \ref{nostupidcases} that no vertex taking weights $S^*$ reduces the total content. Now with weights $S$, vertices $\tt{b}_2, \tt{c_1}$, and $\tt{c_2}$ where the larger color is $+$ increase the total content by 2, while any vertex of these types having $a<+$ and $b<+$ increases the total content by 4, and a vertex of type $\tt{a}$ with $c<+$ also increases the total content by 4. On the other hand, vertices $\tt{c}_1$ and $\tt{c}_2$ move the color involved up half a row, and vertices $\tt{b}_2$ and $\tt{a}$ move a color up one row. In other words, the increase in content for every vertex is precisely twice the number of rows risen by strands through the vertex, except for a vertex of type $\tt{b}_2$ having larger color $+$; in the latter case, the content increases but there is no rise.

Since every strand begins in a $S$ row and moves upwards to a $S^*$ row, the strands altogether need to rise $n^2-2s$ rows, of which $\frac{n^2}{2}-s$ are $S$ rows (if a strand rises one row from a $S$ row to a $S^*$ row, say, we consider half that rise to take place in each row). Since the total content increase must be $n^2-2s$, we must have exactly the minimum content increases at each vertex: none in $S^*$ rows, and 2 per row crossed in $S$ rows.

Now we consider $\mathfrak{R}''(\boldsymbol{x};\boldsymbol{y};\boldsymbol{z})$. Note that the $c$ strand from the right boundary of a $S^*$ row travels leftward and upward through the top half of the lattice and exits on the left boundary. The $c$ strand from the right boundary of a $S$ row is the exit point of the $c$-colored strand moving upward and rightward from a lower spot on the left boundary. This means that the former strand ends on the left of a $S^*$ row, while the latter ends on the left of a $S$ row.

Consider now the total content of all the strands ending on the left side of $S^*$ rows. In other words, we consider only the higher of the two $c$ strands for each $c$. We will show that the total content must be higher in this scenario than in $\mathfrak{R}'(\boldsymbol{x};\boldsymbol{y};\boldsymbol{z})$; as in Lemma \ref{nostupidcases}, this will give a contradiction. Since the endpoints of all our (considered) strands are the same as in $\mathfrak{R}'(\boldsymbol{x};\boldsymbol{y};\boldsymbol{z})$, it suffices to consider the effect of the new starting point of the $c$ strand. Let $r'(c)$ be the row of $S^*$ that has $c$ on its right boundary. Then the content at the start of strand $c$ in $\mathfrak{R}''(\boldsymbol{x};\boldsymbol{y};\boldsymbol{z})$ is $C := n + r_S(c)-r'(c)$ higher than the content at the start of strand $c$ in $\mathfrak{R}'(\boldsymbol{x};\boldsymbol{y};\boldsymbol{z})$.

If $r'(c)<r_S(c)$, then $r'(c)$ is higher in the diagram than $r_S(c)$ (recall we label our rows from the top down), so the $c$ strand may cross as many as $\min\{r_S(c)-r'(c)-\frac{1}{2},n-\frac{1}{2}\}$ fewer $S$ rows (the $\frac{1}{2}$ arises because the $c$ strand starts in $S^*$ and ends in $S$). Since each strand crossing each $S$ row increases the content by at least 2, this can decrease the total content at the end of the strands by at most $D_1 := \min\{2r_S(c)-2r'(c)-1,2n-1\}$. However, \[C-D_1 = \max\{1 + (n - r_S(c)+r'(c)), 1-(n-r_S(c)+r'(c)\} \ge 1,\] so the total content at the end of the strands of $\mathfrak{R}''(\boldsymbol{x};\boldsymbol{y};\boldsymbol{z})$ must be higher than the total content at the end of the strands of $\mathfrak{R}'(\boldsymbol{x};\boldsymbol{y};\boldsymbol{z})$.

If $r'(c)>r_S(c)$, then $r'(c)$ is lower in the diagram than $r_S(c)$, and the $c$ strand must cross at least $\min\{r'(c)-r_S(c)-n+\frac{1}{2},\frac{1}{2}\}$ more $S$ rows, which will increase the total content at the end of the strands by at least $D_2 := \min(2r'(c)-2r_S(c)-2n+1,0)$. Therefore, \[C+D_2 = \min(1 + (r'(c)-r_S(c)-n), 1 - (r'(c)-r_S(c)-n)) \ge 1,\] which is again a contradiction, so $\mathfrak{R}''(\boldsymbol{x};\boldsymbol{y};\boldsymbol{z})$ has no admissible states.
\end{proof}

\begin{lemma} \label{rainbowlemma}
Proposition \ref{rainbowsystem} holds for $w = w_0$.
\end{lemma}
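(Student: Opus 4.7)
The plan is to induct on $n$. For the base case $n=1$, the system $\mathfrak{R}_{w_0}$ consists of a $2 \times 1$ lattice with a unique admissible state: a $\tt{c}_2$ vertex in the top row (with $S^*$ weights, left label $1$) and a $\tt{c}_2$ vertex in the bottom row (with $S$ weights, left label $1$), each of weight $1 - q^2 = 1$ at $q = 0$. The total weight is $1 = \mathcal{G}^{(\beta)}_{w_0}(x_1; y_1)$ as required.

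For the inductive step, the key observation is that at $q = 0$ the $\tt{b}_2$ weight vanishes in both $S$ and $S^*$, so we reduce to a 5-vertex model. In the topmost row of $\mathfrak{R}_{w_0}$ (row $x_n$ with $S^*$ weights, left label $1$, and top boundary $+$), the leftmost vertex has left $= 1$ and top $= +$; checking each admissible type shows that $\tt{b}_1$ and $\tt{c}_1$ would require the smaller color to equal $+$, contradicting $+ > 1$; $\tt{b}_2$ has weight zero; and only $\tt{c}_2$ remains admissible, with weight $1$. This forces the color-$1$ strand to bend south immediately. The remaining vertices in this row then have both left and top equal to $+$, so they must be of type $\tt{a}$ with weight $1$. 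A parallel argument in the bottommost row (row $y_n$ with $S$ weights and left label $n$), using the fact that $n$ is the largest color, shows that the color-$n$ strand is similarly rigidly determined.

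These forced outer layers allow me to strip off the topmost and bottommost rows of $\mathfrak{R}_{w_0}$ for $S_n$, reducing it to a system closely resembling $\mathfrak{R}_{w_0}$ for $S_{n-1}$. The stripped contribution must be identified with the factors $\prod_{i+j=n}(x_i \oplus y_j)$ that distinguish $\mathcal{G}^{(\beta)}_{w_0}$ on $S_n$ from $\mathcal{G}^{(\beta)}_{w_0}$ on $S_{n-1}$. The main technical obstacle will be that the individual vertex weights depend on the column parameters $z_k$ while $\mathcal{G}^{(\beta)}_{w_0}(\boldsymbol{x}; \boldsymbol{y})$ does not; this $\boldsymbol{z}$-dependence must cancel through a telescoping summation, likely using the formal group law identity $1 + \beta(x \ominus y) = (1 + \beta x)/(1 + \beta y)$. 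When the direct peeling becomes intractable deeper inside the lattice, I would invoke the rhombus Yang-Baxter equation of Theorem \ref{SwapYBE} together with Lemmas \ref{nostupidcases} and \ref{nootherstupidcases}, which were designed precisely to rule out the problematic configurations in $\mathfrak{R}$-type systems and should leave exactly the contributions needed to reassemble $\prod_{i+j \leq n}(x_i \oplus y_j)$.
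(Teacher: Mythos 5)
Your base case is correct, and the observation that the topmost row (row $x_n$, with $S^*$ weights and left label $1$) is rigid is also correct: at $q=0$ it must consist of a single $\tt{c}_2$ vertex in column $1$ followed by $\tt{a}$ vertices, all of weight $1$. However, the inductive step has two genuine gaps. First, your claim that the bottommost row (row $y_n$, $S$ weights, left label $n$) is ``similarly rigidly determined'' is false. At $q=0$ the $S$-weight $\tt{b}_1$ vertex vanishes, but $\tt{b}_2$, $\tt{c}_1$, $\tt{c}_2$, $\tt{a}$ all remain; at the leftmost vertex of row $y_n$ (left label $n$, bottom $+$), both $\tt{b}_2$ (left $n$, right $n$, top $+$, bottom $+$, weight $x_i\oplus y_j$) and $\tt{c}_2$ (left $n$, top $n$, right $+$, bottom $+$, weight $1$) are admissible, so the color-$n$ strand may travel east arbitrarily far before turning north. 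Second, even the rigid top row does not peel off cleanly: its bottom boundary, which becomes the new top boundary after stripping, carries color $1$ in column $1$ rather than $+$, and the residual $S^*$ labels $2,\ldots,n$ no longer match the $\mathfrak{R}_{w_0}$ template for $S_{n-1}$. You acknowledge these difficulties but only gesture at Theorem~\ref{SwapYBE} and Lemmas~\ref{nostupidcases}--\ref{nootherstupidcases} without specifying what swaps to perform, so the argument does not close.

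The paper's proof is structurally different: it leaves the outermost rows alone and instead uses the rhombus Yang--Baxter equation to move the row $x_1$ (the \emph{bottom} row of the $S^*$ block, with left label $n$) downward past $y_1,\ldots,y_{n-1}$, one row at a time. Each swap produces a factor $x_1\oplus y_i$ once Lemma~\ref{nootherstupidcases} rules out the $\tt{c}_2$ possibility for the rhombus $R$-vertex. After all $n-1$ swaps, the two bottom rows (both labeled $n$, one $S^*$ and one $S$) are forced with weight $1$, and the remaining $2(n-1)$ rows form precisely $\mathfrak{R}_{w_0}(x_2,\ldots,x_n;y_1,\ldots,y_{n-1};\boldsymbol{z})$, closing the induction. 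Your proposal correctly names the necessary tools but misidentifies which row to move and does not carry out the recursive step.
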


\begin{proof} We proceed by induction on $n$: for $n=1$, there is only one state for the system $\frak{R}_1(x_1;y_1;z_1)$, shown below, which has partition function 1:
\[\begin{array}{c}
\scalebox{.65}{
\begin{tikzpicture}
  \coordinate (ab) at (1,0);
  \coordinate (ba) at (0,1);
  \coordinate (bb) at (1,1);
  \coordinate (db) at (1,3);
  \coordinate (bc) at (2,1);
  \coordinate (cb) at (1,2);
  \coordinate (da) at (0,3);
  \coordinate (dc) at (2,3);
  \coordinate (eb) at (1,4);

  \draw (ab)--(eb);
  \draw (ba)--(bc);
  \draw (da)--(dc);

  \draw[line width=0.5mm,red] (ba)--(bb)--(db)--(da);

  \draw[fill=white] (ab) circle (.25);
  \draw[line width=0.5mm, red, fill=white] (ba) circle (.25);
  \draw[fill=white] (bc) circle (.25);
  \draw[line width=0.5mm, red, fill=white] (cb) circle (.25);
  \draw[line width=0.5mm, red, fill=white] (da) circle (.25);
  \draw[fill=white] (dc) circle (.25);
  \draw[fill=white] (eb) circle (.25);

  \node at (-.5,3) {$x_1$};
  \node at (-.5,1) {$y_1$};
  \node at (1,4.5) {$z_1$};
  \node at (ab) {$+$};

  \node at (eb) {$+$};

  \node at (ba) {$1$};
  \node at (da) {$1$};
 
  \node at (cb) {$1$};
	
  \node at (bc) {$+$};
  \node at (dc) {$+$};

\end{tikzpicture}} \end{array}.\]

Suppose that the claim holds for $w_0 \in S_{n-1}$. Consider the boundary conditions $\frak{R}_{w_0}(\boldsymbol{x};\boldsymbol{y};\boldsymbol{z})$ and attach a rhombus $R$-vertex to the right boundary in our now familiar train argument.

\begin{figure}[h]
\begin{center}
\scalebox{.75}{\begin{tikzpicture}
  \draw (11,0)--(17,0);
  \draw (11,2)--(17,2);
  \draw (12,-1)--(12,3);
  \draw (14,-1)--(14,3);
  \draw (16,-1)--(16,3);
  \coordinate (a1) at (9,0);
  \coordinate (c1) at (11,2);
  \coordinate (a2) at (9,2);
  \coordinate (c2) at (11,0);
  \draw[line width = .5mm, red] (a1) to [out=0,in=180] (c1);
  \draw[line width = .5mm, blue] (a2) to [out=0,in=180] (c2);
  \draw[line width=0.5mm,blue,fill=white] (9,2) circle (.3);
  \draw[line width=0.5mm,red,fill=white] (9,0) circle (.3);
  \draw[line width=0.5mm,red,fill=white] (11,2) circle (.3);
  \draw[line width=0.5mm,blue,fill=white] (11,0) circle (.3);
  \draw[fill=white] (17,2) circle (.3);
  \draw[fill=white] (17,0) circle (.3);
  \node at (17.7,0) {$x_1$};
  \node at (17.7,2) {$y_1$};
  \node at (9,0){$1$};
  \node at (9,2){$n$};
  \node at (11,0){$n$};
  \node at (11,2){$1$};
  \node at (17,0){$+$};
  \node at (17,2){$+$};
  \node[fill=white] at (14,2) {weights $S$};
  \path[fill=gray, opacity = 0.3] (11.5,1.5) rectangle (16.5, 2.5);
  \node[fill=white] at (14,0) {weights $S^*$};
  \path[fill=gray, opacity = 0.3] (11.5,0.5) rectangle (16.5, -0.5);
  
  \node at (7.5,1){\huge $=$};

  \draw (-2,0)--(4,0);
  \draw (-2,2)--(4,2);
  \draw (-1,-1)--(-1,3);
  \draw (1,-1)--(1,3);
  \draw (3,-1)--(3,3);
  \coordinate (p1) at (4,0);
  \coordinate (q1) at (6,2);
  \coordinate (p2) at (4,2);
  \coordinate (q2) at (6,0);
  \draw (p1) to [out=0,in=180] (q1);
  \draw (p2) to [out=0,in=180] (q2);
  \draw[fill=white] (6,2) circle (.3);
  \draw[fill=white] (6,0) circle (.3);
  \draw[fill=white] (4,2) circle (.3);
  \draw[fill=white] (4,0) circle (.3);
  \draw[line width=0.5mm,red,fill=white] (-2,0) circle (.3);
  \draw[line width=0.5mm,blue,fill=white] (-2,2) circle (.3);
  \node at (-2,0) {$1$};
  \node at (-2,2) {$n$};
  \node at (-2.7,0) {$y_1$};
  \node at (-2.7,2) {$x_1$};
  \node at (6,0){$+$};
  \node at (6,2){$+$};
  \node at (4,0){$+$};
  \node at (4,2){$+$};
  \node[fill=white] at (1,2) {weights $S^*$};
  \path[fill=gray, opacity = 0.3] (-1.5,1.5) rectangle (3.5, 2.5);
  \node[fill=white] at (1,0) {weights $S$};
  \path[fill=gray, opacity = 0.3] (-1.5,0.5) rectangle (3.5, -0.5);
  
  \end{tikzpicture}}
\end{center}
\caption{The first train argument in the proof of the Cauchy identity.}
\label{Cauchyops}
\end{figure}
We apply Theorem \ref{SwapYBE} repeatedly to push the $R$-vertex to the left boundary, where it emerges with external edges assigned label $1$ (row $y_1$) and $n$ (row $x_1$) as in the right side of Figure \ref{Cauchyops}. Referring back to the $R$-vertices in Figure \ref{RweightsCauchy}, we then evaluate the partition functions of both sides: on the left hand side, the $R$-vertex could be of type $\tt{a}$ or type $\tt{c}_2$. Lemma \ref{nootherstupidcases} rules out type $\tt{c}_2$, so this $R$-vertex must be of type $\tt{a}$. Since the remaining boundary conditions to the left of the $R$-vertex mimic exactly those of $\frak{R}_{w_0}(\boldsymbol{x};\boldsymbol{y};\boldsymbol{z})$, this partition function is still $Z(\frak{R}_{w_0}(\boldsymbol{x};\boldsymbol{y};\boldsymbol{z}))$.

On the right hand side, there is only one $R$-vertex possible -- that of type $\tt{b}_1$ -- which has weight $x_1\oplus y_1$, and the boundary conditions on the left of that vertex have swapped one $S$ with one $S^*$. Thus we obtain the relation depicted in Figure~\ref{Cauchyops}:
\[ Z(\frak{R}_{w_0}(\boldsymbol{x};\boldsymbol{y};\boldsymbol{z})) = (x_1\oplus y_1) \cdot Z(\frak{R}_{w_0}(\boldsymbol{x};\boldsymbol{y};\boldsymbol{z}) \text{ with rows $x_1$ and $y_1$ swapped}).\]

We may continue to attach $R$-vertices and apply the train argument to move row $x_1$ downward, pushing it down past almost every $S$ row until it sits as the second-to-last row. By the same reasoning as above,
\[ Z(\frak{R}_{w_0}(\boldsymbol{x};\boldsymbol{y};\boldsymbol{z})) = \prod_{i<n}(x_1\oplus y_i) \cdot Z(\mathfrak{R}_{w_0}(\boldsymbol{x};\boldsymbol{y};\boldsymbol{z}) \text{ with row $x_1$ swapped with rows $y_1,y_2,\ldots,y_{n-1}$}).\]
If we examine the remaining boundary conditions, we see that on the left of the diagram, our boundary labels read $1 \; 2 \;\cdots\; n\!-\!1\; 1\; 2\; \cdots\; n\!-\!1\; n\; n$ from top to bottom, with parameters $x_n, x_{n-1},...,x_2,y_1,y_2,...,y_{n-1},x_1,y_n$. That is, if we chop off the bottom two rows, we have the boundary conditions for $\frak{R}_{w_0}(x_2,...,x_n; y_1,...,y_{n-1};\boldsymbol{z})$ in the $n-1$ case. Furthermore, the bottom two rows must have weight 1, since the $n$-colored strand cannot travel north or further east in the penultimate row (which has weights from $S^*$); it must turn immediately south and exit west out the last row in order to give an admissible state.

By induction, we then have that
\[Z(\frak{R}_{w_0}(\boldsymbol{x};\boldsymbol{y};\boldsymbol{z})) = \prod_{i<n}(x_1\oplus y_i) \prod_{k+j\leq n-1}(x_{j+1} \oplus y_k).\]
Therefore,
\[Z(\frak{R}_{w_0}(\boldsymbol{x};\boldsymbol{y};\boldsymbol{z})) = \prod_{i+j\leq n} (x_i\oplus y_j) = \mathcal{G}^{(\beta)}_{w_0}(\boldsymbol{x};\boldsymbol{y}). \qedhere \]
\end{proof}

\begin{example}
Let $n=3$. Figure \ref{cauchysteps} shows the entire step-by-step process from Lemma \ref{rainbowlemma}, including all the steps within the induction. In each step, we push an extra vertex attached from the right side of the system to the left. First, we attach a vertex between rows $y_1$ and $x_1$ and move through to the left via repeated application of the Yang-Baxter equation in Theorem \ref{RweightsCauchy}. This has the effect on the main rectangular part of the diagram of swapping rows $x_1$ and $y_1$ and the labels 1 and 3. We do the same to rows $x_2$ and $y_1$ followed by $x_1$ and $y_2$.
\end{example}

\begin{figure}[h]
\[\begin{array}{c@{\hspace{2pt}}c@{\hspace{2pt}}c@{\hspace{2pt}}c}
\scalebox{.65}{
\begin{tikzpicture}
\draw (-.5,.5)--(3.5,.5);
\draw (-.5,1.5)--(3.5,1.5);
\draw (-.5,2.5)--(3.5,2.5);
\draw (-.5,3.5)--(3.5,3.5);
\draw (-.5,4.5)--(3.5,4.5);
\draw (-.5,5.5)--(3.5,5.5);
\draw (.5,-.5)--(.5,6.5);
\draw (1.5,-.5)--(1.5,6.5);
\draw (2.5,-.5)--(2.5,6.5);
\draw[line width = 0.25mm,fill=white] (.5,-.5) circle (.25);
\draw[line width = 0.25mm,fill=white] (1.5,-.5) circle (.25);
\draw[line width = 0.25mm,fill=white] (2.5,-.5) circle (.25);
\draw[line width = 0.25mm,fill=white] (.5,6.5) circle (.25);
\draw[line width = 0.25mm,fill=white] (1.5,6.5) circle (.25);
\draw[line width = 0.25mm,fill=white] (2.5,6.5) circle (.25);
\node at (.5,-.5) {$+$};
\node at (1.5,-.5) {$+$};
\node at (2.5,-.5) {$+$};
\node at (.5,6.5) {$+$};
\node at (1.5,6.5) {$+$};
\node at (2.5,6.5) {$+$};
\node at (.5,7) {$z_1$};
\node at (1.5,7) {$z_2$};
\node at (2.5,7) {$z_3$};
\draw[line width=0.5mm,blue,fill=white] (-.75,.5) circle (.25);
\draw[line width=0.5mm,green,fill=white] (-.75,1.5) circle (.25);
\draw[line width=0.5mm,red,fill=white] (-.75,2.5) circle (.25);
\draw[line width=0.5mm,blue,fill=white] (-.75,3.5) circle (.25);
\draw[line width=0.5mm,green,fill=white] (-.75,4.5) circle (.25);
\draw[line width=0.5mm,red,fill=white] (-.75,5.5) circle (.25);
\node at (-.75,.5) {$3$};
\node at (-.75,1.5) {$2$};
\node at (-.75,2.5) {$1$};
\node at (-.75,3.5) {$3$};
\node at (-.75,4.5) {$2$};
\node at (-.75,5.5) {$1$};
\node[fill=white] at (0,.5) {$y_3$};
\node[fill=white] at (0,1.5) {$y_2$};
\node[fill=white] at (0,2.5) {$y_1$};
\node[fill=white] at (0,3.5) {$x_1$};
\node[fill=white] at (0,4.5) {$x_2$};
\node[fill=white] at (0,5.5) {$x_3$};
\draw[line width = 0.25mm,fill=white] (3.5,.5) circle (.25);
\draw[line width = 0.25mm,fill=white] (3.5,1.5) circle (.25);
\draw[line width = 0.25mm,fill=white] (3.5,2.5) circle (.25);
\draw[line width = 0.25mm,fill=white] (3.5,3.5) circle (.25);
\draw[line width = 0.25mm,fill=white] (3.5,4.5) circle (.25);
\draw[line width = 0.25mm,fill=white] (3.5,5.5) circle (.25);
\node at (3.5,.5) {$+$};
\node at (3.5,1.5) {$+$};
\node at (3.5,2.5) {$+$};
\node at (3.5,3.5) {$+$};
\node at (3.5,4.5) {$+$};
\node at (3.5,5.5) {$+$};
\node[right] at (3.9,3.5) {$x_1 \leftrightarrow y_1$};
\node[right] at (4.3,3) {$\to$};
\node[fill = white] at (1.5,1.5) {weights S};
\path[fill = gray, opacity = .3] (0.25,0.25) rectangle (2.75, 2.75);
\node[fill = white] at (1.5,4.5) {weights $S^*$};
\path[fill = gray, opacity = .3] (0.25,3.25) rectangle (2.75, 5.75);
\end{tikzpicture}}&
\scalebox{.65}{\begin{tikzpicture}
\draw (-.5,.5)--(3.5,.5);
\draw (-.5,1.5)--(3.5,1.5);
\draw (-.5,2.5)--(3.5,2.5);
\draw (-.5,3.5)--(3.5,3.5);
\draw (-.5,4.5)--(3.5,4.5);
\draw (-.5,5.5)--(3.5,5.5);
\draw (.5,-.5)--(.5,6.5);
\draw (1.5,-.5)--(1.5,6.5);
\draw (2.5,-.5)--(2.5,6.5);
\draw[line width = 0.25mm,fill=white] (.5,-.5) circle (.25);
\draw[line width = 0.25mm,fill=white] (1.5,-.5) circle (.25);
\draw[line width = 0.25mm,fill=white] (2.5,-.5) circle (.25);
\draw[line width = 0.25mm,fill=white] (.5,6.5) circle (.25);
\draw[line width = 0.25mm,fill=white] (1.5,6.5) circle (.25);
\draw[line width = 0.25mm,fill=white] (2.5,6.5) circle (.25);
\node at (.5,-.5) {$+$};
\node at (1.5,-.5) {$+$};
\node at (2.5,-.5) {$+$};
\node at (.5,6.5) {$+$};
\node at (1.5,6.5) {$+$};
\node at (2.5,6.5) {$+$};
\node at (.5,7) {$z_1$};
\node at (1.5,7) {$z_2$};
\node at (2.5,7) {$z_3$};
\draw[line width=0.5mm,blue,fill=white] (-.75,.5) circle (.25);
\draw[line width=0.5mm,green,fill=white] (-.75,1.5) circle (.25);
\draw[line width=0.5mm,blue,fill=white] (-.75,2.5) circle (.25);
\draw[line width=0.5mm,red,fill=white] (-.75,3.5) circle (.25);
\draw[line width=0.5mm,green,fill=white] (-.75,4.5) circle (.25);
\draw[line width=0.5mm,red,fill=white] (-.75,5.5) circle (.25);
\node at (-.75,.5) {$3$};
\node at (-.75,1.5) {$2$};
\node at (-.75,2.5) {$3$};
\node at (-.75,3.5) {$1$};
\node at (-.75,4.5) {$2$};
\node at (-.75,5.5) {$1$};
\node[fill=white] at (0,.5) {$y_3$};
\node[fill=white] at (0,1.5) {$y_2$};
\node[fill=white] at (0,2.5) {$x_1$};
\node[fill=white] at (0,3.5) {$y_1$};
\node[fill=white] at (0,4.5) {$x_2$};
\node[fill=white] at (0,5.5) {$x_3$};
\draw[line width = 0.25mm,fill=white] (3.5,.5) circle (.25);
\draw[line width = 0.25mm,fill=white] (3.5,1.5) circle (.25);
\draw[line width = 0.25mm,fill=white] (3.5,2.5) circle (.25);
\draw[line width = 0.25mm,fill=white] (3.5,3.5) circle (.25);
\draw[line width = 0.25mm,fill=white] (3.5,4.5) circle (.25);
\draw[line width = 0.25mm,fill=white] (3.5,5.5) circle (.25);
\node at (3.5,.5) {$+$};
\node at (3.5,1.5) {$+$};
\node at (3.5,2.5) {$+$};
\node at (3.5,3.5) {$+$};
\node at (3.5,4.5) {$+$};
\node at (3.5,5.5) {$+$};
\node[right] at (3.9,3.5) {$x_2 \leftrightarrow y_1$};
\node[right] at (4.3,3) {$\to$};
\node[fill = white] at (1.5,1) {weights $S$};
\path[fill = gray, opacity = .3] (0.25,0.25) rectangle (2.75, 1.75);
\node[fill = white] at (1.5,2.5) {weights $S^*$};
\path[fill = gray, opacity = .3] (0.25,2.25) rectangle (2.75, 2.75);
\node[fill = white] at (1.5,3.5) {weights $S$};
\path[fill = gray, opacity = .3] (0.25,3.25) rectangle (2.75, 3.75);
\node[fill = white] at (1.5,5) {weights $S^*$};
\path[fill = gray, opacity = .3] (0.25,4.25) rectangle (2.75, 5.75);
\end{tikzpicture}}&
\scalebox{.65}{
\begin{tikzpicture}
\draw (-.5,.5)--(3.5,.5);
\draw (-.5,1.5)--(3.5,1.5);
\draw (-.5,2.5)--(3.5,2.5);
\draw (-.5,3.5)--(3.5,3.5);
\draw (-.5,4.5)--(3.5,4.5);
\draw (-.5,5.5)--(3.5,5.5);
\draw (.5,-.5)--(.5,6.5);
\draw (1.5,-.5)--(1.5,6.5);
\draw (2.5,-.5)--(2.5,6.5);
\draw[line width = 0.25mm,fill=white] (.5,-.5) circle (.25);
\draw[line width = 0.25mm,fill=white] (1.5,-.5) circle (.25);
\draw[line width = 0.25mm,fill=white] (2.5,-.5) circle (.25);
\draw[line width = 0.25mm,fill=white] (.5,6.5) circle (.25);
\draw[line width = 0.25mm,fill=white] (1.5,6.5) circle (.25);
\draw[line width = 0.25mm,fill=white] (2.5,6.5) circle (.25);
\node at (.5,-.5) {$+$};
\node at (1.5,-.5) {$+$};
\node at (2.5,-.5) {$+$};
\node at (.5,6.5) {$+$};
\node at (1.5,6.5) {$+$};
\node at (2.5,6.5) {$+$};
\node at (.5,7) {$z_1$};
\node at (1.5,7) {$z_2$};
\node at (2.5,7) {$z_3$};
\draw[line width=0.5mm,blue,fill=white] (-.75,.5) circle (.25);
\draw[line width=0.5mm,green,fill=white] (-.75,1.5) circle (.25);
\draw[line width=0.5mm,blue,fill=white] (-.75,2.5) circle (.25);
\draw[line width=0.5mm,green,fill=white] (-.75,3.5) circle (.25);
\draw[line width=0.5mm,red,fill=white] (-.75,4.5) circle (.25);
\draw[line width=0.5mm,red,fill=white] (-.75,5.5) circle (.25);
\node at (-.75,.5) {$3$};
\node at (-.75,1.5) {$2$};
\node at (-.75,2.5) {$3$};
\node at (-.75,3.5) {$2$};
\node at (-.75,4.5) {$1$};
\node at (-.75,5.5) {$1$};
\node[fill=white] at (0,.5) {$y_3$};
\node[fill=white] at (0,1.5) {$y_2$};
\node[fill=white] at (0,2.5) {$x_1$};
\node[fill=white] at (0,3.5) {$x_2$};
\node[fill=white] at (0,4.5) {$y_1$};
\node[fill=white] at (0,5.5) {$x_3$};
\draw[line width = 0.25mm,fill=white] (3.5,.5) circle (.25);
\draw[line width = 0.25mm,fill=white] (3.5,1.5) circle (.25);
\draw[line width = 0.25mm,fill=white] (3.5,2.5) circle (.25);
\draw[line width = 0.25mm,fill=white] (3.5,3.5) circle (.25);
\draw[line width = 0.25mm,fill=white] (3.5,4.5) circle (.25);
\draw[line width = 0.25mm,fill=white] (3.5,5.5) circle (.25);
\node at (3.5,.5) {$+$};
\node at (3.5,1.5) {$+$};
\node at (3.5,2.5) {$+$};
\node at (3.5,3.5) {$+$};
\node at (3.5,4.5) {$+$};
\node at (3.5,5.5) {$+$};
\node[right] at (3.9,3.5) {$x_1 \leftrightarrow y_2$};
\node[right] at (4.3,3) {$\to$};
\node[fill = white] at (1.5,1) {weights $S$};
\path[fill = gray, opacity = .3] (0.25,0.25) rectangle (2.75, 1.75);
\node[fill = white] at (1.5,3) {weights $S^*$};
\path[fill = gray, opacity = .3] (0.25,2.25) rectangle (2.75, 3.75);
\node[fill = white] at (1.5,4.5) {weights $S$};
\path[fill = gray, opacity = .3] (0.25,4.25) rectangle (2.75, 4.75);
\node[fill = white] at (1.5,5.5) {weights $S^*$};
\path[fill = gray, opacity = .3] (0.25,5.25) rectangle (2.75, 5.75);
\end{tikzpicture}}&
\scalebox{.65}{
\begin{tikzpicture}
\draw (-.5,.5)--(3.5,.5);
\draw (-.5,1.5)--(3.5,1.5);
\draw (-.5,2.5)--(3.5,2.5);
\draw (-.5,3.5)--(3.5,3.5);
\draw (-.5,4.5)--(3.5,4.5);
\draw (-.5,5.5)--(3.5,5.5);
\draw (.5,-.5)--(.5,6.5);
\draw (1.5,-.5)--(1.5,6.5);
\draw (2.5,-.5)--(2.5,6.5);
\draw[line width = 0.25mm,fill=white] (.5,-.5) circle (.25);
\draw[line width = 0.25mm,fill=white] (1.5,-.5) circle (.25);
\draw[line width = 0.25mm,fill=white] (2.5,-.5) circle (.25);
\draw[line width = 0.25mm,fill=white] (.5,6.5) circle (.25);
\draw[line width = 0.25mm,fill=white] (1.5,6.5) circle (.25);
\draw[line width = 0.25mm,fill=white] (2.5,6.5) circle (.25);
\node at (.5,-.5) {$+$};
\node at (1.5,-.5) {$+$};
\node at (2.5,-.5) {$+$};
\node at (.5,6.5) {$+$};
\node at (1.5,6.5) {$+$};
\node at (2.5,6.5) {$+$};
\node at (.5,7) {$z_1$};
\node at (1.5,7) {$z_2$};
\node at (2.5,7) {$z_3$};
\draw[line width=0.5mm,blue,fill=white] (-.75,.5) circle (.25);
\draw[line width=0.5mm,blue,fill=white] (-.75,1.5) circle (.25);
\draw[line width=0.5mm,green,fill=white] (-.75,2.5) circle (.25);
\draw[line width=0.5mm,green,fill=white] (-.75,3.5) circle (.25);
\draw[line width=0.5mm,red,fill=white] (-.75,4.5) circle (.25);
\draw[line width=0.5mm,red,fill=white] (-.75,5.5) circle (.25);
\node at (-.75,.5) {$3$};
\node at (-.75,1.5) {$3$};
\node at (-.75,2.5) {$2$};
\node at (-.75,3.5) {$2$};
\node at (-.75,4.5) {$1$};
\node at (-.75,5.5) {$1$};
\node[fill=white] at (0,.5) {$y_3$};
\node[fill=white] at (0,1.5) {$x_1$};
\node[fill=white] at (0,2.5) {$y_2$};
\node[fill=white] at (0,3.5) {$x_2$};
\node[fill=white] at (0,4.5) {$y_1$};
\node[fill=white] at (0,5.5) {$x_3$};
\draw[line width = 0.25mm,fill=white] (3.5,.5) circle (.25);
\draw[line width = 0.25mm,fill=white] (3.5,1.5) circle (.25);
\draw[line width = 0.25mm,fill=white] (3.5,2.5) circle (.25);
\draw[line width = 0.25mm,fill=white] (3.5,3.5) circle (.25);
\draw[line width = 0.25mm,fill=white] (3.5,4.5) circle (.25);
\draw[line width = 0.25mm,fill=white] (3.5,5.5) circle (.25);
\node at (3.5,.5) {$+$};
\node at (3.5,1.5) {$+$};
\node at (3.5,2.5) {$+$};
\node at (3.5,3.5) {$+$};
\node at (3.5,4.5) {$+$};
\node at (3.5,5.5) {$+$};
\node[fill = white] at (1.5,.5) {weights $S$};
\path[fill = gray, opacity = .3] (0.25,0.25) rectangle (2.75, .75);
\node[fill = white] at (1.5,1.5) {weights $S^*$};
\path[fill = gray, opacity = .3] (0.25,1.25) rectangle (2.75, 1.75);
\node[fill = white] at (1.5,2.5) {weights $S$};
\path[fill = gray, opacity = .3] (0.25,2.25) rectangle (2.75, 2.75);
\node[fill = white] at (1.5,3.5) {weights $S^*$};
\path[fill = gray, opacity = .3] (0.25,3.25) rectangle (2.75, 3.75);
\node[fill = white] at (1.5,4.5) {weights $S$};
\path[fill = gray, opacity = .3] (0.25,4.25) rectangle (2.75, 4.75);
\node[fill = white] at (1.5,5.5) {weights $S^*$};
\path[fill = gray, opacity = .3] (0.25,5.25) rectangle (2.75, 5.75);
\end{tikzpicture}}
\end{array}\]
\caption{Graphical depiction of steps in Lemma~\ref{rainbowlemma} when $n=3$.}
\label{cauchysteps}
\end{figure}

\begin{proof}[Proof of Proposition \ref{rainbowsystem}]
In the case $w=w_0$, this is Lemma \ref{rainbowlemma}. So we need only show that Cauchy lattice model satisfies the defining recursive relation of the $\beta$-Grothendieck polynomials, \[Z(\frak{R}_{ws_i}(\boldsymbol{x};\boldsymbol{y};\boldsymbol{z})) = \pi^{(\beta)}_{i}(Z(\mathfrak{R}_w(\boldsymbol{x};\boldsymbol{y};\boldsymbol{z}))), \hspace{20pt}\text{ when } \ell(ws_i) = \ell(w)-1.\]

Suppose the length condition holds. If we write $w$ in one-line notation as $c_1c_2\cdots c_n$, this occurs precisely when $c_i > c_{i+1}$. Since weights $S^*$ are a reflection of our original weights, these weights are solvable, and the $R$-vertex weights are a transformation of those in Figure \ref{RweightsDemazure}. Thus, as in Lemma \ref{permutationdecrease}, we can apply a train argument to rows $x_i$ and $x_{i+1}$. See the Figure \ref{dualRvertexweights} for the explicit weights of the $S^*$ row $R$-vertices, noting that the strand running southwest to northeast, labeled $x_i$ in those weights, has parameter $x_{i+1}$ in this case, and the other strand, labeled $x_j$ in the diagram, is $x_i$ in this case. If we attach an $R$-vertex to the left side of our system, we have two possible $R$-vertex types, $\tt{b}_2$ and $\tt{c}_2$. In the first case, the boundary conditions to the right of the $R$-vertex swap and we get partition function $(x_{i+1} - x_i) Z(\frak{R}_{ws_i}(\boldsymbol{x};\boldsymbol{y};\boldsymbol{z}))$. In the second case, the boundary conditions to the right of the $R$-vertex remain the same, yielding partition function $(1 + \beta x_{i+1}) Z(\mathfrak{R}_w(\boldsymbol{x};\boldsymbol{y};\boldsymbol{z}))$.

Moving the $R$-vertex through according to the Yang-Baxter equation, we have only one possibility for the $R$-vertex on the right side (type $\tt{a}$) so the partition function has weight $(1+\beta x_i) Z(\frak{R}_w(s_i\boldsymbol{x};\boldsymbol{y};\boldsymbol{z}))$. Solving for $Z(\frak{R}_{ws_i}(\boldsymbol{x};\boldsymbol{y}))$, we see that the recursive relation holds. Therefore, the partition functions of these two systems are equal, and thus we have that $Z(\frak{R}_w(\boldsymbol{x};\boldsymbol{y};\boldsymbol{z})) = \mathcal{G}^{(\beta)}_w(\boldsymbol{x};\boldsymbol{y})$ for all $w\in S_n$.
\end{proof}
\begin{figure}[h]
\centering
\scalebox{.95}{$\begin{array}{c@{\hspace{10pt}}c@{\hspace{30pt}}c@{\hspace{25pt}}c@{\hspace{10pt}}c@{\hspace{25pt}}}
\toprule
\tt{a} & \tt{b_1} & \tt{b_2} & \tt{c_1} & \tt{c_2} \\
\midrule
\begin{tikzpicture}[scale=0.7]
\draw[line width = .5mm, violet] (0,0) to [out = 0, in = 180] (2,2);
\draw[line width = .5mm, violet] (0,2) to [out = 0, in = 180] (2,0);
\draw[line width=0.5mm, violet, fill=white] (0,0) circle (.35);
\draw[line width=0.5mm, violet, fill=white] (0,2) circle (.35);
\draw[line width=0.5mm, violet, fill=white] (2,2) circle (.35);
\draw[line width=0.5mm, violet, fill=white] (2,0) circle (.35);
\node at (0,0) {$c$};
\node at (0,2) {$c$};
\node at (2,2) {$c$};
\node at (2,0) {$c$};
\end{tikzpicture}
&
\begin{tikzpicture}[scale=0.7]
\draw[line width = .5mm, red] (0,0) to [out = 0, in = 180] (2,2);
\draw[line width = .5mm, blue] (0,2) to [out = 0, in = 180] (2,0);
\draw[line width=0.5mm, red, fill=white] (0,0) circle (.35);
\draw[line width=0.5mm, blue, fill=white] (0,2) circle (.35);
\draw[line width=0.5mm, red, fill=white] (2,2) circle (.35);
\draw[line width=0.5mm, blue, fill=white] (2,0) circle (.35);
\node at (0,0) {$a$};
\node at (0,2) {$b$};
\node at (2,2) {$a$};
\node at (2,0) {$b$};
\end{tikzpicture}
&
\begin{tikzpicture}[scale=0.7]
\draw[line width = .5mm, blue] (0,0) to [out = 0, in = 180] (2,2);
\draw[line width = .5mm, red] (0,2) to [out = 0, in = 180] (2,0);
\draw[line width=0.5mm, blue, fill=white] (0,0) circle (.35);
\draw[line width=0.5mm, red, fill=white] (0,2) circle (.35);
\draw[line width=0.5mm, blue, fill=white] (2,2) circle (.35);
\draw[line width=0.5mm, red, fill=white] (2,0) circle (.35);
\node at (0,0) {$b$};
\node at (0,2) {$a$};
\node at (2,2) {$b$};
\node at (2,0) {$a$};
\end{tikzpicture}
&
\begin{tikzpicture}[scale=0.7]
\draw[line width = .5mm,blue] (0,2) to [out = 0, in = 120] (1,1);
\draw[line width = .5mm, blue] (2,2) to [out = 180, in=60] (1,1);
\draw[line width = .5mm, red] (0,0) to [out = 0, in = -120] (1,1);
\draw[line width = .5mm, red] (2,0) to [out = 180, in = -60] (1,1);
\draw[line width=0.5mm, red, fill=white] (0,0) circle (.35);
\draw[line width=0.5mm, blue, fill=white] (0,2) circle (.35);
\draw[line width=0.5mm, blue, fill=white] (2,2) circle (.35);
\draw[line width=0.5mm, red, fill=white] (2,0) circle (.35);
\node at (0,0) {$a$};
\node at (0,2) {$b$};
\node at (2,2) {$b$};
\node at (2,0) {$a$};
\end{tikzpicture}
&
\begin{tikzpicture}[scale=0.7]
\draw[line width = .5mm,red] (0,2) to [out = 0, in = 120] (1,1);
\draw[line width = .5mm, red] (2,2) to [out = 180, in=60] (1,1);
\draw[line width = .5mm, blue] (0,0) to [out = 0, in = -120] (1,1);
\draw[line width = .5mm, blue] (2,0) to [out = 180, in = -60] (1,1);
\draw[line width=0.5mm, blue, fill=white] (0,0) circle (.35);
\draw[line width=0.5mm, red, fill=white] (0,2) circle (.35);
\draw[line width=0.5mm, red, fill=white] (2,2) circle (.35);
\draw[line width=0.5mm, blue, fill=white] (2,0) circle (.35);
\node at (0,0) {$b$};
\node at (0,2) {$a$};
\node at (2,2) {$a$};
\node at (2,0) {$b$};
\end{tikzpicture}
\\
   \midrule
 1+\beta x_j - q^2(1+\beta x_i) & \beta^2q^2(x_i-x_j)  & x_i-x_j& (1-q^2)(1+\beta x_j)&  (1-q^2)(1+\beta x_i) \\
\bottomrule
\end{array}$}
    \caption{The row $R$-vertex weights that swap strands $i$ and $j$ both with weights $S^*$, where ${\color{red}a}<{\color{blue}b}$ and {\color{violet} $c$} is any color.}
    \label{dualRvertexweights}
\end{figure}
\begin{proof}[Proof of Theorem \ref{GenCauchy}]
By Proposition \ref{rainbowsystem}, $Z(\frak{R}_w(\boldsymbol{x};\boldsymbol{y};\boldsymbol{z})) = \mathcal{G}^{(\beta)}_w(\boldsymbol{x};\boldsymbol{y})$. On the other hand, we can evaluate this partition function in another way, by splitting the system $\frak{R}_w(\boldsymbol{x};\boldsymbol{y};\boldsymbol{z})$ at the middle (the dotted line in Figure \ref{Cauchyrainbow}) and evaluating each piece separately. Since each of the strands must pass up through this line in a different column, we can split the partition function into cases depending on which permutation appears on that line. Considering one of these cases, let $v^{-1}$ be the permutation at the dotted line. Then we see that through reflecting across $y=-1$ the bottom half is equivalent to the system $\frak{S}_{1,v^{-1}}(\boldsymbol{z},\boldsymbol{y})$, so the partition function of the bottom half is $\mathcal{G}^{(\beta)}_v(\boldsymbol{y};\boldsymbol{z})$. For the top half, the boundary conditions are precisely $\frak{S}_{v^{-1},w}(\boldsymbol{x},\ominus\boldsymbol{z})$, with a change of variables on the column parameters in the weights, so the partition function of the top half is $\mathcal{G}^{(\beta)}_{v^{-1},w}(\boldsymbol{x};\ominus\boldsymbol{z})$. Summing over possible midline permutations, we achieve the desired identity.
\end{proof}

In the case $w=w_0$, by Theorems~\ref{modelgivesgrotpolys} and~\ref{modelgivesdual}, $\mathcal{G}^{(\beta)}_{v^{-1},w_0}(\boldsymbol{x};\ominus \boldsymbol{z}) = \mathcal{H}^{(\beta)}_{vw_0}(\boldsymbol{x}; \ominus \boldsymbol{z})$. Thus Theorem~\ref{GenCauchy} becomes the following more familiar Cauchy identity involving Grothendieck polynomials and their duals. The three-variable version is first seen for Schubert polynomials in \cite{FominKirillov-schubert} and is stated for $\beta$-Grothendieck polynomials by Kirillov in \cite{Kirillov-quantum}, which credits the extended abstract \cite{FominKirillov-gpybe-abstract} of Fomin and Kirillov, suggesting that the three-variable version of the Cauchy identity may have been known earlier, though not written down to our knowledge. The more familiar two-variable version for $\beta$-Grothendieck polynomials is as seen in \cite{FominKirillov-gpybe}.

\begin{corollary}[Kirillov \cite{Kirillov-quantum}, Fomin-Kirillov \cite{FominKirillov-gpybe}]
\[ \mathcal{G}^{(\beta)}_{w_0}(\boldsymbol{x};\boldsymbol{y}) = \sum_{v\in S_n} \mathcal{G}^{(\beta)}_v(\boldsymbol{y};\boldsymbol{z})\mathcal{H}^{(\beta)}_{vw_0}(\boldsymbol{x}; \ominus \boldsymbol{z}).\]
In particular, if the column parameters are set to zero, 
\[ \mathcal{G}^{(\beta)}_{w_0}(\boldsymbol{x};\boldsymbol{y}) = \sum_{v\in S_n} \mathcal{G}^{(\beta)}_v(\boldsymbol{y})\mathcal{H}^{(\beta)}_{vw_0}(\boldsymbol{x}).\]
\end{corollary}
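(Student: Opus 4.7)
The plan is to derive the Corollary directly from Theorem~\ref{GenCauchy} by specializing $w = w_0$ and then translating the biaxial factor $\mathcal{G}^{(\beta)}_{v^{-1}, w_0}$ into a dual $\beta$-Grothendieck polynomial. First I would set $w = w_0$ in Theorem~\ref{GenCauchy} to obtain
\[ \mathcal{G}^{(\beta)}_{w_0}(\boldsymbol{x};\boldsymbol{y}) = \sum_{v\in S_n} \mathcal{G}^{(\beta)}_v(\boldsymbol{y}; \boldsymbol{z})\,\mathcal{G}^{(\beta)}_{v^{-1},w_0}(\boldsymbol{x}; \ominus \boldsymbol{z}). \]
The main content of the argument is then the identification
\[ \mathcal{G}^{(\beta)}_{v^{-1},w_0}(\boldsymbol{x}; \ominus \boldsymbol{z}) = \mathcal{H}^{(\beta)}_{vw_0}(\boldsymbol{x}; \ominus \boldsymbol{z}), \]
which I would verify through the lattice model realization. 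By Theorem~\ref{modelgivesgrotpolys} (at $q=0$), the left-hand side equals the partition function $Z(\mathfrak{S}_{v^{-1},w_0}(\boldsymbol{x}, \ominus \boldsymbol{z}))$. On the other hand, Proposition~\ref{subprop:modelgivesgrotpolys} together with Theorem~\ref{modelgivesdual} show that for any $u \in S_n$, $Z(\mathfrak{S}_{w_0 u^{-1}, w_0}) = \mathcal{H}^{(\beta)}_u$. Solving $w_0 u^{-1} = v^{-1}$ gives $u = v w_0$, so the two boundary conditions coincide and the two partition functions agree.

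Substituting this identification back into the specialized Cauchy identity yields the first displayed formula of the Corollary. For the second, I would simply specialize $\boldsymbol{z} = 0$: from the pipe dream expansion in Proposition~\ref{pipedreamproposition} one sees $\mathcal{G}^{(\beta)}_v(\boldsymbol{y}; 0) = \mathcal{G}^{(\beta)}_v(\boldsymbol{y})$, and similarly, since $\ominus 0 = 0$, we have $\mathcal{H}^{(\beta)}_{vw_0}(\boldsymbol{x}; \ominus 0) = \mathcal{H}^{(\beta)}_{vw_0}(\boldsymbol{x}; 0) = \mathcal{H}^{(\beta)}_{vw_0}(\boldsymbol{x})$ by the analogous pipe dream / recursive structure of the dual polynomials.

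The step that requires the most care is the bookkeeping that matches the biaxial boundary $(v^{-1}, w_0)$ with the dual boundary $(w_0 u^{-1}, w_0)$; in particular one must correctly track the inversion and multiplication by $w_0$ on both sides and confirm that Theorem~\ref{modelgivesdual} is being applied to the appropriate index. Once this identification is in place, the Corollary is an immediate corollary of Theorem~\ref{GenCauchy} together with the $\boldsymbol{z} = 0$ specialization, requiring no further lattice-theoretic input.
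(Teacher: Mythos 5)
Your proposal is correct and follows essentially the same route as the paper. The paper derives the first formula by specializing $w = w_0$ in Theorem~\ref{GenCauchy} and then identifying $\mathcal{G}^{(\beta)}_{v^{-1},w_0}(\boldsymbol{x};\ominus\boldsymbol{z}) = \mathcal{H}^{(\beta)}_{vw_0}(\boldsymbol{x};\ominus\boldsymbol{z})$ via Theorems~\ref{modelgivesgrotpolys} and~\ref{modelgivesdual}; you reproduce this identification explicitly (solving $w_0 u^{-1} = v^{-1}$ for $u = vw_0$, using that $w_0 = w_0^{-1}$), which is exactly the bookkeeping the paper leaves implicit, and the $\boldsymbol{z}\mapsto 0$ specialization is handled the same way.
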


In general, diagrams of the type depicted in Figure~\ref{Cauchyrainbow} will result in Cauchy-style identities. By varying the boundary conditions, we obtain different sets of polynomials involved in such equations. Our final theorem of this section is the most general such identity we may prove by such a method.

\begin{theorem}[Generalized Cauchy Identity for Biaxial Polynomials]
\label{reallygeneralizedcauchy}
\[\mathcal{G}^{(\beta)}_{v,w}(\boldsymbol{x};\boldsymbol{y}) = \sum_{u \in S_n}\mathcal{G}^{(\beta)}_{v,u}(\boldsymbol{z};\boldsymbol{x})\mathcal{G}^{(\beta)}_{u,w}(\boldsymbol{y};\ominus \boldsymbol{z}).\]
\end{theorem}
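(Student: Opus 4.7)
The strategy mirrors the proof of Theorem \ref{GenCauchy}: construct a generalized rainbow lattice system $\mathfrak{R}_{v,w}(\boldsymbol{x},\boldsymbol{y};\boldsymbol{z})$ whose partition function can be evaluated in two ways, giving the two sides of the identity. The key new feature compared to $\mathfrak{R}_w$ is a boundary labeling that incorporates the extra permutation $v$.

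Specifically, I would define $\mathfrak{R}_{v,w}$ as a $2n \times n$ rainbow lattice analogous to the system of Proposition \ref{rainbowsystem}, with top $n$ rows carrying weights $S^*$ and bottom $n$ rows carrying weights $S$, column parameters $\boldsymbol{z}$, and top/right/bottom boundaries all labeled $+$. The assignment of $\boldsymbol{x}$ and $\boldsymbol{y}$ row parameters to the two halves, as well as the placement of $v$ on the left boundary, would be chosen so that splitting across the midline produces on one side a bottom-half factor which, after reflection across $y = -x$, is identified with $\mathfrak{S}_{v,u}(\boldsymbol{z},\boldsymbol{x})$ (using exactly the same reflection argument used in the proof of Theorem \ref{GenCauchy}), and on the other side a top-half factor identified with $\mathfrak{S}_{u,w}(\boldsymbol{y},\ominus\boldsymbol{z})$ (where the $\ominus\boldsymbol{z}$ arises from the built-in substitution encoded in the $S^*$ weights). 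Here $u$ records the permutation formed by the colored edges crossing the midline, and summing over $u \in S_n$ produces the right-hand side of the theorem.

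To evaluate $Z(\mathfrak{R}_{v,w})$ directly as $\mathcal{G}^{(\beta)}_{v,w}(\boldsymbol{x};\boldsymbol{y})$, I would induct on the length of $v$. The base case $v = 1$ reduces to Proposition \ref{rainbowsystem}, possibly after invoking the Hudson-type identity (\ref{hudsoneq}) to reconcile any $\boldsymbol{x} \leftrightarrow \boldsymbol{y}$ swap introduced by the chosen orientation. For the inductive step, whenever $\ell(vs_i) = \ell(v) + 1$, I would apply a train argument to the adjacent $v$-labeled rows in the half carrying weights $S$, pushing an $R$-vertex across the full width via the row Yang-Baxter equation for $S$ established analogously to Theorem \ref{DemazureYBE}. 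The resulting recursion would realize exactly the length-increasing operator $\widetilde{\pi}_i^{-1}$ on the $\boldsymbol{y}$ variables from Definition \ref{biaxial-polynomials}, matching the defining recursion of the biaxial polynomials.

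The main obstacle will be determining the correct orientation so that both evaluations yield precisely the claimed expressions, since the variables $\boldsymbol{x}$ and $\boldsymbol{y}$ appear in a somewhat asymmetric way on the right-hand side. Fixing this will require careful bookkeeping: the $(\boldsymbol{z};\boldsymbol{x})$ and $(\boldsymbol{y};\ominus\boldsymbol{z})$ variable pairings in the two factors of the sum force a specific placement of $\boldsymbol{x}$-parameters in the bottom half and $\boldsymbol{y}$-parameters in the top half, and the labeling of left boundaries may need to be adjusted (possibly replacing $w$ by $w^{-1}$) to make the $v=1$ base case land correctly via Hudson's identity. A secondary obstacle, as in the $v=1$ argument, is that the analogues of Lemmas \ref{nostupidcases} and \ref{nootherstupidcases} (which use the content statistic to eliminate forbidden configurations) must be redone with $v$ now contributing nontrivial colored strands on the bottom half's left boundary; this seems routine but will require the same careful content-counting bound adjusted for the extra starting colors. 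With these pieces in place, equating the two evaluations of $Z(\mathfrak{R}_{v,w})$ and renaming the summation variable yields the identity.
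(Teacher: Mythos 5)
Your proposal matches the paper's own approach: define $\mathfrak{R}_{v,w}$ by replacing the identity labels on the bottom $n$ rows of the rainbow system $\mathfrak{R}_w$ with $v$, establish $Z(\mathfrak{R}_{v,w}) = \mathcal{G}^{(\beta)}_{v,w}(\boldsymbol{x};\boldsymbol{y})$ by row Yang-Baxter train arguments starting from the $v=1$ base case of Proposition~\ref{rainbowsystem}, then split along the midline and sum over the interface permutation $u$. Note, however, that your ``secondary obstacle'' of redoing Lemmas~\ref{nostupidcases} and~\ref{nootherstupidcases} does not actually arise: those content-counting lemmas are needed only for the rhombus train argument proving the $\mathfrak{R}_{1,w_0}$ base case in Lemma~\ref{rainbowlemma}, whereas the inductive step in $v$ uses ordinary row $R$-vertices confined to the $S$-weighted half of the lattice and never swaps an $S$ row past an $S^*$ row.
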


\begin{proposition}\label{biaxialrainbowsystem}
For $v,w\in S_n$, let $\frak{R}_{v,w}(\boldsymbol{x};\boldsymbol{y};\boldsymbol{z})$ be the lattice model system $\frak{R}_w(\boldsymbol{x};\boldsymbol{y};\boldsymbol{z})$ from Proposition \ref{rainbowsystem} (see Figure \ref{Cauchyrainbow}), with one modification: the boundary labels on the bottom $n$ rows are now labeled by $v$ from top to bottom. Then $Z(\frak{R}_{v,w}(\boldsymbol{x};\boldsymbol{y};\boldsymbol{z})) = \mathcal{G}^{(\beta)}_{v,w}(\boldsymbol{x};\boldsymbol{y})$.
\end{proposition}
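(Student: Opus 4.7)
We prove the proposition by induction on $\ell(v)$, mirroring the structure of the proof of Proposition~\ref{rainbowsystem} but running the train argument on the bottom (rather than top) half of the rainbow system. The base case $v = 1$ is immediate: the system $\mathfrak{R}_{1,w}$ coincides with $\mathfrak{R}_w$ from Proposition~\ref{rainbowsystem}, so $Z(\mathfrak{R}_{1,w}) = \mathcal{G}^{(\beta)}_w = \mathcal{G}^{(\beta)}_{1,w}$ by Definition~\ref{biaxial-polynomials}.

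\textbf{Inductive step.} Suppose $Z(\mathfrak{R}_{v,w}) = \mathcal{G}^{(\beta)}_{v,w}$, and let $s_i$ be a simple reflection with $\ell(vs_i) = \ell(v)+1$, so that $v(i) < v(i+1)$. In view of the defining recursion $\mathcal{G}^{(\beta)}_{vs_i,w} = \widetilde{\pi}_i^{-1}\mathcal{G}^{(\beta)}_{v,w}$, it suffices to show that $Z(\mathfrak{R}_{vs_i,w}) = \widetilde{\pi}_i^{-1} Z(\mathfrak{R}_{v,w})$. Because the weights $S$ governing the bottom half are obtained from the original weights of Figure~\ref{pipeweights1} by reflection across $y = -x$, the column Yang-Baxter equation of Theorem~\ref{DemazureYBE} translates into a row Yang-Baxter equation for the $S$-weights, with $R$-vertex weights in the parameters $y_i, y_{i+1}$ coinciding with the column-YBE panel of Figure~\ref{RweightsDemazure}. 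We attach such an $R$-vertex at the far left between rows $y_i$ and $y_{i+1}$, with external top label $v(i)$ and external bottom label $v(i+1)$. The two admissible configurations are a ``swap'' vertex of weight $y_{i+1}-y_i$ that contributes a term proportional to $Z(\mathfrak{R}_{vs_i,w})$, and a ``no-swap'' vertex of weight $(1-q^2)(1+\beta y_i)$ that contributes a term proportional to $Z(\mathfrak{R}_{v,w})$. Pushing the $R$-vertex rightward across the $n$ columns of the bottom half via repeated YBE, the external labels $(+,+)$ on the far right force the unique admissible type-$\tt{a}$ configuration, of weight $1+\beta y_i - q^2(1+\beta y_{i+1})$, whose rectangular part is $\mathfrak{R}_{v,w}$ with the row parameters $y_i$ and $y_{i+1}$ swapped. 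Equating the two sides of the YBE and solving for $Z(\mathfrak{R}_{vs_i,w})$ yields
\[ Z(\mathfrak{R}_{vs_i,w}) = \frac{\bigl(1+\beta y_i - q^2(1+\beta y_{i+1})\bigr)\, s_i^y - (1-q^2)(1+\beta y_i)}{y_{i+1}-y_i}\, Z(\mathfrak{R}_{v,w}), \]
which is exactly $\widetilde{\pi}_i^{-1} Z(\mathfrak{R}_{v,w})$ by the explicit formula recorded in Section~\ref{grotpoly}.

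\textbf{Main obstacle.} The principal care needed is in correctly identifying the row YBE for the $S$-weights as the reflection of the column YBE for the original weights, and then confirming that the resulting $R$-vertex weights produce $\widetilde{\pi}_i^{-1}$ with no stray normalization factor (in particular, the ``swap'' weight is $y_{i+1}-y_i$ rather than $\beta^2 q^2 (y_{i+1}-y_i)$). Once these weights are pinned down, the train argument itself is a direct adaptation of Lemma~\ref{permutationdecrease} and the inductive step in the proof of Proposition~\ref{rainbowsystem}.
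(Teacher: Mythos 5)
Your proof is correct and follows essentially the same approach as the paper: both establish the recursion $Z(\mathfrak{R}_{vs_i,w}) = \twid{\pi}_i^{-1} Z(\mathfrak{R}_{v,w})$ by a train argument on the $S$-weighted bottom rows, identifying the swap weight as $y_{i+1}-y_i$ (the $\tt{b}_2$ column $R$-vertex) and the no-swap weight as $(1-q^2)(1+\beta y_i)$ (the $\tt{c}_2$ vertex), with the type $\tt{a}$ vertex of weight $1+\beta y_i - q^2(1+\beta y_{i+1})$ appearing after the push-through. The one difference is organizational: you take Proposition~\ref{rainbowsystem} (all $w$, with $v=1$) as the base case and induct only on $\ell(v)$, whereas the paper re-derives both the $x$- and $y$-variable recursions from the single seed $\mathfrak{R}_{1,w_0}$; your version is marginally more economical since it avoids repeating the $\pi_i$ recursion already established in the proof of Proposition~\ref{rainbowsystem}.
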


\begin{proof}
Since this system is precisely $\frak{R}_w(\boldsymbol{x};\boldsymbol{y};\boldsymbol{z})$ with a different permutation (namely $v$) across the lower half of the left boundary, the recursion established in the $x$ variables for $w$ in the proof of Lemma \ref{rainbowlemma} holds for $\frak{R}_{v,w}(\boldsymbol{x};\boldsymbol{y};\boldsymbol{z})$ by the same argument. So we have already that when $\ell(ws_i) = \ell(w)-1,$
\[ Z(\frak{R}_{v,ws_i}(\boldsymbol{x};\boldsymbol{y};\boldsymbol{z})) = \pi_i(Z( \frak{R}_{v,w}(\boldsymbol{x};\boldsymbol{y};\boldsymbol{z}))). \]
We may also establish a recursion in the $y$ variables using a train argument with the YBE for weights $S$. Suppose that $\ell(vs_i) = \ell(v) + 1$. If we write $v$ in one line notation $c_1c_2\ldots c_n$, this occurs precisely when $c_i < c_{i+1}$. Starting with the partition function of the following system, we observe that the $R$-vertex on the left has two possibilities: type $\tt{b}_2$ or type $\tt{c}_2$. In the former case, we will get a contribution of $(y_{i+1} - y_i)Z(\frak{R}_{vs_i,w}(\boldsymbol{x};\boldsymbol{y};\boldsymbol{z}))$, and in the latter a contribution of $(1+\beta y_i)Z(\frak{R}_{v,w}(\boldsymbol{x};\boldsymbol{y};\boldsymbol{z}))$.

After repeatedly applying the Yang-Baxter equation, the only possibility for the $R$-vertex on the right side is type $\tt{a}$, giving the partition function $(1+\beta y_i) Z(\frak{R}_{v,w}(\boldsymbol{x};s_i\boldsymbol{y};\boldsymbol{z}))$.

Combining both of these recursive steps with the base case $\frak{R}_{1,w_0}(\boldsymbol{x};\boldsymbol{y};\boldsymbol{z})$ proven in Lemma \ref{rainbowlemma}, we have precisely the defining conditions for $\mathcal{G}^{(\beta)}_{v,w}(\boldsymbol{x};\boldsymbol{y})$ in Definition~\ref{biaxial-polynomials}. Therefore, 
\[ Z(\mathfrak{R}_{v,w}(\boldsymbol{x};\boldsymbol{y};\boldsymbol{z})) = \mathcal{G}^{(\beta)}_{v,w}(\boldsymbol{x};\boldsymbol{y}). \qedhere \]
\end{proof}

\begin{proof}[Proof of Theorem \ref{reallygeneralizedcauchy}]
We evaluate $Z(\mathfrak{R}_{v,w}(\boldsymbol{x};\boldsymbol{y};\boldsymbol{z}))$ in two ways as in the proof of Theorem \ref{GenCauchy}. By Proposition \ref{biaxialrainbowsystem}, we know that $Z(\frak{R}_{v,w}(\boldsymbol{x};\boldsymbol{y};\boldsymbol{z})) = \mathcal{G}^{(\beta)}_{v,w}(\boldsymbol{x};\boldsymbol{y})$.

On the other hand, we can split the system $\mathfrak{R}_{v,w}(\boldsymbol{x};\boldsymbol{y};\boldsymbol{z})$ across its horizontal midline (the dotted line of Figure \ref{Cauchyrainbow}) and sum over permutations $u$ that appear on that split (reading off $u$ from left to right). On the top half, as in Theorem \ref{GenCauchy}, we have the system $\mathcal{G}^{(\beta)}_{u,w}(\boldsymbol{x};\ominus \boldsymbol{z})$. On the bottom half, we have a system with left boundary $v$, top boundary $u$, and other boundary edges all labeled $+$ and weights $S$. By reflecting across the line $y=-x$, this has the same partition function as $\mathfrak{S}_{v,u}(\boldsymbol{z},\boldsymbol{y})$. Thus, its partition function is $ \mathcal{G}^{(\beta)}_{v,u}(\boldsymbol{z};\boldsymbol{y})$.

Summing over permutations $u$ on the midline, we obtain the desired identity,
\[ \mathcal{G}^{(\beta)}_{v,w}(\boldsymbol{x};\boldsymbol{y}) = \sum_{u\in S_n} \mathcal{G}^{(\beta)}_{v,u}(\boldsymbol{z};\boldsymbol{y})\mathcal{G}^{(\beta)}_{u,w}(\boldsymbol{x};\ominus \boldsymbol{z}). \qedhere \]
\end{proof}

We now show that some of the terms in Theorem~\ref{reallygeneralizedcauchy} are in fact zero, so that the sum can be written over an interval in the Bruhat order.

\begin{proposition}
$\mathcal{G}^{(\beta)}_{v,w}(\boldsymbol{x};\boldsymbol{y}) = 0$ unless $v\leq w$ in the strong Bruhat order on $S_n$.
\end{proposition}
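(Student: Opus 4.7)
By Theorem~\ref{modelgivesgrotpolys}, $\mathcal{G}^{(\beta)}_{v,w}(\boldsymbol{x};\boldsymbol{y}) = Z(\mathfrak{S}_{v,w})|_{q=0}$. At $q = 0$ the Boltzmann weight of a $\tt{b}_2$ vertex, $\beta^2 q^2(x_i \oplus y_j)$, is zero, so every admissible state contributing to the partition function is $\tt{b}_2$-free: at each crossing the smaller color runs horizontally and the larger vertically. It therefore suffices to show that no such $\tt{b}_2$-free admissible state of $\mathfrak{S}_{v,w}$ exists when $v \not\leq w$.

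I would verify this via the tableau criterion for Bruhat order: $v \leq w$ iff for every $1 \leq i,j \leq n$,
\[\#\{k \leq i : v(k) > j\} \leq \#\{k \leq i : w(k) > j\}.\]
Fix $(i,j)$ and suppose we have a $\tt{b}_2$-free admissible state; call a color $k$ \emph{large} when $k > j$, and let $UL$ be the upper-left $i\times i$ block consisting of rows $1,\dots,i$ and columns $1,\dots,i$. Each large-color strand has top endpoint at column $v^{-1}(k)$ and left endpoint at row $w^{-1}(k)$; set $L^+ := \{k > j : v^{-1}(k) \leq i,\ w^{-1}(k) > i\}$ and $L^- := \{k > j : v^{-1}(k) > i,\ w^{-1}(k) \leq i\}$. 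A direct count shows the tableau inequality at $(i,j)$ is equivalent to $|L^+| \leq |L^-|$. The plan is to construct an injection $\phi : L^+ \hookrightarrow L^-$ from the state: a strand $k \in L^+$ enters $UL$ through its top boundary and, since its left endpoint lies below row $i$, must exit $UL$ through its internal bottom or right boundary. The absence of $\tt{b}_2$ sharply restricts large-color strands (at $\tt{b}_1$ a large strand is vertical; at $\tt{c}_1$ a large strand connects top--left; at $\tt{c}_2$ it connects left--bottom), and following the $k$-strand after it exits $UL$ locates a companion large-color strand entering $UL$ through the same internal boundary and terminating on the left of the grid within rows $1,\dots,i$, i.e., an element of $L^-$.

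The hard part will be verifying that $\phi$ is well-defined and injective when many large strands interact through sequences of $\tt{c}_1$--$\tt{c}_2$ turns. A cleaner alternative, in the spirit of the content arguments used in Lemmas~\ref{nostupidcases} and~\ref{nootherstupidcases}, is a global content-balance computation: assign each colored edge a content depending on its coordinates, sum the contributions of large-color strands along the internal boundary of $UL$, and observe that each admissible vertex type contributes sign-definitely to this sum (using the absence of $\tt{b}_2$). The resulting content identity directly yields $|L^+| \leq |L^-|$, completing the verification of the tableau criterion and hence the vanishing when $v \not\leq w$.
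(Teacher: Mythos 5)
Your proof takes a genuinely different route from the paper's, and the key step is left as a plan rather than carried out, so as written it has a real gap.

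\textbf{Comparison with the paper's argument.} Both proofs open with the same observation: at $q=0$ the Boltzmann weight of a $\tt{b}_2$ vertex vanishes, so every contributing state is $\tt{b}_2$-free. From there the paper takes a global and very short route: view the colored strands as a wiring diagram running from the top boundary (permutation $v$) to the left boundary (permutation $w$); the only vertices where two strands cross are $\tt{b}_1$ and $\tt{b}_2$, each corresponding to right-multiplication by a simple reflection; a $\tt{b}_1$ crossing is length-increasing, a $\tt{b}_2$ crossing is length-decreasing; at $q=0$ only $\tt{b}_1$ occurs, so $w$ is reached from $v$ by a length-increasing chain, i.e.\ $v \leq_R w$, which in particular gives $v\le w$ in the strong order. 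This is a three-line argument. Your proof instead attacks the (equivalent) tableau criterion $\#\{k\le i: v(k)>j\}\le\#\{k\le i: w(k)>j\}$ directly, by counting large-color strand crossings of the boundary of the $i\times i$ upper-left block. The reduction to $|L^+|\le|L^-|$ is correct, and if completed this would give a more hands-on, lattice-local proof of the same fact — a nice consistency check — but it requires verifying an inequality for all pairs $(i,j)$ rather than one global monotonicity statement.

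\textbf{The gap.} The content of your proof is the assertion $|L^+|\le|L^-|$, and that is precisely what is not proved: you sketch an injection $\phi:L^+\hookrightarrow L^-$ and an alternative content-balance argument, but carry out neither, and you yourself flag that the injectivity of $\phi$ ``when many large strands interact'' is the hard part. Moreover the sketch's local analysis of $\tt{b}_2$-free vertices contains errors that would derail the injection. For instance, ``at $\tt{b}_1$ a large strand is vertical'' is false when both colors at the vertex exceed $j$: in a $\tt{b}_1$ vertex the \emph{larger} of the two colors runs horizontally, so a large color can be horizontal. Similarly, at $\tt{c}_2$ the larger color connects the \emph{right} and bottom edges, not ``left–bottom'' (no admissible vertex has a $W$–$S$ elbow). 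The correct local fact you should extract from $\tt{b}_2$-freeness is simply the monotonicity $S\le E$ and $N\le W$ at every vertex (with $+$ the largest label), which $\tt{b}_2$ is the unique vertex to violate. And since strands move only south and west, a strand in $L^+$ can in fact only exit the block through its bottom edge, not ``bottom or right.'' With the corrected local facts a content/sweep argument in the style of Lemma~\ref{nootherstupidcases} ought to close the gap, but until that is done the proof is incomplete; given how much machinery this requires for a statement the paper dispatches with a one-paragraph braid argument, I'd recommend adopting the latter.
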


\begin{proof}
We prove this by observing the allowed crossings of colored paths travelling in through the top boundary of the chromatic model and out through the left boundary in an admissible state. We begin with strands labeled by $v = v(1),...,v(n)$ from left to right, and we end with strands labeled by $w = w(1),\ldots,w(n)$ from top to bottom. We view the strands travelling through the model as a braid, and each vertex involving two strands crossing corresponds to a simple reflection acting on the right of our permutation. The only vertices that cross strands, and therefore change the permutation are $\tt{b}_1$ and $\tt{b}_2$. Each $\tt{b}_1$ vertex increases the length of the permutation by adding an inversion, while each $\tt{b}_2$ vertex decreases the length of the permutation by removing an inversion.

When $q=0$, $\tt{b}_2$ is not an admissible vertex, so the permutation can only increase in length. Therefore, unless $v\leq w$ in the strong Bruhat order on $S_n$, the model has no admissible states, and so $\mathcal{G}^{(\beta)}_{v,w}(\boldsymbol{x};\boldsymbol{y}) = 0$.
\end{proof}

\begin{corollary}
Applying the previous proposition to Theorem \ref{reallygeneralizedcauchy} to remove zero terms we arrive at the following:
\[ \mathcal{G}^{(\beta)}_{v,w}(\boldsymbol{x};\boldsymbol{y}) = \sum_{\substack{u\in S_n\\v\leq u\leq w}} \mathcal{G}^{(\beta)}_{v,u}(\boldsymbol{z};\boldsymbol{y})\mathcal{G}^{(\beta)}_{u,w}(\boldsymbol{x};\ominus \boldsymbol{z}).\]
\end{corollary}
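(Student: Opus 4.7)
The plan is to deduce the corollary directly by combining Theorem~\ref{reallygeneralizedcauchy} with the vanishing statement of the immediately preceding proposition. Starting from the identity
\[\mathcal{G}^{(\beta)}_{v,w}(\boldsymbol{x};\boldsymbol{y}) = \sum_{u \in S_n}\mathcal{G}^{(\beta)}_{v,u}(\boldsymbol{z};\boldsymbol{y})\mathcal{G}^{(\beta)}_{u,w}(\boldsymbol{x};\ominus \boldsymbol{z}),\]
I would analyze each summand on the right-hand side. A summand indexed by $u$ is a product of two biaxial polynomials, so it vanishes as soon as either factor vanishes.

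Next I would apply the preceding proposition to each factor separately. Applied to $\mathcal{G}^{(\beta)}_{v,u}(\boldsymbol{z};\boldsymbol{y})$, the proposition forces $v \leq u$ in the strong Bruhat order whenever this term contributes. Applied to $\mathcal{G}^{(\beta)}_{u,w}(\boldsymbol{x};\ominus \boldsymbol{z})$, it forces $u \leq w$ whenever this term contributes. Therefore every $u \in S_n$ not lying in the Bruhat interval $[v,w]$ contributes a zero term to the sum, and these terms may be dropped without affecting the equality.

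After discarding the zero terms, the sum reduces exactly to the claimed expression
\[\mathcal{G}^{(\beta)}_{v,w}(\boldsymbol{x};\boldsymbol{y}) = \sum_{\substack{u\in S_n\\v\leq u\leq w}} \mathcal{G}^{(\beta)}_{v,u}(\boldsymbol{z};\boldsymbol{y})\mathcal{G}^{(\beta)}_{u,w}(\boldsymbol{x};\ominus \boldsymbol{z}).\]
There is no genuine obstacle here: the corollary is a direct trimming of Theorem~\ref{reallygeneralizedcauchy} using the vanishing criterion already established, and the only thing to verify is that the Bruhat condition must hold simultaneously for both factors, which is immediate. The proof is essentially one sentence.
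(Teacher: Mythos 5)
Your proposal is correct and matches the paper's intended reasoning exactly: the paper gives no separate proof, presenting the corollary as an immediate consequence of applying the vanishing proposition to each factor in the sum from Theorem~\ref{reallygeneralizedcauchy}, which is precisely the one-sentence argument you spell out.
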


\section{A Branching Rule for Double $\beta$-Grothendieck Polynomials}\label{branching}
In this section, we will give a branching rule for the Grothendieck polynomial $\mathcal{G}^{(\beta)}_w(\boldsymbol{x};\boldsymbol{y})$. If $w\in S_n$, our rule gives a formula for $\mathcal{G}^{(\beta)}_w(\boldsymbol{x};\boldsymbol{y})$ in terms of Grothendieck polynomials for permutations in $S_{n-1}$. We choose the name ``branching rule'' because this process is reminiscent of branching rules from representation theory, and because for lattice models that encode characters of representations, the process we describe indeed does give a representation-theoretic branching rule (see \cite{BBFschur}).

In Section \ref{onerownonzerosection}, we will give a condition using the weak and strong Bruhat orders which determines when the one-row partition function of a modification of $\mathfrak{S}_{1,w^{-1}}(\boldsymbol{y},\boldsymbol{x})$ is non-zero. In Section \ref{interleavingsection}, we will show that this generalizes the interleaving condition for non-chromatic 5-vertex models, and in Section \ref{branchingrulesubsection}, we will use our generalized interleaving condition to determine a branching rule for $\mathcal{G}^{(\beta)}_w(\boldsymbol{x};\boldsymbol{y})$ (Corollary \ref{branchingrule}). Once again, as we are only considering the double $\beta$-Grothendieck polynomials, we set $q=0$ for this section, making the relevant model a 5-vertex model.

\subsection{Generalized Interleaving Condition} \label{onerownonzerosection}

We give a rule for when the one-row partition function is nonzero, which generalizes the interleaving condition from non-chromatic 5-vertex models to the chromatic case (see, for example, Lemma~5 of \cite{BBFschur}). We will work with a modification of the system $\mathfrak{S}_{1,w^{-1}}(\boldsymbol{y},\boldsymbol{x})$ for this computation, where we simply reflect the entire model along $y = -x$, including vertex weights, boundary conditions, and row and column parameters. We will denote this system $\mathfrak{S}_{1,w^{-1}}'(\boldsymbol{y},\boldsymbol{x})$ for the remainder of this section. As we will need to refer to specific vertex types in this section, the vertex weights and corresponding labels are given in Figure \ref{essentiallymodelP}. Note that by Proposition~\ref{hudson}, the partition function of this system is $\mathcal{G}_{w}^{(\beta)}(\boldsymbol{x},\boldsymbol{y})$.
We use the symbols $\le,\le_L\le_R$ to denote the strong Bruhat order, left (weak) Bruhat order, and right (weak) Bruhat order, respectively.

\begin{figure}
\centering
\scalebox{.95}{
$
\begin{array}{c@{\hspace{10pt}}c@{\hspace{10pt}}c@{\hspace{10pt}}c@{\hspace{10pt}}c@{\hspace{0pt}}}
\toprule
\tt{a}&\tt{b}_1&\tt{b_2}&\tt{c}_1&\tt{c}_2\\
\midrule
\begin{tikzpicture}
\coordinate (a) at (-.75, 0);
\coordinate (b) at (0, .75);
\coordinate (c) at (.75, 0);
\coordinate (d) at (0, -.75);
\coordinate (aa) at (-.75,.5);
\coordinate (cc) at (.75,.5);
\draw[line width=0.5mm, violet] (a)--(0,0);
\draw[line width=0.6mm, violet] (b)--(0,0);
\draw[line width=0.5mm, violet] (c)--(0,0);
\draw[line width=0.6mm, violet] (d)--(0,0);
\draw[line width=0.5mm, violet,fill=white] (a) circle (.25);
\draw[line width=0.5mm, violet,fill=white] (b) circle (.25);
\draw[line width=0.5mm, violet, fill=white] (c) circle (.25);
\draw[line width=0.5mm, violet, fill=white] (d) circle (.25);
\node at (0,1) { };
\node at (a) {$c$};
\node at (b) {$c$};
\node at (c) {$c$};
\node at (d) {$c$};
\end{tikzpicture}
&
\begin{tikzpicture}
\coordinate (a) at (-.75, 0);
\coordinate (b) at (0, .75);
\coordinate (c) at (.75, 0);
\coordinate (d) at (0, -.75);
\coordinate (aa) at (-.75,.5);
\coordinate (cc) at (.75,.5);
\draw[line width=0.5mm, blue] (a)--(c);
\draw[line width=0.6mm, red] (b)--(d);
\draw[line width=0.5mm,blue,fill=white] (a) circle (.25);
\draw[line width=0.5mm,blue,fill=white] (c) circle (.25);
\draw[line width=0.5mm,red,fill=white] (b) circle (.25);
\draw[line width=0.5mm,red,fill=white] (d) circle (.25);
\node at (0,1) { };
\node at (a) {$b$};
\node at (b) {$a$};
\node at (c) {$b$};
\node at (d) {$a$};
\end{tikzpicture}
&
\begin{tikzpicture}
\coordinate (a) at (-.75, 0);
\coordinate (b) at (0, .75);
\coordinate (c) at (.75, 0);
\coordinate (d) at (0, -.75);
\coordinate (aa) at (-.75,.5);
\coordinate (cc) at (.75,.5);
\draw[line width=0.5mm, red] (a)--(c);
\draw[line width=0.6mm, blue] (b)--(d);
\draw[line width=0.5mm,red,fill=white] (a) circle (.25);
\draw[line width=0.5mm,red,fill=white] (c) circle (.25);
\draw[line width=0.5mm,blue,fill=white] (b) circle (.25);
\draw[line width=0.5mm,blue,fill=white] (d) circle (.25);
\node at (0,1) { };
\node at (a) {$a$};
\node at (b) {$b$};
\node at (c) {$a$};
\node at (d) {$b$};
\end{tikzpicture}
& \begin{tikzpicture}
\coordinate (a) at (-.75, 0);
\coordinate (b) at (0, .75);
\coordinate (c) at (.75, 0);
\coordinate (d) at (0, -.75);
\coordinate (aa) at (-.75,.5);
\coordinate (cc) at (.75,.5);
\draw[line width=0.5mm, blue](a)--(0,0)--(b);
\draw[line width=0.5mm,blue,fill=white] (b) circle (.25);
\draw[line width=0.5mm,blue,fill=white] (a) circle (.25);
\draw[line width=0.5mm, red](d)--(0,0)--(c);
\draw[line width=0.5mm,red,fill=white] (c) circle (.25);
\draw[line width=0.5mm,red,fill=white] (d) circle (.25);
\node at (0,1) { };
\node at (a) {$b$};
\node at (b) {$b$};
\node at (c) {$a$};
\node at (d) {$a$};
\end{tikzpicture}
& \begin{tikzpicture}
\coordinate (a) at (-.75, 0);
\coordinate (b) at (0, .75);
\coordinate (c) at (.75, 0);
\coordinate (d) at (0, -.75);
\coordinate (aa) at (-.75,.5);
\coordinate (cc) at (.75,.5);
\draw[line width=0.5mm, red](a)--(0,0)--(b);
\draw[line width=0.5mm,red,fill=white] (b) circle (.25);
\draw[line width=0.5mm,red,fill=white] (a) circle (.25);
\draw[line width=0.5mm, blue](d)--(0,0)--(c);
\draw[line width=0.5mm,blue,fill=white] (c) circle (.25);
\draw[line width=0.5mm,blue,fill=white] (d) circle (.25);
\node at (0,1) { };
\node at (a) {$a$};
\node at (b) {$a$};
\node at (c) {$b$};
\node at (d) {$b$};
\end{tikzpicture}
\\
   \midrule
 1 & 0 & x_i\oplus y_j & 1+\beta (x_i\oplus y_j) & 1  \\
   \bottomrule
\end{array}
$}
\caption{The Boltzmann weights for $\mathfrak{S}_{1,w^{-1}}'(\boldsymbol{y},\boldsymbol{x})$ at a vertex in row $i$ and column $j$, where $\textcolor{red}{a}<\textcolor{blue}{b}$ and ${\color{violet} c}$ is any color and we have set $q=0$. We consider the $+$ label to be larger than any color, and the same weights hold when one or more labels are $+$.}
\label{essentiallymodelP}
\end{figure}

Let $w\in S_n$, and let $w_i := w^{-1}(i)$, so $\eta_w:= w_1w_2\ldots w_n$ is the one-line notation of $w^{-1}$. We then have a natural left action of $S_n$ on $\eta_w$ by \[s_i\cdot (w_1\ldots w_iw_{i+1}\ldots w_n) = w_1\ldots w_{i+1}w_i\ldots w_n;\] and under this action, \begin{equation}v\cdot \eta_w = \eta_{vw}.\label{actiononelineequation}\end{equation}

For $w\in S_n$, define the permutations \[w_{\max} := s_1s_2\ldots s_{k-1}w, \hspace{20pt} \text{where } k=w(1), \text{ and} \] \[w^- := w_{\max}|_{2,\ldots,n} \in S_{n-1},\] and let $w^+$ be the image of $w$ in $S_{n+1}$ via the map $\{1,\ldots,n\}\to \{2,\ldots,n+1\}$, $i\mapsto i+1$. The permutation $w^-$ is well-defined since $w_{\max}(1) = 1$. Note that $(w^-)^+ = w_{\max}$ and $(w^+)^- = w$.

In terms of one-line notation, $\eta_{w^-}$ is obtained from $\eta_w$ by removing 1 and decreasing all remaining entries by 1, $\eta_{w^+}$ is obtained from $\eta_w$ by adding 1 to the front and increasing all other entries by 1, and $\eta_{w_{\max}}$ is obtained from $\eta_w$ by shifting 1 to the front.

Further, define $w_{\min}\in S_n$ to be the permutation \[w_{\min} = s_1^{\delta_1}\cdots s_{n-1}^{\delta_{n-1}} w, \hspace{30pt} \delta_i = \begin{cases} 1, & \text{if }  s_is_{i+1}^{\delta_{i+1}}\cdots s_{n-1}^{\delta_{n-1}}w < s_{i+1}^{\delta_{i+1}}\cdots s_{n-1}^{\delta_{n-1}}w \\ 0, & \text{else.}\end{cases}\] In other words, $w_{\min}$ is the minimal permutation in the left Bruhat order obtained from left multiplying $w$ by a subexpression of $s_1\cdots s_{n-1}$. An equivalent definition for $\delta_i$ is \[\delta_i = \begin{cases} 1, & \text{if there exists } j>i \text{ such that } w_i>w_j \\ 0, & \text{else.}\end{cases}\]

\begin{example}
If we set $w = s_1s_2s_3s_2s_1s_5 = (14)(56)$, then $\eta_w = 423165$, $\eta_{w^-} = 31254, \eta_{w^+} = 1534276, \eta_{w_{\max}} = 142365$, and $\eta_{w_{\min}} = 142356$. 
\end{example}

We will prove the following result:
\begin{theorem} \label{interleaving}
Let $w,v\in S_n$ with $v(1) = 1$. The one-row partition function $T(w,v)$ in Figure \ref{modelPoneline} is nonzero if and only if $w_{\min} \le v\le_L w_{\max}$.
\end{theorem}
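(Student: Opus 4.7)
The plan is to directly analyze the structure of admissible states of the one-row model at $q=0$ and then match this to the Bruhat interval condition.

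Since $\tt{b}_1$ has weight zero at $q=0$, no admissible state contains such a vertex. The color 1 appears on the top boundary at column $w(1)$ (because $\eta_w[w(1)] = 1$) and on the left boundary, and these two occurrences must be joined by a single strand. Tracing this strand under the surviving admissible vertex types, I would establish the following forced structure: for $j < w(1)$ the vertex at column $j$ must be $\tt{b}_2$ with color 1 horizontal and color $\eta_w[j]$ vertical (so $b_j = \eta_w[j]$); at column $w(1)$ the vertex is $\tt{c}_2$ with color 1 turning north-to-west and some color $c_0 > 1$ on the south and east edges; and at column $n$, the right boundary $+$ forces a $\tt{c}_2$ emitting $+$ at the south, so $b_n = +$. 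Consequently $\eta_v[i] = \eta_w[i-1]$ for $2 \le i \le w(1)$, matching $\eta_{w_{\max}}$ in its first $w(1)$ entries. Beyond column $w(1)$, the horizontal state $h_j$ entering column $j$ forces a $\tt{b}_2$ vertex whenever $h_j < \eta_w[j]$; when $h_j = \eta_w[j]$ there is a choice among $\tt{a}$, $\tt{c}_1$, $\tt{c}_2$; and the state has weight zero whenever $h_j > \eta_w[j]$.

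For the upper bound $v \le_L w_{\max}$, I would argue inductively through columns $w(1) < j \le n$ that each $\tt{c}_1$ choice at a matching column corresponds to applying an adjacent transposition that lowers the number of inversions relative to the default sequence $\eta_{w_{\max}}$, i.e. a length-decreasing left multiplication. Composing these expresses $w_{\max} v^{-1}$ as a reduced subword of a reduced expression for $w_{\max}$, so $v \le_L w_{\max}$. For the lower bound $w_{\min} \le v$ in the strong Bruhat order, I would observe that the forced $\tt{b}_2$ vertices beyond column $w(1)$ correspond exactly to the positions $i \ge w(1)$ where $\delta_i = 0$ in the definition of $w_{\min}$: these forced crossings cannot be ``uncrossed'' by any choice of vertex, so the resulting $v$ must lie above $w_{\min}$ in strong Bruhat. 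Conversely, for any $v \in [w_{\min}, w_{\max}]$ with $v(1) = 1$, an admissible state is constructed explicitly by placing the forced $\tt{b}_2$ vertices for $j < w(1)$ and then walking through columns $j \ge w(1)$ choosing $\tt{c}_1$ precisely at those columns where an adjacent transposition is needed to carry $\eta_{w_{\max}}$ to $\eta_v$; compatibility with admissibility is guaranteed by the Bruhat-interval hypothesis.

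The main obstacle will be rigorously translating local $\tt{c}_1/\tt{c}_2$ choices at columns beyond $w(1)$ into global Bruhat moves and establishing the exact correspondence with both bounds of the interval. This requires careful combinatorial bookkeeping to show that the sequence of permissible swaps at matching columns generates exactly the interval $[w_{\min}, w_{\max}]$ — in particular, that the interaction between forced $\tt{b}_2$ crossings (controlled by the $\delta_i$) and the freely chosen $\tt{c}_1$ swaps precisely carves out this interval under the mixed strong/left-weak order condition.
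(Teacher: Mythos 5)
Your local analysis of the one-row state is essentially correct and parallels the paper's intermediate lemma: one first shows $T(w,v)\ne 0$ iff $v(1)=1$ and $v=s_1^{\epsilon_1}\cdots s_{n-1}^{\epsilon_{n-1}}w$ with $\ell(w)=\ell(v)+\sum\epsilon_i$, by reading off $\tt{b}_2$ crossings column by column exactly as you do. But there are two genuine gaps in turning this into the claimed interval condition. First, your forward direction is loose: from ``each $\tt{c}_1$ is a length-decreasing left multiplication'' it does not follow in general that $v\le_L w_{\max}$; one needs the additional observation that the relevant generators commute. The paper extracts $s_1\cdots s_{k-1}$ (with $k=w(1)$) from the factorization and commutes it past the remaining $s_i^{\epsilon_i}$ ($i\ge k+1$), giving a reduced factorization $v=s_{k+1}^{\epsilon_{k+1}}\cdots s_{n-1}^{\epsilon_{n-1}}w_{\max}$, which is what actually certifies $v\le_L w_{\max}$. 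Your identification of the ``forced $\tt{b}_2$ vertices beyond column $w(1)$'' with the positions where $\delta_i=0$ is also not right as stated: the $\delta_i$ encode which $\epsilon_i$ \emph{may} be nonzero (one shows $\epsilon_i\le\delta_i$), not which crossings are forced.

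The more serious gap is your reverse direction. Asserting that ``compatibility with admissibility is guaranteed by the Bruhat-interval hypothesis'' is precisely the nontrivial content of the theorem, and nothing in your sketch supplies a mechanism. The subtlety is that the hypothesis mixes the strong Bruhat order (below) with the left weak order (above), so one cannot simply choose vertices greedily and cite Bruhat comparability. The paper resolves this with an external tool: the order-reversing bijection $\Phi_w\colon [1,w]_L\to[1,w]_R$, $\sigma\mapsto w\sigma^{-1}$, which is order-reversing for \emph{both} the weak and strong orders (Takigiku). Applying $\Phi_w$ converts $w_{\min}\le v\le_L w_{\max}$ into $s_{k-1}\cdots s_1\le_R\Phi_w(v)\le s_{n-1}^{\delta_{n-1}}\cdots s_1^{\delta_1}$, whence $\Phi_w(v)$ is a subword $s_{n-1}^{\epsilon_{n-1}}\cdots s_1^{\epsilon_1}$ of the Coxeter word with $\epsilon_i\le\delta_i$, and $v=\Phi_w(v)^{-1}w$ is the required reduced factorization placing $v$ in $A_w$. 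Without this (or an equivalent device), the reverse implication remains unproved.
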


Our first step in proving this theorem is the following lemma.

\begin{figure}[h]
\begin{center}
\scalebox{0.8}{
\begin{tikzpicture}
  \coordinate (aa) at (1,1);
  \coordinate (ab) at (1,2);
  \coordinate (ac) at (1,3);
  \coordinate (ba) at (2,1);
  \coordinate (bb) at (2,2);
  \coordinate (bc) at (2,3);
  \coordinate (ca) at (3,1);
  \coordinate (cb) at (3,2);
  \coordinate (cc) at (3,3);
  \coordinate (da) at (4,1);
  \coordinate (db) at (4,2);
  \coordinate (dc) at (4,3);
  \coordinate (ea) at (5,1);
  \coordinate (eb) at (5,2);
  \coordinate (ec) at (5,3);
  \coordinate (fa) at (6,1);
  \coordinate (fb) at (6,2);
  \coordinate (fc) at (6,3);
  \coordinate (ga) at (7,1);
  \coordinate (gb) at (7,2);
  \coordinate (gc) at (7,3);
  \coordinate (ha) at (8,1);
  \coordinate (hb) at (8,2);
  \coordinate (hc) at (8,3);
  \coordinate (ia) at (9,1);
  \coordinate (ib) at (9,2);
  \coordinate (ic) at (9,3);
  \coordinate (ja) at (10,1);
  \coordinate (jb) at (10,2);
  \coordinate (jc) at (10,3);
  \coordinate (ka) at (11,1);
  \coordinate (kb) at (11,2);
  \coordinate (kc) at (11,3);
  \draw (ab)--(kb);
  \draw (ba)--(bc);
  \draw (da)--(dc);
  \draw (ha)--(hc);
  \draw (ja)--(jc);
  \draw[fill=white] (ba) circle (.4);
  \draw[fill=white] (da) circle (.4);
  \draw[fill=white] (ha) circle (.4);
  \draw[fill=white] (ja) circle (.4);
  \draw[fill=white] (bc) circle (.4);
  \draw[fill=white] (dc) circle (.4);
  \draw[fill=white] (hc) circle (.4);
  \draw[fill=white] (jc) circle (.4);
  \draw[fill=white] (ab) circle (.4);
  \draw[fill=white] (cb) circle (.4);
  \draw[fill=white] (eb) circle (.4);
  \draw[fill=white] (gb) circle (.4);
  \draw[fill=white] (ib) circle (.4);
  \draw[fill=white] (kb) circle (.4);
  \path[fill=white] (bb) circle (.4);
  \path[fill=white] (db) circle (.4);
  \path[fill=white] (fb) circle (.4);
  \path[fill=white] (hb) circle (.5);
  \path[fill=white] (jb) circle (.4);
  \node at (bb) {$T_{1,1}$};
  \node at (db) {$T_{1,2}$};
  \node at (fc) {$\ldots$};
  \node at (fb) {$\ldots$};
  \node at (fa) {$\ldots$};
  \node at (hb) {$T_{1,{n-1}}$};
  \node at (jb) {$T_{1,n}$};
  
  \node at (-1,2) {row:};
  \node at (-0.2,2) {$1$};
  \node at (bc) {$w_1$};
  \node at (dc) {$w_2$};
  \node at (fc) {$\ldots$};
  \node at (hc) {{\footnotesize $w_{n-1}$}};
  \node at (jc) {$w_n$};
  \node at (ab) {$1$};
  \node at (ba) {$v_2$};
  \node at (da) {$v_3$};
  \node at (fa) {$\ldots$};
  \node at (ha) {$v_n$};
  \node at (ja) {$+$};
  \node at (kb) {$+$};
  \node at (10,4) {$n$};
  \node at (8,4) {$n-1$};
  \node at (6,4) {$\ldots$};
  \node at (4,4) {$2$};
  \node at (2,4) {$1$};
  \node at (1,4.04) {column:};
\end{tikzpicture}}
\end{center}
\caption{The one-row partition function for $\mathfrak{S}_{1,w^{-1}}'(\boldsymbol{y},\boldsymbol{x})$, where $w_i := w^{-1}(i), v_i := v^{-1}(i)$.}
\label{modelPoneline}
\end{figure}

\begin{lemma}
For a given $w\in S_n$ the permutations $v\in S_n$ such that the partition function in Figure \ref{modelPoneline} is nonzero form the set \[A_w = \left\{v\in S_n \;|\;  v(1) = 1, \; v =  s_1^{\epsilon_1} \ldots s_{n-1}^{\epsilon_{n-1}} w ,\; \text{where } \epsilon_i \in \{0,1\},\; l(w) = l(v) + \sum \epsilon_i\right\}.\]
\end{lemma}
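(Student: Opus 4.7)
The plan is to show directly that (i) for each $v$ with $v(1)=1$ the one-row admits at most one admissible state, and (ii) existence of such a state is equivalent to $v \in A_w$. Uniqueness is immediate because at $q=0$ the weight-zero type $\tt{b}_1$ is excluded, and for each of the remaining vertex types the label on any one edge is forced by the labels on the other three; processing columns left-to-right, all horizontal labels are therefore forced by the boundary. I next track the horizontal color $h$ along the row: it starts as $1$, is preserved through $\tt{a}$ and $\tt{b}_2$ vertices (both of which force top $=$ bottom, with the latter requiring top $> h$), and jumps to the bottom color at each $\tt{c}$-vertex, so it must equal $+$ on the right. Since $+$ only appears on the bottom at column $n$, the rightmost $\tt{c}$-vertex sits in column $n$.

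Let $j_1 < \cdots < j_r = n$ be the $\tt{c}$-columns and $c_0 = 1, c_1, \ldots, c_{r-1}$ the corresponding horizontal colors. Reading labels at each $\tt{c}$-vertex yields $j_i = w(c_{i-1})$ and $v(c_i) = w(c_{i-1}) + 1$ for $i \leq r-1$, while the non-$\tt{c}$ columns force $v(k) = w(k)+1$ for every $k \notin \{c_0,\ldots,c_{r-1}\}$. The $\tt{b}_2$ constraint at intermediate columns requires $w^{-1}(j) > c_{i-1}$ for $j \in (j_{i-1}, j_i)$, i.e., within each window $[j_{i-1}+1, j_i]$ of $\eta_w$ the value $c_{i-1}$ (which sits at position $j_i$) is the minimum. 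These relations combine to give $v = \sigma w$ with $\sigma$ the product of the disjoint cyclic shifts $(j_{i-1}+1,\, j_{i-1}+2,\, \ldots,\, j_i)$ for $i = 1, \ldots, r$ (using the convention $j_0 = 0$). Each cycle factors as the reduced product $s_{j_{i-1}+1} s_{j_{i-1}+2} \cdots s_{j_i - 1}$, and since the cycles act on disjoint intervals, juxtaposing them in increasing order of indices yields
\[\sigma \;=\; s_1^{\epsilon_1} s_2^{\epsilon_2} \cdots s_{n-1}^{\epsilon_{n-1}},\qquad \epsilon_k = 0 \iff k \in \{j_1, \ldots, j_{r-1}\},\]
a reduced expression with $\ell(\sigma) = n - r = \sum_k \epsilon_k$.

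To complete the forward direction I verify the length identity $\ell(\sigma w) = \ell(w) - \ell(\sigma)$ by applying the factors of $\sigma$ on the left of $w$ from right to left, cycle by cycle. The cycles have disjoint supports, so I treat one at a time; inside a single cycle the factors $s_{j_i - 1}, s_{j_i - 2}, \ldots, s_{j_{i-1}+1}$ are applied in this order. At the $m$-th step the positions $j_{i-1}+1, \ldots, j_i - m$ of the current $\eta$ still carry the original values $\eta_w[j_{i-1}+1], \ldots, \eta_w[j_i - m]$, all strictly larger than $c_{i-1}$ by the minimum-at-right condition, while $c_{i-1}$ has been shuffled leftward to position $j_i - m + 1$, so the swap $(j_i - m,\, j_i - m+1)$ is a genuine descent and the length drops by $1$. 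Summing over cycles gives a total length drop of $n - r = \sum_k \epsilon_k$, yielding the defining length identity of $A_w$. For the converse, given $v \in A_w$ I read off $\{j_1 < \cdots < j_{r-1}\} := \{k : \epsilon_k = 0\}$, set $j_r := n$, $c_0 := 1$, $c_i := v^{-1}(w(c_{i-1})+1)$ recursively, and assemble the candidate state with $\tt{c}$-vertices at columns $j_1, \ldots, j_r$. The compatibility $w(c_{i-1}) = j_i$ and the column-agreement $w_j = v_{j+1}$ at non-$\tt{c}$ columns fall out by induction from $v = \sigma w$ combined with $v(1)=1$, while the $\tt{b}_2$ condition on each window must hold because otherwise some factor of $\sigma$ would increase rather than decrease length, contradicting the length identity $\ell(v)+\ell(\sigma)=\ell(w)$.

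The main obstacle is the two-way translation between the combinatorial $\tt{b}_2$-admissibility---the minimum-at-right structure of $\eta_w$ on each window $[j_{i-1}+1, j_i]$---and the algebraic length-additivity $\ell(\sigma w) + \ell(\sigma) = \ell(w)$; both directions of the proof rest on this translation. The identification of $\sigma$ with the specific cyclic-shift product, together with the recognition of the subword $s_1^{\epsilon_1} \cdots s_{n-1}^{\epsilon_{n-1}}$ of $s_1 s_2 \cdots s_{n-1}$ and the bookkeeping that matches admissible states to the $\epsilon$-tuples, are more routine consequences once the translation is in hand.
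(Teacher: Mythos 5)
Your proof is correct and follows essentially the same approach as the paper's: characterize the unique admissible state column by column, translate the $\tt{b}_2$/$\tt{c}$-vertex structure into adjacent transpositions acting on the one-line word $\eta_w$, and verify that $\sigma = vw^{-1}$ is a (reduced) subword of $s_1\cdots s_{n-1}$ satisfying the length-additivity condition. Where the paper's argument is terse -- asserting that crossings correspond to length-increasing simple reflections and leaving the converse to a single sentence -- you spell out the cycle decomposition of $\sigma$, the descent-by-descent verification of $\ell(\sigma w)=\ell(w)-\ell(\sigma)$, and the explicit reconstruction of the state from a given $v\in A_w$, but the underlying mechanism is the same.
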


\begin{proof}

Figure \ref{modelPoneline} requires that $v(1)=1$, and a vertex of type $\tt{b}_2$ with $a <b< +$ in column $i$ corresponds to a simple reflection $s_i$ applied to $\eta_v$. Each $s_i$ must be length-increasing or the weight is 0 by the conditions on the colors for the vertices $\tt{b}_1$ and $\tt{b}_2$, and the length of $w$ is the length of $v$ plus the number of vertices of type $\tt{b}_2$ with $a <b< +$, which is $\sum_i \epsilon_i$.

The condition $v_i = w_j$ with $i\le j$ requires exactly that there is a type $\tt{b}_2$ vertex in columns $i,i+1,\ldots,j-1$, but not in column $i-1$ or column $j$. This corresponds to an action of $s_{j-1}s_{j-2}\ldots s_i$ on $\eta_v$. Factors of this form commute, so we can write $\eta_w = s_{n-1}^{\epsilon_{n-1}}\cdots s_1^{\epsilon_1}\cdot \eta_v$ for some $\epsilon_1,\ldots,\epsilon_{n-1}\in\{0,1\}$. By (\ref{actiononelineequation}), $w = s_{n-1}^{\epsilon_{n-1}}\cdots s_1^{\epsilon_1} v$. Given these conditions on $v$, there is a unique way to fill in the lattice with nonzero weight, so the partition function is nonzero.
\end{proof}

\begin{proof}[Proof of Theorem \ref{interleaving}]
It is clear from the definitions that $w_{\min}$ and $w_{\max}$ are elements of $A_w$. Let $v = s_1^{\epsilon_1}\ldots s_{n-1}^{\epsilon_{n-1}}w\in A_w$. We show that $w_{\min}\le v\le_L w_{\max}$.

If $\epsilon_i=1$, then the application of $s_i$ to $v_{i+1} := s_{i+1}^{\epsilon_{i+1}}\ldots s_{n-1}^{\epsilon_{n-1}}w$ must be length decreasing. Since $v_{i+1}^{-1}(i) = w_i$ and $v_{i+1}^{-1}(i+1) = w_j$ for some $j>i$, we must have $w_i>w_j$ for some $i<j$. Therefore, $\epsilon_i\le \delta_i$ for all $i$, so $w_{\min}\le v$.

On the other hand, $v(1)=1$, so if $w_k=1$ (i.e. $w(1)=k$), then we must have $\epsilon_i=1$ for $i<k$, but $\epsilon_k=0$. Therefore, $s_1s_2\cdots s_{k-1}$ commutes with the other $s_i^{\epsilon_i}$, so we can write \[v = s_{k+1}^{\epsilon_{k+1}}\ldots s_{n-1}^{\epsilon_{n-1}}s_1s_2\cdots s_{k-1}w = s_{k+1}^{\epsilon_{k+1}}\ldots s_{n-1}^{\epsilon_{n-1}}w_{\max},\] so $v\le_L w_{\max}$.

Conversely, suppose that $w_{\min}\le v\le_L w_{\max}$. Since $v\le_L w_{\max}$, $v(1)=1$. Furthermore, $v\le_L w_{\max}\le_L w$, so we can write $w = uv$, where $l(u)+l(v) = l(w)$. We want to show that $u$ has a reduced expression of the form $s_1^{\epsilon_1}\ldots s_{n-1}^{\epsilon_{n-1}}$. Consider the left (resp. right) Bruhat interval of elements less than $w$, which we call $[1,w]_L$ (resp. $[1,w]_R$). By \cite[Proposition~3.12]{Takigiku}, the map $\Phi_w:[1,w]_L\to [1,w]_R, \sigma\mapsto w\sigma^{-1}$ is an order-reversing bijection for both the weak and strong orders. In other words, we have \[\Phi_w(w_{\max}) = s_{k-1}\ldots s_1 \le_R \Phi_w(v)\le \Phi_w(w_{\min}) = s_{n-1}^{\delta_{n-1}}\ldots s_1^{\delta_1}.\] In other words, $\Phi_w(v)$ can be written $s_{n-1}^{\epsilon_{n-1}}\ldots s_1^{\epsilon_1}$, where $\epsilon_i\le \delta_i$, so \[v = \Phi_w(v)^{-1}w = s_1^{\epsilon_1}\ldots s_{n-1}^{\epsilon_{n-1}}w,\] and since everything above is a reduced expression, $v\in A_w$.
\end{proof}

The following general branching rule follows directly:

\begin{corollary} \label{generalbranching}
If $w\in S_n$, we have \[Z(\mathfrak{S}'_{1,w^{-1}}) = \sum_{w_{\min}\le v\le_L w_{\max}} Z(\mathfrak{S}'_{1, (v^-)^{-1}}) T(w,v),\] and every $T(w,v)$ that appears is nonzero.
\end{corollary}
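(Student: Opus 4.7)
The strategy is to evaluate $Z(\mathfrak{S}'_{1,w^{-1}})$ by decomposing the sum over admissible states according to the configuration on the top row. Each admissible state induces labels on the $n$ horizontal edges immediately below row $1$, and together with the $+$ on the right boundary of row $1$, these determine a permutation $v\in S_n$ with $v(1)=1$: since the color $1$ enters on the left of row $1$, it must exit along one of the bottom edges of this row, and the remaining colors entering from the top must continue downward, so the bottom of row $1$ reads $v_2,v_3,\ldots,v_n,+$ in the notation of Figure~\ref{modelPoneline}. For this fixed choice of $v$, the boundary of the top row is completely prescribed, and the unique compatible configuration carries precisely the Boltzmann weight $T(w,v)$.

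Next I would analyze the sub-model consisting of rows $2,\ldots,n$. Its top boundary is $(v_2,\ldots,v_n,+)$, its left boundary inherits the labels $2,3,\ldots,n$ (from top to bottom) from $\mathfrak{S}'_{1,w^{-1}}$, and its right and bottom boundaries are labeled $+$. I claim that column $n$ of this sub-model is forced: since the top of column $n$ in row $2$ is $+$ and the right boundary is $+$, inspection of the admissible vertex types in Figure~\ref{essentiallymodelP} shows that the only possibility is a type $\mathtt{a}$ vertex with color $+$, i.e.\ all four incident edges labeled $+$. Iterating down the column shows every vertex of column $n$ is of this form, with cumulative weight $1$.

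Removing column $n$ leaves an $(n-1)\times(n-1)$ grid with top boundary $(v_2,\ldots,v_n)$, left boundary $(2,3,\ldots,n)$ from top to bottom, and $+$ along the right and bottom. Since the Boltzmann weights in Figure~\ref{essentiallymodelP} depend only on the relative ordering of color labels (with $+$ largest), relabeling every color $c\in\{2,\ldots,n\}$ by $c-1$ preserves all vertex weights. Under this relabeling, the top boundary becomes $(v_2-1,\ldots,v_n-1)$, which is precisely the one-line notation of $(v^-)^{-1}$ by the definition of $v^-$, and the left boundary becomes $(1,2,\ldots,n-1)$. The resulting system is exactly $\mathfrak{S}'_{1,(v^-)^{-1}}$, so the partition function of rows $2,\ldots,n$ equals $Z(\mathfrak{S}'_{1,(v^-)^{-1}})$.

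Combining these steps gives
\[
Z(\mathfrak{S}'_{1,w^{-1}}) \;=\; \sum_{\substack{v\in S_n \\ v(1)=1}} T(w,v)\,Z(\mathfrak{S}'_{1,(v^-)^{-1}}),
\]
and invoking Theorem~\ref{interleaving} restricts the sum to $w_{\min}\le v\le_L w_{\max}$, while simultaneously ensuring every remaining $T(w,v)$ is nonzero. The main bookkeeping obstacle is Step~3: verifying that the sub-grid after trivializing column $n$ and relabeling colors really does coincide with $\mathfrak{S}'_{1,(v^-)^{-1}}$. This amounts to matching boundary data on three sides and checking compatibility of the Boltzmann weights under the shift $c\mapsto c-1$, which is immediate from the order-only dependence of the weights.
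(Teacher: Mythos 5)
Your proof is correct and follows the same approach the paper intends: decompose $Z(\mathfrak{S}'_{1,w^{-1}})$ according to the labels on the bottom of row one, identify the row-one partition function as $T(w,v)$, show the remaining $(n-1)\times(n-1)$ block is $\mathfrak{S}'_{1,(v^-)^{-1}}$, and restrict the sum via Theorem~\ref{interleaving}. The paper states the corollary as following ``directly'' and, in the proof of Corollary~\ref{branchingrule}, simply asserts that the lower $n-1$ rows give $\mathcal{G}^{(\beta)}_{v^-}$; your explicit verification that column $n$ is forced to be all-$+$ (hence trivial) and that the color shift $c\mapsto c-1$ preserves the order-dependent Boltzmann weights is exactly the bookkeeping the paper omits.
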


\begin{remark}
Everything in this subsection depends only on which vertices are admissible, and not on their precise weights. Therefore, our results hold for any chromatic lattice model that has the same boundary conditions and admissible vertices as $\mathfrak{S}'_{1,w^{-1}}(\boldsymbol{y}, \boldsymbol{x})$. Thus, we can think of Corollary \ref{generalbranching} as a statement about an arbitrary chromatic 5-vertex model.
\end{remark}

\subsection{Reduction to the Non-Chromatic Case} \label{interleavingsection}

Now we show that Theorem \ref{interleaving} generalizes the interleaving condition for the non-chromatic 5-vertex model.

Recall that a Grassmannian permutation is a permutation $w$ with at most one descent. Say that this descent is at $b=b_w$, so $w(1)<w(2)<\ldots<w(b)>w(b+1)<w(b+2)<\ldots<w(n)$; equivalently, $1,\ldots,b$ forms a subword of $\eta_w$, and so does $b+1,\ldots,n$. We can associate to $w$ the partition $\lambda_w$, where $\ell(\lambda_w)\le b$ and $(\lambda_w)_{b+1-i} = w(i)-i$. 

\begin{definition} \label{interleavingdefinition}
We say that two partitions $\mu = (\mu_1,\ldots,\mu_b)$ and $\nu = (\nu_1,\ldots,\nu_b)$ satisfy the interleaving condition relative to $b$ if $\nu_b=0$, and for all $i$, $\mu_i \ge \nu_i \ge \nu_{i+1}$.
\end{definition}

\begin{remark}
Definition \ref{interleavingdefinition} is the condition needed for the following partition function to be nonzero: referring to Figure \ref{onelinenonchromatic}, we have that each part $\mu_i$ for $1\le i\le b$ corresponds to a $-$ in column $\mu_i+i$, and each part $\nu_i$ corresponds to a $-$ in column $\nu_i+i-1$ (column 0 is taken to be the circle to the left of the row). Note that we consider $\nu$ to have $b$ parts since this matches better with the chromatic model, whereas it is more standard \cite{BBFschur} to consider it to have $b-1$ parts.

\begin{figure}[h]
\begin{center}
\scalebox{0.8}{
\begin{tikzpicture}

  \coordinate (aa) at (1,1);
  \coordinate (ab) at (1,2);
  \coordinate (ac) at (1,3);
  \coordinate (ba) at (2,1);
  \coordinate (bb) at (2,2);
  \coordinate (bc) at (2,3);
  \coordinate (ca) at (3,1);
  \coordinate (cb) at (3,2);
  \coordinate (cc) at (3,3);
  \coordinate (da) at (4,1);
  \coordinate (db) at (4,2);
  \coordinate (dc) at (4,3);
  \coordinate (ea) at (5,1);
  \coordinate (eb) at (5,2);
  \coordinate (ec) at (5,3);
  \coordinate (fa) at (6,1);
  \coordinate (fb) at (6,2);
  \coordinate (fc) at (6,3);
  \coordinate (ga) at (7,1);
  \coordinate (gb) at (7,2);
  \coordinate (gc) at (7,3);
  \coordinate (ha) at (8,1);
  \coordinate (hb) at (8,2);
  \coordinate (hc) at (8,3);
  \coordinate (ia) at (9,1);
  \coordinate (ib) at (9,2);
  \coordinate (ic) at (9,3);
  \coordinate (ja) at (10,1);
  \coordinate (jb) at (10,2);
  \coordinate (jc) at (10,3);
  \coordinate (ka) at (11,1);
  \coordinate (kb) at (11,2);
  \coordinate (kc) at (11,3);
  \draw (ab)--(kb);
  \draw (ba)--(bc);
  \draw (da)--(dc);
  \draw (ha)--(hc);
  \draw (ja)--(jc);
  \draw[fill=white] (ba) circle (.25);
  \draw[fill=white] (da) circle (.25);
  \draw[fill=white] (ha) circle (.25);
  \draw[fill=white] (ja) circle (.25);
  \draw[fill=white] (bc) circle (.25);
  \draw[fill=white] (dc) circle (.25);
  \draw[fill=white] (hc) circle (.25);
  \draw[fill=white] (jc) circle (.25);
  \draw[fill=white] (ab) circle (.25);
  \draw[fill=white] (cb) circle (.25);
  \draw[fill=white] (eb) circle (.25);
  \draw[fill=white] (gb) circle (.25);
  \draw[fill=white] (ib) circle (.25);
  \draw[fill=white] (kb) circle (.25);
  \path[fill=white] (bb) circle (.4);
  \path[fill=white] (db) circle (.4);
  \path[fill=white] (fb) circle (.4);
  \path[fill=white] (hb) circle (.5);
  \path[fill=white] (jb) circle (.4);
  \node at (bb) {$T_{1,1}$};
  \node at (db) {$T_{1,2}$};
  \node at (fc) {$\ldots$};
  \node at (fb) {$\ldots$};
  \node at (fa) {$\ldots$};
  \node at (hb) {$T_{1,{n-1}}$};
  \node at (jb) {$T_{1,n}$};
  \node at (ja) {$+$};
  \node at (kb) {$+$};
  \draw [->,>=stealth] (2,3.7)--(10,3.7);
  \node at (6,4) {$\mu$};
  \draw [->,>=stealth] (1,0.3)--(8,0.3);
  \node at (4,0) {$\nu$};
\end{tikzpicture}}
\end{center}
\caption{The one-row partition function for $\mathfrak{S}'_{1,w^{-1}}$, where the top and bottom boundaries are Grassmannian permutations.}\label{onelinenonchromatic}
\end{figure}
\end{remark}

\begin{lemma} \label{grassmannianbruhatorderequivalence}
If $u,v\in S_n$ are Grassmannian permutations, both with descent at $b$, then $u\le_L v$ if and only if $u\le v$.
\end{lemma}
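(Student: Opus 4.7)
The implication $u \le_L v \Rightarrow u \le v$ holds for arbitrary permutations, since the left weak Bruhat order refines the strong Bruhat order. For the converse, the plan is to induct on $\ell(v)-\ell(u)$: the base case $u=v$ is trivial, and the inductive step reduces to showing that every strong Bruhat cover $u \lessdot v$ between Grassmannian permutations with common descent at $b$ is automatically a left weak cover $u \lessdot_L v$.

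To establish the cover step, I would write $v = u \cdot (j,k)$ with $j < k$ and $\ell(v) = \ell(u)+1$. The first sub-step is to show that $j \le b < k$: if instead both $j,k \le b$ or both $j,k > b$, then the swap takes place inside a single monotone block of $u$ (where $u(1)<\cdots<u(b)$ or $u(b+1)<\cdots<u(n)$), and a direct check shows this forces a descent of $v$ strictly inside that block, contradicting that $v$ is Grassmannian with descent at $b$. The second sub-step is to show $u(k) = u(j)+1$. Here I would assume $u(k) > u(j)+1$ for contradiction, locate the (unique) position $m$ with $u(m) = u(j)+1$, and split on $m \le b$ versus $m > b$: monotonicity of $u$ on each block forces $j < m < k$ and $u(j) < u(m) < u(k)$ in either case, violating the standard Bruhat cover criterion (which states that $u \lessdot u(j,k)$ iff $u(j)<u(k)$ and no intermediate $m$ has $u(m)$ strictly between $u(j)$ and $u(k)$).

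Once $u(k) = u(j)+1$ is in hand, I would set $i := u(j)$; the conjugate transposition $(u(j),u(k))$ is then the simple reflection $s_i$, so
\[
v = u \cdot (j,k) = \bigl(u(j),u(k)\bigr) \cdot u = s_i\, u,
\]
and since $\ell(v) = \ell(u)+1$ this is precisely the left weak cover $u \lessdot_L v$. I do not anticipate a serious obstacle: the argument is a direct case analysis, with the only delicate bookkeeping being the location of $m$ in the second sub-step. Conceptually the lemma reflects the well-known fact that on Grassmannian permutations with a fixed descent, both the weak and strong Bruhat orders coincide with containment of the associated partitions in a $b \times (n-b)$ rectangle, but the plan above gives a self-contained proof without invoking that identification.
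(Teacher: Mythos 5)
Your cover analysis is correct, and your route is genuinely different from the paper's: the paper uses the canonical staircase reduced word for $v$ together with the subword characterization of strong Bruhat order, arguing that a Grassmannian subword must be a consecutive truncation of each block of the staircase, after which the deleted factors commute past the survivors and exhibit $v$ as a length-additive product ending in $u$. You instead analyze Bruhat covers directly and show that a cover between two Grassmannians with a common descent must be a left weak cover. Both of your sub-steps check out: the transposition $(j,k)$ must straddle $b$ by monotonicity of each block, and the cover criterion then forces $u(k)=u(j)+1$, giving $v=s_{u(j)}u$ with $\ell(v)=\ell(u)+1$.

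However, the reduction to covers has a genuine gap. Inducting on $\ell(v)-\ell(u)$ requires that whenever $u<v$, both Grassmannian with descent at $b$, and $\ell(v)-\ell(u)\ge 2$, there exists a Grassmannian $w$ with descent at $b$ (or the identity) satisfying $u<w<v$; without such a $w$ you cannot invoke either the inductive hypothesis or your cover sub-lemma, since a generic saturated Bruhat chain from $u$ to $v$ in $S_n$ passes through non-Grassmannian permutations, about which your sub-lemma says nothing. The missing ingredient is exactly the statement that the parabolic quotient $W^J$, for $J=\{s_i:i\ne b\}$, is a graded poset under the induced Bruhat order. This is a true and standard fact about Coxeter groups, but it is not automatic, and your plan neither proves nor cites it. Note also that you cannot close the gap by appealing to the partition-containment picture you mention at the end: in the paper that identification is Lemma \ref{grassmannianpartition}, whose proof depends on the present lemma, so that route would be circular. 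Once the gradedness of the parabolic quotient is supplied, your argument is complete and is arguably more elementary than the paper's reduced-word approach.
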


\begin{proof}
Suppose that $u\le v$, and since $v$ is Grassmannian, $v$ has a reduced expression of the form \[\und{v} = s_{v(1)}s_{v(1)-1}\ldots s_1 s_{v(2)}s_{v(2)-1}\ldots s_2 \ldots s_{v(b)}s_{v(b)-1}\ldots s_b,\] and there exists some reduced expression $\und{u}$ that is a subword of $\und{v}$. Call $s_{v(k)}s_{v(k)-1}\ldots s_k$ the $k$-block of $\und{v}$, and call its subword that appears in $\und{u}$ the $k$-block of $u$.

We have $\und{v}\cdot \eta_1 = \eta_v$, and a key feature of a reduced expression of a Grassmannian permutation is that every application of a simple reflection swaps an element of $\{1,\ldots,b\}$ with an element of $\{b+1,\ldots,n\}$. For $\und{u}$ to still be Grassmannian, it must also have this property, which amounts to requiring that the $k$-block of $u$ is $s_{u(k)}s_{u(k)-1}\ldots s_k$.

Therefore, \[\und{u} = s_{u(1)}s_{u(1)-1}\ldots s_1 s_{u(2)}s_{u(2)-1}\ldots s_2 \ldots s_{u(b)}s_{u(b)-1}\ldots s_b,\] and notice that $s_{v(j)}s_{v(j)-1}\ldots s_{u(j)+1}$ commutes with every simple reflection in the $k$-block of $u$ for $k<j$. Therefore, \begin{align*} v &= s_{v(1)}s_{v(1)-1}\ldots s_1 \ldots s_{v(b)}s_{v(b)-1}\ldots s_b \\&= s_{v(1)}\ldots s_{u(1)+1}s_{u(1)}\ldots s_1 \ldots s_{v(b)}\ldots s_{u(b)+b}s_{u(b)}\ldots s_b \\&= s_{v(1)}\ldots s_{u(1)+1}\ldots s_{v(b)}\ldots s_{u(b)+b} s_{u(1)}\ldots s_1 \ldots s_{u(b)}\ldots s_b \\&= s_{v(1)}\ldots s_{u(1)+1}\ldots s_{v(b)}\ldots s_{u(b)+b} u,\end{align*} and this is a reduced expression, so $u\le_L v$.
\end{proof}

\begin{lemma} \label{grassmannianpartition}
If $u,v\in S_n$ are Grassmannian permutations, both with descent at $b$, then $u\le v$ is equivalent to the condition that $(\lambda_u)_i \le (\lambda_v)_i$ for all $1\le i\le b$.
\end{lemma}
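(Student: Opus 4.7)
The approach is to reduce the componentwise partition inequality to an inequality on values of $u$ and $v$, and then characterize the strong Bruhat order between two Grassmannian permutations with the same descent.

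\textbf{Step 1: Translating the partition inequality.} Using the definition $(\lambda_w)_{b+1-i} = w(i) - i$, I would first observe that the inequality $(\lambda_u)_j \le (\lambda_v)_j$ for all $1 \le j \le b$ is equivalent, via the substitution $i = b+1-j$, to the condition $u(i) - i \le v(i) - i$, that is, $u(i) \le v(i)$, for all $1 \le i \le b$. Thus the lemma is equivalent to the claim that for Grassmannian $u, v \in S_n$ with common descent at $b$, one has $u \le v$ in the strong Bruhat order if and only if $u(i) \le v(i)$ for all $1 \le i \le b$.

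\textbf{Step 2: Forward direction.} Assume $u \le v$. I would recycle the subword analysis from the proof of Lemma~\ref{grassmannianbruhatorderequivalence} (just above): fix the reduced expression
\[ \und{v} = s_{v(1)} s_{v(1)-1} \cdots s_1 \cdot s_{v(2)} s_{v(2)-1} \cdots s_2 \cdots s_{v(b)} s_{v(b)-1} \cdots s_b, \]
and by the subword criterion choose a reduced expression $\und{u}$ for $u$ as a subword of $\und{v}$. Because $u$ is again Grassmannian with descent at $b$, every simple reflection in $\und{u}$ must swap an element of $\{1,\dots,b\}$ with one of $\{b{+}1,\dots,n\}$, which forces the $k$-th block of $\und{u}$ to be exactly $s_{u(k)} s_{u(k)-1} \cdots s_k$. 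For this to embed as a subword of the $k$-th block of $\und{v}$, we need $u(k) \le v(k)$ for every $1 \le k \le b$.

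\textbf{Step 3: Reverse direction.} Conversely, suppose $u(k) \le v(k)$ for all $1 \le k \le b$. Then $s_{u(k)} s_{u(k)-1} \cdots s_k$ is manifestly a subword of $s_{v(k)} s_{v(k)-1} \cdots s_k$, so concatenating over $k$ exhibits
\[ \und{u} = s_{u(1)} s_{u(1)-1}\cdots s_1 \cdot s_{u(2)} s_{u(2)-1}\cdots s_2 \cdots s_{u(b)} s_{u(b)-1}\cdots s_b \]
as a reduced subword of $\und{v}$. Since this is a reduced expression for a Grassmannian permutation with descent at $b$ whose values on $\{1,\dots,b\}$ are precisely $u(1), \dots, u(b)$, it is a reduced expression for $u$, and the subword criterion gives $u \le v$.

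\textbf{Main obstacle.} The only real point requiring care is ensuring the block-by-block subword identification: that any reduced subword expression of a Grassmannian $u$ inside $\und{v}$ must partition into the stated $k$-blocks. This parity-type observation is already implicit in the previous lemma, so the lemma should follow almost immediately once the partition inequality is translated into the coordinate inequality on $u(i), v(i)$.
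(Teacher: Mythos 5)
Your proposal is correct and follows essentially the same route as the paper: translate the componentwise partition inequality into a statement about the lengths $u(k)-k \le v(k)-k$ of the $k$-blocks, then invoke the block-by-block subword structure established in the proof of Lemma~\ref{grassmannianbruhatorderequivalence} to show this is equivalent to $u \le v$. The paper's version is terser (it asserts both directions in one line), but the content and the key parity observation forcing the $k$-block of $\und{u}$ to be $s_{u(k)}\cdots s_k$ are identical.
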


\begin{proof}
The lengths of the $k$-blocks of $u$ and $v$ are $(\lambda_u)_k$ and $(\lambda_v)_k$, respectively. The reduced words from Lemma \ref{grassmannianbruhatorderequivalence} show that $u\le v$ if and only if every $k$-block of $v$ is at least as long as the corresponding $k$-block of $u$.
\end{proof}

The following proposition shows that for Grassmannian permutations, Theorem \ref{interleaving} is equivalent to the interleaving condition from the non-chromatic 5-vertex model.

\begin{proposition}
If $w,v\in S_n$ are Grassmannian permutations with a descent at $b$, then $\lambda_w$ and $\lambda_v$ satisfy the interleaving condition if and only if $w_{\min}\le v\le_L w_{\max}$.
\end{proposition}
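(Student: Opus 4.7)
The plan is to reduce both sides of the equivalence to coordinatewise inequalities on partitions, by computing $\lambda_{w_{\max}}$ and $\lambda_{w_{\min}}$ explicitly in terms of $\lambda_w$; everything then follows from Lemmas \ref{grassmannianbruhatorderequivalence} and \ref{grassmannianpartition}. Concretely, I would prove that for Grassmannian $w$ with descent at (most) $b$ and $S=\{w(1)<\cdots<w(b)\}$, both $w_{\max}$ and $w_{\min}$ are again Grassmannian with descent at most $b$, with
\[
(\lambda_{w_{\max}})_j=\begin{cases}(\lambda_w)_j & 1\le j\le b-1,\\ 0 & j=b,\end{cases}\qquad
(\lambda_{w_{\min}})_j=\begin{cases}(\lambda_w)_{j+1} & 1\le j\le b-1,\\ 0 & j=b.\end{cases}
\]

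The formula for $\lambda_{w_{\max}}$ is a direct calculation: writing $k=w(1)$, the product $s_1s_2\cdots s_{k-1}$ equals the cycle $(1\,2\,\cdots\,k)$ on the left, which sends the value $k$ to $1$ and fixes every value exceeding $k$. Since $w(2),\ldots,w(b)>w(1)=k$, this leaves $w(2),\ldots,w(b)$ unchanged and sets $w_{\max}(1)=1$; a routine check shows $w_{\max}(b+1)<\cdots<w_{\max}(n)$ too, so $w_{\max}$ is Grassmannian with descent at most $b$, and the partition identity is read off from $(\lambda_{w_{\max}})_{b+1-i}=w_{\max}(i)-i$.

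The formula for $\lambda_{w_{\min}}$ is subtler. Its engine is the observation that if $u$ is Grassmannian with set $S_u$, then $s_iu<u$ exactly when $i\notin S_u$ and $i+1\in S_u$, in which case $s_iu$ is again Grassmannian with $S_{s_iu}=(S_u\setminus\{i+1\})\cup\{i\}$. This lets me show inductively that every intermediate permutation $w^{(i)}=s_{i+1}^{\delta_{i+1}}\cdots s_{n-1}^{\delta_{n-1}}w$ used to build $w_{\min}$ is Grassmannian with descent at most $b$, and track exactly how $S$ evolves. Processing $i=n-1,n-2,\ldots,1$ in that order, one verifies by induction that each original $s_j$ (for $j=b,b-1,\ldots,2$) cascades downward through the vacant indices below it and stops exactly at $s_{j-1}+1$ (blocked by the next-lower element of $S$, which has not yet been touched because smaller indices are processed later); finally $s_1$ cascades all the way to $1$. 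This yields $S_{\min}=\{1,s_1+1,s_2+1,\ldots,s_{b-1}+1\}$, whence the formula for $\lambda_{w_{\min}}$.

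Once both partition formulas are in hand, the proposition is immediate. Lemma \ref{grassmannianbruhatorderequivalence} converts $w_{\min}\le v\le_L w_{\max}$ into $w_{\min}\le v\le w_{\max}$ in strong Bruhat order, and Lemma \ref{grassmannianpartition} turns the latter into $(\lambda_{w_{\min}})_j\le(\lambda_v)_j\le(\lambda_{w_{\max}})_j$ for all $j$. Substituting gives $(\lambda_w)_{j+1}\le(\lambda_v)_j\le(\lambda_w)_j$ for $1\le j\le b-1$ together with $(\lambda_v)_b=0$, which is the (classical) interleaving condition that Definition \ref{interleavingdefinition} is intended to encode. The main obstacle is the cascading-shift analysis for $w_{\min}$: one must carefully argue that the shift of each $s_j$ proceeds uninterruptedly until it is blocked by the still-untouched $s_{j-1}$, using that strictly smaller indices $i$ have not yet been processed when we act on $s_j$. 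The remaining steps are formal translations through the two lemmas.
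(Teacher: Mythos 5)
Your proof is correct and takes essentially the same route as the paper: compute $\lambda_{w_{\max}}$ and $\lambda_{w_{\min}}$ explicitly in terms of $\lambda_w$, then translate $w_{\min}\le v$ and $v\le_L w_{\max}$ into coordinatewise partition inequalities via Lemmas \ref{grassmannianbruhatorderequivalence} and \ref{grassmannianpartition}. The difference is one of detail. The paper asserts in one line that ``$\eta_{w_{\min}}$ is obtained from $\eta_w$ by moving every entry $i\in\{2,\ldots,b\}$ to $w(i-1)+1$, and moving $1$ to the front,'' without justification. You supply the missing argument: you characterize when $s_iu<u$ for Grassmannian $u$ in terms of its value set $S_u=\{u(1)<\cdots<u(b)\}$ (namely $i\notin S_u$, $i+1\in S_u$), then track how $S$ evolves as $i$ runs from $n-1$ down to $1$, showing each $w(j)$ cascades down to $w(j-1)+1$ and that this resting point is stable because the next swap point $w(j-1)-1$ has not yet been processed. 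This cascading analysis is the genuine content the paper elides, and it also confirms along the way (as is needed for the two lemmas to apply) that each intermediate permutation remains Grassmannian with descent at most $b$. One remark: the string $\mu_i\ge\nu_i\ge\nu_{i+1}$ printed in Definition \ref{interleavingdefinition} is vacuous in its last clause; the condition the one-row partition function actually encodes, and the one you correctly arrive at, is the usual interleaving $\mu_i\ge\nu_i\ge\mu_{i+1}$ together with $\nu_b=0$, so your reading of the definition as intended matches what the proposition requires.
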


\begin{proof}
We obtain $\eta_{w_{\max}}$ by moving 1 to the front of $\eta_w$, so $\lambda_{w_{\max}}$ is obtained by setting the $b$-th part of $\lambda_w$ to 0. By Lemmas \ref{grassmannianbruhatorderequivalence} and \ref{grassmannianpartition}, the condition $v\le_L w_{\max}$ therefore is equivalent to the condition that $(\lambda_v)_i\le (\lambda_w)_i$ for all $i$ and that $(\lambda_v)_b = 0$.

We obtain $\eta_{w_{\min}}$ from $\eta_w$ by moving every entry $i\in \{2,\ldots,b\}$ to $w(i-1)+1$, and moving 1 to the front. In other words, $\lambda_{w_{\min}}$ is the partition obtained from $\lambda_w$ by removing the first part. By Lemma \ref{grassmannianpartition}, the condition $w_{\min}\le v$ given that $v(1)=1$ is equivalent to the condition that $(\lambda_w)_{i+1} = (\lambda_{w_{\min}})_i\le (\lambda_w)_i$ for all $i$.
\end{proof}

\subsection{The Branching Rule} \label{branchingrulesubsection}

In this subsection, we will use Corollary \ref{generalbranching} to determine a branching rule for the $\beta$-Grothendieck polynomials $\mathcal{G}^{(\beta)}_w(\boldsymbol{x};\boldsymbol{y})$. Immediate from that corollary is that restriction from $S_n$ to $S_{n-1}$ gives $\mathcal{G}^{(\beta)}_w(\boldsymbol{x};\boldsymbol{y})$ as a linear combination of Grothendieck polynomials corresponding to permutations in the half-strong-half weak Bruhat interval $I_w:=w_{\min}\le v\le_L w_{\max}$.

\begin{proposition} \label{onerowpartitionfunction}
The one-row partition function of $\mathfrak{S}'_{1,w^{-1}}(\boldsymbol{y},\boldsymbol{x})$, using the weights from Figure \ref{essentiallymodelP}, is \[T(w,v) = \begin{cases} \prod_{i=1}^n d_i, & \text{if } v\in I_w\\ 0, & \text{if } v\notin I_w,\end{cases} \hspace{20pt} \text{where} \hspace{20pt} d_i = \begin{cases} x_1\oplus y_i, & \text{if } w(i) = v(i+1) \\ 1, & \text{if } w(i) < v(i+1) \\ 1+\beta(x_1\oplus y_i), & \text{if } w(i) > v(i+1). \end{cases}\]
\end{proposition}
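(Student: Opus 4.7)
My plan is to split the proposition into the two cases $v \notin I_w$ and $v \in I_w$. For $v \notin I_w$, the vanishing $T(w,v) = 0$ is exactly the content of Theorem \ref{interleaving}, so no new work is needed there. The substantive content is the product formula in the case $v \in I_w$, which I would establish by tracing horizontal colors across the one-row system column by column.

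For $0 \le j \le n$, let $h_j$ denote the color of the horizontal edge between columns $j$ and $j+1$, with boundary conditions $h_0 = 1$ and (for admissibility) $h_n = +$. At column $j$ the four adjacent edges carry $N = w_j$ (the top label, i.e.\ $w^{-1}(j)$), $S = v_{j+1}$ (the bottom label, with the convention $v_{n+1} := +$), $W = h_{j-1}$, and $E = h_j$. Since $q = 0$ forbids vertex $\tt{b}_1$, a short case analysis over the four remaining admissible vertex types yields: if $w_j \ne v_{j+1}$, admissibility forces $h_{j-1} = w_j$ and $h_j = v_{j+1}$, with the vertex being $\tt{c}_1$ of weight $1+\beta(x_1 \oplus y_j)$ when $w_j > v_{j+1}$ and $\tt{c}_2$ of weight $1$ when $w_j < v_{j+1}$; if $w_j = v_{j+1}$, then $h_j = h_{j-1}$ and the vertex is either $\tt{a}$ of weight $1$ (when $h_{j-1} = w_j$) or $\tt{b}_2$ of weight $x_1 \oplus y_j$ (when $h_{j-1} < w_j$). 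This determines $h_j$ uniquely from $h_{j-1}$, so the case $v \in I_w$ yields exactly one admissible state (existence being guaranteed by Theorem \ref{interleaving}).

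The cases $w_j \ne v_{j+1}$ immediately produce the column weights claimed in the proposition. The step I expect to be the main obstacle is excluding the $\tt{a}$ sub-case when $w_j = v_{j+1}$, so that the column weight is always $x_1 \oplus y_j$. To handle this, I would observe that by the recursion $h_{j-1}$ coincides with some $v_k$ for $k \le j$: either the initial value $h_0 = 1 = v_1$ (using $v(1) = 1$), or a ``reset'' value $v_k$ assigned at the most recent column $k-1$ with $w_{k-1} \ne v_k$ and then propagated unchanged through columns satisfying $w_m = v_{m+1}$. Then $h_{j-1} = w_j = v_{j+1}$ would force $v_k = v_{j+1}$ with $k \le j < j+1$, contradicting the injectivity of the permutation $v$. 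This rules out the $\tt{a}$ vertex, so whenever $w_j = v_{j+1}$ the vertex is $\tt{b}_2$ with weight $x_1 \oplus y_j$, and multiplying the column contributions gives $T(w,v) = \prod_{i=1}^n d_i$ as stated.
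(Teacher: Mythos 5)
Your proof is correct, and it follows the same column-by-column analysis the paper uses, but you spell out a step the paper's very terse proof leaves implicit. The paper simply asserts which vertex type appears in each column based on the comparison of the top and bottom labels. The genuine content you supply, and correctly identify as the crux, is ruling out the type $\tt{a}$ vertex (weight $1$) in favor of type $\tt{b}_2$ (weight $x_1\oplus y_j$) when the top and bottom labels of column $j$ coincide. Your observation that the incoming horizontal color $h_{j-1}$ always equals $v_k$ for some $k\le j$ (initialization at $v_1=1$, reset to $v_{m+1}$ at any column $m$ with distinct top and bottom labels, propagation otherwise) and that injectivity of $v^{-1}$ then forbids $h_{j-1}=v_{j+1}$ is exactly the right argument. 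One very minor point: in the $w_j = v_{j+1}$ case you also need $h_{j-1}\le w_j$ (otherwise the vertex is $\tt{b}_1$, weight $0$); this is automatic once you invoke Theorem \ref{interleaving} for existence of a state of nonzero weight, as you do, but it is worth noting explicitly. Also note that in the proposition statement the conditions are written $w(i)$, $v(i+1)$, while your analysis (correctly matching Figure \ref{modelPoneline}) uses $w_i = w^{-1}(i)$, $v_{i+1}=v^{-1}(i+1)$; this is a notational hiccup in the paper rather than a flaw in your argument.
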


\begin{proof}
By Theorem \ref{interleaving}, the one-row partition function $T(w,v)$ has exactly one nonzero state if $v\in I_w$, and none if $v\notin I_w$.

If $v\in I_w$, column $i$ of $T(w,v)$ contains a vertex of type $\tt{b}_2$ precisely when $w(i)=v(i+1)$, a vertex of type $\tt{c}_2$ precisely when $w(i)<v(i+1)$, and a vertex of type $\tt{c}_1$ precisely when $w(i)>v(i+1)$.
\end{proof}

\begin{corollary} \label{branchingrule}
The $\beta$-Grothendieck polynomials obey the following branching rule:

\begin{equation}\mathcal{G}^{(\beta)}_w(x_1,\ldots,x_n;y_1,\ldots,y_n) = \sum_{v\in I_w}  \left(\prod_{i=1}^n d_i\right) \mathcal{G}^{(\beta)}_{v^-}(x_2,\ldots,x_n;y_1,\ldots,y_{n-1}). \label{branchingruleequation}\end{equation} \end{corollary}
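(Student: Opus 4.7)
The plan is to combine the three previous results of this section in a straightforward way. First, setting $q=0$ in Proposition~\ref{hudson} gives $\mathcal{G}^{(\beta)}_w(\boldsymbol{x};\boldsymbol{y}) = Z(\mathfrak{S}'_{1,w^{-1}}(\boldsymbol{y},\boldsymbol{x}))$, translating the polynomial identity we want into an equality of partition functions. Next I would apply Corollary~\ref{generalbranching} to strip off the top row, obtaining
$$Z(\mathfrak{S}'_{1,w^{-1}}(\boldsymbol{y},\boldsymbol{x})) \;=\; \sum_{v\in I_w} T(w,v)\,Z(\mathfrak{S}'_{1,(v^-)^{-1}}(y_1,\ldots,y_{n-1};x_2,\ldots,x_n)),$$
where $I_w = \{v : w_{\min}\le v\le_L w_{\max}\}$; the fact that only these $v$ contribute is exactly Theorem~\ref{interleaving}.

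From here, two further substitutions finish the argument. A second application of Proposition~\ref{hudson}, now for the permutation $v^-\in S_{n-1}$, identifies the smaller partition function as $\mathcal{G}^{(\beta)}_{v^-}(x_2,\ldots,x_n;y_1,\ldots,y_{n-1})$. Then Proposition~\ref{onerowpartitionfunction} supplies the explicit evaluation $T(w,v)=\prod_{i=1}^n d_i$ on $I_w$. Substituting these two evaluations into the display above produces exactly (\ref{branchingruleequation}).

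Since essentially all of the technical content has already been established in Theorem~\ref{interleaving} and Propositions~\ref{hudson} and~\ref{onerowpartitionfunction}, the main step requiring care is the bookkeeping of boundary labels and parameters needed to justify the second application of Proposition~\ref{hudson}. Specifically, when the top row is peeled away, one must observe that the unique column through which color~$1$ exited the top is forced to carry the label $+$ throughout the remainder of the grid (this follows from the fact that the color~$1$ strand makes a type~$\tt{c}_2$ turn in row~$1$ and thus leaves a $+$ on the south edge of that vertex, below which no color~$1$ can be generated). This leaves an $(n-1)\times(n-1)$ grid with row parameters $x_2,\ldots,x_n$ and column parameters $y_1,\ldots,y_{n-1}$; after the trivial relabeling $i\mapsto i-1$ on the remaining colors, the top boundary coincides with $(v^-)^{-1}$ in one-line notation, matching the data for $\mathfrak{S}'_{1,(v^-)^{-1}}$ in $S_{n-1}$. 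Granted this identification, the corollary follows by direct substitution.
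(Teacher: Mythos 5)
Your overall strategy is exactly the paper's: identify $\mathcal{G}^{(\beta)}_w(\boldsymbol{x};\boldsymbol{y})$ with $Z(\mathfrak{S}'_{1,w^{-1}}(\boldsymbol{y},\boldsymbol{x}))$, peel off the top row using the interleaving condition, evaluate the one-row partition function by Proposition~\ref{onerowpartitionfunction}, and recognize the remaining $(n-1)$ rows as giving $\mathcal{G}^{(\beta)}_{v^-}(x_2,\ldots,x_n;y_1,\ldots,y_{n-1})$. The citations and structure are all right.

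However, your bookkeeping paragraph contains a genuine error, and it is not harmless. You assert that the column through which color $1$ exits the top row (namely column $w(1)$) carries the label $+$ on its south edge and hence is the column that gets removed from the lower grid. This is false. At the $\tt{c}_2$ vertex where color $1$ turns north, the south and east edges carry whatever color $c$ enters that vertex from below, and that color $c$ is generically \emph{not} $+$: it is the next color that continues rightward along row $1$. What actually happens in row $1$ is a chain of $\tt{c}_2$ turns: color $1$ turns north at column $w(1)$ and is replaced horizontally by some $c_1>1$, which then turns north further right and is replaced by $c_2>c_1$, and so on, until the last turn occurs at column $n$, whose east edge is the $+$ on the right boundary. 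In particular, a simple check shows the $+$ on the intermediate boundary is \emph{forced} to be in column $n$: if a $+$ appeared on a horizontal edge in any earlier column, the next vertex to the right would need $+$ on both west and north edges, but the top boundary labels $w_1,\ldots,w_n$ are never $+$, so no admissible vertex exists.

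This matters because if one removed column $w(1)$ from the lower $(n-1)\times n$ grid as you propose, the remaining column parameters would be $y_1,\ldots,\widehat{y_{w(1)}},\ldots,y_n$ rather than $y_1,\ldots,y_{n-1}$, which would not match the right-hand side of \eqref{branchingruleequation}. The correct statement is that the all-$+$ column of the lower grid is always column $n$, which is exactly what Figure~\ref{modelPoneline} and Figure~\ref{branchingrulefigure} encode (the intermediate boundary is $v_2,\ldots,v_n,+$, with the $+$ fixed in the last position). With that correction, the remaining $(n-1)\times(n-1)$ grid does indeed have row parameters $x_2,\ldots,x_n$ and column parameters $y_1,\ldots,y_{n-1}$, and the rest of your argument goes through.
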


\begin{proof}
Let us view the row below the top boundary to be a fixed permutation $v$ (see Figure \ref{branchingrulefigure}). Note that for a given $v\in I_w$, the lower $n-1$ rows of the model have partition function $\mathcal{G}^{(\beta)}_{v^-}(x_2,\ldots,x_n;y_1,\ldots,y_{n-1})$. The result now follows from Theorem \ref{interleaving} and Proposition \ref{onerowpartitionfunction}.
\end{proof}

\begin{figure}[h]
\begin{center}
\scalebox{0.8}{
\begin{tikzpicture}
  \coordinate (ab) at (1,0);
  \coordinate (ad) at (3,0);
  \coordinate (af) at (5,0);
  \coordinate (ah) at (7,0);
  \coordinate (ba) at (0,1);
  \coordinate (bc) at (2,1);
  \coordinate (be) at (4,1);
  \coordinate (bg) at (6,1);
  \coordinate (bi) at (8,1);
  \coordinate (cb) at (1,2);
  \coordinate (cd) at (3,2);
  \coordinate (cf) at (5,2);
  \coordinate (ch) at (7,2);
  \coordinate (da) at (0,3);
  \coordinate (dc) at (2,3);
  \coordinate (de) at (4,3);
  \coordinate (dg) at (6,3);
  \coordinate (di) at (8,3);
  \coordinate (eb) at (1,4);
  \coordinate (ed) at (3,4);
  \coordinate (ef) at (5,4);
  \coordinate (eh) at (7,4);
  \coordinate (fa) at (0,5);
  \coordinate (fc) at (2,5);
  \coordinate (fe) at (4,5);
  \coordinate (fg) at (6,5);
  \coordinate (fi) at (8,5);
  \coordinate (gb) at (1,6);
  \coordinate (gd) at (3,6);
  \coordinate (gf) at (5,6);
  \coordinate (gh) at (7,6);
  \coordinate (ha) at (0,7);
  \coordinate (hc) at (2,7);
  \coordinate (he) at (4,7);
  \coordinate (hg) at (6,7);
  \coordinate (hi) at (8,7);
  \coordinate (ib) at (1,8);
  \coordinate (id) at (3,8);
  \coordinate (if) at (5,8);
  \coordinate (ih) at (7,8);
  \coordinate (bb) at (1,1);
  \coordinate (bd) at (3,1);
  \coordinate (bf) at (5,1);
  \coordinate (bh) at (7,1);
  \coordinate (db) at (1,3);
  \coordinate (dd) at (3,3);
  \coordinate (df) at (5,3);
  \coordinate (dh) at (7,3);
  \coordinate (fb) at (1,5);
  \coordinate (fd) at (3,5);
  \coordinate (ff) at (5,5);
  \coordinate (fh) at (7,5);
  \coordinate (hb) at (1,7);
  \coordinate (hd) at (3,7);
  \coordinate (hf) at (5,7);
  \coordinate (hh) at (7,7);
  \coordinate (bax) at (0,1.5);
  \coordinate (bcx) at (2,1.5);
  \coordinate (bex) at (4,1.5);
  \coordinate (bgx) at (6,1.5);
  \coordinate (bix) at (8,1.5);
  \coordinate (dax) at (0,3.5);
  \coordinate (dcx) at (2,3.5);
  \coordinate (dex) at (4,3.5);
  \coordinate (dgx) at (6,3.5);
  \coordinate (dix) at (8,3.5);
  \coordinate (fax) at (0,5.5);
  \coordinate (fcx) at (2,5.5);
  \coordinate (fex) at (4,5.5);
  \coordinate (fgx) at (6,5.5);
  \coordinate (fix) at (8,5.5);
  \coordinate (hax) at (0,7.5);
  \coordinate (hcx) at (2,7.5);
  \coordinate (hex) at (4,7.5);
  \coordinate (hgx) at (6,7.5);
  \coordinate (hix) at (8,7.5);
  \draw (ab)--(ib);
  \draw (ad)--(id);
  \draw (af)--(if);
  \draw (ah)--(ih);
  \draw (ba)--(bi);
  \draw (da)--(di);
  \draw (fa)--(fi);
  \draw (ha)--(hi);
  \draw[fill=white] (ab) circle (.3);
  \draw[fill=white] (ad) circle (.3);
  \draw[fill=white] (af) circle (.3);
  \draw[fill=white] (ah) circle (.3);
  \draw[fill=white] (ba) circle (.3);
  \draw[fill=white] (bc) circle (.3);
  \path[fill=white] (be) circle (.3);
  \draw[fill=white] (bg) circle (.3);
  \draw[fill=white] (bi) circle (.3);
  \draw[fill=white] (cb) circle (.3);
  \draw[fill=white] (cd) circle (.3);
  \draw[fill=white] (cf) circle (.3);
  \draw[fill=white] (ch) circle (.3);
  \draw[fill=white] (da) circle (.4);
  \draw[fill=white] (dc) circle (.3);
  \path[fill=white] (de) circle (.3);
  \draw[fill=white] (dg) circle (.3);
  \draw[fill=white] (di) circle (.3);
  \path[fill=white] (eb) circle (.3);
  \path[fill=white] (ed) circle (.3);
  \path[fill=white] (ef) circle (.3);
  \path[fill=white] (eh) circle (.3);
  \draw[fill=white] (fa) circle (.3);
  \draw[fill=white] (fc) circle (.3);
  \path[fill=white] (fe) circle (.3);
  \draw[fill=white] (fg) circle (.3);
  \draw[fill=white] (fi) circle (.3);
  \draw[fill=white] (gb) circle (.3);
  \draw[fill=white] (gd) circle (.3);
  \draw[fill=white] (gf) circle (.3);
  \draw[fill=white] (gh) circle (.3);
  \draw[fill=white] (ha) circle (.3);
  \draw[fill=white] (hc) circle (.3);
  \path[fill=white] (he) circle (.3);
  \draw[fill=white] (hg) circle (.3);
  \draw[fill=white] (hi) circle (.3);
  \draw[fill=white] (ib) circle (.3);
  \draw[fill=white] (id) circle (.3);
  \draw[fill=white] (if) circle (.4);
  \draw[fill=white] (ih) circle (.3);
  \path[fill=white] (bb) circle (.3);
  \path[fill=white] (bd) circle (.3);
  \path[fill=white] (bf) ellipse (.5cm and .3cm);
  \path[fill=white] (bh) circle (.3);
  \path[fill=white] (db) ellipse (.5cm and .3cm);
  \path[fill=white] (dd) ellipse (.5cm and .3cm);
  \path[fill=white] (df) ellipse (.7cm and .3cm);
  \path[fill=white] (dh) ellipse (.5cm and .3cm);
  \path[fill=white] (fb) circle (.3);
  \path[fill=white] (fd) circle (.3);
  \path[fill=white] (ff) ellipse (.5cm and .3cm);
  \path[fill=white] (fh) circle (.3);
  \path[fill=white] (hb) circle (.3);
  \path[fill=white] (hd) circle (.3);
  \path[fill=white] (hf) ellipse (.5cm and .3cm);
  \path[fill=white] (hh) circle (.3);
  \node at (bb) {$T_{n,1}$};
  \node at (bd) {$T_{n,2}$};
  \node at (bf) {$T_{n,n-1}$};
  \node at (bh) {$T_{n,n}$};
  \node at (db) {$T_{n-1,1}$};
  \node at (dd) {$T_{n-1,2}$};
  \node at (df) {$T_{n-1,n-1}$};
  \node at (dh) {$T_{n-1,n}$};
  \node at (fb) {$T_{2,1}$};
  \node at (fd) {$T_{2,2}$};
  \node at (ff) {$T_{2,n-1}$};
  \node at (fh) {$T_{2,n}$};
  \node at (hb) {$T_{1,1}$};
  \node at (hd) {$T_{1,2}$};
  \node at (hf) {$T_{1,n-1}$};
  \node at (hh) {$T_{1,n}$};
  \node at (eb) {$\vdots$};
  \node at (ed) {$\vdots$};
  \node at (ef) {$\vdots$};
  \node at (eh) {$\vdots$};
  \node at (be) {$\cdots$};
  \node at (de) {$\cdots$};
  \node at (fe) {$\cdots$};
  \node at (he) {$\cdots$};
  \node at (-1,8) {row:};
  \node at (-1.2,7) {1};
  \node at (-1.2,5) {2};
  \node at (-1.2,4) {$\vdots$};
  \node at (-1.2,1) {$n$};
  \node at (ib) {$w_1$};
  \node at (id) {$w_2$};
  \node at (if) {$w_{n-1}$};
  \node at (ih) {$w_n$};
  \node at (ha) {$1$};
  \node at (hi) {$+$};
  \node at (gb) {$v_2$};
  \node at (gd) {$\ldots$};
  \node at (gf) {$v_n$};
  \node at (gh) {$+$};
  \node at (fa) {$2$};
  \node at (fi) {$+$};
  \node at (da) {$n${\tiny $-$}1};
  \node at (di) {$+$};
  \node at (ba) {$n$};
  \node at (-1.2,3) {$n-1$};
  \node at (bi) {$+$};
  \node at (ab) {$+$};
  \node at (ad) {$+$};
  \node at (af) {$+$};
  \node at (ah) {$+$};
  \node at (7,9) {$n$};
  \node at (5,9) {$n-1$};
  \node at (4,9) {$\ldots$};
  \node at (3,9) {$2$};
  \node at (1,9) {$1$};
  \node at (0,9.04) {column:};
\end{tikzpicture}}

\end{center}
\caption{The branching rule computation in Corollary \ref{branchingrule}. The partition function $Z(\mathfrak{S}'_{1,w^{-1}})$ can be given as the sum over $v\in I_w$ of the partition function of this diagram. Fixing $v$ splits the diagram into two pieces. The top piece is $T(w,v)$, while the bottom piece is $\mathcal{G}_{v^-}^{(\beta)}(\boldsymbol{x};\boldsymbol{y})$.}
\label{branchingrulefigure}
\end{figure}

\begin{remark}
If we do the same process for $\mathfrak{S}'_{1,w}(\boldsymbol{x},\boldsymbol{y})$ and apply the involution (\ref{hudsoneq}), we obtain the following identity:

\[\mathcal{G}^{(\beta)}_w(x_1,\ldots,x_n;y_1,\ldots,y_n) = \sum_{w_{\min}\le v\le_R w_{\max}} \left(\prod_{i=1}^n f_i\right) \mathcal{G}^{(\beta)}_{v_-}(x_1,\ldots,x_{n-1};y_2,\ldots,y_n),\] where \[f_i = \begin{cases} x_i\oplus y_1, & \text{if } w^{-1}(i) = v^{-1}(i+1) \\ 1, & \text{if } w^{-1}(i) < v^{-1}(i+1) \\ 1+\beta(x_i\oplus y_1), & \text{if } w^{-1}(i) > v^{-1}(i+1). \end{cases} \hspace{20pt} and \hspace{20pt} v_- = ((v^{-1})^-)^{-1}.\]
We can alternatively obtain this formula by simply ``conjugating'' (\ref{branchingruleequation}) by (\ref{hudsoneq}).
\end{remark}

\begin{remark}
A similar branching rule to Corollary \ref{branchingrule} may be possible for the biaxial double $\beta$-Grothendieck polynomials. We have not attempted to find such a rule.
\end{remark}

\bibliographystyle{siam}
\bibliography{bibliography.bib}

\end{document}